\documentclass[a4paper,11pt]{article}

\pagestyle{plain}                                                      
\setlength{\textwidth}{6.5in}     
\setlength{\oddsidemargin}{0in}   
\setlength{\evensidemargin}{0in}  
\setlength{\textheight}{8.5in}    
\setlength{\topmargin}{0in}       
\setlength{\headheight}{0in}      
\setlength{\headsep}{0in}         
\setlength{\footskip}{.5in}

\usepackage{color}
\usepackage[latin1]{inputenc}
\usepackage[T1]{fontenc}
\usepackage[normalem]{ulem}
\usepackage[english]{babel}
\usepackage{verbatim}
\usepackage{graphicx}
\usepackage{enumerate,enumitem}
\usepackage{amsmath,amssymb,amsfonts,amsthm,mathrsfs}
\usepackage{rotating,relsize}
\usepackage{float}
\usepackage{array}
\usepackage{MnSymbol}
\usepackage[all,cmtip]{xy}
 \usepackage[hidelinks]{hyperref}
\xyoption{all}
\setlist[enumerate]{leftmargin=.7cm,label=\roman*)}

\newtheorem{theorem}{Theorem}[section]
\newtheorem*{theorem*}{Theorem}
\newtheorem*{cor*}{Corollary}

\newtheorem{lemma}[theorem]{Lemma}
\newtheorem{prop}[theorem]{Proposition}
\newtheorem{cor}[theorem]{Corollary}

\theoremstyle{definition}
\newtheorem*{remark*}{Remark}
\newtheorem{defn}[theorem]{Definition}
\newtheorem{rem}[theorem]{Remark}
\newtheorem{example}[theorem]{Example}

\DeclareMathOperator{\id}{id}

\DeclareMathOperator{\tr}{tr}

\DeclareMathOperator{\map}{Map}

\DeclareMathOperator{\Top}{Top}

\DeclareMathOperator*{\colim}{colim}
\DeclareMathOperator*{\hocolim}{hocolim}

\DeclareMathOperator{\THH}{THH}
\DeclareMathOperator{\THR}{THR}

\DeclareMathOperator{\GW}{GW}

\DeclareMathOperator{\res}{res}
\DeclareMathOperator{\KR}{KR}

\DeclareMathOperator{\Sp}{Sp}

\DeclareMathOperator{\ev}{ev}

\DeclareMathOperator{\Hom}{Hom}
\DeclareMathOperator{\Sh}{sh}
\DeclareMathOperator{\Inj}{Inj}

\DeclareMathOperator{\Z}{\mathbb{Z}}
\DeclareMathOperator{\F}{\mathbb{F}}
\DeclareMathOperator{\sd}{sd}
\DeclareMathOperator{\Tor}{\underline{Tor}}
\DeclareMathOperator{\proj}{proj}
\DeclareMathOperator{\tran}{tran}
\DeclareMathOperator{\R}{\mathbb{R}}
\DeclareMathOperator{\incl}{incl}
\DeclareMathOperator{\Ho}{Ho}
\DeclareMathOperator{\Ab}{Ab}
\DeclareMathOperator{\Ext}{Ext}
\DeclareMathOperator{\Torr}{Tor}
\DeclareMathOperator{\red}{red}
\DeclareMathOperator{\Fun}{Fun}
\DeclareMathOperator{\sk}{sk}
\DeclareMathOperator{\Mon}{Mon}

\begin{document}
\begin{center}\LARGE{Real topological Hochschild homology}
\end{center}

\begin{center}\Large{Emanuele Dotto, Kristian Moi, Irakli Patchkoria, Sune Precht Reeh}
\end{center}

\vspace{.05cm}

\abstract{
This paper interprets Hesselholt and Madsen's real topological Hochschild homology functor $\THR$ in terms of the multiplicative norm construction.
We show that $\THR$ satisfies cofinality and Morita invariance, and that it is suitably multiplicative. We then calculate its geometric fixed points and its Mackey functor of components, and show a decomposition result for group-algebras. Using these structural results we determine the homotopy type of $\THR(\F_p)$ and show that its bigraded homotopy groups are polynomial on one generator over the bigraded homotopy groups of $H\F_p$. We then calculate the homotopy type of $\THR(\Z)$ away from the prime $2$, and the homotopy ring of the geometric fixed-points spectrum $\Phi^{\mathbb{Z}/2}\THR(\Z)$.
}

\vspace{.05cm}


\tableofcontents

\section*{Introduction}
\phantomsection\addcontentsline{toc}{section}{Introduction} 
The topological Hochschild homology spectrum $\THH$ of a ring spectrum was introduced in \cite{Bok} and \cite{BHM} as a tool to study the algebraic $K$-theory of rings and connective ring spectra by means of the trace map. The trace induces an equivalence between a stabilized version of $K$-theory and $\THH$ \cite{DM}, and an equivalence between the relative $K$-theory and the relative topological cyclic homology of a nilpotent extension \cite{McCarthy}, \cite{Dundas}, \cite{DGM}.
It was later discovered that $\THH$ itself carries arithmetic information, having important relations to Witt vectors \cite{Wittvect}, to the deRham-Witt complex \cite{IbLarsLocalfields}, \cite{IbLarsDeRhamMixed}, and to the Hasse-Weil zeta function \cite{LarsZeta}.

The real topological Hochschild homology spectrum $\THR$ is a genuine $\Z/2$-equivariant spectrum associated to a genuine $\Z/2$-spectrum with a compatible multiplicative structure, introduced in \cite{IbLars}, \cite{thesis} and \cite{Amalie}. It is a genuine equivariant refinement of the antipodal map of the Hopf algebra structure on $\THH$ of commutative ring spectra defined by Angeltveit and Rognes \cite{angrog}, of the involution on $\THH$ defined in \cite{Kroinv}, and of the involution on the Hochschild complex of a ring with anti-involution of \cite{Loday}. It receives a trace map  $\tr\colon \KR\to \THR$ from the real $K$-theory spectrum $\KR$ of \cite{IbLars}, and therefore its derived fixed points spectrum $\THR^{\Z/2}$ is the target of a trace map from the Hermitian $K$-theory spectrum or Grothendieck-Witt spectrum $\GW$ of \cite{Schlichting,IbLars}. The trace map induces an equivalence of equivariant spectra after a suitable stabilization of $\KR$, at least when $2$ is invertible \cite{thesis}, \cite{Gcalc}, showing that $\THR$ carries information about Hermitian $K$-theory of rings with anti-involution.

The current definition of the $\THR$ spectrum uses a construction based on the original model of \cite{Bok}. Although this construction is homotopy invariant (Theorem \ref{invBok}) and receives a trace map from $\KR$, it is not suitable for calculations. The main purpose of this paper is to give an alternative construction of the $\THR$ spectrum in terms of the multiplicative norm construction of \cite{HHR} and \cite{Sto}, and use this description to carry out calculations.
\\

The input for $\THR$ is a ring spectrum with anti-involution $(A,w)$. A point-set model for their homotopy theory is the category of orthogonal $\Z/2$-spectra $A$ which are equipped with the structure of a ring spectrum, such that the involution $w$ defines a map of ring spectra $w\colon A^{op}\to A$ (see \S\ref{flatMS}). Alternatively, these can be described as algebras in genuine $\Z/2$-spectra over a certain $E_\sigma$-operad, where $\sigma$ is the sign representation of $\Z/2$ (see Remark \ref{operad}).
Let $N^{\mathbb{Z}/2}_e A$ be the norm construction of the underlying ring spectrum of $A$. Then the $\Z/2$-spectrum $A$ is a module over the $\Z/2$-ring spectrum $N^{\mathbb{Z}/2}_e A$ via the map
\[
N^{\mathbb{Z}/2}_e A \wedge A=A \wedge A\wedge A\xrightarrow{1\wedge w\wedge 1}A \wedge A^{op}\wedge A\xrightarrow{1\wedge \tau}A \wedge A\wedge A^{op}\stackrel{\mu}{\longrightarrow}A,
\]
where $\tau\in\Sigma_2$ is the flip permutation and $\mu$ is the multiplication of $A$. This also provides an $A$-module structure on the geometric fixed points of $A$
\[
A\wedge\Phi^{\Z/2}A\stackrel{\simeq}{\longrightarrow} \Phi^{\Z/2}(N^{\mathbb{Z}/2}_e A \wedge A)\longrightarrow \Phi^{\Z/2}A
\]
under suitable cofibrancy assumptions. Similar module structures are defined for an $(A,w)$-bimodule $M$. The following theorem is the main outcome of \S\ref{secTHR}.

\begin{theorem*}
Let $(A,w)$ be a flat ring spectrum with anti-involution, and let $M$ be a flat  $(A,w)$-bimodule. There is a stable equivalence of genuine $\Z/2$-spectra
\[
\THR(A;M)\simeq M \wedge^{\mathbf{L}}_{N^{\mathbb{Z}/2}_e A} A,
\]
where $\THR(A;M)$ is the B\"{o}kstedt model of real topological Hochschild homology defined in  \cite{IbLars}, \cite{thesis} and \cite{Amalie} (see also \S\ref{secBok}).
In particular, on geometric fixed points there is a natural stable equivalence of spectra 
\[
\Phi^{\Z/2}\THR(A;M)\simeq \Phi^{\Z/2}M \wedge^{\mathbf{L}}_{A} \Phi^{\Z/2}A.
\]
\end{theorem*}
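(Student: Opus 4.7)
The plan is to exhibit an explicit simplicial model for $M\wedge^{\mathbf{L}}_{N^{\Z/2}_e A} A$ and identify it with (an edgewise subdivision of) the B\"{o}kstedt real simplicial spectrum defining $\THR(A;M)$. On the algebraic side this is routine: using that $N^{\Z/2}_e A$ is the genuine $\Z/2$-ring spectrum with underlying spectrum $A\wedge A$ and swap action, with module structures on $M$ and on $A$ given by the formulas of the introduction, I form the two-sided bar construction $B_\bullet(M,N^{\Z/2}_e A, A)$ as a simplicial genuine $\Z/2$-spectrum. Its $n$-simplex is $M\wedge (A\wedge A)^{\wedge n}\wedge A \cong M\wedge A^{\wedge(2n+1)}$, and under the flatness hypotheses on $(A,w)$ and $M$ its realization provides a model for the derived relative smash product.

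On the $\THR$ side, I recall that the B\"{o}kstedt real cyclic spectrum has $n$-simplex a homotopy colimit modelling $M\wedge A^{\wedge n}$, so that its edgewise subdivision $\sd_2$ at level $n$ has $n$-simplex of shape $M\wedge A^{\wedge(2n+1)}$, equipped with a $\Z/2$-action combining the real cyclic involution with $w$. I construct a natural map of simplicial $\Z/2$-spectra from $B_\bullet(M,N^{\Z/2}_e A, A)$ to $\sd_2$ applied to the B\"{o}kstedt object, matching the bar face and degeneracy operators with the corresponding dihedral operators under the identifications induced by the involution $w$ and the swap on $A\wedge A$. The map is a stable equivalence on each simplicial level once one replaces the B\"{o}kstedt indexed homotopy colimits by ordinary iterated smash products, which is where the flatness assumptions on $A$ and $M$ enter essentially, via the standard B\"{o}kstedt approximation lemma.

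The geometric-fixed-points statement follows from the first part by applying $\Phi^{\Z/2}$. This functor commutes with realizations of simplicial objects and with derived smash products on suitably cofibrant inputs, and the standard computation gives $\Phi^{\Z/2}N^{\Z/2}_e A \simeq A$. A short naturality check confirms that the induced $A$-module structures on $\Phi^{\Z/2}M$ and $\Phi^{\Z/2}A$ coming from applying $\Phi^{\Z/2}$ to the $N^{\Z/2}_e A$-module structures agree with those indicated in the statement, yielding
\[
\Phi^{\Z/2}\THR(A;M) \simeq \Phi^{\Z/2}M\wedge^{\mathbf{L}}_{A}\Phi^{\Z/2}A.
\]

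The main obstacle I anticipate is the equivariant simplicial comparison: one must match the dihedral face, degeneracy and involution operators of the Hesselholt-Madsen B\"{o}kstedt model precisely with those of the two-sided bar construction, and verify that the $\Z/2$-action arising from the real cyclic structure on $\sd_2\THR(A;M)$ coincides with the one coming from the swap on the norm combined with the bimodule involutions. Once this combinatorial and equivariant bookkeeping is in place, the stable-equivalence argument reduces to the standard B\"{o}kstedt approximation together with the flatness hypotheses.
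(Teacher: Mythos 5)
Your high-level outline matches the paper's strategy: identify the two-sided bar construction $B_\bullet(M, N^{\Z/2}_e A, A)$ with the edgewise subdivision of the dihedral nerve (which is Proposition \ref{dihedral-two-sided}, a routine permutation of smash factors), then compare to the B\"okstedt model, then take geometric fixed points via the diagonal $A\cong\Phi^{\Z/2}N^{\Z/2}_eA$ and the monoidality of $\Phi^{\Z/2}$. The geometric fixed points step you sketch is essentially Theorem \ref{geofixcalc}. However, the middle step — the comparison with the B\"okstedt model — has a genuine gap.

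The gap is your claim that you can ``construct a natural map of simplicial $\Z/2$-spectra from $B_\bullet(M,N^{\Z/2}_e A, A)$ to $\sd_2$ applied to the B\"okstedt object.'' No such natural map exists. The $k$-simplex of the B\"okstedt model is $\hocolim_{I^{\times 1+\mathbf{k}}}\Omega^{\underline{i}}(\mathbb{S}\wedge M_{i_0}\wedge A_{i_1}\wedge\dots\wedge A_{i_k})$, and the only point-set map $M\wedge A^{\wedge k}$ could naturally emit into it lands at the vertex $\underline{i}=\underline{0}$, i.e.\ into $\Sigma^\infty(M_0\wedge A_0\wedge\dots\wedge A_0)$ — which receives $M_0\wedge\dots\wedge A_0$, not $M\wedge A^{\wedge k}$. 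The paper resolves this by introducing the intermediate ``shift model'' $\Omega_I^{\bullet}(A;M;\Sh)$ of Example \ref{middlemodel} and running a \emph{zig-zag} through it (Theorem \ref{comparison}): a forward simplicial map $M\wedge A^{\wedge k}\to\hocolim_{I^{\times 1+\mathbf{k}}}\Omega^{\underline{i}}(\Sh^{i_0}M\wedge\dots\wedge\Sh^{i_k}A)$ obtained by stabilizing, and a backward simplicial map from $\THR_k(A;M)$ induced by $\Sigma^\infty(M_{i_0}\wedge\dots\wedge A_{i_k})\to\Sh^{i_0}M\wedge\dots\wedge\Sh^{i_k}A$. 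The proof that the backward map is an equivalence is not a direct application of any ``standard B\"okstedt approximation lemma''; it requires replacing the inner loops $\Omega^{n\rho}$ by the free spectra $F_{n\rho}$ (which do commute monoidally with smash products) and reasoning in the triangulated homotopy category via the canonical homotopy presentation.

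A second, related issue: you appeal to ``the standard B\"okstedt approximation lemma'' to identify the B\"okstedt $k$-simplex with an iterated smash product. Non-equivariantly this is classical, but in the genuine $\Z/2$-equivariant setting it is a substantial technical result that the paper has to establish from scratch. Concretely, one needs that the restriction functor $\iota\colon\mathbb{N}\to I$, $n\mapsto 2n+1$, induces a $\Z/2$-equivalence of homotopy colimits (Proposition \ref{cofinal}), which requires analyzing the $\Z/2$-equivariant injection monoid $\mathcal{M}R^{\Z/2}$, showing $B\mathcal{M}R^{\Z/2}\simeq\ast$ (Lemma \ref{BMR}), establishing right cofinality of certain comparison functors into Grothendieck constructions (Lemma \ref{gammaandpi}), and a careful semi-stability argument adapted to the norm $A^{\wedge\Z/2}$ (Lemma \ref{ItoN}). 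Your proposal implicitly assumes all of this is available off the shelf; it is not, and producing it is one of the main technical contributions of the paper.

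Finally, a minor structural remark: the paper does not compare the subdivided B\"okstedt object with the bar construction directly. It compares the unsubdivided B\"okstedt model with the unsubdivided dihedral nerve levelwise, via the zig-zag above, and only applies the subdivision $\sd_e$ to identify the dihedral nerve with the two-sided bar construction. Routing through $\sd_2$ of the B\"okstedt object as you propose is not wrong per se, but it doesn't avoid the need for the zig-zag, and doing the subdivision on the wrong side obscures where the equivariant identification is actually happening.
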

From this geometric fixed points formula one can recover the calculation of $\pi_0\Phi^{\Z/2}\THR(A)$ of \cite{AmalieGeom}.
We prove this theorem first by comparing $\THR(A;M)$ to the dihedral Bar construction $B^{di}_\wedge (A;M)$ in a way analogous to  \cite{Sh00} and \cite{PatchkSagave} in Theorem \ref{comparison}, and then by showing that $B^{di}_\wedge (A;M)$ computes the derived smash product. We wish to emphasize that our equivalence is implemented by a zig-zag of only two simplicial equivalences, of the form
\[M\wedge A^{\wedge k}\stackrel{\simeq}{\longrightarrow}\hocolim_{I^{\times k+1}}\Omega^{i_0+\dots+i_k}(\Sh^{i_0} M\wedge \Sh^{i_1} A\wedge\dots\wedge \Sh^{i_k} A)\stackrel{\simeq}{\longleftarrow}\THR_k(A;M).
\]

The real algebraic $K$-theory functor admits as input Wall antistructures, or more generally exact categories with duality. It is therefore desirable to define $\THR$ on these more general objects. This is addressed in \S\ref{secgen}, where we extend the $\THR$ functor to a set-up of spectrally enriched categories with duality which include Wall antistructures, their category of modules, as well as a spectral version of Wall antistructures. We also extend the comparison between $\THR$ and the dihedral Bar construction and the calculation of its geometric fixed points spectra to this categorical context.

In \S\ref{secprop} we discuss multiplicativity, cofinality, and Morita invariance for the functor $\THR$. If the underlying ring spectrum $A$ is commutative, the ring spectrum with anti-involution $(A,w)$ is nothing but a genuine $\Z/2$-equivariant commutative ring spectrum. The following results are proved in \S\ref{secmult}.

\begin{theorem*}
Let $(A,w)$ be a commutative $\Z/2$-equivariant ring spectrum. Then $\THR(A):=\THR(A;A)$ is an associative $\Z/2$-equivariant ring spectrum, and the dihedral Bar construction $B^{di}_\wedge A$ is a $\Z/2$-equivariant augmented commutative $A$-algebra. If $(A,w)$ is flat there is a stable equivalence of associative genuine $\Z/2$-equivariant ring spectra
\[
\THR(A)\simeq B^{di}_\wedge A\simeq A \wedge^{\mathbf{L}}_{N^{\mathbb{Z}/2}_e A} A,
\]
and a stable equivalence of associative ring spectra $\Phi^{\Z/2}\THR(A)\simeq \Phi^{\Z/2}A \wedge^{\mathbf{L}}_{A}\Phi^{\Z/2}A $.
\end{theorem*}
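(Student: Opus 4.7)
The plan is to refine the simplicial zigzag from the previous theorem (with $M=A$) into a zigzag of maps of simplicial ring spectra, which upon realization yields the claimed equivalence of associative $\Z/2$-equivariant ring spectra. First I would equip each of the three objects with its natural multiplicative structure. The derived smash product $A \wedge^{\mathbf{L}}_{N^{\Z/2}_e A} A$ is automatically a commutative $\Z/2$-equivariant $A$-algebra since both $A$ and $N^{\Z/2}_e A$ are commutative $\Z/2$-equivariant algebras, and the module map $N^{\Z/2}_e A \wedge A \to A$ used in the excerpt is a ring map when $A$ is commutative (the anti-involution $w$ is then itself a ring map $A\to A$). The dihedral Bar spectrum $B^{di}_\wedge A$ carries a levelwise shuffle multiplication which, for commutative $A$, makes it into a commutative simplicial $A$-algebra, and the dihedral $\Z/2$-action intertwines this multiplication because $A$ is equivariantly commutative. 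Finally, $\THR(A)$ inherits a simplicial ring structure from the lax monoidal pairing on the Bökstedt homotopy colimits over $I^{\times(k+1)}$ given by concatenation of indexing objects, combined with the multiplication on the smash powers of $A$.

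Second, I would show that the simplicial equivalences
\[A \wedge A^{\wedge k} \stackrel{\simeq}{\longrightarrow} \hocolim_{I^{\times k+1}}\Omega^{i_0+\dots+i_k}(\Sh^{i_0}A \wedge \dots \wedge \Sh^{i_k}A) \stackrel{\simeq}{\longleftarrow} \THR_k(A)\]
are maps of simplicial ring spectra. The left-hand map is the inclusion at the vertex $(0,\dots,0)$, and its multiplicativity reduces to the observation that both sides implement the shuffle-and-multiply operation on $A^{\wedge(k+1)}$. The right-hand map is the Bökstedt assembly, whose multiplicativity in the non-equivariant case for commutative $A$ is standard; to extend it equivariantly one verifies that the shuffle coherences are invariant under the dihedral involution reversing the ordering of the smash factors, using commutativity of $A$ as a $\Z/2$-spectrum.

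Third, for the geometric fixed points statement I would apply $\Phi^{\Z/2}$ to the resulting equivalence of associative ring spectra. Since $\Phi^{\Z/2}$ is strong symmetric monoidal on cofibrant $\Z/2$-spectra and the standard identification $\Phi^{\Z/2} N^{\Z/2}_e A \simeq A$ is one of ring spectra, the multiplicative equivalence $\Phi^{\Z/2}\THR(A) \simeq \Phi^{\Z/2}A \wedge^{\mathbf{L}}_{A} \Phi^{\Z/2}A$ follows immediately from the first part together with the identification $\Phi^{\Z/2}A\wedge \Phi^{\Z/2}A \simeq \Phi^{\Z/2}(A\wedge A)$.

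The principal obstacle I anticipate lies in verifying strict simplicial multiplicativity of the Bökstedt assembly. One must carefully align the shuffle permutations of the loop coordinates in $\Omega^{i_0+\dots+i_k}$ with the shuffles needed to implement the multiplication on $A^{\wedge(k+1)}$, and then check that the resulting pairings are $\Z/2$-equivariant with respect to the dihedral involution. Flatness of $(A,w)$ is what lets these identifications be made without further cofibrant replacement, so that the pairings genuinely represent the derived operations and the comparison between simplicial ring spectra induces the stated equivalence on realizations.
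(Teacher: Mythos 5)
Your proposal matches the paper's approach: equip $B^{di}_\wedge A$, $\THR(A)$, and the derived smash product with their respective multiplicative structures (Proposition \ref{multBdi} and Corollary \ref{multTHR} in the paper), endow the middle term of the comparison zigzag with a compatible multiplication, verify that both maps of the zigzag are multiplicative levelwise (Theorem \ref{comparisonmult}), and then pass to geometric fixed points (Corollary \ref{cor:fixed-pt-alg-str}). The paper's proof of Theorem \ref{comparisonmult} carries out exactly the shuffle-coherence verification you flag as the principal obstacle, by defining the multiplication on the middle term via the pairings $(\Sh^i A)\wedge(\Sh^j B)\to\Sh^{i+j}(A\wedge B)$ and checking compatibility with the inclusion at $(0,\dots,0)$ and with the canonical maps from suspension spectra to shifts.

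Two small cautions worth making explicit. First, the paper is careful to claim only an \emph{associative} equivalence: the lax monoidal transformation $\THR(A)\wedge\THR(B)\to\THR(A\wedge B)$ is not symmetric (because $+\colon I\times I\to I$ is not strictly symmetric monoidal), so although $B^{di}_\wedge A$ and $A\wedge^{\mathbf{L}}_{N^{\Z/2}_eA}A$ carry commutative structures, the comparison with $\THR(A)$ is only as associative monoids. Your write-up should not suggest otherwise. Second, the geometric fixed-points step does not quite ``follow immediately'' from strong monoidality of $\Phi^{\Z/2}$; as in the proof of Theorem \ref{geofixcalc} and Remark \ref{noC}, one needs the cofibrant replacements to be taken in the category of associative $N^{\Z/2}_eA$-algebras (and the intermediate algebra $C$) before invoking \cite[Proposition B.203]{HHR}, so that the monoidal structure on $\Phi^{\Z/2}_{\mathcal{M}}$ is applied to cofibrant objects in a compatible way. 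Both points are details rather than errors in the strategy, and your overall plan is the one the paper follows.
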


When $A$ is commutative $\THH(A)$ and the cyclic Bar construction $B^{cy}_\wedge A$ have natural structures of $E_{\infty}$-ring spectra. We are however not aware of a comparison of these objects as $E_\infty$-algebras. In our situation, $B^{di}_\wedge A$ is strictly commutative and therefore has an action of the genuine $\Z/2$-equivariant $E_\infty$-operad. It is plausible that $\THR(A)$ also has an action of this operad which is compatible with the one on $B^{di}_\wedge A$. We leave this as an open question. In our calculations we will only use the comparison as associative rings.
\\

Given a ring spectrum with anti-involution $(A,w)$ we let $M^{\vee}_nA:=\bigvee_{n\times n}A$ be the non-unital matrix ring. This has an anti-involution $w^T$ obtained by applying $w$ to the wedge summands and by transposing, that is by applying the involution of $n\times n$ that flips the factors. Now suppose that $R$ is a discrete ring with anti-involution. The category of finitely generated projective right $R$-modules inherits a duality $D(P):=\hom_R(P,R)$, where the left $R$-module $\hom_R(P,R)$ is regarded as a right $R$-module via the anti-involution.
The following is proved in \S\ref{seccof} and \S\ref{secmorita}.

\begin{theorem*}[Morita Invariance]
If $(A,w)$ is flat, the inclusion $(A,w)\to (M^{\vee}_nA,w^T)$ of the $(1,1)$-wedge summand induces a stable equivalence of genuine $\Z/2$-spectra
\[
\THR(A)\stackrel{\simeq}{\longrightarrow}\THR(M^{\vee}_nA)
\]
for every integer $n\geq 1$. Moreover $\THR$ satisfies cofinality, and it follows that there is a stable equivalence of genuine $\Z/2$-spectra
\[\THR(HR)\stackrel{\simeq}{\longrightarrow}\THR(H\mathcal{P}_R),\]
where $H$ denotes the Eilenberg-MacLane construction, with fixed-points spectrum $(HR)^{\Z/2}=H(R^{\Z/2})$. This equivalence in fact holds for every Wall antistructure $(R,w,\epsilon)$, in the sense of \cite{Wall}.
\end{theorem*}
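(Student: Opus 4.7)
The plan is to prove Morita invariance and cofinality separately and then to combine them. Throughout, I would leverage the main theorem's identification $\THR(A)\simeq A\wedge^{\mathbf{L}}_{N^{\Z/2}_e A} A$ and the extension of $\THR$ to spectrally enriched categories with duality from Section~\ref{secgen}. Rather than attacking the derived smash product directly, I find it cleaner to work with the dihedral bar construction $B^{di}_\wedge$, which by the main theorem computes $\THR$.

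For Morita invariance, I would construct an explicit $\Z/2$-equivariant dihedral homotopy equivalence in close analogy with the classical simplicial proof of Morita invariance for $\THH$. The inclusion $\iota\colon (A,w)\to (M_n^\vee A, w^T)$ of the $(1,1)$-corner induces a map of dihedral $\Z/2$-objects $B^{di}_\wedge(A)\to B^{di}_\wedge(M_n^\vee A)$. Its homotopy inverse is a ``trace projection''
\[
\tau\colon B^{di}_k(M_n^\vee A) = \bigvee_{(i_0,j_0),\ldots,(i_k,j_k)} A^{\wedge (k+1)} \longrightarrow B^{di}_k(A)
\]
picking out those summands indexed by cyclically composable tuples of matrix entries. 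Cyclic rotation permutes such summands by construction; the new point is that the reflection, which reverses the cyclic order and applies $w$, also preserves them, since the reverse of a cyclic loop of matrix indices is still a cyclic loop and $w^T$ is compatible with transposition. Granting this, $\tau\circ\iota=\id$ and $\iota\circ\tau\simeq\id$ by adapting the classical ``insertion of identity matrices'' simplicial homotopy, the new content being to verify that this homotopy is dihedral and not merely cyclic. An alternative I would keep in mind, should the combinatorics become unwieldy, is to argue by isotropy separation: check the equivalence on underlying spectra, where it is the classical statement, and on geometric fixed points using the formula $\Phi^{\Z/2}\THR(A)\simeq \Phi^{\Z/2}A\wedge^{\mathbf{L}}_A\Phi^{\Z/2}A$, together with the identification $\Phi^{\Z/2}(M_n^\vee A)\simeq\bigvee_{i=1}^n \Phi^{\Z/2}A$ of the diagonal summands.

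For cofinality I would adapt the standard cofinality argument for $\THH$ to the equivariant, categorical setting of Section~\ref{secgen}: if $\mathcal{C}\to\mathcal{D}$ is a duality-preserving inclusion such that every object of $\mathcal{D}$ is a summand of one in $\mathcal{C}$ via a duality-compatible split idempotent, then the induced map on dihedral bar objects is a stable equivalence of $\Z/2$-spectra. For the final statement, the category $\mathcal{P}_R$ is exhausted by the subcategories of free modules of rank $\le n$ with the hyperbolic duality; the inclusion $HR\to H\mathcal{F}_R^{\le n}$ factors through $HM_n^\vee R$. Combining Morita equivalences $\THR(HR)\simeq\THR(HM_n^\vee R)$ over all $n$ (using that $\THR$ commutes with filtered colimits, which follows from the bar-construction model) with cofinality $\THR(H\mathcal{F}_R)\simeq\THR(H\mathcal{P}_R)$ yields the claim, and the same strategy goes through for a general Wall antistructure $(R,w,\epsilon)$ once one checks that the sign $\epsilon$ only affects the duality on $\mathcal{P}_R$, not the Morita/cofinality reductions.

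The principal obstacle is verifying $\Z/2$-equivariance of the classical simplicial constructions on which Morita invariance and cofinality for ordinary $\THH$ rest. The Morita homotopy $\iota\circ\tau\simeq\id$ depends on a non-canonical insertion of identity matrices, and some care is needed to upgrade it from a cyclic to a dihedral homotopy. Similarly, the cofinality argument typically invokes chosen direct-sum complements, and these must be selected to be duality-compatible; this is the step at which the Hermitian/real structure genuinely intervenes on top of the $\THH$-level arguments, and it is likely what determines the shape of the ``cofinal inclusion'' hypothesis that my cofinality statement should require.
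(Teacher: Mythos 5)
Your approach matches the paper's: both Morita invariance and cofinality are proved by constructing explicit trace/retraction maps on the Segal edgewise subdivision of the dihedral nerve together with dihedral (semisimplicial) homotopies, and the final statement for Wall antistructures is obtained by the chain $\THR(R)\simeq\hocolim_n\THR(M_n^\vee R)\simeq\hocolim_n\THR(H\mathcal{F}_R^{\le n})\simeq\THR(H\mathcal{F}_R)\simeq\THR(H\mathcal{P}_R)$ combining Morita invariance, cofinality, and filtered-colimit compatibility. One reassurance on the point you flag as the main obstacle: in the paper's cofinality proof the chosen retractions $i_c,p_c$ need not themselves be duality-compatible, because the retraction map on the subdivided dihedral nerve inserts $i_c,p_c$ on one side of the symmetry and their duals $D(i_c),D(p_c)$ on the mirror side, which makes the construction equivariant for any choice; the genuine work is in checking that this and the "insert identities" homotopy assemble into a semisimplicial homotopy, which is exactly what you anticipate.
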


The construction of the trace map from $\KR(R,w,\epsilon)$ of \cite{IbLars} and \cite{thesis} lands most naturally in the spectrum $\THR(H\mathcal{P}_R)$. Thus the previous theorem is instrumental for describing a trace map to the spectrum $\THR(HR)$.

\begin{remark*}
If $(R,w)$ is a \emph{discrete} ring with anti-involution, our geometric fixed points formula shows that, after inverting $2$, the geometric fixed points of $\THR$ vanish:
\[
(\Phi^{\Z/2}\THR(HR;M))[\tfrac{1}{2}]\simeq (\Phi^{\Z/2}M \wedge^{\mathbf{L}}_{HR} \Phi^{\Z/2}HR)[\tfrac{1}{2}]\simeq \Phi^{\Z/2}M \wedge^{\mathbf{L}}_{HR} \Phi^{\Z/2}(HR[\tfrac{1}{2}])\simeq \ast
\]
since $\Phi^{\Z/2}(HR[\tfrac{1}{2}])$ is contractible (see Corollary \ref{geomzero}). As a consequence the trace map
\[
L(R)\otimes\mathbb{Q}\simeq \Phi^{\Z/2}\KR(R)\otimes \mathbb{Q}\stackrel{\tr}{\longrightarrow}\Phi^{\Z/2}\THR(HR)\otimes \mathbb{Q}\simeq\ast
\]
cannot detect elements in rational $L$-theory. The first equivalence follows from \cite{jl} and \cite{BF}. This result is consistent with Corti{\~n}as' vanishing result for the Chern Character to dihedral homology on the $L$-theory summand of \cite{gc}.
For non-discrete ring spectra the geometric fixed points of $\THR$ need not vanish, even after inverting $2$. In fact, Dotto and Ogle \cite{DO} have shown that the trace for the spherical group-ring $\mathbb{S}[G]$ is non-trivial on rational geometric fixed points.
\end{remark*}

We now turn our attention to some calculations. From the description of $\THR$ as a derived smash product we obtain a strongly convergent spectral sequence of $\Z/2$-Mackey functors
\[ \Tor_{p,q}^{{\underline{\pi}}_\ast(N^{\mathbb{Z}/2}_e A)}({\underline{\pi}}_\ast A, {\underline{\pi}}_\ast A) \Rightarrow {\underline{\pi}}_{p+q}\THR(A),\]
where $\Tor$ is the left derived functor of the $\Box$-product of Mackey functors, and ${\underline{\pi}}_\ast$ is the $\Z/2$-Mackey functor of homotopy groups. From this spectral sequence and a calculation of $\underline{\pi}_0N^{\mathbb{Z}/2}_e A$ we obtain the following in \S\ref{secpi0}.

\begin{theorem*} Let $(A,w)$ be a ring spectrum with anti-involution whose underlying orthogonal $\Z/2$-spectrum is flat and connective. Then the Mackey functor $\underline{\pi}_0\THR(A)$ is naturally isomorphic to the Mackey functor
\[
\xymatrix@C=40pt{
\pi_0A/[\pi_0A, \pi_0A] \ar@<6ex>@(dl,dr)_-{w}\ar@<.5ex>[r]^-{\tran}& (\pi^{\mathbb{Z}/2}_0 A \otimes \pi^{\mathbb{Z}/2}_0 A)/T \ar@<.5ex>[l]^-{\res}
},
\]
where $[\pi_0A, \pi_0A]$ is the commutator subgroup, and $T$ is the subgroup generated by the relations:
\begin{itemize}
\item[i)] $x \otimes a \cdot y-\omega (a) \cdot x \otimes y$, for $x,y \in \pi^{\mathbb{Z}/2}_0 A$ and $a \in \pi_0A$, where $\cdot$ is a multiplicative action of $\pi_0A$ on $\pi^{\Z/2}_0A$ induced by the module structure over $\pi_0N^{\Z/2}_eA$,
\item[ii)] $x \otimes \tran(a \res(y) w(b))-\tran(w(b)\res(x)a) \otimes y$, for $x,y \in \pi^{\mathbb{Z}/2}_0 A$ and $a, b \in \pi_0A$.
\end{itemize}
The transfer and the restriction are defined respectively by
\[\res(x \otimes y)=\res(x)\res(y)\ \ \ \ \ \ \ \ \ \ \  \mbox{and} \ \ \ \ \ \ \ \ \ \ \ \ \  \tran(a)= \tran(a) \otimes 1,\]
where $1 \in \pi^{\mathbb{Z}/2}_0 A$ is defined by the unit $\mathbb{S}\to A$.

If $A$ is moreover commutative these relations generate an ideal, $(\pi^{\mathbb{Z}/2}_0 A \otimes \pi^{\mathbb{Z}/2}_0 A)/T$ is a commutative ring, and $\underline{\pi}_0\THR(A)$ is a Tambara functor. The multiplicative norm of $\underline{\pi}_0\THR(A)$ is given by $a\mapsto N(a) \otimes 1$, where $N$ is the multiplicative norm of $\underline{\pi}_0A$.
\end{theorem*}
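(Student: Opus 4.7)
The strategy is to read off $\underline{\pi}_0\THR(A)$ from the strongly convergent Tor spectral sequence recalled just above the statement. Under the connectivity and flatness hypotheses, $\underline{\pi}_* A$ and $\underline{\pi}_* N^{\Z/2}_e A$ are concentrated in non-negative degrees, so the only contribution on the $p+q=0$ line is
\[\underline{\pi}_0\THR(A)\;\cong\;\underline{\pi}_0 A\,\Box_{\underline{\pi}_0 N^{\Z/2}_e A}\,\underline{\pi}_0 A.\]
The theorem therefore reduces to an explicit computation of this box product of Mackey functors.

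The first step is to describe the Green functor $\underline{\pi}_0 N^{\Z/2}_e A$. By the K\"unneth isomorphism its underlying ring is $\pi_0 A\otimes\pi_0 A$ with flip $\Z/2$-action, while its fixed-point level is generated by the norm classes $N(a)$ for $a\in\pi_0 A$ together with the image of the transfer from the underlying, subject to the standard relations for a $\Z/2$-norm Tambara functor. The module action of $\underline{\pi}_0 N^{\Z/2}_e A$ on $\underline{\pi}_0 A$ induced by the map displayed in the introduction is computed to be $(a\otimes b)\cdot x = axw(b)$ at the underlying level; at the fixed-point level the action of $N(a)$ is the multiplicative action $a\cdot x$ of the statement, and $\tran(a\otimes b)\cdot y = \tran\bigl(a\,\res(y)\,w(b)\bigr)$ by Frobenius reciprocity.

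At the underlying level one then computes the tensor product directly: viewing $\underline{\pi}_0 A$ as a right module via the flip symmetry of $\underline{\pi}_0 N^{\Z/2}_e A$, the relations $xw(a)\otimes y = x\otimes ay$ and $bx\otimes y = x\otimes yw(b)$ reduce every simple tensor to $1\otimes z$ and impose $1\otimes ay = 1\otimes ya$ for every $a,y\in\pi_0 A$, identifying the underlying group with $\pi_0 A/[\pi_0 A,\pi_0 A]$ equipped with the $w$-involution. For the fixed-point level, the $\Box$-product of Mackey functors admits the standard presentation by generators $x\otimes y$ for $x,y\in\pi_0^{\Z/2}A$ together with transfers of underlying tensors, modulo $\underline{\pi}_0 N^{\Z/2}_e A$-linearity. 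The $N(a)$-linearity yields precisely relation (i); the $\tran(a\otimes b)$-linearity, combined with the Frobenius formula $\tran(u)\cdot y = \tran(u\cdot\res(y))$ and the computation at the underlying level, produces relation (ii). The transfer and restriction formulas follow from the coequalizer description of $\Box$ together with the Mackey structure of $\underline{\pi}_0 A$.

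In the commutative case, $\underline{\pi}_0 N^{\Z/2}_e A$ is a Tambara functor and $\underline{\pi}_0 A$ is a commutative algebra over it, so the box product is a commutative Green functor. The multiplicativity theorem of \S\ref{secmult} further endows $\THR(A)$ with the structure of an associative $\Z/2$-equivariant ring spectrum, and a direct verification shows that the Green structure promotes to a Tambara structure whose multiplicative norm is induced from that of $\underline{\pi}_0 A$ via the unit map $\underline{\pi}_0 A\to\underline{\pi}_0\THR(A)$, $a\mapsto a\otimes 1$. The main technical obstacle throughout is the identification of the fixed-point level of the box product as the explicit quotient in the statement: showing that relations (i) and (ii) generate precisely the submodule killed by $\underline{\pi}_0 N^{\Z/2}_e A$-linearity requires careful bookkeeping with Frobenius reciprocity and the norm Tambara structure.
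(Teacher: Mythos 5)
Your overall strategy --- passing to the box product $\underline{\pi}_0(A) \Box_{\underline{\pi}_0(N^{\Z/2}_e A)} \underline{\pi}_0(A)$ via the Tor spectral sequence, computing the underlying level directly, and extracting the fixed-point level from an explicit presentation of $\Box$ --- is exactly the route the paper takes, and you correctly identify the module actions and the sources of relations (i) and (ii). However, two steps you treat as routine are in fact the substantive content of the argument.

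First, you describe $\underline{\pi}_0 N^{\Z/2}_e A$ as generated by norm classes and transfers ``subject to the standard relations for a $\Z/2$-norm Tambara functor.'' This is not available: $A$ is only associative with anti-involution, not $\Z/2$-equivariantly commutative, so $N^{\Z/2}_e A$ is not a $G$-$E_\infty$-ring and $\underline{\pi}_0 N^{\Z/2}_e A$ is not a Tambara functor. The paper instead establishes, via the isotropy separation sequence together with a surjectivity argument using Euler classes and $\Phi^{\Z/2}\circ N^{\Z/2}_e \simeq \id$ (Lemma \ref{Witt vectors}), the short exact sequence
\[0 \to (\pi_0 A \otimes \pi_0 A)_{\Z/2} \xrightarrow{\;t\;} \pi_0^{\Z/2}(N^{\Z/2}_e A) \xrightarrow{\Phi^{\Z/2}} \pi_0 A \to 0,\]
additively split by $N$, and identifies the resulting Green functor with a non-commutative Witt Green functor $\mathbb{W}_2^{\otimes}(\pi_0 A)$. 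The injectivity of $t$, hence the uniqueness of the decomposition $x = N(a)+t(c)$, is what makes the rest of the computation work, and this is not a formal consequence of K\"unneth plus ``standard relations.''

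Second, the passage from the presentation of the box product to a quotient of $\pi_0^{\Z/2}A \otimes \pi_0^{\Z/2}A$ alone is not bookkeeping: the Bouc presentation of $(\underline{\pi}_0 A \Box_{\underline{\pi}_0 N^{\Z/2}_e A} \underline{\pi}_0 A)(\ast)$ has \emph{two} summands of generators --- elements of $\pi_0^{\Z/2}A\otimes\pi_0^{\Z/2}A$ \emph{and} transfers of elements of $\pi_0 A\otimes\pi_0 A$ --- modulo five families of relations. The key observation, which your proposal omits, is that the unit $1\in\pi_0^{\Z/2}A$ with $\res(1)=1$ lets one absorb the transfer generators through
\[a\otimes b \sim ab\otimes 1 = ab\otimes\res(1) \sim \tran(ab)\otimes 1,\]
and then show that three of the five relations become consequences of the remaining two (your (i) and (ii)). Without this, the claim that (i) and (ii) suffice, and the formulas for $\tran$ and $\res$ in the theorem, are not established. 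The ``careful bookkeeping'' you defer is actually the central point of the paper's proof.
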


A ring with anti-involution with particular geometric relevance is the group-ring $\Z[G]$ where $G$ is a discrete group, and the anti-involution is given on generators by $g \mapsto g^{-1}$. In general if $M$ is a monoid with anti-involution $\iota\colon M^{op}\to M$ and $(A,w)$ is a ring spectrum with anti-involution, one can form the monoid-ring $A[M]:=A\wedge M_+$ with anti-involution $w\wedge\iota$. In \cite{Amalie} H\o genhaven shows that there is an equivalence
\[
\THR(\mathbb{S}[G])\simeq \Sigma^{\infty}B^{di}_\times G_+
\]
where the dihedral Bar construction $B^{di}_\times G$ is a model for the free loop space $\map(S^{\sigma},B^\sigma G)$ with respect to the sign-representation $\sigma$. We generalize this result to arbitrary coefficients in \S\ref{grouprings}.

\begin{theorem*}
Let $(A,w)$ be a flat ring spectrum with anti-involution, and $M$ a well-pointed topological monoid with anti-involution. Then the assembly map
\[\THR(A)\wedge (B_{\times}^{di}M)_+\stackrel{\simeq}{\longrightarrow}\THR(A[M])\]
is a stable equivalence of genuine $\Z/2$-spectra. If $G$ is a discrete group and $\iota$ is inversion, there is an isomorphism of Mackey functors
\[
\underline{\pi}_0\THR(\Z[G])\cong \big(\xymatrix@C=40pt{
\mathbb{Z}[G_{conj}]\ar@(dl,dr)_-{(-)^{-1}}\ar@<.5ex>[r]^-{\tran}& (\Z[G_{conj}]_{\Z/2} \oplus \Z[G^{\Z/2} \times_G G^{\Z/2}])/D \ar@<.5ex>[l]^-{\res}
}\big),\]
where $G_{conj}$ is the set of conjugacy classes of elements of $G$, and $D$ is the subgroup generated by the elements $2[g,g'] - [gg']$, for all $[g,g'] \in G^{\Z/2} \times_G G^{\Z/2}$. The transfer and restriction maps are defined in Corollary \ref{cor:grpring}.
\end{theorem*}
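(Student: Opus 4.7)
The plan is to first prove the assembly equivalence via the dihedral Bar construction, and then derive the Mackey-functor formula for $\underline{\pi}_0\THR(\Z[G])$ as a consequence, combined with a direct analysis of the $\Z/2$-equivariant space $B^{di}_\times G$.

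For the assembly equivalence I would begin from the observation that $(A[M])^{\wedge(k+1)}=A^{\wedge(k+1)}\wedge M_+^{\wedge(k+1)}$, and, since the multiplication and anti-involution on $A[M]=A\wedge M_+$ are defined factorwise, the face, degeneracy, cyclic, and reflection operators of $B^{di}_\wedge(A[M])$ split as smash products of those on $B^{di}_\wedge A$ and those on $B^{di}_\times M$. This gives a levelwise isomorphism $B^{di}_\wedge(A[M])\cong B^{di}_\wedge A \wedge (B^{di}_\times M)_+$ of dihedral objects in $\Z/2$-spectra. Under the flatness of $A$ and well-pointedness of $M$, geometric realization preserves this smash product up to stable $\Z/2$-equivalence, and the main theorem's identification $\THR(A;M)\simeq B^{di}_\wedge(A;M)$ then promotes this to the desired assembly equivalence.

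For the Mackey-functor formula I would specialize the assembly equivalence to $A=H\Z$ (with trivial anti-involution) and $M=G$, so that $\THR(\Z[G])\simeq \THR(H\Z)\wedge (B^{di}_\times G)_+$. At the underlying level $\pi_0$ computes to $\Z[\pi_0 LBG]=\Z[G_{conj}]$ with involution induced by loop reversal $[g]\mapsto [g^{-1}]$. For the $\Z/2$-fixed level, one first identifies $\pi_0((B^{di}_\times G)^{\Z/2})\cong G^{\Z/2}\times_G G^{\Z/2}$, where a fixed component $(g,g')$ restricts to the underlying conjugacy class $[gg']$, and then invokes a tom Dieck-style splitting to write $\pi_0^{\Z/2}\THR(\Z[G])$ as the quotient of $\Z[G_{conj}]_{\Z/2}\oplus \Z[G^{\Z/2}\times_G G^{\Z/2}]$ by the subgroup $D$ generated by the relation $2[g,g']-[gg']$. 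This relation encodes the standard Mackey identity relating the transfer of an underlying class to twice its lift through a $\Z/2$-fixed-point splitting. The transfer, restriction and involution maps appearing in Corollary \ref{cor:grpring} can then be read off from this decomposition combined with the Mackey-functor structure of $\underline{\pi}_0\THR(H\Z)$.

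The main technical obstacle is the identification $\pi_0((B^{di}_\times G)^{\Z/2})\cong G^{\Z/2}\times_G G^{\Z/2}$. I would handle this by a direct analysis of the fixed simplices: a fixed $k$-simplex $(g_0,\dots,g_k)\in G^{k+1}$ under $\omega_k(g_0,\dots,g_k)=(g_0^{-1},g_k^{-1},\dots,g_1^{-1})$ must satisfy $g_0\in G^{\Z/2}$ and $g_i=g_{k+1-i}^{-1}$, so that, after accounting for face relations and simultaneous $G$-conjugation between fixed simplices of neighboring degrees, the fixed realization collapses to pairs of involutions in $G^{\Z/2}\times G^{\Z/2}$ modulo simultaneous conjugation. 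Alternatively, one can use the equivariant identification $B^{di}_\times G\simeq \map(S^\sigma, B^\sigma G)$ together with equivariant obstruction theory to classify reflection-equivariant loops by their boundary values at the two $\Z/2$-fixed points of $S^\sigma$, which are forced to land in the involution locus $G^{\Z/2}$ by the anti-involution on $G$.
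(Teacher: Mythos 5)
Your assembly equivalence argument is essentially the paper's: Proposition \ref{assembly} likewise reduces to the shuffle isomorphism $(N^{di}_\wedge A)\wedge (N^{di}_\times M)_+\cong N^{di}_\wedge(A[M])$ and then transports the equivalence through the comparison of Theorem \ref{comparison}. Your identification of the fixed points $(B^{di}_\times G)^{\Z/2}$ also matches the paper, which records the homeomorphism $(B^{di}M)^{\Z/2}\cong B(M^{\Z/2},M,M^{\Z/2})$ before the statement of Corollary \ref{cor:monring}.

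There is, however, a genuine gap in your derivation of the Mackey functor $\underline{\pi}_0\THR(\Z[G])$. You propose to ``invoke a tom Dieck-style splitting to write $\pi_0^{\Z/2}\THR(\Z[G])$ as the quotient of $\Z[G_{conj}]_{\Z/2}\oplus\Z[G^{\Z/2}\times_G G^{\Z/2}]$ by $D$.'' This is not what the tom Dieck splitting gives. The tom Dieck splitting applies to \emph{suspension spectra} and it only produces a direct sum, never a quotient: for $X=B^{di}G$ it yields
\[
\pi_0^{\Z/2}\THR(\mathbb{S}[G])\cong \Z[G_{conj}]_{\Z/2}\oplus\Z[G^{\Z/2}\times_G G^{\Z/2}]
\]
with no relations. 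The spectrum $\THR(\Z[G])\simeq \THR(H\Z)\wedge(B^{di}G)_+$ is not a suspension spectrum, so the splitting does not apply to it directly, and the relation $D$ does not fall out of it. In the paper, the relation $D$ arises from passing through the box product: one uses that $\underline{\pi}_0$ takes smash products of connective $\Z/2$-spectra to box products (Corollary \ref{cor:monring}, via the box-product spectral sequence), computes $\underline{\pi}_0\THR(\Z)$ separately (it has $\Z$ at both levels with $\res=1$ and $\tran=2$), and then evaluates the box product
\[
\underline{\pi}_0\THR(\Z)\ \Box\ \underline{\pi}_0(\Sigma^\infty (B^{di}G)_+)
\]
using the explicit formula for $\Box$ from Bouc. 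The subgroup $D$ is exactly the Frobenius-reciprocity-type relation $\tran(1)\otimes[g,g']\sim 1\otimes\res([g,g'])$ in that box product (recall $\tran(1)=2$ and $\res([g,g'])=[gg']+[gg']^{-1}=2[gg']$ in $\Z[G_{conj}]_{\Z/2}$). Your proposal never carries out this box product and instead attributes $D$ to a splitting that cannot produce relations, so as written the argument does not establish the stated formula. To repair it, replace the ``tom Dieck-style splitting'' step with: (i) establish $\underline{\pi}_0(X\wedge Y)\cong\underline{\pi}_0 X\ \Box\ \underline{\pi}_0 Y$ for connective flat $\Z/2$-spectra; (ii) compute $\underline{\pi}_0\THR(\Z)$ from Theorem \ref{pi0relations}; (iii) apply the explicit formula for $\Box$ to obtain the relation $2[g,g']-[gg']$.
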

In Examples \ref{ZZ} and \ref{ZZ2} we give explicit calculations for the cases $G=\Z$ and $G=\Z/2$ based on these formulas.
\\

We recall that the homotopy groups of a $\Z/2$-equivariant ring spectrum $A$ form a bigraded ring
\[
\pi_{n,k}A:=[S^{n,k},A]^{\Z/2}
\]
where $S^{n,k}=S^{n-k}\wedge S^{k\sigma}$ and $\sigma$ denotes the sign representation of $\Z/2$. We write $\Sigma^{n,k}$ for the corresponding suspension functor. Let $p$ be a prime and let $\F_p$ have the trivial involution. We let $T_{H\F_p}(S^{2,1}):=\bigvee^\infty_{n=0} \Sigma^{2n,n} H\F_p$ denote the free $H\F_p$-algebra generated by $S^{2,1}$.
The following is proved in \S\ref{secFp}.

\begin{theorem*}
There is a stable equivalence of genuine $\Z/2$-equivariant ring spectra $T_{H\F_p}(S^{2,1})\stackrel{\simeq}{\to} \THR(\F_p)$ for every prime $p$, and therefore an isomorphism of bigraded rings
\[\pi_{\ast,\ast}\THR(\F_p)\cong {H\F_p}_{\ast,\ast}[\tilde{x}],\]
where $\tilde{x}$ has bidegree $(2,1)$. We deduce that there are isomorphisms of graded rings
\[
\pi_\ast \THR(\F_p)^{\Z/2} \cong\left\{
\begin{array}{lll}
\F_p [y]&,\  |y|=4&, \ \mbox{for $p$ odd}
\\
\F_2[\bar{x},y]&, \  |y|=1, |\bar{x}|=2 &,\ \mbox{for $p=2$}.

\end{array}
\right.
\]
Under the restriction map  $\pi_\ast \THR(\F_p)^{\Z/2} \to \pi_\ast \THH(\F_p)\cong \F_p[x]$, the generator $y$ maps to $x^2$ for $p$ odd, and for $p=2$ the element $\bar{x}$ maps to $x$ and $y$ maps to $0$.
\end{theorem*}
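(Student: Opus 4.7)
By the main theorem applied to $(H\F_p, \mathrm{id})$, we have an equivalence of commutative $\Z/2$-equivariant ring spectra
\[
\THR(\F_p) \simeq H\F_p \wedge^{\mathbf{L}}_{N^{\Z/2}_e H\F_p} H\F_p.
\]
The plan is to produce a generating class $\tilde{x} \in \pi_{2,1}\THR(\F_p)$ lifting B\"okstedt's generator $x \in \pi_2\THH(\F_p)$, and then to invoke the universal property of the free $H\F_p$-algebra to obtain a $\Z/2$-equivariant ring map $\phi\colon T_{H\F_p}(S^{2,1}) \to \THR(\F_p)$. The class $\tilde{x}$ is naturally extracted from a bar $1$-simplex in the dihedral Bar model $B^{di}_\wedge H\F_p$, whose inherent sign-representation twist accounts for the extra $\sigma$-degree.

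To show $\phi$ is a stable $\Z/2$-equivalence we apply isotropy separation. On underlying non-equivariant spectra $\phi$ is the $H\F_p$-algebra map $T_{H\F_p}(S^2) \to \THH(\F_p)$ sending the free generator to $x$, an equivalence by B\"okstedt's theorem. For geometric fixed points, the main theorem supplies
\[
\Phi^{\Z/2}\THR(\F_p) \simeq \Phi^{\Z/2}H\F_p \wedge^{\mathbf{L}}_{H\F_p} \Phi^{\Z/2}H\F_p,
\]
while $\Phi^{\Z/2}T_{H\F_p}(S^{2,1}) \simeq T_{\Phi^{\Z/2}H\F_p}(S^1)$ since $\Phi^{\Z/2}S^{2,1}\simeq S^1$. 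For $p$ odd, $\Phi^{\Z/2}H\F_p \simeq \ast$ (because $2 \in \F_p^\times$ makes $(H\F_p \wedge E\Z/2_+)^{\Z/2} \to H\F_p^{\Z/2}$ already an equivalence), so both sides are contractible and the map is trivially an equivalence. For $p=2$, one has $\pi_\ast \Phi^{\Z/2}H\F_2 \cong \F_2[t]$ with $|t|=1$, and both sides become free on one degree-$1$ generator over $\Phi^{\Z/2}H\F_2$; we verify directly that $\Phi^{\Z/2}\phi$ sends the free generator to a polynomial generator by tracing $\tilde{x}$ through the bar construction.

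Granting the equivalence, the bigraded identification $\pi_{\ast,\ast}\THR(\F_p) \cong {H\F_p}_{\ast,\ast}[\tilde{x}]$ is immediate from the free algebra structure. To compute $\pi_\ast\THR(\F_p)^{\Z/2}=\pi_{\ast,0}\THR(\F_p)$ we sum the contributions $\pi_{\ast-2k,-k}H\F_p \cdot \tilde{x}^k$ for $k\geq 0$. For $p$ odd, the vanishing $\Phi^{\Z/2}\THR(\F_p)\simeq\ast$ reduces $\THR(\F_p)^{\Z/2}$ to $\THH(\F_p)^{h\Z/2}$; with the involution acting as $x\mapsto -x$ the homotopy fixed point spectral sequence collapses to $\F_p[x^2]$, yielding $y=x^2$ in degree $4$ that restricts to $x^2$. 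For $p=2$ we use the Hu--Kriz description of $\pi_{\ast,\ast}H\F_2$ (generated by the Euler class $a_\sigma$ in bidegree $(-1,-1)$ and the orientation class $u_\sigma$ in bidegree $(0,-1)$) to extract the $(\ast,0)$-classes: the products $\bar{x}:=u_\sigma \tilde{x}$ of bidegree $(2,0)$ and $y:=a_\sigma \tilde{x}$ of bidegree $(1,0)$ generate $\F_2[\bar{x}, y]$, with $\bar{x}$ restricting to $x$ (as $u_\sigma \mapsto 1$) and $y$ to $0$ (as $a_\sigma$ has trivial underlying class).

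The main obstacle we anticipate is the construction of the equivariant lift $\tilde{x}$ and the concrete verification that $\Phi^{\Z/2}\phi$ is an equivalence at $p=2$; both reduce to careful tracking of bar simplices through the dihedral structure of $B^{di}_\wedge H\F_p$.
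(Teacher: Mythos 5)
Your overall strategy — construct a class $\tilde{x}\in\pi_{2,1}\THR(\F_p)$ lifting B\"okstedt's generator, induce the algebra map $T_{H\F_p}(S^{2,1})\to\THR(\F_p)$ by freeness, then check the equivalence on underlying spectra and on geometric fixed points using the formula $\Phi^{\Z/2}\THR(\F_p)\simeq\Phi^{\Z/2}H\F_p\wedge^{\mathbf{L}}_{H\F_p}\Phi^{\Z/2}H\F_p$ — is the same as the paper's. However, you have two genuine gaps, and each one is a step that the paper has to work for.

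First, the construction of $\tilde{x}$. You write that $\tilde{x}$ is ``naturally extracted from a bar $1$-simplex in $B^{di}_\wedge H\F_p$, whose inherent sign-representation twist accounts for the extra $\sigma$-degree.'' But the circle-action/$1$-simplex construction only produces a map $\Sigma^{1,1}H\F_p\to\THR(\F_p)$, i.e.\ a class in bidegree $(1,1)$ — not $(2,1)$. To climb the remaining dimension one needs to mod out by $p$, observe that $(\Sigma^{1,1}H\F_p)/p\simeq\Sigma^{1,1}H\F_p\vee\Sigma^{2,1}H\F_p$ so that there is a class $\alpha\in\pi_{2,1}\THR(\F_p)/p$, verify (via B\"okstedt's computation of $\pi_2\THH(\F_p)/p$) that $\alpha$ is nontrivial, and then lift from $\THR(\F_p)/p$ back to $\THR(\F_p)$ using the vanishing $\pi_{1,1}\THR(\F_p)=0$ (itself an exact-sequence argument depending on the computation of $\underline{\pi}_0\THR$). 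None of this appears in your proposal, and the ``inherent sign twist'' does not produce the bidegree $(2,1)$ on its own.

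Second, and more seriously, the $p=2$ geometric fixed points check. After establishing $\pi_\ast\Phi^{\Z/2}T_{H\F_2}(S^{2,1})\cong\F_2[v,w]$ and $\pi_\ast\Phi^{\Z/2}\THR(\F_2)\cong\F_2[w_1,w_2]$, you say ``we verify directly that $\Phi^{\Z/2}\phi$ sends the free generator to a polynomial generator by tracing $\tilde{x}$ through the bar construction.'' This is the crux of the argument, and it is not a proof. The lift $\tilde{x}$ is only defined up to the kernel of the restriction $\pi_{2,1}\THR(\F_2)\to\pi_2\THH(\F_2)$, and it is not a priori clear that every such lift hits a polynomial generator of the geometric fixed points; in particular, ``tracing'' presupposes an explicit cycle for $\tilde{x}$ that you have not exhibited. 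The paper instead reduces the question, via the isotropy separation long exact sequence (using that $\phi$ is an underlying equivalence and an isomorphism on $\pi_{0,0}$), to showing that $\pi_{1,0}T_{H\F_2}(S^{2,1})\to\pi_{1,0}\THR(\F_2)$ is an isomorphism. It then establishes separately (a) $\pi_{1,0}T_{H\F_2}(S^{2,1})\cong\F_2$; (b) $\pi_{1,0}\THR(\F_2)\cong\F_2$, via a careful chase through the isotropy separation sequence, using that $\pi_1\Phi^{\Z/2}\THR(\F_2)\cong\F_2\oplus\F_2$, that the transfer into $\pi_{0,0}\THR(\F_2)$ is injective, and that the $\pi_1$-map $(H\F_2)_{h\Z/2}\to\THR(\F_2)_{h\Z/2}$ is an isomorphism; and (c) injectivity of the map, from the fact that $\tilde{x}$ restricts to $x\ne 0$. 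You have supplied none of these, and without them the claim that $\phi$ is an equivalence on $\Phi^{\Z/2}$ is unsubstantiated.

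The rest (the extraction of $\F_p[y]$ for $p$ odd via the homotopy fixed point spectral sequence, and $\F_2[\bar{x},y]$ for $p=2$ from the Hu--Kriz description of $(H\F_2)_{\ast,\ast}$) is essentially fine and matches the corollaries in the paper.
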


To prove the theorem we lift the equivalence $\bigvee^\infty_{n=0} \Sigma^{2n} H\F_p\stackrel{\simeq}{\to} \THH(\F_p)$ of B{\"o}kstedt \cite{Bok} and Breen \cite{breen} to a map of $\Z/2$-spectra, and test that it induces an equivalence on geometric fixed points. If $p$ is odd the source has trivial geometric fixed points, and so does the target by the geometric fixed points formula for $\THR$ of \S\ref{secgeom}. When $p=2$ we use the calculation of \cite{HuKriz} of $\Phi^{\Z/2}H\F_2$ and the same formula to show that the map is an equivalence on geometric fixed points.


Let $\Z$ have the trivial involution. Using a similar strategy we calculate $\THR(\Z)$ localized at an odd prime $p$, or equivalently $\THR(\Z_{(p)})$, in \S\ref{secTHRZ}.
\begin{theorem*} Let $p$ be an odd prime. There is a stable equivalence of genuine $\Z/2$-spectra
\[\THR(\Z)_{(p)} \simeq \THR(\Z_{(p)})\simeq H\Z_{(p)} \vee \bigvee_{k \geq 1} \Sigma^{2k-1, k} H(\Z/p^{\nu_p(k)}),\]
where $\nu_p(k)$ is the $p$-adic valuation of $k$.
\end{theorem*}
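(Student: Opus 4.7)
The plan mimics the strategy of \S\ref{secFp}: construct a $\Z/2$-equivariant map refining Bökstedt's wedge splitting of $\THH(\Z)_{(p)}$, and verify it is a stable equivalence using the equivariant Whitehead theorem by checking it on underlying non-equivariant spectra and on geometric fixed points.

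First I would observe that the geometric fixed points of the target vanish. Indeed, by the geometric fixed points formula of the main theorem of \S\ref{secTHR},
\[\Phi^{\Z/2}\THR(H\Z_{(p)}) \simeq \Phi^{\Z/2}H\Z_{(p)} \wedge^{\mathbf{L}}_{H\Z_{(p)}} \Phi^{\Z/2}H\Z_{(p)},\]
and since $\tfrac{1}{2}\in \Z_{(p)}$ for odd $p$, Corollary \ref{geomzero} gives $\Phi^{\Z/2}H\Z_{(p)} \simeq \ast$, hence $\Phi^{\Z/2}\THR(H\Z_{(p)}) \simeq \ast$. In particular $\THR(\Z_{(p)})$ is cofree as a $\Z/2$-spectrum, and its equivariant homotopy type is determined by its underlying spectrum with its $\Z/2$-action.

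Next I construct the comparison map. For each $k \geq 1$ I need an equivariant class $\tilde{\mu}_k \in \pi^{\Z/2}_{2k-1, k}\THR(\Z_{(p)})$ of order $p^{\nu_p(k)}$ whose restriction to $\pi_{2k-1}\THH(\Z)_{(p)}$ is Bökstedt's generator $\mu_k$. A natural source is the Bockstein associated to $H\Z_{(p)} \xrightarrow{p} H\Z_{(p)} \to H\F_p$, applied to the classes $\tilde{x}^k \in \pi^{\Z/2}_{2k, k}\THR(\F_p)$ from the $\THR(\F_p)$ computation of \S\ref{secFp}, pulled back along the ring map $\THR(\Z)_{(p)} \to \THR(\F_p)$. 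Assembled together these give a map
\[\Phi\colon H\Z_{(p)} \vee \bigvee_{k \geq 1}\Sigma^{2k-1, k}H(\Z/p^{\nu_p(k)}) \longrightarrow \THR(\Z_{(p)})\]
which, on underlying non-equivariant spectra, recovers Bökstedt's splitting of $\THH(\Z)_{(p)}$ by construction. On geometric fixed points, each source summand has $\Phi^{\Z/2}H(\Z/p^{\nu_p(k)}) \simeq \ast$ by Corollary \ref{geomzero} (odd $p$ means $2$ is a unit in $\Z/p^{\nu_p(k)}$), and the target is contractible by the previous step; so both sides have trivial geometric fixed points. The equivariant Whitehead theorem then implies that $\Phi$ is a stable equivalence of genuine $\Z/2$-spectra.

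The hard part will be producing the lifts $\tilde{\mu}_k$ with exactly the torsion order $p^{\nu_p(k)}$, rather than a smaller one, so that $\Phi$ agrees with Bökstedt additively on each summand. This requires a careful analysis of the bigraded spectral sequence
\[\Tor^{\underline{\pi}_\ast N^{\Z/2}_e H\Z_{(p)}}_{\ast,\ast}(\underline{\pi}_\ast H\Z_{(p)}, \underline{\pi}_\ast H\Z_{(p)}) \Rightarrow \underline{\pi}_\ast\THR(H\Z_{(p)})\]
recorded in the introduction, or alternatively of the homotopy fixed points spectral sequence for the cofree spectrum $\THR(\Z_{(p)})$; the idempotent splitting of $\Z/2$-spectra at odd primes via $\tfrac{1\pm w}{2}$ should simplify these computations considerably.
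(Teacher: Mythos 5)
Your general scheme---build a $\Z/2$-equivariant refinement of B\"okstedt's splitting and verify it on underlying spectra and on geometric fixed points---is the same as the paper's, and your observation that both sides have contractible $\Z/2$-geometric fixed points (so everything reduces to a non-equivariant question) is exactly right. But you have located the real difficulty without solving it, and the way you propose to solve it is the one place the paper does something genuinely cleverer.

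You propose to produce classes $\tilde\mu_k \in \pi^{\Z/2}_{2k-1,k}\THR(\Z_{(p)})$ by ``Bocksteins applied to $\tilde x^k\in\pi^{\Z/2}_{2k,k}\THR(\F_p)$, pulled back along $\THR(\Z)_{(p)}\to\THR(\F_p)$.'' This does not parse as written: the ring map goes $\THR(\Z)_{(p)}\to\THR(\F_p)$, so homotopy classes of $\THR(\F_p)$ do not pull back to $\THR(\Z)_{(p)}$. If instead you mean a Bockstein boundary map, note that $\THR(\F_p)$ is not equivalent to $\THR(\Z)_{(p)}/p$ (already non-equivariantly $\pi_{2k-1}(\THH(\Z_{(p)})/p)\cong\Z/p\neq 0 =\pi_{2k-1}\THH(\F_p)$), so there is no such boundary map in the first instance; and even the mod-$p$ Bockstein of $\THR(\Z)_{(p)}$ only detects $p$-torsion, not the full $p^{\nu_p(k)}$-torsion you need. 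You acknowledge that producing the lifts with the right torsion order is ``the hard part''---it is, and the proposal leaves it open.

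The paper avoids the lifting problem entirely. It first proves Lemma \ref{actionTHRZp}: the involution $w$ on the underlying spectrum $\THH(\Z_{(p)})$ is $H\Z_{(p)}$-linear, so by the universal coefficient sequence in the derived category of $\Z_{(p)}$-modules it diagonalizes with respect to B\"okstedt's splitting, acting by an integer $w_k$ on the $k$-th summand; $w^2=1$ forces $w_k=\pm 1$, and the sign $w_k=(-1)^k$ is nailed down by compatibility with the Bockstein to $\THH(\F_p)$ (here your $\THR(\F_p)$ computation is used, but only through the involution on $\pi_{2k}\THH(\F_p)$). Then, instead of lifting $\incl_k\colon\Sigma^{2k-1}H\Z/p^{\nu_p(k)}\to\THH(\Z)_{(p)}$ to a bigraded equivariant class, one applies the restriction--induction adjunction to get $\Z/2_+\wedge\Sigma^{2k-1}H\Z/p^{\nu_p(k)}\to\THR(\Z)_{(p)}$, and precomposes with the canonical map $S^{2k-1,k}\wedge H\Z/p^{\nu_p(k)}\to\Z/2_+\wedge\Sigma^{2k-1}H\Z/p^{\nu_p(k)}$. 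The resulting $\overline{\incl}_k$ is built from existing non-equivariant data by formal adjunction---no lifting. The point of knowing $w_k=(-1)^k$ is the final computation: on underlying spectra, $S^{2k-1,k}\to \Z/2_+\wedge S^{2k-1}$ becomes $(1,(-1)^k)\colon S^{2k-1}\to S^{2k-1}\vee S^{2k-1}$, so $\overline{\incl}_k$ becomes $\incl_k+(-1)^k w\incl_k = 2\incl_k$, and since $2$ is invertible this is still (a summand of) an underlying equivalence. Your idempotent-splitting remark via $\frac{1\pm w}{2}$ is in the right circle of ideas---it is essentially a repackaging of knowing the signs $w_k$---but you would still need to determine those signs and you would still need the adjunction trick to avoid bigraded lifting.
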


If one could show that a similar equivalence holds at the prime $2$, we would get an equivalence
\[\THR(\Z) \stackrel{?}{\simeq} H\Z \vee \bigvee_{k \geq 1} \Sigma^{2k-1, k} H\Z/k.\]
In Theorem \ref{thm:phiTHR(Z)} we calculate the homotopy ring $\pi_\ast\Phi^{\Z/2}\THR(\Z)$, showing in particular that the geometric fixed-points on both sides of this expression have isomorphic homotopy groups.

\begin{theorem*} There is an isomorphism of graded rings
\[\pi_\ast \Phi^{\Z/2} \THR(\Z) \cong \F_2[b_1,b_2, e]/e^2, \]
where $|b_1| = |b_2| = 2$ and $|e| = 1$. The element $b_1+b_2$ lifts to an element of $\pi_2 (\THR(\Z)^{\Z/2}) \cong \Z/2$ of infinite multiplicative order.
\end{theorem*}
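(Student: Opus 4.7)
The starting point is the geometric fixed-points formula from the multiplicative theorem in the introduction: since $\Z$ has trivial involution, $H\Z$ is a $\Z/2$-equivariant commutative ring spectrum, yielding an equivalence of associative ring spectra
\[\Phi^{\Z/2}\THR(\Z) \simeq \Phi^{\Z/2}H\Z \wedge^{\mathbf{L}}_{H\Z} \Phi^{\Z/2}H\Z.\]
As a preliminary I would compute $\pi_*\Phi^{\Z/2}H\Z \cong \F_2[b]$ with $|b|=2$ using the isotropy separation cofiber sequence $(H\Z)_{h\Z/2} \to (H\Z)^{\Z/2} \to \Phi^{\Z/2}H\Z$ together with the identifications $\pi_*(H\Z)_{h\Z/2} = H_*(\Z/2;\Z)$ (which is $\Z,\Z/2,0,\Z/2,0,\ldots$) and $\pi_*(H\Z)^{\Z/2} = \Z$ concentrated in degree $0$; the long exact sequence forces $\pi_{2n}\Phi^{\Z/2}H\Z = \Z/2$ and $\pi_{2n+1}\Phi^{\Z/2}H\Z = 0$, and the ring structure is polynomial on the generator $b \in \pi_2$.

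Next I would run the Künneth spectral sequence
\[E^2_{p,*} = \Torr^{\Z}_{p,*}\bigl(\F_2[b], \F_2[b]\bigr) \Longrightarrow \pi_{p+*}\Phi^{\Z/2}\THR(\Z).\]
Since $\Torr^{\Z}_*(\F_2, \F_2) = \F_2[e]/(e^2)$ with $|e|=1$ and $\F_2[b]$ decomposes as a direct sum of copies of $\F_2$ over $\Z$, Tor distributes to yield $E^2 = \F_2[b_1, b_2] \otimes \F_2[e]/(e^2)$, where $b_1, b_2$ are the generators from the left and right tensor factors. The spectral sequence is concentrated in homological degrees $0$ and $1$, so all higher differentials vanish for degree reasons and no additive extensions arise (each filtration quotient is an $\F_2$-vector space). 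The multiplicative structure carries over to give the ring isomorphism $\pi_*\Phi^{\Z/2}\THR(\Z) \cong \F_2[b_1, b_2, e]/(e^2)$.

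For the lift I would apply the isotropy separation cofiber sequence
\[\THR(\Z)_{h\Z/2} \to \THR(\Z)^{\Z/2} \to \Phi^{\Z/2}\THR(\Z).\]
B\"okstedt's calculation $\pi_*\THH(\Z) = \Z, 0, 0, \Z/2, 0, \Z/3, \ldots$ combined with the homotopy-orbit spectral sequence gives $\pi_1\THR(\Z)_{h\Z/2} = H_1(\Z/2;\Z) = \Z/2$ and $\pi_2\THR(\Z)_{h\Z/2} = 0$, so the long exact sequence reduces to
\[0 \to \pi_2\THR(\Z)^{\Z/2} \to \F_2\{b_1, b_2\} \xrightarrow{\partial} \Z/2.\]
To pin down $\ker\partial$ I would use naturality of $\partial$ along the augmentation $\THR(\Z) \to H\Z$, which exists because $B^{di}_\wedge H\Z$ is an augmented $H\Z$-algebra (commutative case). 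The augmentation sends both $b_1$ and $b_2$ to $b \in \pi_2\Phi^{\Z/2}H\Z$, while the analogous boundary $\partial_{H\Z}\colon \F_2\cdot b \to \Z/2$ is an isomorphism from the exact sequence for $H\Z$. Since the augmentation is the identity on $\pi_0 = \Z$ and $\pi_1\THR(\Z) = 0$, it induces an isomorphism $\pi_1\THR(\Z)_{h\Z/2} \cong \Z/2 \xrightarrow{\sim} \Z/2 \cong \pi_1(H\Z)_{h\Z/2}$, so $\partial(b_1) = \partial(b_2) = 1$ and $\ker\partial = \F_2\{b_1+b_2\} \cong \Z/2$, providing the unique nonzero lift $x$.

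Infinite multiplicative order of $x$ follows immediately from the fact that $\THR(\Z)^{\Z/2} \to \Phi^{\Z/2}\THR(\Z)$ is a ring homomorphism: $x^n \mapsto (b_1+b_2)^n = \sum_{i=0}^n \binom{n}{i} b_1^i b_2^{n-i}$, which is nonzero in $\F_2[b_1, b_2]$ for every $n \geq 1$, hence $x^n \neq 0$. I expect the main technical obstacle to be the naturality argument identifying $\ker\partial$: carefully establishing that the augmentation induces an isomorphism on $\pi_1$ of homotopy orbits and making the comparison of isotropy separation sequences sufficiently precise to force $\partial(b_1) = \partial(b_2)$. The remainder is a routine assembly of the geometric fixed-points formula, the Künneth spectral sequence, and standard homological algebra.
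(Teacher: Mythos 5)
Your argument reproduces the paper's strategy up to identifying the $E_\infty$-page of the K\"unneth spectral sequence as $\F_2[b_1,b_2,e]/(e^2)$, but then asserts without justification that this is the abutment \emph{as a ring}: ``the multiplicative structure carries over.'' That assertion is precisely the multiplicative extension problem, and it is where the real content of the theorem lies. Multiplicativity of the filtration does not close it: $e$ sits in filtration $1$, so $e^2\in F_2\pi_2$; but since $E^\infty_{p,q}=0$ for $p\geq 2$ one has $F_2\pi_2 = F_1\pi_2 = F_0\pi_2 = \pi_2\Phi^{\Z/2}\THR(\Z)\cong\F_2\{b_1,b_2\}$, which is no constraint at all. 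A priori $e^2$ could equal $0$, $b_1$, $b_2$, or $b_1+b_2$, and the spectral sequence alone cannot distinguish these cases.

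The paper resolves the extension by comparing with $\Phi^{\Z/2}\THR(\F_2)$. The mod-$2$ reduction $\red\colon H\Z\to H\F_2$ induces a ring map
\[\pi_*(\Phi^{\Z/2}H\Z \wedge^{\mathbf{L}}_{H\Z}\Phi^{\Z/2}H\Z) \longrightarrow \pi_*(\Phi^{\Z/2}H\F_2\wedge^{\mathbf{L}}_{H\F_2}\Phi^{\Z/2}H\F_2) \cong \F_2[w_1,w_2]\]
sending $b_i\mapsto w_i^2$; if this map vanishes in degree $1$, then $e\mapsto 0$, hence $e^2\mapsto 0$, and since $b_1$, $b_2$, $b_1+b_2$ all map to nonzero elements, $e^2$ must be zero. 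Establishing the degree-$1$ vanishing is the content of Lemma \ref{techlemmabockst}: it requires the $H\Z$-module splitting $\Phi^{\Z/2}H\Z\simeq\bigvee_{n\geq 0}\Sigma^{2n}H\F_2$, the analogous splitting over $H\F_2$, and an analysis of the fiber of the Frobenius $N^{\Z/2}_eH\Z\to H\Z$ on $\pi_0^{\Z/2}$. None of that appears in your proposal, and without it the claimed ring isomorphism is unjustified. (Your route to the lift via the augmentation $\THR(\Z)\to H\Z$, rather than via the paper's map $S^{1,1}_+\wedge H\Z\to\THR(\Z)$, is a legitimate alternative: both compare isotropy separation sequences and both yield $\ker\partial=\F_2\{b_1+b_2\}$.)
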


This theorem shows in particular that the multiplication on $\THR( \Z)$ is non-trivial already at the level of homotopy groups, as opposed to the multiplication on $\THH(\Z)$ which is trivial on the homotopy groups of positive degree. The remaining piece in the computation of $\THR(\Z)$ at the prime $2$ is to show that the isomorphism between the homotopy groups of the geometric fixed points can be realized by a map of $\Z/2$-spectra. The techniques used for the calculation at odd primes do not allow us to construct such a map. This will be the subject of future work. 

\subsection*{Acknowledgements}
The authors would like to thank the Hausdorff Research Institute for Mathematics in Bonn for its hospitality during the Junior Trimester Program in Topology in 2016. Much of the work on this paper was carried out during this program. The authors also acknowledge the support of the Danish National Research Foundation through the Centre for Symmetry and Deformation (DNRF92). The second author was supported by the Max Planck institute for Mathematics and thanks the Mittag-Leffler Institute for their hospitality. The third author was supported by the German Research Foundation Schwerpunktprogramm 1786. The fourth author was supported by Independent Research Fund Denmark's Sapere Aude program (DFF--4002-00224) and by the Max Planck Institute for Mathematics.

We would like to thank Lars Hesselholt and Ib Madsen for their support for this project. We thank Bj{\o}rn Dundas, Mike Hill, Amalie H{\o}genhaven, Thomas Nikolaus, John Rognes, Steffen Sagave, Peter Scholze, Stefan Schwede, Sean Tilson and Christian Wimmer for useful discussions.

\subsection*{Notation and conventions}
By a space, we will always mean a compactly generated weak Hausdorff topological space. We denote by $\Top$ the category of spaces, by $\Top^G$ the category of $G$-spaces for a finite group $G$, and by $\Top_\ast$ and $\Top_{\ast}^G$ the associated categories of pointed objects.

The category of orthogonal spectra will be denoted by $\Sp$ and the category of $G$-equivariant orthogonal spectra will be denoted by $\Sp^G$. We will mostly work with the flat stable (positive) model structure on the category $\Sp^G$ based on a complete $G$-universe, which we will call genuine $G$-spectra. This model structure is constructed in \cite{Sto, BrDuSt} and is referred to as the $\mathbb{S}$-model structure. We prefer the terminology flat since the cofibrant objects in this model structure behave like flat modules in algebra. The cofibrations in the flat model structure on $\Sp^G$ are called flat cofibrations and the cofibrant objects are just called flat.
We will occasionally use the stable model structure on $\Sp^G$ from \cite{HHR} which is Quillen equivalent to the flat model structure and has fewer cofibrant objects. An equivalence of $G$-spectra will always be understood as a stable equivalence with respect to a complete $G$-universe.
 We will mostly be concerned with the case $G=\Z/2$.

\section{Background}

\subsection{Equivariant diagrams and real simplicial objects}
In this section we recall some of the constructions from \cite{Gdiags}, \cite{IbLars}.

\begin{defn}
Let $J$ be a small category with an involution $\omega\colon J\to J$, and $\mathscr{C}$ a category. A $\mathbb{Z}/2$-diagram in $\mathscr{C}$ is a functor $X\colon J\to \mathscr{C}$ together with a natural transformation $w\colon X\longrightarrow X\circ \omega$ such that the composite
\[
X\stackrel{w}{\longrightarrow}X\circ \omega\stackrel{w|_\omega}{\longrightarrow}X\circ \omega^2=X
\]
is the identity natural transformation. A morphism of $\mathbb{Z}/2$-diagram is a natural transformation of underlying functors $f\colon X\to Y$ such that $w_Y\circ f=f|_{\omega}\circ w_X$.
\end{defn}

\begin{example}
Let $\Delta$ be the standard skeleton for the category of non-empty finite totally ordered sets and order-preserving maps. This category has an involution $\omega$ that is constant on objects, and that sends a morphism $\alpha\colon [n]\to [k]$ to
\[
\omega(\alpha)(i)=k-\alpha(n-i).
\]
This induces a similar involution on the opposite category $\Delta^{op}$. A $\mathbb{Z}/2$-diagram $X\colon \Delta^{op}\to \mathscr{C}$ is called a real simplicial object of $\mathscr{C}$.
Explicitly, this consists of a simplicial object $X$ together with a map $w\colon X^{op}\to X$ of order two, that is, involutions $w_k\colon X_k\to X_k$ such that for every $\alpha\colon [n]\to [k]$
\[\alpha^{\ast}w_k=w_n(\omega(\alpha))^{\ast}.\]
A morphism of real simplicial objects is a morphism of $\mathbb{Z}/2$-diagrams.
\end{example}

The cosimplicial object $\Delta^{\bullet}\colon \Delta\to \Top$ that sends $[n]$ to the topological $n$-simplex $\Delta^n$ has a canonical structure of $\mathbb{Z}/2$-diagram, defined by the map $w\colon \Delta^n\to \Delta^n$
\[
w(t_0,t_1,\dots,t_n)=(t_n,\dots, t_1, t_0).
\]

\begin{defn}\label{coend}
Let $X\colon \Delta^{op}\to \mathscr{C}$ be a real simplicial object in a category $\mathscr{C}$ which is tensored over $\Top$. The geometric realization of $X$ is the $\mathbb{Z}/2$-object of $\mathscr{C}$ defined as the coend
\[|X|:=X\otimes_{\Delta}\Delta^{\bullet}\]
 with the diagonal $\mathbb{Z}/2$-action defined in \cite[\S1.2]{Gdiags}.
\end{defn}

\begin{rem}\label{subdivision}
Let $\sd_e\colon \Delta^{op}\to \Delta^{op}$ denote the functor that sends $[n]$ to the join $[n]\ast [n]=[2n+1]$, and a morphism $\alpha$ to $\alpha\ast \omega(\alpha)$. Precomposition with this functor defines an endofunctor on the category of simplicial objects in $\mathscr{C}$, which is called the Segal edgewise subdivision and it is still denoted $\sd_e$. This construction was introduced by Segal in \cite{Segalsub}. It satisfies $\sd_e (X^{op})=\sd_eX$, and thus a $\mathbb{Z}/2$-structure $w\colon X\to X^{op}$ on a simplicial object $X$ induces a simplicial involution
\[\sd_eX\xrightarrow{\sd_ew} \sd_e (X^{op})=\sd_eX\]
on the Segal edgewise subdivision. The geometric realization $|\sd_eX|$ inherits an involution, and it is readily verified that the canonical isomorphism $|\sd_eX|\cong |X|$ is equivariant with respect to the $\mathbb{Z}/2$-action on $|X|$ of Definition \ref{coend}.
\end{rem}

We recall that the homotopy colimit of a $\Z/2$-diagram $X\colon J\to \Top$ inherits a $\Z/2$-action. This can be explicitly defined by expressing the homotopy colimit as the realization of the simplicial space
\[
\hocolim_JX:=|\coprod_{\underline{j}\in N_\bullet J}X_{j_0}|
\]
as in \cite{BK}. The involution is then the geometric realization of the simplicial involution that sends $(\underline{j},x)$ to $(\omega(\underline{j}),w(x))$. The homotopical properties of this involution have been studied in \cite{Gdiags}.

For a finite dimensional $\Z/2$-representation $V$, let $\ev_V \colon \Sp^{\Z/2} \to \Top_\ast^{\Z/2}$ be the evaluation at level $V$ functor.
\begin{defn}
A real simplicial spectrum $X\colon \Delta^{op}\to \Sp$ is good if for each $\Z/2$-representation $V$ the simplicial $\Z/2$-space $(\ev_V)_\ast \sd_e X $ is good, that is, if for all $n \geq 0 $ the map of $\mathbb{Z}/2$-spaces
\[(s_{2n+2-i}s_i)_V\colon (X_{2n+1})_V\longrightarrow (X_{2n+3})_V\]
is an $h$-cofibration of (unpointed) $\Z/2$-spaces.
\end{defn}

The following result follows immediately from Remark \ref{subdivision}, and the fact that fixed points of finite groups commute with geometric realizations.

\begin{lemma}\label{good}
Let $f\colon X\to Y$ be a map of good real simplicial orthogonal spectra, such that the map $f_n\colon X_n\to Y_n$ is a stable equivalence of orthogonal $\mathbb{Z}/2$-spectra. Then $|f|\colon |X|\to |Y|$ is a stable equivalence of orthogonal $\mathbb{Z}/2$-spectra.
\end{lemma}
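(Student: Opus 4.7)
The plan is to pass through the edgewise subdivision and reduce to the non-equivariant statement that geometric realization of good simplicial orthogonal spectra preserves levelwise stable equivalences. By Remark \ref{subdivision} there is an equivariant homeomorphism $|X| \cong |\sd_e X|$, and similarly for $Y$, so it is equivalent to show that $|\sd_e f|$ is a stable $\Z/2$-equivalence. The advantage is that the $\Z/2$-action on $|\sd_e X|$ is induced by a levelwise simplicial involution $\sd_e w$, rather than by the mixing of the simplicial structure with the real structure that is needed to define the action on $|X|$ directly.

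A map of orthogonal $\Z/2$-spectra is a stable equivalence if and only if both its underlying non-equivariant map and its geometric fixed-points map $\Phi^{\Z/2}$ are non-equivariant stable equivalences, so I would verify each condition for $|\sd_e f|$. For the underlying condition, each $f_n$ is in particular a non-equivariant stable equivalence, and the goodness hypothesis on $(\ev_V)_\ast \sd_e X$ implies non-equivariant goodness of $\ev_V \sd_e X$ for every $V$; the classical non-equivariant result on realization of good simplicial orthogonal spectra then yields that $|\sd_e f|$ is a non-equivariant stable equivalence.

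For the geometric fixed-points condition the key point is that, since the $\Z/2$-action on $\sd_e X$ is simplicial and each $(\ev_V)_\ast \sd_e X$ is a good simplicial $\Z/2$-space, fixed points commute with geometric realization at each spectrum level:
\[
(|\sd_e X|)^{\Z/2}_V = |\ev_V \sd_e X|^{\Z/2} \cong |(\ev_V \sd_e X)^{\Z/2}|.
\]
Letting $V$ run through representations of the form $V_0 \oplus W^\sigma$ and passing to the appropriate colimit in $W$ yields an identification $\Phi^{\Z/2}|\sd_e X| \simeq |\Phi^{\Z/2} \sd_e X|$, whose $n$-th simplicial level is $\Phi^{\Z/2} f_{2n+1}$. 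Since $\Phi^{\Z/2}$ preserves stable equivalences between the relevant $\Z/2$-spectra under the goodness hypothesis, these are non-equivariant stable equivalences, so one last appeal to the non-equivariant realization result concludes the proof.

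The main obstacle will be making precise the commutation of $\Phi^{\Z/2}$ with geometric realization using the goodness hypothesis; once this is in place, everything else reduces to the two cited facts together with the standard non-equivariant version of the statement.
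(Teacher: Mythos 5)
Your reduction to the edgewise subdivision via Remark \ref{subdivision} is exactly right, and your handling of the underlying non-equivariant condition is fine. The gap lies in the geometric fixed-points half, specifically in the unsupported assertion that ``$\Phi^{\Z/2}$ preserves stable equivalences between the relevant $\Z/2$-spectra under the goodness hypothesis.''

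Goodness, as defined in the paper, only constrains the degeneracy maps $(s_{2n+2-i}s_i)_V$ of $\sd_e X$ to be $h$-cofibrations of $\Z/2$-spaces at every representation level $V$; it places no cofibrancy condition on the individual levels $X_n$ as orthogonal $\Z/2$-spectra. Your argument needs a point-set model of $\Phi^{\Z/2}$ that commutes with the colimit defining geometric realization. The natural choice, the Mandell--May $\Phi^{\Z/2}_{\mathcal{M}}$, is colimit-preserving, but it agrees with the derived geometric fixed points only for flat (or similarly cofibrant) $\Z/2$-spectra, which the $X_n$ to which this lemma is applied --- homotopy colimits of loop spectra in the B\"{o}kstedt construction --- are not. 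If instead you read $\Phi^{\Z/2}$ via the colimit formula for its homotopy groups, which \emph{is} unconditionally homotopy-invariant, then it ceases to be a functor to orthogonal spectra and the realization $|\Phi^{\Z/2}\sd_e X|$ that you want to feed into the non-equivariant good-realization lemma no longer makes sense. The route the paper's citation of ``fixed points of finite groups commute with geometric realizations'' points to avoids $\Phi^{\Z/2}$ entirely: check that $\pi^H_*(|\sd_e f|)$ is an isomorphism for $H\le\Z/2$ directly, using the colimit formula $\pi^H_n(Z)=\colim_V[S^{n+V},Z(V)]^H$ (which requires no fibrant replacement), the point-set commutation of $(-)^H$ with realization of simplicial $\Z/2$-spaces at each $V$, and the skeletal filtration of the good realization of $(\ev_V)_*\sd_e X$.
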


\subsection{Real \texorpdfstring{$I$}{I}-spaces}\label{sec:realI}
Let $I$ be the category whose objects are the non-negative integers, and whose morphisms $i\to j$ are the injective maps $\alpha\colon \{1,\dots, i\}\to\{1,\dots, j\} $. This category has a $\mathbb{Z}/2$-action $\omega\colon I\to I$ which is trivial on objects and that sends a map $\alpha\colon i\to j$ to
\[\omega(\alpha)(s)=j-\alpha(i-s+1)+1.\]

\begin{defn}
A real $I$-space is a $\mathbb{Z}/2$-diagram $X\colon I\to \Top$.
\end{defn}
The class of examples of real $I$-spaces we are the most concerned with comes from $\mathbb{Z}/2$-spectra.

\begin{example}
Let $E$ be an orthogonal  $\mathbb{Z}/2$-spectrum. There is an associated $I$-space $\Omega_I E\colon I\to \Top_\ast$ which is defined by
\[(\Omega_I E)(i)=\Omega^iE_i\]
on objects, and as in \cite[\S 2.3]{SchlichtkrullUnits} on morphisms. The $I$-space $\Omega_I E$ has a $\mathbb{Z}/2$-diagram structure defined by the maps
\[
\Omega^iE_i\stackrel{()\circ\tau_i}{\longrightarrow}\Omega^iE_i \stackrel{w}{\longrightarrow}\Omega^iE_i\stackrel{\Omega^i\tau_i}{\longrightarrow}\Omega^iE_i
\]
where $w$ denotes the  $\mathbb{Z}/2$-action on $E$. Here $\tau_i\in \Sigma_i$ is the permutation that reverses the order on $\{1,\dots, i\}$, which is applied first to the smash factors of the sphere $S^i$, and then to $E_i$ through the orthogonal structure of $E$. We let $\Omega^{\infty}_IE$ be the pointed $\mathbb{Z}/2$-space defined as the homotopy colimit of this $\mathbb{Z}/2$-diagram
\[\Omega^{\infty}_IE:=\hocolim_I\Omega_IE\]
(where the homotopy colimit is computed in the category of unpointed spaces). The involution is the one defined in the previous section, and by abuse of notation we still denote it by $w$.\end{example}

We recall that the functor $\Omega^{\infty}_I\colon \Sp\to \Top_\ast$ on non-equivariant spectra is lax-monoidal with respect to the smash product of orthogonal spectra and the cartesian product of pointed spaces, and therefore it preserves associative monoids. The equivariant lift $\Omega^{\infty}_I\colon \Sp^{\mathbb{Z}/2}\to \Top^{\mathbb{Z}/2}_\ast$ fails to be lax symmetric monoidal. This is because the natural transformation
\[
\phi\colon (\Omega^{\infty}_IE)\times(\Omega^{\infty}_IF)\to  \hocolim_{I\times I}\Omega^{i+j}(E_i\wedge F_j)\to  \hocolim_{I\times I}\Omega^{i+j}(E\wedge F)_{i+j}\stackrel{+_\ast}{\to}  \hocolim_{I}\Omega^{i}(E\wedge F)_{i}
\]
uses the disjoint union functor $+ \colon I\times I\to I$ , which is not strictly equivariant. This functor does however satisfy a compatibility condition with the $\mathbb{Z}/2$-action on $I$, namely
\[
\omega(\alpha+\beta)=\omega(\beta)+\omega(\alpha)
\]
for every pair of morphisms $\alpha,\beta$ in $I$, and this allows us describe its monoidal properties. Given a monoid $(M,\mu)$ in a symmetric monoidal category $(\mathscr{C},\otimes)$, we let $M^{op}$ denote the object $M$ equipped with the multiplication
\[
M\otimes M\stackrel{\tau}{\longrightarrow}M\otimes M\stackrel{\mu}{\longrightarrow}M
\]
where $\tau$ is the symmetry isomorphism of the symmetric monoidal structure.
\begin{defn}\label{defmonanti}
A monoid with anti-involution in a symmetric monoidal category $(\mathscr{C},\otimes)$ is a monoid $M$ in $\mathscr{C}$ equipped with a morphism of monoids $w\colon M^{op}\to M$ which satisfies $w^2=\id$.
\end{defn}

\begin{prop}\label{antisymmon}
Let $E$ and $F$ be orthogonal $\mathbb{Z}/2$-spectra with $\Z/2$-action maps $w$ and $w'$, respectively. The diagram of spaces
\[
\xymatrix{
(\Omega^{\infty}_IE)\times(\Omega^{\infty}_IF)\ar[d]_{w\times w'}\ar[r]^-{\phi}&\Omega^{\infty}_I(E\wedge F)\ar[d]^{w\wedge w'}\\
(\Omega^{\infty}_IE)\times(\Omega^{\infty}_IF)\ar[d]_{\tau}&\Omega^{\infty}_I(E\wedge F)\ar[d]^{\Omega^{\infty}_I(\tau)}\\
(\Omega^{\infty}_IF)\times(\Omega^{\infty}_IE)\ar[r]_-{\phi}&\Omega^{\infty}_I(F\wedge E)
}
\]
commutes, where $w$, $w'$ and $w \wedge w'$ denote the induced $\Z/2$-action maps on the $\Omega^{\infty}_I$ construction.  In particular, the functor $\Omega^{\infty}_I\colon \Sp^{\mathbb{Z}/2}\to \Top^{\mathbb{Z}/2}_\ast$ preserves monoids with anti-involution.
\end{prop}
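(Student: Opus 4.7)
The plan is to verify the square by chasing at the pointwise level in the Bousfield--Kan model of the homotopy colimit, and then to deduce the monoid-with-anti-involution statement by a formal diagram chase.

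I would begin by factoring $\phi$ as the composite of the pointwise lax symmetric monoidal comparison
\[
(\Omega_I E)(i)\times (\Omega_I F)(j)\longrightarrow \Omega^{i+j}(E_i\wedge F_j)\longrightarrow \Omega^{i+j}((E\wedge F)_{i+j})
\]
with the map of homotopy colimits induced by the sum functor $+\colon I\times I\to I$. The non-$\Z/2$-equivariance of $+$ is controlled precisely by the identity $\omega(\alpha+\beta)=\omega(\beta)+\omega(\alpha)$, i.e. $\omega\circ(+)=(+)\circ\chi\circ(\omega\times\omega)$, where $\chi$ swaps the two factors of $I\times I$. This is the combinatorial source of the symmetry $\tau$ on the right-hand side of the square.

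The second ingredient is the permutation identity $\tau_{i+j}=(\tau_j\oplus\tau_i)\circ\chi_{i,j}$, where $\chi_{i,j}$ is the block swap that exchanges two initial blocks of sizes $i$ and $j$. Inserted at level $i+j$ into the orthogonal structure of $E\wedge F$, the block swap $\chi_{i,j}$ is exactly how the symmetry $\tau\colon E\wedge F\to F\wedge E$ of orthogonal spectra is realized at that level, and the remaining factors $\tau_j\oplus\tau_i$ match the pair of reversals $\tau_j$ and $\tau_i$ coming from the two individual involutions on $\Omega^j F_j$ and $\Omega^i E_i$ after the factors have been interchanged. Expanding both composites of the square on an element represented by simplices $\underline{i},\underline{j}\in N_n I$ and pointed maps $f\in\Omega^{i_0}E_{i_0}$, $g\in\Omega^{j_0}F_{j_0}$, and tracking separately the three contributions $(\_)\circ\tau_\bullet$, $w$, and $\Omega^\bullet\tau_\bullet$ that make up the involution on $\Omega^\infty_I$, the verification reduces to the two identities above together with the naturality $\tau\circ(w\wedge w')=(w'\wedge w)\circ\tau$ of the symmetric monoidal twist with respect to $\Z/2$-actions.

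For the consequence, given $(M,\mu,w)$ a monoid with anti-involution the multiplication $\mu^{\Omega}:=\mu_\ast\circ\phi$ on $\Omega^\infty_I M$ comes from the non-equivariant lax monoidality of $\Omega^\infty_I$, and $w_\ast$ is the induced involution. Applying the square to $E=F=M$, $w=w'$, and using the anti-involution identity $w\circ\mu\circ\tau=\mu\circ(w\wedge w)$ together with naturality $(w\wedge w)\circ\tau=\tau\circ(w\wedge w)$, a short chase yields
\[
w_\ast\circ\mu_\ast\circ\phi\circ\tau=\mu_\ast\circ\phi\circ(w_\ast\times w_\ast),
\]
which says precisely that $w_\ast\colon (\Omega^\infty_I M)^{op}\to\Omega^\infty_I M$ is a map of monoids. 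The main obstacle is purely bookkeeping: keeping straight which $\tau$ acts where (symmetric monoidal twist of spectra, reversal permutation in a symmetric group, swap of $I\times I$, or swap on cartesian products of spaces) and matching the simplicial and the orthogonal contributions to the involution on $\Omega^\infty_I$ through the decomposition $\tau_{i+j}=(\tau_j\oplus\tau_i)\circ\chi_{i,j}$; once this is aligned, the identification of the two composites is routine.
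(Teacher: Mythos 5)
Your proposal is correct and follows essentially the same route as the paper: chase the square at the level of the Bousfield--Kan simplicial model, identify the two sources of non-strict equivariance, namely the functor $+\colon I\times I\to I$ satisfying $\omega(\alpha+\beta)=\omega(\beta)+\omega(\alpha)$ and the permutation identity relating $\tau_{i+j}$, the block swap $\chi_{i,j}$, and $\tau_i,\tau_j$, and then deduce the monoid-with-anti-involution statement by a short diagram chase. Your identity $\tau_{i+j}=(\tau_j\oplus\tau_i)\circ\chi_{i,j}$ is a rearrangement of the one used in the paper, and the remaining details are exactly the bookkeeping you describe.
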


\begin{proof}
We use the Bousfield-Kan formula to express the homotopy colimits as geometric realizations of simplicial spaces, and we show that the diagram of the statement commutes simplicially. An $n$-simplex in the simplicial space realizing to $\Omega^{\infty}_IE$ is a pair $(\underline{i},x)$ where $\underline{i}=(i_0\to\dots\to i_n)$ is an $n$-simplex of the nerve of $I$, and $x\in \Omega^{i_0}E_{i_0}$. The map $\phi$ is then defined by
\[
\phi((\underline{i},x),(\underline{j},y))=(\underline{i}+\underline{j},\iota(x\wedge y))
\]
where $x\wedge y\colon S^{i_0+j_0}\to E_{i_0}\wedge F_{j_0}$ is the smash of the maps $x$ and $y$, $\underline{i}+\underline{j}$ is the value of the simplicial map $+\colon NI\times NI\to NI$ induced by the disjoint union functor $+$ (where $N$ stands for the nerve), and $\iota\colon E_i\wedge F_j\to (E\wedge F)_{i+j}$ is the canonical map. The upper composite of the diagram is then
\[
(\tau\circ (w\wedge w')\circ \phi)((\underline{i},x),(\underline{j},y))=(\omega(\underline{i}+\underline{j}),\iota(w'(y)\wedge w(x))).
\]
This follows from the definitions using the properties of the canonical map $\iota\colon E_i\wedge F_j\to (E\wedge F)_{i+j}$ and from the fact that the identity
\[\chi_{i,j}\tau_{i+j}=\tau_i+\tau_j\]
holds, where $\tau_k\in \Sigma_k$ is the permutation that reverses the order on $\{1,\dots, k\}$ and $\chi_{i,j}$ is the block permutation in $\Sigma_{i+j}$ that swaps the first block of size $i$ with the last block of size $j$.
The lower composite is
\[
(\phi\circ\tau\circ (w\times w'))((\underline{i},x),(\underline{j},y))=\phi((\omega(\underline{j}),w'(y)),(\omega(\underline{i}),w(x)))=(\omega(\underline{j})+\omega(\underline{i}),\iota(w'(y)\wedge w(x))).
\]
These agree because $\omega(\alpha+\beta)=\omega(\beta)+\omega(\alpha)$. Now suppose that $E$ is a monoid with anti-involution, and consider the diagram
\[
\xymatrix{
(\Omega^{\infty}_IE)\times(\Omega^{\infty}_IE)\ar[d]_{w\times w}\ar[r]^-{\phi}&\Omega^{\infty}_I(E\wedge E)\ar[d]^{w\wedge w}\ar[r]^{\mu}&\Omega^{\infty}_IE\ar[dd]^{w}\\
(\Omega^{\infty}_IE)\times(\Omega^{\infty}_IE)\ar[d]_{\tau}&\Omega^{\infty}_I(E\wedge E)\ar[d]^{\tau}\\
(\Omega^{\infty}_IE)\times(\Omega^{\infty}_IE)\ar[r]_-{\phi}&\Omega^{\infty}_I(E\wedge E)\ar[r]_-{\mu}&\Omega^{\infty}_IE
}
\]
The left square commutes by the previous argument, and the right square commutes because $E$ is a monoid with anti-involution. More precisely,  the right hand diagram commutes by applying the (non-equivariant) functor $\Omega^{\infty}_I$ to the diagram which exhibits $E$ as a ring spectrum with anti-involution and by observing that the conjugation by $\tau_i$ is compatible with the map $\mu \colon E \wedge E \to E$, by naturality of the conjugation by $\tau_i$. Thus the outer rectangle also commutes, showing that the space $\Omega^{\infty}_IE$ with its involution $w$ and the monoid structure $\mu\circ\phi$ is a monoid with anti-involution in the category of spaces.
\end{proof}

\begin{rem} There is another, more conceptual explanation of why $\Omega^{\infty}_I$ preserves monoids with anti-involution. The functor $\Omega_I$ from orthogonal spectra to $I$-spaces is lax symmetric monoidal, where the category of $I$-spaces is equipped with the Day convolution product \cite{SagSchlichtdiag}. Thus a ring spectrum $E$ with an anti-involution $w$ gives an $I$-space monoid $\Omega_I E$ with an anti-involution $\Omega_I (w)$.  After passing to homotopy colimits this gives a map
\[ \hocolim_I \Omega_I(w)  \colon \hocolim_I  ((\Omega_I E)^{op} )  \to  \hocolim_I  (\Omega_I E).\]
The homotopy colimit functor from $I$-spaces to spaces is not lax symmetric monoidal but just lax monoidal. Thus, given an $I$-space monoid $X$, there is no immediate relation between $\hocolim_I (X^{op})$ and $(\hocolim_I X)^{op}$. However, the involution $\omega\colon I \to I$ provides such an identification.  We recall that this involution satisfies $\omega(\alpha+\beta)= \omega(\beta)+\omega(\alpha)$, or in other words it is a strictly monoidal isomorphism of monoidal categories $\omega \colon (I, +)^{op} \to (I, +)$, where $ (I, +)^{op}$ is the category $I$ with the monoidal structure $n+^{op}m:=m+n$. Moreover the $(I,+)$-space monoid $X$ defines an $(I,+)^{op}$-space monoid $\hat{X}$, which is $X$ as a functor and with the monoid structure
\[\hat{X}_n \times \hat{X}_m = X_n \times X_m \cong  X_m \times X_n \to X_{m+n}= \hat{X}_{n+^{op}m}.\]
It follows from the definitions that there are isomorphisms of monoids
\[ (\hocolim_{I} X)^{op}=(\hocolim_{(I,+)} X)^{op} \cong \hocolim_{(I,+)^{op}} (\hat{X})^{op}\stackrel{\omega_\ast}{\longrightarrow} \hocolim_{(I,+)} ((\hat{X})^{op}\circ\omega)\cong\hocolim_{(I,+)}X^{op},\]
where the monoidal products in the indexing refer to the induced monoid structures on the Bousfield-Kan construction over $I$. The last isomorphism is induced by the isomorphism of $I$-space monoids
$(\hat{X})^{op} \circ \omega\cong X^{op}$, given by the isomorphisms $X(\tau_i) \colon X_i \to X_i$. It is worth remarking that the morphism $X(\tau_i) \colon X_i \to X_i$ defines a $\Z/2$-diagram structure on any $I$-space $X$ (not necessarily a monoid). By inspection, one sees that the action coming from this canonical $\Z/2$-diagram structure combined with the involution of $E$ is exactly the involution of $\Omega^{\infty}_IE$.
\end{rem}

The aim of the remainder of this section is to prove that $\Omega^{\infty}_I$ has the equivariant homotopy type of the genuine infinite loop space functor.
\begin{theorem}\label{genuineloop}
Let $E$ be an orthogonal $\mathbb{Z}/2$-spectrum. There is a natural equivalence of $\mathbb{Z}/2$-spaces
\[
\Omega^{\infty \rho}E:=\hocolim_{n\in \mathbb{N}}\Omega^{n\rho}E_{n\rho}\stackrel{\simeq}{\longrightarrow}\Omega^{\infty}_IE
\]
where $\rho$ denotes the regular representation of $\mathbb{Z}/2$. In particular if $A$ is a ring spectrum with anti-involution, then $\Omega^{\infty}_IA$ is a monoid with anti-involution model for $\Omega^{\infty \rho}A$.
\end{theorem}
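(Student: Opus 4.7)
The plan is to construct the comparison as the map induced by an equivariantly cofinal functor from $\mathbb{N}$ into $I$, and to verify the equivalence by checking it on underlying spaces and on $\mathbb{Z}/2$-fixed points separately.

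First I would coordinatize $n\rho$ as $\R^{2n}$ with the $\Z/2$-involution the order-reversing permutation $\tau_{2n}(s)=2n+1-s$. Because the $O(2n)$-action and the $\Z/2$-action on $E_{2n}$ in an orthogonal $\Z/2$-spectrum commute, this identification furnishes an isomorphism $\Omega^{n\rho}E_{n\rho}\cong \Omega^{2n}E_{2n}$ of $\Z/2$-spaces whose involution is precisely the one appearing in the $I$-diagram $\Omega_I E$ at object $2n$. Next, I would define $\iota\colon \mathbb{N}\to I$ (with $\mathbb{N}$ carrying trivial $\Z/2$-action) by $n\mapsto 2n$ and by sending the successor morphism to the middle inclusion $\alpha_n\colon 2n\hookrightarrow 2n+2$, $s\mapsto s+1$. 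A direct calculation from $\omega(\alpha)(s)=j-\alpha(i-s+1)+1$ gives $\omega(\alpha_n)=\alpha_n$, so $\iota$ is a functor of $\Z/2$-categories. The diagram $n\mapsto \Omega^{n\rho}E_{n\rho}$ then agrees with the restriction of $\Omega_I E$ along $\iota$, and the induced map on Bousfield--Kan homotopy colimits is the comparison in the statement.

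The non-equivariant part is B\"okstedt's cofinality for the even subsequence of $I$ with middle inclusions: for each $i\in I$ the overcategory $(i\downarrow \iota)$ is filtered (any injection $i\to 2n$ factors through middle inclusions into arbitrarily large $2m$, and any two such become comparable after a further middle inclusion) and hence contractible. For the $\Z/2$-fixed points I would apply the formula from \cite{Gdiags} that $(\hocolim_I X)^{\Z/2}$ is computed by a homotopy colimit over the $\omega$-fixed subcategory $I^\omega$, whose morphisms $\phi\colon i\to j$ are the symmetric injections $\phi(s)+\phi(i-s+1)=j+1$ and whose value at $i$ is $(\Omega^iE_i)^{\tau_i\cdot w}$. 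Writing $V_i$ for $\R^i$ with the $\tau_i$-action, this value is the genuine equivariant loop space $(\Omega^{V_i}E_{V_i})^{\Z/2}$; the image of $\iota$ recovers the sequence $(\Omega^{n\rho}E_{n\rho})^{\Z/2}$ whose homotopy colimit is $(\Omega^{\infty\rho}E)^{\Z/2}$. Cofinality of $\iota(\mathbb{N})$ in $I^\omega$ then reduces to the representation-theoretic fact that every finite-dimensional $\Z/2$-representation (such as $V_{2n}=n\rho$ or $V_{2n+1}=n\rho+\epsilon$) embeds equivariantly into $m\rho$ for $m$ large enough, which produces the required $\omega$-fixed injection $i\to 2m$; the overcategories are again filtered and contractible.

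The main technical obstacle is this fixed-point cofinality, which requires a careful setup of the $\omega$-fixed subcategory $I^\omega$, an analysis of its overcategories, and the reduction of cofinality to the embeddability of arbitrary finite-dimensional $\Z/2$-representations into $m\rho$. Once the equivalence of $\Z/2$-spaces is established, the concluding statement about the monoid-with-anti-involution structure on $\Omega^\infty_I A$ for a ring spectrum with anti-involution $A$ follows at once from Proposition \ref{antisymmon}.
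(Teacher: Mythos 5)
The key gap in your argument is the claim that the overcategories $(i\downarrow\iota)$ (and their fixed-point analogues) are filtered and hence contractible. This is false, and the failure is exactly the reason B\"okstedt's approximation lemma is a nontrivial theorem rather than a formal cofinality statement. For the comma category $(i\downarrow\iota)$: its objects are pairs $(n,\alpha\colon i\to 2n)$, and a morphism $(n,\alpha)\to (n',\alpha')$ exists only when $\alpha'$ is \emph{exactly} the composite of $\alpha$ with the canonical chain of middle inclusions $2n\hookrightarrow 2n'$. In particular there is at most one morphism out of $(n,\alpha)$ with any given target level. Thus if $\alpha\neq\beta$ are two injections $i\to 2n$, they never map to a common object: a common receptacle $(m,\gamma)$ would force $\gamma$ to equal both pushforwards of $\alpha$ and of $\beta$, whence $\alpha=\beta$. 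So $(i\downarrow\iota)$ is not filtered; $\iota$ is not homotopy cofinal; and the map $\hocolim_{\mathbb N}\iota^\ast X\to\hocolim_I X$ is \emph{not} an equivalence for a general $I$-diagram $X$. The same objection applies verbatim to the $\omega$-fixed subcategory $I^\omega$: equivariant embeddability of arbitrary $\Z/2$-representations into $m\rho$ produces objects in the overcategory but does not make it filtered. Your representation-theoretic observation is true but does not give cofinality.

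What makes the comparison map an equivalence is a genuine \emph{stability} hypothesis on the diagram, not the shape of $I$. This is what Proposition \ref{cofinal} in the paper encodes: the underlying $I$-space must be semi-stable, and on fixed points one must verify that the equivariant injection monoid $\mathcal{M}R^{\Z/2}$ acts by weak equivalences. The paper then proves Proposition \ref{cofinal} not by overcategory contractibility but by a substantially more elaborate argument: replacing $\mathbb{N}$ and $I^{\Z/2}$ by categories fibered over $\mathcal{M}R^{\Z/2}$ (the Grothendieck construction $\mathcal{M}R^{\Z/2}\wr(I^o/\omega)$), proving cofinality of the auxiliary functors $\gamma$ and $\pi$ in Lemma \ref{gammaandpi}, showing $B\mathcal{M}R^{\Z/2}\simeq\ast$ in Lemma \ref{BMR}, and invoking Quillen's Theorem B. The proof of Theorem \ref{genuineloop} then consists of carefully \emph{verifying} the two hypotheses of Proposition \ref{cofinal} for $\Omega_I E$: semi-stability of the underlying $I$-space (because it comes from an orthogonal spectrum), and that each $f\in\mathcal{M}R^{\Z/2}$ acts trivially on fixed-point homotopy groups (by semi-stability of orthogonal $\Z/2$-spectra) and via loop maps (by an explicit diagram chase). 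None of this machinery appears in your proposal, and without it the argument does not go through. Your choice $\iota(n)=2n$ in place of the paper's $\iota(n)=2n+1$ is a harmless cosmetic variant (the even embedding produces $n\rho$ directly, whereas the odd one produces $n\rho+1$ and needs a one-step suspension), but it does not avoid the cofinality problem.
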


As in the classical theory of $I$-spaces (see e.g. \cite[2.2.9]{Sh00}) Theorem \ref{genuineloop} will follow from a comparison of the homotopy colimit of a real $I$-space with the homotopy colimit of its restriction to the subcategory of natural numbers $\mathbb{N}$. The difference between these homotopy colimits is measured by an equivariant version of the injection monoid.

Given a natural number $n$, we write $[-n,n]=\{-n,\dots,-1,0,1,\dots n\}$ for the symmetric interval with $2n+1$ elements. We will also write $\underline{n}$ for the set $\{1,\cdots, n\}$. For convenience, we will systematically identify $\underline{2n+1}$ in the order preserving way with $[-n,n]$ so that the involution on the morphisms of $I$ becomes conjugation by $-1$. We let $\iota\colon\mathbb{N}\to I$ be the functor that sends $n$ to $2n+1$, and the unique morphism $n\leq m$ to the canonical inclusion $[-n,n]\subset [-m,m]$. Clearly $\iota$ is $\Z/2$-equivariant with respect to the trivial involution on $\mathbb{N}$, thus providing a $\Z/2$-equivariant map
\[
\iota_\ast\colon \hocolim_{\mathbb{N}}\iota^\ast X\longrightarrow\hocolim_{I}X
\]
for every real $I$-space $X\colon I\to \Top$.
Now let $\mathcal{M}R=\Inj(\mathbb{Z},\mathbb{Z})$ be the monoid of self injections of the integers, and $\mathcal{M}R^{\Z/2}$ the submonoid of $\mathbb{Z}/2$-equivariant maps with respect to the involution on $\Z$ given by multiplication by $-1$. For every element $f$ of $\mathcal{M}R$ we define
\[f\cdot n:=\max |f([-n,n])|,\]
which is a map of posets $f\colon (\mathbb{N},\leq)\to (\mathbb{N},\leq)$.
The injection $f\colon[-n,n]\to [-(f\cdot n),f\cdot n]$ induces a  map
\[f\cdot (-)\colon \hocolim_{\mathbb{N}}\iota^\ast X\xrightarrow{X(f)}\hocolim_{\mathbb{N}}f^\ast\iota^\ast X\stackrel{f_\ast}{\longrightarrow}\hocolim_{\mathbb{N}}\iota^\ast X,\]
which is equivariant if $f$ lies in $\mathcal{M}R^{\Z/2}$.
\begin{prop}\label{cofinal}
Let $X\colon I\to \Top$ be a real $I$-space whose underlying $I$-space is semi-stable (see e.g. \cite[\S 2.5]{SagSchCpletion}). Suppose that for every injection $f$ in $\mathcal{M}R^{\Z/2}$ the map $f\cdot(-)$
is an equivalence on $\Z/2$-fixed points. Then the canonical map
\[\iota_\ast\colon \hocolim_{\mathbb{N}}\iota^\ast X\stackrel{\simeq}{\longrightarrow}  \hocolim_{I}X\]
is an equivalence of $\Z/2$-spaces.
\end{prop}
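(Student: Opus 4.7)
The plan is to verify that $\iota_*$ is a weak equivalence both on underlying spaces and on $\Z/2$-fixed points; taken together, these yield the desired equivalence of $\Z/2$-spaces. The underlying statement is exactly the classical cofinality theorem for semi-stable $I$-spaces (see e.g.\ \cite{Sh00} or \cite{SagSchCpletion}): the inclusion $\iota\colon \mathbb{N} \to I$, $n \mapsto 2n+1$, fails to be ordinary cofinal, but the obstruction to cofinality is measured by an action of the injection monoid $\mathcal{M}R$ on $\hocolim_{\mathbb{N}}\iota^*X$; semi-stability makes this action homotopically trivial, and the Bousfield-Kan comparison theorem then produces the desired equivalence on underlying spaces.

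For the $\Z/2$-fixed points, I would realize both homotopy colimits via the Bousfield-Kan formula as geometric realizations of simplicial spaces, and compute fixed points via the edgewise subdivision (Remark \ref{subdivision}). A $\Z/2$-fixed $(2n+1)$-simplex of the Bousfield-Kan replacement of $\hocolim_I X$ consists of a chain $i_0 \to \cdots \to i_{2n+1}$ in $I$ together with a point $x \in X_{i_0}$, such that the chain is self-conjugate (the morphism in position $k$ matches the $\omega$-image of the one in position $2n-k$) and $x$ is fixed by the composite involution on $X_{i_0}$ induced by the chain and the $\Z/2$-diagram structure. For the source, since $\omega$ acts trivially on $\mathbb{N}$, the fixed $(2n+1)$-simplices are simply chains in $\mathbb{N}$ together with points of $X^{\Z/2}$, so that $(\hocolim_\mathbb{N}\iota^*X)^{\Z/2} \cong \hocolim_{\mathbb{N}}(\iota^*X)^{\Z/2}$.

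With these identifications, the comparison on fixed points reduces to a non-equivariant cofinality question between $\hocolim_{\mathbb{N}}(\iota^*X)^{\Z/2}$ and the homotopy colimit, with values in appropriate fixed point subspaces of $X$, over the category of palindromic chains in $I$. The hypothesis that $f\cdot(-)$ is an equivalence on $\Z/2$-fixed points for every $f\in \mathcal{M}R^{\Z/2}$ is precisely the equivariant analogue of semi-stability, asserting that the $\mathcal{M}R^{\Z/2}$-action on $(\hocolim_\mathbb{N}\iota^*X)^{\Z/2}$ is homotopically trivial. This allows the same Bousfield-Kan cofinality argument to run on fixed points, exhibiting the fixed-point category of palindromic chains as homotopy cofinal over $\mathbb{N}$ after trivializing the action.

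The main obstacle, as I see it, is the bookkeeping for the identification $\underline{2n+1} \cong [-n,n]$ and matching the symmetric combinatorics of palindromic chains in $I$ with the action of $\mathcal{M}R^{\Z/2} = \Inj(\mathbb{Z},\mathbb{Z})^{\Z/2}$; in particular one must verify that the composite involution $X_{i_0} \to X_{i_{2n+1}} \to X_{i_0}$ coming from a palindromic chain coincides, up to semi-stable equivalence, with the involution on $\hocolim_\mathbb{N}\iota^*X$ induced by $\omega$ after restriction. Once these identifications are in place, the remainder of the argument parallels the non-equivariant semi-stable cofinality proof, applied one level down to the fixed points.
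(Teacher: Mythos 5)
Your outline correctly isolates the two checks — an underlying non-equivariant equivalence and one on $\Z/2$-fixed points — and rightly sees the injection monoid and a ``semi-stability trivializes the action'' principle as the driving idea, but both halves stop short of the actual arguments. Non-equivariantly, citing the semi-stable cofinality theorem is insufficient: that result gives the equivalence along the standard inclusion $c\colon\mathbb{N}\to I$, $n\mapsto\underline{n}$, whereas $\iota$ sends $n\mapsto\underline{2n+1}\cong[-n,n]$, and one must separately compare $\iota_\ast$ to $c_\ast$. The paper does this via a natural transformation $\lambda\colon c\Rightarrow\iota$ built from a bijection $\mathbb{N}\cong\mathbb{Z}$, checked to induce an isomorphism on the associated colimit systems — a step your sketch omits.

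On fixed points, where the substance of the proposition lives, the central constructions are left unmade. You correctly note $(\hocolim_\mathbb{N}\iota^\ast X)^{\Z/2}\cong\hocolim_\mathbb{N}(\iota^\ast X)^{\Z/2}$ and propose a ``palindromic chain'' model for $(\hocolim_IX)^{\Z/2}$, then assert that ``the same Bousfield--Kan cofinality argument'' runs there; but the argument does not transport mechanically. The paper factors $\iota\colon\mathbb{N}\to I^{\Z/2}$ through an auxiliary category $I^o/\omega$ of equivariant injections $[-n,n]\rightarrowtail\mathbb{Z}$, carrying a strict $\mathcal{M}R^{\Z/2}$-action, and then supplies three nontrivial inputs: homotopy right cofinality of $\gamma\colon\mathbb{N}\to I^o/\omega$ and of $\pi\colon\mathcal{M}R^{\Z/2}\wr(I^o/\omega)\to I^{\Z/2}$ (Lemma \ref{gammaandpi}), contractibility of $B\mathcal{M}R^{\Z/2}$ (Lemma \ref{BMR}), and an application of Quillen's Theorem B using the hypothesis on $f\cdot(-)$ to exhibit the relevant fiber sequence. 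Your sketch collapses these into ``bookkeeping'' and flags them as the ``main obstacle'' without resolving them. That bookkeeping \emph{is} the proof: without the factorization and the two cofinality lemmas, ``the same argument on fixed points'' is a restatement of the problem rather than a solution.
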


\begin{proof}
We start by showing that $\iota_\ast$ is an equivalence on fixed points, by adapting the argument of \cite[2.2.9]{Sh00}. The idea of the proof is to replace $\mathbb{N}$ by a category $I^o/\omega$ with an action of $\mathcal{M}R^{\mathbb{Z}/2}$, and the fixed points category $I^{\mathbb{Z}/2}$ with the Grothendieck construction $\mathcal{M}R^{\mathbb{Z}/2}\wr(I^o/\omega)$.

We let $I^o/\omega$ be the category whose objects are the pairs $(n,\alpha)$, where $n$ is a non-negative integer and $\alpha\colon [-n,n]\rightarrowtail \mathbb{Z}$ is an equivariant injective map. A morphism $(n,\alpha)\to (n',\alpha')$ in $I^o/\omega$ is an injective equivariant map $\beta\colon [-n,n]\rightarrowtail[-n',n']$ such that $\alpha=\alpha'\circ\beta$. The category $I^o/\omega$ has a strict left action of $\mathcal{M}R^{\mathbb{Z}/2}$, which is given on objects by $f(n,\alpha) = (n, f \circ \alpha)$ and on morphisms by $f(\beta)=\beta$. The functor $\iota\colon \mathbb{N}\to I^{\mathbb{Z}/2}$ factors as
\[
\xymatrix@C=70pt@R=15pt{
\mathbb{N}\ar[r]^-\iota\ar[d]_\gamma& I^{\mathbb{Z}/2}\\
I^o/\omega\ar[r]_-{\delta}&\mathcal{M}R^{\mathbb{Z}/2}\wr (I^o/\omega)\rlap{\ .}\ar[u]_\pi
}
\]
Here $\gamma$ sends an object $n$ to the inclusion $[-n,n]\subset \mathbb{Z}$, and the morphism $n\leq m$ to the inclusion $[-n,n]\subset [-m,m]$. The functor $\delta$ is the canonical inclusion of the fiber over the unique object of $\mathcal{M}R^{\mathbb{Z}/2}$ and $\pi$ sends an object $(n,\alpha)$ to $2n+1$. A morphism $(n,\alpha)\to (n',\alpha')$ in the Grothendieck construction is a pair $(f,\beta)$ where $f\in \mathcal{M}R^{\mathbb{Z}/2}$ and $\beta\colon [-n,n]\rightarrowtail [-n',n']$ is equivariant and such that $f\circ\alpha=\alpha'\circ\beta$. This is sent to $\beta$ by $\pi$.

We prove in Lemma \ref{gammaandpi} below that $\gamma$ and $\pi$ are homotopy right cofinal. Thus the vertical maps in the commutative diagram
\[
\xymatrix{\displaystyle
\hocolim_{\mathbb{N}}\iota^\ast X^{\mathbb{Z}/2}\ar[r]^-{\iota_\ast}\ar[d]_{\gamma_\ast}^{\simeq}
&
\displaystyle
\hocolim_{I^{\mathbb{Z}/2}} X^{\mathbb{Z}/2}\rlap{$\cong (\hocolim_{I} X)^{\mathbb{Z}/2}$}
\\
\displaystyle
\hocolim_{I^o/\omega}\delta^\ast \pi^\ast X^{\mathbb{Z}/2}\ar[r]_-{\delta_\ast}
&
\displaystyle
\hocolim_{\mathcal{M}R^{\mathbb{Z}/2}\wr(I^o/\omega)}\pi^\ast X^{\mathbb{Z}/2}\ar[u]_{\pi_\ast}^{\simeq}
\ar[r]
&
B\mathcal{M}R^{\mathbb{Z}/2}\simeq\ast
}
\]
are weak equivalences. The classifying space of the injection monoid $B\mathcal{M}R^{\mathbb{Z}/2}$ is contractible by Lemma \ref{BMR}. The bottom sequence is equivalent to the sequence
\[
\hocolim_{I^o/\omega}\delta^\ast \pi^\ast X^{\mathbb{Z}/2}\stackrel{}{\longrightarrow}  \hocolim_{\mathcal{M}R^{\mathbb{Z}/2}}(\hocolim_{I^o/\omega} \delta^\ast \pi^\ast X^{\mathbb{Z}/2})\longrightarrow B\mathcal{M}R^{\mathbb{Z}/2}\simeq \ast.
\]
We now show that $\mathcal{M}R^{\mathbb{Z}/2}$ acts by equivalences on $\hocolim_{I^o/\omega} \delta\ast \pi^\ast X^{\mathbb{Z}/2}$. The action of an element $f$ of $\mathcal{M}R^{\mathbb{Z}/2}$ is by definition the bottom horizontal composite of the diagram
\[
\xymatrix@C=38pt{
\displaystyle
\hocolim_{\mathbb{N}}\gamma^\ast\delta^\ast\pi^\ast X^{\mathbb{Z}/2}\ar@/^5ex/[rrr]^-{f\cdot(-)}_-\simeq\ar[rr]^-{X(f)}\ar[dd]^-{\gamma_\ast}_\simeq\ar[drr]_-{X(\pi(f,\id))|_{\gamma}}
&&\displaystyle
\hocolim_{\mathbb{N}}f^\ast\gamma^\ast\delta^\ast\pi^\ast X^{\mathbb{Z}/2}\ar[r]^{f_{\ast}}\ar[ddr]^{(\gamma f)_\ast}
&\displaystyle
\hocolim_{\mathbb{N}}\gamma^\ast\delta^\ast\pi^\ast X^{\mathbb{Z}/2}\ar[dd]^-{\gamma_\ast}_\simeq
\\
&&\displaystyle
\hocolim_{\mathbb{N}}\gamma^\ast f^\ast\delta^\ast\pi^\ast X^{\mathbb{Z}/2}\ar[d]^{\gamma_\ast}\ar[u]^{X(\pi\delta(\xi))}\ar[dr]_-{(f\gamma)_\ast}
\\
\displaystyle
\hocolim_{I^o/\omega}\delta^\ast\pi^\ast X^{\mathbb{Z}/2}\ar[rr]_-{X(\pi(f,\id))}
&&\displaystyle
\hocolim_{I^o/\omega}f^\ast\delta^\ast\pi^\ast X^{\mathbb{Z}/2}\ar[r]_-{f_{\ast}}
&\displaystyle
\hocolim_{I^o/\omega}\delta^\ast\pi^\ast X^{\mathbb{Z}/2}
}
\]
and the composite of the top row is an equivalence by assumption. Here the bottom left horizontal map 
is induced by the morphism $(f,\id)\colon \alpha\to f\alpha$ of $\mathcal{M}R^{\Z/2}\wr(I^o/\omega)$. The upward map in the middle column is induced by the natural transformation
\[
\xi_n\colon f\cdot \gamma(n)=f\cdot(n,[-n,n]\subset \mathbb{Z})=(n,f|_{[-n,n]})\xrightarrow{f|_{[-n,n]}}(f\cdot n,[-f\cdot n,f\cdot n]\subset \mathbb{Z})=\gamma(f\cdot n)
\]
in $I^o/\omega$. Thus $(\gamma f)_\ast\circ X(\pi\delta(\xi))$ is homotopic to $(f\gamma)_\ast$. The rest of the diagram commutes strictly. A version of Quillen's theorem B \cite[page 98]{Quillen} then applies to show that the sequence above is a fiber sequence, showing that $\iota_\ast$ is an equivalence on fixed points.

We must show that $\iota_\ast$ is a non-equivariant equivalence. Let $c\colon\mathbb{N}\to I$ be the standard inclusion that sends $n\leq m$ to the inclusion $\underline{n}\subset\underline{m}$. Since the $I$-space $X$ is semi-stable, the map
\[c_\ast\colon \hocolim_{\mathbb{N}}c^\ast X\longrightarrow \hocolim_{I}X\]
is a non-equivariant equivalence by \cite[Prop. 2.10]{SagSchCpletion}.
Thus it remains to compare the maps $c_\ast$ and $\iota_\ast$. Let $\phi\colon \mathbb{N}\to \mathbb{Z}$ be the bijection that sends $2i+1$ to $-i$ and $2i$ to $i$. The restriction of $\phi$ to the subset $\underline{n}$ defines a natural transformation
\[\lambda_n:=\phi|_{\underline{n}}\colon c(n)=n\longrightarrow 2n+1=\iota(n),\]
and this gives rise to a homotopy commutative diagram
\[
\xymatrix@C=50pt{
\hocolim_{\mathbb{N}}c^\ast X\ar[r]^-{c_\ast}_-\simeq\ar[d]_{\lambda}
&
\hocolim_{I}X\\
\hocolim_{\mathbb{N}}\iota^\ast X\ar[ur]_{\iota_\ast}\rlap{\ .}
}
\]
We now show that $\lambda$ is an equivalence on homotopy colimits. The restriction $\phi_{n}^{-1}$ of $\phi^{-1}\colon \mathbb{Z}\to \mathbb{N}$ to the interval $[-n,n]$ defines a commutative diagram
\[\xymatrix{
\dots\ar[r]
&
X_n\ar[r]^-c\ar[d]_-{\lambda_n}
&
X_{n+1}\ar[r]^-c
&
\dots\ar[r]^-c
&
X_{2n+1}\ar[r]^-c\ar[d]^{\lambda_{2n+1}}
&\dots
\\
\dots\ar[r]
&
X_{2n+1}\ar[r]_-\iota\ar[urrr]^-{\phi^{-1}_n}
&
X_{2n+3}\ar[r]_-\iota
&
\dots\ar[r]_-\iota
&
X_{4n+3}\ar[r]_-\iota
&\dots & .
}
\]
The upper triangle commutes by definition, and it is not hard to see that the lower triangle commutes as well. Thus $\phi^{-1}$ provides an ind-inverse for $\lambda$, which is therefore a homeomorphism.
\end{proof}

\begin{lemma}\label{BMR}
Let $\mathcal{M}R_0\subset \mathcal{M}R$ be the $\mathbb{Z}/2$-submonoid of injections that send zero to zero. Both $B\mathcal{M}R_0$ and $B\mathcal{M}R$ are weakly $\mathbb{Z}/2$-contractible.
\end{lemma}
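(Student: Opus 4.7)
The proof proceeds via the classical Eilenberg swindle for classifying spaces of injection monoids, adapted to the $\mathbb{Z}/2$-equivariant setting. Recall that for any monoid $\mathcal{M}$, a pair of monoid homomorphisms $F, G \colon \mathcal{M} \to \mathcal{M}$ together with an element $\eta \in \mathcal{M}$ satisfying $\eta \cdot F(f) = G(f) \cdot \eta$ for all $f$ induces a simplicial homotopy between the maps $BF, BG \colon B\mathcal{M} \to B\mathcal{M}$. If $\eta$ is $\mathbb{Z}/2$-fixed and $F,G$ are $\mathbb{Z}/2$-equivariant, this homotopy is equivariant.

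The plan is first to handle the $\mathbb{Z}/2$-fixed points. Any $f \in \mathcal{M}R^{\mathbb{Z}/2}$ must satisfy $f(0) = 0$, so $\mathcal{M}R^{\mathbb{Z}/2} = \mathcal{M}R_0^{\mathbb{Z}/2}$. Define equivariant injections $s, t \colon \mathbb{Z} \to \mathbb{Z}$ by $s(n) = 2n$ and, for $n \neq 0$, $t(n) = \sgn(n)(2|n|-1)$, with $t(0) = 0$. Both lie in $\mathcal{M}R_0^{\mathbb{Z}/2}$, and their images partition $\mathbb{Z} \setminus \{0\}$ as the even-nonzero and odd integers. Using this partition, define a monoid homomorphism $\oplus \colon \mathcal{M}R_0^{\mathbb{Z}/2} \times \mathcal{M}R_0^{\mathbb{Z}/2} \to \mathcal{M}R_0^{\mathbb{Z}/2}$ by
\[
(f \oplus g)\circ s = s \circ f, \qquad (f \oplus g)\circ t = t \circ g, \qquad (f \oplus g)(0) = 0.
\]
A direct check shows that $f \oplus g$ is equivariant and injective, and that $\oplus$ respects composition. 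The key identities $(1 \oplus f) \cdot s = s$ and $(1 \oplus f) \cdot t = t \cdot f$ then exhibit $s$ and $t$ as natural transformations from the constant functor at $1$ and the identity functor, respectively, into the functor $G(f) := 1 \oplus f$. The resulting equivariant simplicial homotopies yield $\id_{B\mathcal{M}R^{\mathbb{Z}/2}} \simeq BG \simeq \const$, so $B\mathcal{M}R^{\mathbb{Z}/2} = B\mathcal{M}R_0^{\mathbb{Z}/2}$ is weakly contractible.

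For the underlying non-equivariant spaces, the classifying space of the monoid of self-injections of a countably infinite set is known to be contractible by the same swindle: for $\mathcal{M}R$ use $s(n) = 2n$ and $t(n) = 2n+1$, which have disjoint images partitioning $\mathbb{Z}$; for $\mathcal{M}R_0$ restrict to $\mathbb{Z} \setminus \{0\}$ and use the identification $\mathcal{M}R_0 \cong \Inj(\mathbb{Z}\setminus\{0\}, \mathbb{Z}\setminus\{0\})$ together with the analogous $s, t$ (for instance the equivariant ones constructed above, whose underlying injections still give a valid swindle).

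The main obstacle is the construction of the equivariant sum $\oplus$. Since an equivariant injection $\mathbb{Z} \to \mathbb{Z}$ is forced to fix the unique $\mathbb{Z}/2$-fixed point $0$, an equivariant bijection $\mathbb{Z} \cong \mathbb{Z} \sqcup \mathbb{Z}$ cannot exist (the left side has one fixed point, the right side has two). This is circumvented by noting that $\mathbb{Z} \setminus \{0\}$ is a free $\mathbb{Z}/2$-set and hence admits an equivariant decomposition into two copies of itself, realized explicitly by the maps $s$ and $t$ above, which is precisely what is needed to carry the swindle through on the complement of the fixed point.
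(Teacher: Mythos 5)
Your proposal is correct and takes essentially the same approach as the paper's proof: both run an Eilenberg swindle on the injection monoid using the same two injections (your $s(n)=2n$ is the paper's $\theta$, and your $t$ is the paper's $\varphi$), with the same key observation that $0$ is the unique $\mathbb{Z}/2$-fixed point of $\mathbb{Z}$ and hence the splitting must take place on the free part $\mathbb{Z}\setminus\{0\}$. The only cosmetic differences are that you swap which natural transformation plays which role (your $G(f)=1\oplus f$ is the identity on evens and applies $f$ on odds, whereas the paper's $F(f)$ is the identity on odds and applies $f$ on evens), and you run the swindle directly on the fixed-point monoid $\mathcal{M}R_0^{\mathbb{Z}/2}$ rather than, as the paper does, running it $\mathbb{Z}/2$-equivariantly on all of $\mathcal{M}R_0$ and then passing to fixed points.
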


\begin{proof} Our argument is an adaptation of \cite[5.2]{Schwedehtpygps}.
We first consider the monoid $\mathcal MR$ of all injections $\mathbb Z\to \mathbb Z$, and we construct a functor $F\colon\mathcal MR \to \mathcal MR$ defined on each injection $f\colon \mathbb Z \to \mathbb Z$ by
\[F(f)(t) := \begin{cases}2f(\tfrac 12t) & \text{if $t$ is even,}\\ t & \text{if $t$ is odd.}\end{cases}\]
The injective map $\theta(t):=2t$ then defines a natural transformation $\theta\colon Id_{\mathcal MR_0} \Rightarrow F$. At the same time, the injective map $\psi(t) :=2t+1$ defines a natural transformation $\psi\colon C_{\id_{\mathbb Z}}\Rightarrow F$ to $F$ from the constant functor that sends every $f$ to the identity map on $\mathbb Z$. The two natural transformations $\theta$ and $\psi$ together give a zigzag that contracts $B\mathcal MR$ non-equivariantly.
We still need to argue that the fixed points $(B\mathcal MR)^{\mathbb Z/2}\simeq B(\mathcal MR^{\mathbb Z/2})$ are contractible as well. Note however that $\mathcal MR^{\mathbb Z/2}$ is the monoid of $\mathbb Z/2$-equivariant injections $\mathbb Z\to \mathbb Z$, and any equivariant map has to send zero to zero, hence $\mathcal MR^{\mathbb Z/2}=\mathcal MR_0^{\mathbb Z/2}$. We will study the fixed points $\mathcal MR_0^{\mathbb Z/2}$ below.

Next we consider the submonoid $\mathcal MR_0$ of injections that send zero to zero. The functor $F$ restricts to an endofunctor on $\mathcal MR_0$, and $\theta\in \mathcal MR_0$ still gives a natural transformation $\theta\colon Id_{\mathcal MR_0} \Rightarrow F$. However, the map $\psi$ does not send zero to zero, so we need a different transformation. We define $\varphi\in \mathcal MR_0$ by
\[\varphi(t) :=\begin{cases}2t-1 & \text{if $t>0$,}\\ 0 & \text{if $t=0$,}\\ 2t+1 &\text{if $t<0$.}\end{cases}\]
Because we are working in $\mathcal MR_0$ where all maps send zero to zero, $\varphi$ defines a natural transformation $\varphi\colon C_{\id_{\mathbb Z}}\Rightarrow F$ in $\mathcal MR_0$. As above, the two natural transformations $\theta$ and $\varphi$ give a zigzag that contracts $B\mathcal MR_0$. Since $F$, $\theta$ and $\varphi$ are $\mathbb Z/2$-equivariant, $B\mathcal MR_0$ is even equivariantly contractible, and therefore the fixed point space $B\mathcal MR_0^{\mathbb Z/2}$ is contractible as well by the same natural transformations $\theta$ and $\varphi$.
\end{proof}

\begin{lemma}\label{gammaandpi}
The functors $\gamma\colon \mathbb{N}\to I^o/\omega$ and $\pi\colon \mathcal{M}R^{\mathbb{Z}/2}\wr(I^o/\omega) \to I^{\Z/2}$ from the proof of Proposition \ref{cofinal} are homotopy right cofinal.
\end{lemma}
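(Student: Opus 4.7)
The plan is to apply Quillen's Theorem~A: for each object $d$ in the target of a functor $F$, we show that the under-category $d\downarrow F$ has contractible nerve, which implies $F$ is homotopy right cofinal. For $\gamma$: fix $(n,\alpha)\in I^o/\omega$. An object of $(n,\alpha)\downarrow\gamma$ is a pair $(m,\delta)$ where $\delta\colon(n,\alpha)\to\gamma(m)=(m,\incl_m)$ is a morphism in $I^o/\omega$, that is, an equivariant injection $[-n,n]\to[-m,m]$ with $\incl_m\circ\delta=\alpha$. Such a $\delta$ exists and is unique precisely when $\alpha([-n,n])\subseteq[-m,m]$. Setting $n_0:=\max_{t\in[-n,n]}|\alpha(t)|$, the slice is isomorphic to the directed poset $\{m\in\mathbb{N} : m\geq n_0\}$, whose nerve is contractible.

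For $\pi$: fix $i\in I^{\Z/2}$. An object of $i\downarrow\pi$ is a triple $X=((n,\alpha),\delta)$ with $\delta\colon i\hookrightarrow[-n,n]$ an equivariant injection, and a morphism is a pair $(f,\beta)$ in the wreath product satisfying $\beta\delta=\delta'$. We plan to transport the contraction of $B\mathcal{M}R^{\Z/2}$ from Lemma~\ref{BMR} into this slice via a zigzag of natural transformations. Let $F_0$, $\theta$, and $\varphi\in\mathcal{M}R^{\Z/2}$ denote the functor and elements from the proof of Lemma~\ref{BMR}. Define an endofunctor $F$ on $i\downarrow\pi$ by $F((n,\alpha),\delta):=((n,2\alpha),\delta)$ and $F(f,\beta):=(F_0(f),\beta)$; the identity $F_0(f)(2t)=2f(t)$ makes this well-defined, and the pairs $(\theta,\id_{[-n,n]})$ assemble into a natural transformation $\id\Rightarrow F$.

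Next, choose a basepoint $X_0:=((n_0,\incl_{n_0}),\delta_0)$ with $n_0=\lfloor i/2\rfloor$ and $\delta_0$ any fixed equivariant injection $i\hookrightarrow[-n_0,n_0]$. For each $X=((n,\alpha),\delta)$ the equivariant injection $\beta^X\colon[-n_0,n_0]\to[-n,n]$ satisfying $\beta^X\delta_0=\delta$ is uniquely determined: when $i$ is odd, $\delta_0$ is a bijection and $\beta^X=\delta\circ\delta_0^{-1}$; when $i$ is even, the same formula on $\delta_0(i)$ together with $\beta^X(0)=0$ (forced by equivariance) does it. Define $g^X\in\mathcal{M}R^{\Z/2}$ by $g^X(t)=2\alpha(\beta^X(t))$ for $t\in[-n_0,n_0]$ and $g^X(t)=\varphi(t)$ otherwise; this is an equivariant injection because the two clauses produce disjoint sets of nonzero even and nonzero odd integers. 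The pair $(g^X,\beta^X)\colon X_0\to F(X)$ is then a morphism in the slice, and naturality in $X$ follows from the identities $F_0(f)(2s)=2f(s)$ for $s$ even and $F_0(f)(\varphi(s))=\varphi(s)$ for $s\neq 0$, exactly as in Lemma~\ref{BMR}. This yields a natural transformation $C_{X_0}\Rightarrow F$ from the constant functor at $X_0$, and the zigzag $\id\Rightarrow F\Leftarrow C_{X_0}$ exhibits the identity of $B(i\downarrow\pi)$ as homotopic to a constant map, establishing contractibility. The main technical point is the even/odd case split in constructing $\beta^X$ and $g^X$, which faithfully mirrors the corresponding split in the contractibility argument for $B\mathcal{M}R^{\Z/2}$.
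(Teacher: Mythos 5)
Your proof is correct. The $\gamma$ part is essentially the paper's argument (under-category has initial object $\max|\alpha|$, so is contractible). The $\pi$ part takes a genuinely different route: the paper exhibits a homotopy retraction of $n/\pi$ onto the one-object subcategory $\mathrm{End}_{X_0}\cong\mathcal{M}R^{\Z/2}$ (via a chosen extension $f_\alpha$ of each equivariant injection $\alpha$ to an equivariant bijection of $\Z$, plus a natural transformation to the identity), then appeals to Lemma~\ref{BMR}; you instead \emph{lift} the explicit contracting zigzag for $B\mathcal{M}R^{\Z/2}$ from Lemma~\ref{BMR} directly into the slice, building an endofunctor $F$ on $i\downarrow\pi$ and natural transformations $\id\Rightarrow F\Leftarrow C_{X_0}$. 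Your approach avoids constructing the retraction functor (and its delicate choice of coherent extensions $f_\alpha$), at the cost of verifying that the contraction data ($\theta$, $\varphi$, $F_0$) can be lifted compatibly with the extra data $\delta$ in the slice; the key insight that makes this work is that the \emph{range} constraint (even vs.\ odd integers) on $g^X$ keeps the two clauses disjoint exactly as in Lemma~\ref{BMR}. One small detail you should make explicit: in the even case, $\beta^X$ defined by $\delta\circ\delta_0^{-1}$ on $[-n_0,n_0]\setminus\{0\}$ and $\beta^X(0)=0$ is injective precisely because $\delta$ is an equivariant injection out of the fixed-point-free $\Z/2$-set $\underline{2j}$, hence misses $0\in[-n,n]$; without this observation the injectivity of $\beta^X$ (and hence that it is a morphism in $I^o/\omega$) is not immediate. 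Also, the identity you cite should read $F_0(f)(2s)=2f(s)$ for \emph{all} $s$ (not only even $s$), which is what you actually use.
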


\begin{proof}
For each object $(n,\alpha)$ of $I^o/\omega$, the under-category $(n,\alpha)/\gamma$ is isomorphic to the poset of non-negative integers $m$ such that $\alpha([-n,n])\subset [-m,m]$. This has an initial object $\max_{[-n,n]} |\alpha|$, and it is therefore contractible.

The under category $n/\pi$ has objects the triples $(k,\alpha,\gamma)$ where $\alpha\colon [-k,k]\rightarrowtail\Z$ and $\gamma\colon \underline{n}\rightarrowtail \underline{2k+1}$ are equivariant injections.
A morphism $(k,\alpha,\gamma)\to (k',\alpha',\gamma')$ is pair $(f,\beta)$ where $f\in \mathcal{M}R^{\Z/2}$ and $\beta\colon [-k,k]\rightarrowtail [-k',k']$ is equivariant and such that $f\circ\alpha= \alpha'\circ\beta$ and $\gamma'=\beta\gamma$.

First let us suppose that $n=2j+1$ is odd. We show that the inclusion of the endomorphisms of the object $(j,[-j,j]\subset \Z,\id_n)$ into $n/\pi$ is a homotopy retract. The endomorphisms of $(j,[-j,j]\subset \Z,\id_n)$ form the submonoid of $\mathcal{M}R^{\Z/2}$ of those injections that restrict to the identity on the interval $[-j,j]$.
There is a retraction
\[
R\colon n/\pi\longrightarrow End_{(j,[-j,j]\subset \Z,\id_n)}
\]
defined as follows. For every equivariant injection $\alpha\colon [-j,j]\rightarrowtail \Z$ we choose an equivariant bijection $f_{\alpha}\colon\mathbb{Z}\to \mathbb{Z}$ which extends $\alpha$. If $\alpha$ is the canonical inclusion $[-j,j]\subset \Z$, then we choose $f_{\alpha}=\id$.  Now we define $R$ by sending a morphism $(f,\beta)\colon (k,\alpha,\gamma)\to (k',\alpha',\gamma')$ to
\[R(f):=f^{-1}_{\alpha'\gamma'}f f_{\alpha\gamma}.\]
This is a well-defined endomorphism since on the interval $[-j,j]$ we have that
\[f^{-1}_{\alpha'\gamma'}f f_{\alpha\gamma}|_{[-j,j]}=f^{-1}_{\alpha'\gamma'} f\alpha\gamma=f^{-1}_{\alpha'\gamma'} \alpha'\beta\gamma=f^{-1}_{\alpha'\gamma'} \alpha'\gamma'=\id.\]
It is easy to verify that $R$ is a functor and a retraction for the inclusion of the endomorphisms. Moreover, the morphism
\[(f_{\alpha\gamma},\gamma)\colon(j,[-j,j]\subset \Z,\id_n)\longrightarrow (k,\alpha,\gamma)\]
defines a natural transformation from $R$ to the identity. Thus the inclusion of the endomorphisms induces a homotopy equivalence on classifying spaces
\[\ast\simeq B\mathcal{M}R^{\Z/2}\cong BEnd_{(j,[-j,j]\subset \Z,\id_n)}\stackrel{\simeq}{\longrightarrow} B(n/\pi)\]
where the isomorphism between the endomorphisms and $\mathcal{M}R^{\Z/2}$ collapses the interval $[-j,j]$ to zero.

If $n=2j$ is even, we consider the object $(j,[-j,j]\subset\Z,c)$ where $c\colon \underline{2j}\cong [-j,j]\backslash 0\subset  [-j,j]\cong\underline{2j+1}$ is the equivariant inclusion which restricts to the standard inclusion $\underline{j}\to \underline{j+1}$. Similarly to the odd case, the inclusion $End_{(j,[-j,j]\subset\Z,c)}\to n/\pi$ admits a homotopy retraction. The endomorphisms of $(j,[-j,j]\subset\Z,c)$ are again the submonoid of $\mathcal{M}R^{\Z/2}$ of the injections that restrict to the identity on $[-j,j]$.
The retraction is defined in a similar matter, by choosing to extend each equivariant map $\alpha\colon  [-j,j]\backslash 0\rightarrowtail\Z$ to an equivariant bijection on $\mathbb{Z}$. The key observation is that the map $c$ can be extended to the identity of $\mathbb{Z}$. Thus there is an equivariant homotopy equivalence
\[\ast\simeq B\mathcal{M}R^{\Z/2}\cong BEnd_{(j,[-j,j]\subset\Z,c)}\stackrel{\simeq}{\longrightarrow} B(n/\pi).\qedhere\]
\end{proof}

\begin{proof}[Proof of \ref{genuineloop}]
Let $E$ be an orthogonal $\mathbb{Z}/2$-spectrum with associated real $I$-space $\Omega_IE\colon I\to \Top$.
Let us start by identifying the restriction $\iota^\ast \Omega_IE$ along the equivariant inclusion $\iota\colon\mathbb{N}\to I$. The involution on the spaces
\[(\iota^\ast \Omega_IE)_n=\Omega^{2n+1}E_{2n+1}\]
is defined by conjugating the levelwise involution of $E$ with the flip permutations on $S^{2n+1}$ and $E_{2n+1}$. These are precisely the $\Z/2$-spaces $\Omega^{n\rho+1}E_{n\rho+1}$, where $\rho$ is the regular representation of $\mathbb{Z}/2$. Therefore suspension by the trivial representation defines a weak equivalence
\[
\Omega^{\infty \rho}E\stackrel{\sim}{\longrightarrow} \hocolim_{n\in\mathbb{N}}\Omega^{n\rho+1}E_{n\rho+1}=\hocolim_{\mathbb{N}}\iota^\ast \Omega_IE.
\]
It remains to show that $\iota_\ast\colon \hocolim_{\mathbb{N}}\iota^\ast \Omega_IE\to \hocolim_{I}\Omega_IE$ is an equivariant equivalence.
We will prove this using Proposition \ref{cofinal}. The underlying $I$-space is equivalent to the $I$-space associated to the orthogonal spectrum $E$, and it is therefore semi-stable. An equivariant injection $f\in\mathcal{M}R^{\Z/2}$ acts trivially on the homotopy groups of $(\hocolim_{n\in\mathbb{N}}\Omega^{n\rho+1}E_{n\rho+1})^{\mathbb{Z}/2}$ at the canonical basepoint. This is because every orthogonal $\mathbb{Z}/2$-spectrum is semi-stable, see \cite[3.4]{Schwede} for a proof. To show that $f$ acts by equivalences it therefore suffices to show that it acts by a loop map. Let us recall that $f\cdot n$ is the smallest integer such that $f([-n,n])\subset [-f\cdot n,f\cdot n]$. The action of $f$ is then the top row of the commutative diagram
\[
\xymatrix@C=35pt{
\displaystyle\hocolim_{n\in\mathbb{N}}\Omega^{n\rho+1}E_{n\rho+1}\ar[r]^-{X(f)}\ar[d]_-{\simeq}
&
\displaystyle\hocolim_{n\in\mathbb{N}}\Omega^{(f\cdot n)\rho+1}E_{(f\cdot n)\rho+1}\ar[r]^-{f_\ast}\ar[d]_-{\simeq}
&
\displaystyle\hocolim_{n\in\mathbb{N}}\Omega^{n\rho+1}E_{n\rho+1}\ar[d]_-{\simeq}
\\
\Omega\displaystyle\hocolim_{n\in\mathbb{N}}\Omega^{n\rho}E_{n\rho+1}\ar[r]^-{\Omega X(f_0)}
&
\Omega\displaystyle\hocolim_{n\in\mathbb{N}}\Omega^{(f\cdot n)\rho}E_{(f\cdot n)\rho+1}\ar[r]^-{\Omega(f_0)_\ast}
&
\Omega\displaystyle\hocolim_{n\in\mathbb{N}}\Omega^{n\rho}E_{n\rho+1}\rlap{\ ,}
}
\]
where $f_0\colon [-n,n]-0\to [-f\cdot n,f\cdot n]-0$ is the restriction of $f$ (we observe that since $f$ is equivariant it sends zero to zero).
\end{proof}

\begin{rem}
A statement similar to \ref{genuineloop} holds for symmetric  $\mathbb{Z}/2$-spectra if one assumes suitable semi-stability conditions.
\end{rem}

\section{Real topological Hochschild homology}\label{secTHR}

\subsection{Ring spectra with anti-involution}

A ring spectrum with anti-involution is a genuine $\Z/2$-equivariant spectrum with a compatible multiplicative structure. It is a direct generalization of the notions of discrete and simplicial rings with anti-involution \cite{BF}, and a spectral version of ``Hermitian Mackey functors'' (see \ref{defHermMackey}).

\begin{defn} A ring spectrum with anti-involution is a monoid with anti-involution in the symmetric monoidal category of orthogonal spectra and smash product (see Definition \ref{defmonanti}). Explicitly, this is a pair $(A,w)$ consisting of an orthogonal ring spectrum $A$ and a morphism of orthogonal ring spectra $w\colon A^{op}\to A$ such that $w^2=\id$.
\end{defn}

We remark that since the involution on $A$ is strict, the map $w$ defines a $\Z/2$-action on the underlying orthogonal spectrum of $A$, and thus it determines a genuine $\Z/2$-equivariant homotopy type. The constructions of this paper will take this underlying genuine $\Z/2$-equivariant homotopy type into account.

\begin{defn}
A morphism of ring spectra with anti-involution $f\colon (A,w)\to (B,\sigma)$ is a morphism of orthogonal ring spectra $f\colon A\to B$ which commutes strictly with the involutions.
We say that $f$ is a stable $\Z/2$-equivalence if the underlying map of $\Z/2$-spectra is an equivalence in the stable model structure of orthogonal $\Z/2$-spectra induced by a complete $\Z/2$-universe.
\end{defn}

\begin{rem}\label{operad}
The resulting category of ring spectra with anti-involution can be described as a category of algebras over a certain equivariant version of the associative operad $Ass$. Let $Ass^{\sigma}$ be the symmetric operad in the category of $\Z/2$-sets whose underlying operad of sets is $Ass$, and with the involution on $Ass_n=\Sigma_n$ defined by sending $\sigma$ to $\tau_n\circ\sigma$, where $\tau_n$ is the permutation of $\{1,\dots,n\}$ that reverses the order. Then the category of $Ass^{\sigma}$-algebras in $\Z/2$-spectra is isomorphic to the category of ring spectra with anti-involution. Moreover $\Sigma_n$ with this involution is a cofibrant $\Z/2\times\Sigma_n$-space with respect to the family of graph subgroups of $\Z/2\times\Sigma_n$ (see \cite{BP}). Thus the theory of algebras over $Ass^{\sigma}$ is homotopically meaningful.

We further observe that there is a map of symmetric operads of $\mathbb{Z}/2$-spaces $Ass^{\sigma}\to E^{\mathbb{Z}/2}_{\infty}$, where $E^{\mathbb{Z}/2}_{\infty}$ is the model for the $\mathbb{Z}/2$-genuine $E_\infty$-operad of \cite{MonaEG}. The $n$-th space of $E^{\mathbb{Z}/2}_{\infty}$ is the classifying space of the functor category $\underline{Fun}(\mathcal{E}\Z/2,\mathcal{E}\Sigma_n)$, where $\mathcal{E}G=G\wr G$ is the translation groupoid of the left $G$-action on the finite group $G$. The map
\[Ass^{\sigma}_n\longrightarrow \underline{Fun}(\mathcal{E}\Z/2,\mathcal{E}\Sigma_n)\]
sends an element $\sigma\in \Sigma_n$ to the functor $\mathcal{E}\Z/2\to\mathcal{E}\Sigma_n$ that sends the unit $1$ to $\sigma$ and $-1$ to $\tau_n\circ\sigma$. Thus genuinely commutative $\Z/2$-equivariant orthogonal ring spectra give rise to ring spectra with anti-involution by forgetting structure.
\end{rem}

\begin{example}
\begin{enumerate}
\item[i)] Let $R$ be a discrete or simplicial ring with an anti-involution $w\colon R^{op}\to R$. The Dold-Thom construction (see e.g.\cite[Example 2.13]{Schwede}) of the Eilenberg-MacLane spectrum $H \colon \Ab \to \Sp$ is lax symmetric monoidal, and hence $HR$ is a ring spectrum with anti-involution. The underlying $\Z/2$-spectrum is the Eilenberg-MacLane spectrum of the $\Z/2$-Mackey functor with underlying abelian group $R$ and fixed points group $R^{\Z/2}$. Throughout the paper, we will always consider the Eilenberg-MacLane spectrum $HR$ as a ring spectrum with anti-involution in this manner.
\item[ii)] Any $\Z/2$-equivariant commutative orthogonal ring spectrum is a ring spectrum with anti-involution, for example the Eilenberg-MacLane spectrum of a Tambara functor.
\item[iii)] The suspension spectrum $\mathbb{S}[G]:=\mathbb{S}\wedge G_+$ of a topological group $G$ is an orthogonal ring spectrum, where the multiplication is defined from the multiplication on $G$. The inversion map $\iota\colon G^{op}\to G$ defines anti-involution on $\mathbb{S}[G]$.
\end{enumerate}
\end{example}

Let $(A,w)$ be a ring spectrum with anti-involution and $M$ an $A$-bimodule. We let $M^{op}$ be the $A$-bimodule with left module structure
\[
A\wedge M\stackrel{\tau}{\longrightarrow}M\wedge A\stackrel{\id\wedge w}{\longrightarrow}M\wedge A\stackrel{\mu_r}{\longrightarrow}M
\]
where $\tau$ is the symmetry isomorphism and $\mu_r$ is the right $A$-module action, and with right $A$-module structure
\[
M\wedge A\stackrel{\tau}{\longrightarrow}A\wedge M\stackrel{w\wedge\id}{\longrightarrow}A\wedge M\stackrel{\mu_l}{\longrightarrow}M.
\]
where $\mu_l$ is the left $A$-module action.

\begin{defn}
Let $(A,w)$ be a ring spectrum with anti-involution. An $(A,w)$-bimodule is a pair $(M,j)$ of an $A$-bimodule $M$ and a map of $A$-bimodules $j\colon M^{op}\to M$ that satisfies $j^2=\id$.

A morphism from an $(A,w)$-bimodule $(M,j)$ to a $(B,\sigma)$-bimodule $(N,k)$ is a pair $(f,\phi)$ where $f\colon (A,w)\to (B,\sigma)$ is a morphism of ring spectra with anti-involution and $\phi\colon M\to f^{\ast}N$ is a map of $A$-bimodules which commutes with the involutions $j$ and $k$ (where $f^\ast$ denotes restriction of scalars).
We say that $(f,\phi)$ is a stable $\Z/2$-equivalence if both $f$ and $\phi$ are stable $\Z/2$-equivalences of underlying orthogonal $\Z/2$-spectra.
\end{defn}

\begin{defn}\label{defflat}
We say that a ring spectrum with anti-involution $(A,w)$ is flat if its underlying orthogonal $\Z/2$-spectrum is flat (see \S\ref{appendixflat}). Similarly, an $(A,w)$-bimodule $(M,j)$ is flat if its underlying orthogonal $\Z/2$-spectrum is flat.
\end{defn}

\begin{rem}
We will build real topological Hochschild homology out of indexed smash products, and flatness is a convenient condition that makes these constructions homotopically well-behaved. In Proposition \ref{antimodelstructure} we construct a model structure on the category of ring spectra with anti-involution where the equivalences are the stable equivalence of genuine $\Z/2$-spectra. In particular this will show that any ring spectrum with anti-involution can be replaced by a flat one, up to stable $\Z/2$-equivalence.
\end{rem}

\begin{rem}\label{flatreplcomm}
In our calculations of $\THR(\Z)$ and $\THR(\F_p)$ we will need to know that if a ring spectrum with anti-involution happens to be commutative, then a flat replacement can be carried out without loosing commutativity. This can be achieved by taking a cofibrant replacement in the flat positive model structure on $\Z/2$-equivariant commutative orthogonal ring spectra of \cite{Sto} and \cite{BrDuSt}.

In particular, any $\Z/2$-equivariant commutative ring $R$ admits a $\Z/2$-equivariant commutative orthogonal ring spectrum model for its Eilenberg-MacLane spectrum $HR$ (even though the classical construction of the Eilenberg-MacLane spectrum provides a flat $\Z/2$-equivariant \emph{symmetric} spectrum, by \cite[Example 2.7]{Schwedesym}, it is unclear to the authors if it is flat as a $\Z/2$-equivariant \emph{orthogonal} spectrum).
\end{rem}

\subsection{The dihedral Bar construction}\label{secdihedral}

We use the norm construction of \cite{HHR} and \cite{Sto} to show that the cyclic Bar construction on a ring spectrum $A$ inherits the structure of a $\mathbb{Z}/2$-spectrum from an anti-involution on $A$.

For every non-negative integer $k$, we let ${\bf k}=\{1,\dots,k\}$ denote the $\mathbb{Z}/2$-set with the involution that sends $i$ to $k+1-i$. Given a ring spectrum with anti-involution $(A,w)$, we let $A^{\wedge {\bf k}}$ be the corresponding indexed smash product as defined in \cite{HHR}. The following is a straightforward generalization of the involution on the Hochschild complex of a ring with anti-involution of \cite[5.2.1]{Loday}.

\begin{defn}
Let $(M,j)$ be an $(A,w)$-bimodule. The dihedral nerve of $(A,w)$ with coefficients in $(M,j)$ is the real simplicial orthogonal spectrum $N_{\wedge}^{di}(A;M)$ with $k$-simplices
\[N_{\wedge}^{di}(A;M)_k=M\wedge A^{\wedge {\bf k}},\]
and the levelwise involution given by the involution $j$ on $M$ and by the involution on the indexed smash product $A^{\wedge {\bf k}}$. The simplicial structure is the standard simplicial structure on the cyclic nerve (see e.g. \cite{Loday}). We let $B_{\wedge}^{di}(A;M)$ be the orthogonal $\mathbb{Z}/2$-spectrum defined by the geometric realization of $N_{\wedge}^{di}(A;M)$. When $A=M$ we write $B_{\wedge}^{di}A$ for $B_{\wedge}^{di}(A;A)$, or  $B_{\wedge}^{di}(A,w)$ if we want to emphasize the anti-involution.
\end{defn}

\begin{rem}
When $M=A$ the simplicial spectrum $N_{\wedge}^{di}A$ has a dihedral structure, and therefore its realization $B_{\wedge}^{di}A$ is an $S^{1}\rtimes \Z/2$-spectrum. In the present paper we will focus on the $\Z/2$-action and neglect the cyclic structure.
\end{rem}

We note that the dihedral bar construction in orthogonal spectra, together with its na\"{i}ve $\Z/2$-equivariant homotopy type, has been already considered by Kro in \cite{Kroinv}. We will focus on the genuine $\Z/2$-equivariant homotopy type of the dihedral bar construction $B_{\wedge}^{di}(A;M)$.

We now give a homotopical interpretation of the dihedral nerve of a ring spectrum with anti-involution $(A,w)$.
Let $N^{\mathbb{Z}/2}_e A$ be the multiplicative norm of the underlying orthogonal spectrum $A$ \cite{HHR}. We recall that this is the ring spectrum $A\wedge A$ with the componentwise multiplication, and the $\Z/2$-action defined by the flip permutation $\tau$. We note that $N^{\mathbb{Z}/2}_e A$ does not depend on the involution $w$, but that it is isomorphic to the enveloping algebra $A^e:=A \wedge A^{op}$ via the map
\[1\wedge w\colon N^{\mathbb{Z}/2}_e A\stackrel{\cong}{\longrightarrow} A^e.\]
Now let $(M,j)$ be an $(A,w)$-bimodule. The $\Z/2$-spectrum $(M,j)$ has the structures of a left and a right $\Z/2$-equivariant $N^{\mathbb{Z}/2}_e A$-module, defined respectively by the maps
\[
N^{\mathbb{Z}/2}_e A \wedge M=A \wedge A\wedge M\xrightarrow{1\wedge w\wedge 1}A \wedge A^{op}\wedge M\stackrel{1\wedge \tau}{\longrightarrow}A \wedge M\wedge A^{op}\stackrel{\mu}{\longrightarrow}M
\]
and
\[
M\wedge N^{\mathbb{Z}/2}_e A =M \wedge A\wedge A\xrightarrow{1\wedge 1\wedge w}M\wedge A \wedge A^{op}\xrightarrow{(\tau\wedge 1)\circ(1\wedge \tau)}A^{op} \wedge M\wedge A\stackrel{\mu}{\longrightarrow}M,
\]
respectively. In particular the $(A,w)$-bimodule $(A,w)$ becomes a left and a right $N^{\mathbb{Z}/2}_e A$-module.

\begin{prop} \label{dihedral-two-sided} There is a natural isomorphism of simplicial $\Z/2$-equivariant orthogonal spectra
\[
\sd_e N^{di}_{\wedge}(A;M)\cong N_{\wedge}(M, N^{\mathbb{Z}/2}_e A, A)_{\bullet}
\]
between the Segal edgewise subdivision of the dihedral bar construction and the two-sided bar construction
in the category of $\Z/2$-spectra.
\end{prop}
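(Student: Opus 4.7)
The plan is to construct an explicit natural levelwise isomorphism $\Phi_\bullet$ and verify that it is $\Z/2$-equivariant and compatible with the simplicial structure on both sides. At each level $n$ both sides are canonically $M\wedge A^{\wedge(2n+1)}$ as underlying (non-equivariant) spectra, and only the $\Z/2$-action and the apparent grouping of the $A$-factors differ. As a $\Z/2$-set, ${\bf 2n+1}$ decomposes into the fixed point $\{n+1\}$ together with $n$ free orbits $\{k,\,2n+2-k\}$ for $k=1,\dots,n$, so the indexed smash product $A^{\wedge {\bf 2n+1}}$ splits equivariantly into a central factor $A$ (carrying the involution $w$) and $n$ free-orbit factors $A^{\wedge\{k,2n+2-k\}}$. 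Each such free-orbit factor is $\Z/2$-equivariantly isomorphic to $N_e^{\Z/2} A = A \wedge A$ (with its flip action) via the map $x\wedge y \mapsto x \wedge w(y)$ placing $x$ at slot $k$ and $w(y)$ at slot $2n+2-k$: the identity $w^2=\id$ converts the ``swap and apply $w$'' action on the orbit factor into the flip action on $N_e^{\Z/2} A$. Assembling these identifications and reordering smash factors, I would define
\[
\Phi_n\bigl(m\wedge a_1\wedge\dots\wedge a_{2n+1}\bigr) := m\wedge (a_1\wedge w(a_{2n+1}))\wedge (a_2 \wedge w(a_{2n}))\wedge \dots\wedge (a_n\wedge w(a_{n+2}))\wedge a_{n+1},
\]
which is manifestly natural in $(A,w)$ and $(M,j)$. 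The $\Z/2$-equivariance of $\Phi_n$ follows directly from the orbit-wise equivariance together with $j$ acting on $M$ and $w$ on the central $A$.

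The main work is to verify that $\Phi_\bullet$ intertwines the simplicial structure maps. From Remark \ref{subdivision}, $\sd_e(d_i)=d_i * \omega(d_i)=d_i * d_{n-i}$, so the face $d_i^{\sd_e}$ at level $n$ is the composite $d_i\circ d_{2n+1-i}$ on $N^{di}_\wedge(A;M)_{2n+1}$. I would split into three cases. For $0<i<n$ this composite multiplies $a_i a_{i+1}$ and $a_{2n+1-i} a_{2n+2-i}$; under $\Phi_n$ these merge the $i$-th and $(i+1)$-th $N_e^{\Z/2}A$-factors into a single factor whose $Y$-slot is $w(a_{2n+2-i})\,w(a_{2n+1-i}) = w(a_{2n+1-i}\,a_{2n+2-i})$, matching componentwise multiplication in $N_e^{\Z/2}A$ via the anti-multiplicativity of $w$. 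For $i=0$ the composite $d_0\circ d_{2n+1}$ produces $a_{2n+1}\cdot m\cdot a_1$, which equals $m\cdot (a_1\wedge w(a_{2n+1}))$ under the right $N_e^{\Z/2}A$-module structure $m\cdot (a\wedge b)=w(b)\cdot m\cdot a$ on $M$, matching the $d_0$ face of the two-sided bar (which absorbs the first $N_e^{\Z/2}A$-factor into $M$). For $i=n$ the composite $d_n\circ d_{n+1}$ produces the central $a_n\cdot a_{n+1}\cdot a_{n+2}$, matching the left $N_e^{\Z/2}A$-action $(a\wedge b)\cdot z = a z w(b)$ on the rightmost $A$-factor of the two-sided bar. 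Degeneracies are analogous: $\sd_e(s_i)=s_i * s_{n-i}$ inserts unit maps at positions $i+1$ and $2n+2-i$, corresponding to the insertion of the unit $1\wedge 1$ of $N_e^{\Z/2}A$ at the $(i+1)$-st slot of the two-sided bar.

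The main obstacle is purely combinatorial, namely keeping track of the shuffle permutation implicit in $\Phi_n$ together with the placement of $w$ across the $2n+1$ smash factors. No deep difficulty appears once the identification $x\wedge y\mapsto x\wedge w(y)$ between $N_e^{\Z/2}A$ and each free-orbit factor is in place: the relations $w^2=\id$, $j^2=\id$ and the anti-multiplicativity $w(xy)=w(y)\,w(x)$ then suffice for all the face and degeneracy checks, and naturality in both variables is automatic from the formula.
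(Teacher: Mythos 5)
Your proposal is correct and matches the paper's proof: the explicit isomorphism you define,
\[
\Phi_n(m\wedge a_1\wedge\dots\wedge a_{2n+1}) = m\wedge (a_1\wedge w(a_{2n+1}))\wedge\dots\wedge (a_n\wedge w(a_{n+2}))\wedge a_{n+1},
\]
is exactly the composite the paper writes (a shuffle permutation followed by $1\wedge(1\wedge w)^{\wedge k}\wedge 1$), and your orbit decomposition of ${\bf 2n+1}$ is the structural reason behind it. The paper leaves the verification of $\Z/2$-equivariance and simplicial compatibility to the reader, which you spell out; the case split on faces and the use of $w^2=\id$, $j^2=\id$, and anti-multiplicativity of $w$ are exactly the ingredients needed.
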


\begin{proof}
 Define for every $k\geq 0$ an isomorphism
\[M \wedge A^{\wedge {\bf 2k+1}} \longrightarrow M \wedge (A\wedge A^{op})^{\wedge k} \wedge A \xrightarrow{1\wedge (1\wedge w)^{\wedge k}\wedge 1} M \wedge (N^{\mathbb{Z}/2}_e A)^{\wedge  k} \wedge A,\]
where the first map is the permutation
\[x_0 \wedge x_1 \wedge \cdots \wedge x_{2k+1} \longmapsto x_0 \wedge (x_1 \wedge x_{2k+1} ) \wedge (x_2 \wedge x_{2k}) \wedge \cdots \wedge (x_k \wedge x_{k+2}) \wedge x_{k+1}.\]
These maps are $\Z/2$-equivariant isomorphisms and they are compatible with the simplicial structure.
\end{proof}

\begin{cor} \label{derived smash} Let $(A,w)$ be a flat ring spectrum with anti-involution and $(M,j)$ a flat $(A,w)$-bimodule. Then there is a stable equivalence of $\Z/2$-spectra
\[B^{di}_{\wedge}(A;M)\simeq M \wedge^{\mathbf{L}}_{N^{\mathbb{Z}/2}_e A} A
\]
where the right-hand term is the derived smash product of left and right $N^{\mathbb{Z}/2}_e A$-modules. In particular the functor $B^{di}_{\wedge}$ preserves stable $\Z/2$-equivalences of flat ring spectra with anti-involution.
\end{cor}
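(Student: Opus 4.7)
The plan is to leverage Proposition \ref{dihedral-two-sided} to reduce the claim to the standard fact that the two-sided Bar construction computes the derived smash product. First I would combine Proposition \ref{dihedral-two-sided} with Remark \ref{subdivision}, which says that the canonical isomorphism $|\sd_e X|\cong |X|$ is $\Z/2$-equivariant. This yields a natural $\Z/2$-equivariant isomorphism of orthogonal $\Z/2$-spectra
\[
B^{di}_\wedge(A;M)\;=\;|N^{di}_\wedge(A;M)|\;\cong\;|\sd_e N^{di}_\wedge(A;M)|\;\cong\;|N_\wedge(M,N^{\Z/2}_e A,A)_\bullet|.
\]
So it suffices to show that the right-hand geometric realization is a model for $M\wedge^{\mathbf L}_{N^{\Z/2}_e A} A$.

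The key point then is that the flatness assumptions make the simplicial $\Z/2$-spectrum $N_\wedge(M,N^{\Z/2}_e A,A)_\bullet$ homotopically well-behaved. Since $A$ is flat and the flat model structure on $\Sp^{\Z/2}$ is symmetric monoidal, the norm $N^{\Z/2}_e A = A\wedge A$ is a flat ring $\Z/2$-spectrum, and each smash power $M\wedge (N^{\Z/2}_e A)^{\wedge k}\wedge A$ is a flat $\Z/2$-spectrum. I would then verify that the degeneracies of the simplicial $\Z/2$-spectrum $N_\wedge(M,N^{\Z/2}_e A,A)_\bullet$ are $h$-cofibrations of underlying pointed $\Z/2$-spaces at every spectrum level $V$, so that the simplicial spectrum is good in the sense of the definition preceding Lemma \ref{good}. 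Goodness and levelwise flatness together imply that the geometric realization computes the homotopy colimit of the simplicial diagram, which by general principles is precisely the derived smash product $M\wedge^{\mathbf L}_{N^{\Z/2}_e A} A$.

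For the second assertion, suppose $(A,w)\to (A',w')$ is a stable $\Z/2$-equivalence of flat ring spectra with anti-involution and $(M,j)\to (M',j')$ is a compatible stable $\Z/2$-equivalence of flat bimodules. Since smashing with flat $\Z/2$-spectra preserves stable $\Z/2$-equivalences, the induced map
\[
N_\wedge(M,N^{\Z/2}_e A,A)_\bullet \longrightarrow N_\wedge(M',N^{\Z/2}_e A',A')_\bullet
\]
is a stable $\Z/2$-equivalence at every simplicial level. Both simplicial spectra are good by the previous step, so Lemma \ref{good} yields a stable $\Z/2$-equivalence of realizations, which through the isomorphism above gives the required stable $\Z/2$-equivalence $B^{di}_\wedge(A;M)\stackrel{\simeq}{\to} B^{di}_\wedge(A';M')$.

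The main technical obstacle is the verification of goodness for the edgewise subdivided dihedral nerve, since the degeneracies in the two-sided Bar construction mix unit maps of $N^{\Z/2}_e A$ with identity maps on $M$ and $A$. This is a routine but not entirely automatic consequence of flatness of $A$ and $M$: one needs that smashing with the unit $\mathbb S\to N^{\Z/2}_e A$ produces $h$-cofibrations at every level $V$, which follows from the fact that flat cofibrations (and in particular the unit map of a flat ring $\Z/2$-spectrum) are closed under smash products with flat $\Z/2$-spectra.
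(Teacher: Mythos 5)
Your reduction via Proposition~\ref{dihedral-two-sided} and Remark~\ref{subdivision}, and the use of goodness plus Lemma~\ref{good} for the homotopy-invariance statement, match the paper's strategy (the paper relies on Lemma~\ref{properness} for the analogous goodness claim). The gap is in the step you label ``by general principles.'' The assertion that the homotopy colimit of the two-sided bar construction $N_\wedge(M,N^{\Z/2}_eA,A)_\bullet$ is the derived smash product is precisely the nontrivial content of Lemma~\ref{barderived}, and it is not automatic. That lemma requires the unit $\mathbb{S}\to N^{\Z/2}_eA$ to be a (flat) cofibration and $M$, $A$ to be underlying cofibrant, and the argument runs through Reedy cofibrancy of the bar construction plus the extra-degeneracy argument exhibiting $B_\wedge(R,R,N)\to N$ as a cofibrant replacement of $N$ in $R$-modules. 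Goodness of the simplicial spectrum (degeneracies are levelwise $h$-cofibrations) gives you that the realization agrees with the homotopy colimit and that it preserves levelwise equivalences, but it does not by itself identify that homotopy colimit with $M\wedge^{\mathbf{L}}_{N^{\Z/2}_eA}A$. You need the module-theoretic input, which is exactly Lemma~\ref{barderived}, so you should either prove it or cite it explicitly rather than appealing to folklore.

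Two smaller points. First, your justification for flatness of $N^{\Z/2}_eA$ is not quite right: $N^{\Z/2}_eA$ is not a smash product of two $\Z/2$-spectra but the norm of a non-equivariant spectrum, with the flip action, so ``the flat model structure is symmetric monoidal'' does not directly apply. The paper instead cites the results of Stolz and Brun--Dundas--Stolz that the unit $\mathbb{S}\to N^{\Z/2}_eA$ is a $\Z/2$-equivariant flat cofibration whenever $\mathbb{S}\to A$ is an underlying flat cofibration. Second, note that the paper's notion of ``good'' is defined for \emph{real} simplicial spectra (via the Segal subdivision); for the plain simplicial $\Z/2$-spectrum $N_\wedge(M,N^{\Z/2}_eA,A)_\bullet$ you want the corresponding statement about its degeneracies, which is what Lemma~\ref{properness} verifies in the subdivided picture. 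Your conclusion for the ``In particular'' statement is fine once this is in place.
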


\begin{proof}  Since the unit map $\mathbb{S} \to A$ is an underlying flat cofibration, it follows from \cite[Theorem 3.4.22-23]{Sto} and \cite[Theorem 3.2.14]{BrDuSt} that the unit $\mathbb{S} \to N^{\mathbb{Z}/2}_e A$ is a $\Z/2$-equivariant flat cofibration. In particular, the spectrum $N^{\mathbb{Z}/2}_e A$ is flat. It follows that $B_{\wedge}(M, N^{\mathbb{Z}/2}_e A, A)$ and the derived smash product $M \wedge^{\mathbf{L}}_{N^{\mathbb{Z}/2}_e A} A$ are equivalent, essentially by \cite[Lemma 4.1.9]{Sh00} and \cite[Proposition IX.2.3]{EKMM}, which we recall and reprove in Lemma \ref{barderived} below.
\end{proof}

\begin{lemma}\label{barderived} Let $(\mathcal{C}, \wedge, \mathbb{I})$ be a cofibrantly generated simplicial monoidal model category. Suppose that $R$ is a monoid object in $\mathcal{C}$ such that the unit map $\mathbb{I} \to R$ is a cofibration and $R$ is underlying cofibrant in $\mathcal{C}$. Further, let $M$ be a right $R$-module and $N$ be a left $R$-module and assume that $M$ and $N$ are cofibrant as underlying objects of $\mathcal{C}$. Then the two-sided bar construction $B_{\wedge}(M,R,N)$ is weakly equivalent to the derived smash product $M \wedge^{\mathbf{L}}_RN$.
\end{lemma}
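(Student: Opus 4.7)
The plan is to identify $B_{\wedge}(R,R,N)$ as a cofibrant replacement of $N$ in the category of left $R$-modules, and then pass the functor $M \wedge_R (-)$ through it. Since realization commutes with colimits, there is a natural isomorphism $M \wedge_R B_{\wedge}(R,R,N) \cong B_{\wedge}(M,R,N)$, so the lemma follows once this identification is established.

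First I would construct a weak equivalence of left $R$-modules $B_{\wedge}(R,R,N)\to N$. The extra degeneracy $s_{-1}\colon R^{\wedge k}\wedge N \to R^{\wedge k+1}\wedge N$ given by inserting the unit $\mathbb{I}\to R$ in the first position provides a simplicial homotopy between the identity and the composite of the augmentation $B_{\wedge}(R,R,N)\to N$ with the inclusion of $0$-simplices (compare \cite[IX.2.3]{EKMM} or the discussion following \cite[4.1.9]{Sh00}). Since the realization of a simplicial homotopy is an honest homotopy in the simplicial model category $\mathcal{C}$, this map is a weak equivalence, and it is manifestly a map of left $R$-modules.

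Second I would show that $B_{\wedge}(R,R,N)$ is cofibrant as a left $R$-module. The simplicial object $N_{\wedge}(R,R,N)_{\bullet}$ with $k$-simplices $R\wedge R^{\wedge k}\wedge N$ is a simplicial left $R$-module, and its Reedy latching maps in left $R$-modules are of the form $R\wedge L_k\to R\wedge R^{\wedge k}\wedge N$, where $L_k$ is built out of iterated pushout-products of the unit $\mathbb{I}\to R$ smashed with $R^{\wedge k-1}\wedge N$. The pushout-product axiom, together with the hypotheses that $\mathbb{I}\to R$ is a cofibration and that $R$ and $N$ are underlying cofibrant, makes these latching maps cofibrations of $R$-modules. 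Consequently the skeletal filtration of the realization expresses $B_{\wedge}(R,R,N)$ as a sequential colimit of pushouts of cofibrations between cofibrant left $R$-modules (using the tensoring over $\Delta^\bullet /\partial\Delta^\bullet$), so it is itself cofibrant as a left $R$-module.

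Combining these two points, $B_{\wedge}(R,R,N)\to N$ is a cofibrant replacement in left $R$-modules, so
\[
M\wedge^{\mathbf{L}}_R N \simeq M\wedge_R B_{\wedge}(R,R,N) \cong B_{\wedge}(M,R,N),
\]
where the last isomorphism uses that realization commutes with the coequalizer defining the smash product over $R$. The main obstacle is the cofibrancy step: one must check carefully that the simplicial model category hypothesis and the pushout-product axiom are strong enough to guarantee that the Reedy latching maps in \emph{left $R$-modules} (not merely in $\mathcal{C}$) are cofibrations, and that the resulting skeletal filtration of the realization produces a cofibrant $R$-module; this is where the hypothesis that $\mathbb{I}\to R$ is a cofibration, rather than merely that $R$ is underlying cofibrant, is essential.
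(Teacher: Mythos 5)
Your Reedy-cofibrancy argument (producing a cofibrant left $R$-module) and the extra-degeneracy argument (producing the weak equivalence $B_{\wedge}(R,R,N)\to N$) are both correct and are exactly the two steps used in the paper.  The gap is the final identification
\[
M\wedge^{\mathbf{L}}_R N\;\simeq\;M\wedge_R B_{\wedge}(R,R,N).
\]
Having a cofibrant replacement $B_{\wedge}(R,R,N)\to N$ of $N$ in left $R$-modules is \emph{not} by itself enough to compute the derived smash product with an arbitrary $M$.  The derived bifunctor is $M^c\wedge_R N^c$; one may omit the replacement of $M$ only when $M$ is already cofibrant \emph{as a right $R$-module}, but the hypothesis only gives that $M$ is cofibrant in the underlying category $\mathcal{C}$.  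Equivalently: the Quillen-bifunctor formalism tells you that $(-)\wedge_R B_{\wedge}(R,R,N)$ preserves weak equivalences between \emph{cofibrant} right $R$-modules, but gives you no control over the comparison map $M^c\wedge_R B_{\wedge}(R,R,N)\to M\wedge_R B_{\wedge}(R,R,N)$ when $M$ itself is not $R$-cofibrant.

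The paper closes this gap by first running the Reedy-cofibrancy/skeletal-filtration argument for the full three-variable bar construction $B_{\wedge}(M,R,N)$ (not just $B_{\wedge}(R,R,N)$), deducing that $B_{\wedge}(-,R,N)$ is homotopy-invariant in the first variable among objects that are cofibrant in $\mathcal{C}$.  It then chooses a cofibrant replacement $M^c\to M$ in right $R$-modules, observes that $M^c$ is still underlying cofibrant because the unit $\mathbb{I}\to R$ is a cofibration and $R$ is underlying cofibrant, and uses the homotopy invariance to get $B_{\wedge}(M,R,N)\simeq B_{\wedge}(M^c,R,N)\cong M^c\wedge_R B_{\wedge}(R,R,N)$.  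Now \emph{both} tensor factors are cofibrant as $R$-modules, so this unambiguously models $M\wedge^{\mathbf{L}}_R N$.  In short: you need to insert the cofibrant replacement of $M$, and you need the homotopy invariance of the bar construction (which your Reedy cofibrancy already gives you) to remove it again on the $B_{\wedge}(M,R,N)$ side.
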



\begin{proof} The bar construction $B_{\wedge}(M,R,N)$ is the geometric realization of the simplicial object $[k] \mapsto B_{\wedge}(M,R,N)_k= M \wedge R^{\wedge k} \wedge N$. This simplicial object is Reedy cofibrant. Indeed, using that $\mathbb{I} \to R$ is a cofibration and $M$ and $N$ are cofibrant, the iterated pushout-product axiom implies that the latching morphism $L_kB_{\wedge}(M,R,N) \to B_{\wedge}(M,R,N)_k$ is a cofibration. The geometric realization $B_{\wedge}(M,R,N)$ is isomorphic to the colimit $\colim_n \sk_n$, where $sk_0=M \wedge N$ and $\sk_{n-1}$ and $\sk_n$ are related by the pushout square
\[\xymatrix{\Delta^n \otimes L_nB_{\wedge}(M,R,N) \coprod_{\partial \Delta^n \otimes L_nB_{\wedge}(M,R,N)} \partial \Delta^n \otimes B_{\wedge}(M,R,N)_n \ar[r] \ar[d] & \sk_{n-1} \ar[d] \\ \Delta^n \otimes B_{\wedge}(M,R,N)_n \ar[r] & \sk_n, }\]
where the left vertical morphism is a cofibration. Under our assumptions this implies that $B_{\wedge}(M,R,N)$ is cofibrant and homotopy invariant.
Now let
\[M^c \stackrel{\sim}{\longrightarrow} M\]
be a cofibrant replacement of $M$ as a right $R$-module. Since $R$ is underlying cofibrant, so is $M^c$. The homotopy invariance of the bar construction gives that $B_{\wedge}(M,R,N)$  and $B_{\wedge}(M^c,R,N)$ are weakly equivalent. On the other hand $B_{\wedge}(M^c,R,N)$ is isomorphic to $M^c \wedge_R B_{\wedge}(R,R,N)$. Using the same skeletal filtration argument as above one sees that in fact $B_{\wedge}(R,R,N)$ is a cofibrant left $R$-module. Moreover, there is an augmentation $B_{\wedge}(R,R,N)_{\bullet} \longrightarrow N$ and this augmented simplicial object has an extra degeneracy in $\mathcal{C}$, coming from the unit map $\mathbb{I} \to R$. Hence we get a weak equivalence
\[B_{\wedge}(R,R,N) \stackrel{\sim}{\longrightarrow} N\]
of left $R$-modules and we conclude that $B_{\wedge}(R,R,N)$ is a cofibrant replacement of $N$. Finally, this implies that
\[B_{\wedge}(M^c,R,N) \cong M^c \wedge_R B_{\wedge}(R,R,N) \]
models the derived smash product $M \wedge^{\mathbf{L}}_RN$ which finishes the proof.
\end{proof}

We will need to ensure that the geometric realization of $N_{\wedge}^{di}(A;M)$ is homotopically well-behaved.

\begin{lemma}\label{properness}
Suppose $(A,w)$ and $(M,j)$ are flat. Then the real simplicial spectrum $N_{\wedge}^{di}(A;M)$ is good.
\end{lemma}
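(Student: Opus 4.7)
The plan is to reduce goodness to a condition about the degeneracies of the two-sided bar construction, via the edgewise subdivision, and then invoke the compatibility of the norm with the flat model structure. By Proposition \ref{dihedral-two-sided} there is a natural isomorphism of simplicial orthogonal $\Z/2$-spectra
\[
\sd_e N^{di}_{\wedge}(A;M)\cong N_{\wedge}(M, N^{\Z/2}_e A, A)_\bullet.
\]
Under this identification, each double degeneracy $s_{2n+2-i}s_i\colon N^{di}_\wedge(A;M)_{2n+1}\to N^{di}_\wedge(A;M)_{2n+3}$ corresponds to a single degeneracy of the two-sided bar construction of the form
\[
\id_M\wedge\id^{\wedge j}\wedge\eta\wedge\id^{\wedge(n-j)}\wedge\id_A\colon M\wedge (N^{\Z/2}_e A)^{\wedge n}\wedge A\longrightarrow M\wedge (N^{\Z/2}_e A)^{\wedge n+1}\wedge A,
\]
where $\eta\colon \mathbb{S}\to N^{\Z/2}_e A$ is the unit. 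It therefore suffices to show that each such map is an $h$-cofibration of $\Z/2$-spaces after evaluating at every $\Z/2$-representation $V$.

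Following the proof of Corollary \ref{derived smash}, I would appeal to \cite[Theorem 3.4.22--23]{Sto} and \cite[Theorem 3.2.14]{BrDuSt}: since the unit $\mathbb{S}\to A$ is a flat cofibration of underlying orthogonal spectra (as $(A,w)$ is flat), the unit $\eta\colon \mathbb{S}\to N^{\Z/2}_e A$ is a $\Z/2$-equivariant flat cofibration, and in particular $N^{\Z/2}_e A$ is itself a flat $\Z/2$-spectrum. Combined with the flatness of $M$ and $A$, the pushout-product axiom for the flat model structure on $\Sp^{\Z/2}$ then implies that smashing $\eta$ with the flat objects $M\wedge (N^{\Z/2}_e A)^{\wedge j}$ on the left and $(N^{\Z/2}_e A)^{\wedge(n-j)}\wedge A$ on the right yields a flat cofibration of orthogonal $\Z/2$-spectra.

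Finally, since evaluation at a representation $V$ is a left Quillen functor from the flat model structure on $\Sp^{\Z/2}$ to pointed $\Z/2$-spaces, each degeneracy evaluates to a closed cofibration of pointed $\Z/2$-spaces, which in particular is an $h$-cofibration of unpointed $\Z/2$-spaces, as required. The delicate point in this proposal is the interaction of the multiplicative norm with the flat model structure, ensuring both that $\eta$ is a flat cofibration and that the pushout-product axiom is available at the $\Z/2$-equivariant level; once those model-categorical inputs are invoked, the remainder of the argument reduces to routine simplicial bookkeeping through the edgewise subdivision.
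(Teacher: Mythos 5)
Your proposal is essentially correct and follows the same strategy as the paper: both pass through Proposition \ref{dihedral-two-sided} to identify the subdivided dihedral nerve with the two-sided bar construction, identify the degeneracies as inserting the unit $N^{\Z/2}_e\eta$, invoke the flatness of $(A,w)$ and $(M,j)$ together with the pushout-product axiom of \cite{Sto,BrDuSt}, and conclude by observing that flat cofibrations are levelwise $h$-cofibrations. One small imprecision in your last step: evaluation at a representation $V$ is \emph{not} a left Quillen functor out of the \emph{stable} flat model structure on $\Sp^{\Z/2}$, since stable acyclic cofibrations need not evaluate to weak equivalences of spaces. What is actually used is the direct point-set fact that a flat cofibration of orthogonal $\Z/2$-spectra is, by construction, a levelwise $h$-cofibration of $\Z/2$-spaces; that is the fact the paper invokes, and it suffices. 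You should also briefly record that the well-pointedness of the evaluations (which holds because $A$ and $M$ are flat) is what converts a pointed $h$-cofibration into an unpointed one, as required by the definition of goodness.
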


\begin{proof}
Since a flat cofibration of orthogonal $\Z/2$-spectra is a levelwise $h$-cofibration, it is sufficient to show that the degeneracy maps of the subdivision of $N_{\wedge}^{di}(A;M)$ are  flat cofibrations. By the isomorphism of \ref{dihedral-two-sided} these are the degeneracies
\[s_i\colon M\wedge (N^{\Z/2}_eA)^{\wedge k}\wedge A \longrightarrow M\wedge (N^{\Z/2}_eA)^{\wedge k+1}\wedge A\]
of the two-sided Bar construction, which insert the unit $N^{\Z/2}_e\eta\colon \mathbb{S}\cong N^{\Z/2}_e\mathbb{S}\to N^{\Z/2}_eA$ in the $(i+1)$-st component. Since the underlying ring spectrum of $A$ is flat, the unit $\eta\colon \mathbb{S}\to A$ is automatically a flat cofibration of orthogonal spectra, and thus  $N^{\Z/2}_e\eta$ is a flat cofibration of orthogonal $\Z/2$-spectra. Finally, since $(A,w)$ and $(M,j)$ are flat, smashing with them preserves flat cofibrations.
\end{proof}

\subsection{B\"{o}kstedt's model of real topological Hochschild homology}\label{secBok}

Let $I$ be the category of finite sets and injective maps with the $\mathbb{Z}/2$-action $\omega\colon I\to I$ defined in \S\ref{sec:realI}.
The dihedral Bar construction of $I$ is the real simplicial category $N_{\bullet}^{di}I$ with $k$-simplices
\[N^{di}_kI=I^{\times k+1}\]
and the standard faces and degeneracy maps of the cyclic Bar construction for the symmetric monoidal structure $(I,+, 0)$. The real structure on $N_{\bullet}^{di}I$ is defined by the natural transformation
\[
\xymatrix@C=40pt{\omega_k\colon I^{\times k+1}\ar[r]^{\prod_{k+1} \omega}
&
I^{\times k+1}\ar[r]^{1\times \tau_k}
&
I^{\times k+1}
}
\]
where $\tau_k\colon I^{\times {\bf k}}\to I^{\times {\bf k}}$ is induced by the permutation in $\Sigma_k$ that reverses the order. We denote the resulting categories with $\Z/2$-action by $I^{\times 1+{\bf k}}$.
We let $\Delta^{op}\wr N^{di}I$ be the Grothendieck construction of the  $\mathbb{Z}/2$-diagram  $N_{\bullet}^{di}I\colon \Delta^{op}\to Cat$ (see \cite{Thomason}). This category inherits an evident $\mathbb{Z}/2$-action from the  $\mathbb{Z}/2$-structure of $N_{\bullet}^{di}I$ (see \cite[Section 2.5]{Gdiags} for the details).
We remark that for every integer $k\geq 0$ the inclusion $\iota_k\colon I^{\times 1+{\bf k}}\to \Delta^{op}\wr N^{di}I$ is $\mathbb{Z}/2$-equivariant.

\begin{prop}\label{THRofRMCat}
Let $X\colon \Delta^{op}\wr N^{di}I\to\Sp$ be a $\mathbb{Z}/2$-diagram of orthogonal spectra. Then the collection of $\mathbb{Z}/2$-spectra
\[
\THR_k(X)=\hocolim_{I^{1+{\bf k}}}\iota_{k}^\ast X,
\]
together with the simplicial structure induced by $N_{\bullet}^{di}I$, define a real simplicial orthogonal spectrum $\THR_\bullet(X)$. A natural transformation $f\colon X\to Y$ of $\mathbb{Z}/2$-diagrams $\Delta^{op}\wr N^{di}I\to\Sp$ induces a natural map of real simplicial orthogonal spectra
\[f_\ast\colon \THR_\bullet(X)\longrightarrow  \THR_\bullet(Y).\]
\end{prop}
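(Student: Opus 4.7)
The plan is to unpack definitions: everything will follow formally from the fact that $X$ is a $\Z/2$-diagram on the Grothendieck construction $\Delta^{op}\wr N^{di}I$, and from the functoriality of the $\hocolim$ construction for $\Z/2$-diagrams recalled in the Background.

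First I would construct the simplicial structure maps. For a morphism $\alpha\colon [k]\to[n]$ in $\Delta$, the functor $\alpha^{\ast}\colon N^{di}_nI=I^{\times 1+{\bf n}}\to I^{\times 1+{\bf k}}=N^{di}_kI$ induced by the simplicial structure of $N^{di}_\bullet I$ together with the identity on the fibers produces morphisms
\[
([n],\underline{i})\longrightarrow ([k],\alpha^{\ast}(\underline{i}))
\]
in $\Delta^{op}\wr N^{di}I$, natural in $\underline{i}\in I^{\times 1+{\bf n}}$. Applying $X$ yields a natural transformation $\iota_n^{\ast} X\Rightarrow \iota_k^{\ast}X\circ\alpha^{\ast}$, and taking homotopy colimits and post-composing with the canonical map $\alpha^{\ast}_{\ast}\colon \hocolim_{I^{\times 1+{\bf n}}}\iota_k^{\ast}X\circ\alpha^{\ast}\to \hocolim_{I^{\times 1+{\bf k}}}\iota_k^{\ast}X$ defines $\alpha^{\ast}\colon \THR_n(X)\to\THR_k(X)$. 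The simplicial identities are automatic from the composition law in $\Delta^{op}\wr N^{di}I$ and the functoriality of $\hocolim$.

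Next I would define the real structure. Since $\iota_k$ is $\Z/2$-equivariant (with the action $\omega_k$ on $I^{\times 1+{\bf k}}$), the restriction $\iota_k^{\ast}X$ is a $\Z/2$-diagram on $I^{\times 1+{\bf k}}$, whose homotopy colimit $\THR_k(X)$ inherits a canonical $\Z/2$-action $w_k$ by the construction recalled in \S1.1. The identity $w_k^2=\id$ follows from the corresponding identity for the $\Z/2$-diagram $X$.

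The main verification is then the real simplicial condition $\alpha^{\ast}w_k = w_n(\omega(\alpha))^{\ast}$. This reduces to showing that the square
\[
\xymatrix{
\iota_n^{\ast}X\ar[r]^-{w}\ar[d]&\iota_n^{\ast}X\circ\omega_n\ar[d]\\
\iota_k^{\ast}X\circ\alpha^{\ast}\ar[r]_-{w}&\iota_k^{\ast}X\circ\omega_k\circ\alpha^{\ast}
}
\]
of natural transformations of functors $I^{\times 1+{\bf n}}\to\Sp$ commutes, where the vertical maps come from the morphisms $([n],\underline{i})\to([k],\alpha^{\ast}\underline{i})$ in the Grothendieck construction, and the horizontal maps from the $\Z/2$-diagram structure on $X$. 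This is exactly the compatibility encoded in the fact that $X$ is a $\Z/2$-diagram on $\Delta^{op}\wr N^{di}I$: the $\Z/2$-action on the Grothendieck construction carries the morphism $([n],\underline{i})\to([k],\alpha^{\ast}\underline{i})$ to the morphism $([n],\omega_n(\underline{i}))\to([k],(\omega(\alpha))^{\ast}\omega_n(\underline{i}))=([k],\omega_k\alpha^{\ast}(\underline{i}))$, using the identity $\alpha^{\ast}\circ\omega_n=\omega_k\circ(\omega(\alpha))^{\ast}$ on fibers which expresses that $N^{di}_\bullet I$ is a real simplicial category. Passing to $\hocolim$ then yields the desired identity after combining with the functoriality of $\omega_k$ and $\omega_n$ on the homotopy colimits. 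Finally, a natural transformation $f\colon X\to Y$ of $\Z/2$-diagrams restricts to natural transformations $\iota_k^{\ast}f\colon \iota_k^{\ast}X\to\iota_k^{\ast}Y$ of $\Z/2$-diagrams, and these induce maps of $\Z/2$-spectra on hocolims which commute with the simplicial structure maps by naturality of the constructions above. The main obstacle is none other than the bookkeeping in the square displayed, which is however entirely formal given the Grothendieck-construction description of the $\Z/2$-diagram structure.
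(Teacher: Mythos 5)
Your proposal is correct and follows essentially the same route as the paper's proof: you define $\alpha^{\ast}$ as the composite of a map induced by the structural morphisms of the Grothendieck construction with a pushforward, define $w_k$ via the $\Z/2$-diagram structure and the pushforward along $\omega_k$, and reduce the real-simplicial identity to the commutativity of a square expressing the compatibility of the $\Z/2$-structure on $X$ with the morphisms $(\alpha,\id)$ of $\Delta^{op}\wr N^{di}I$ — which is precisely the ``upper left'' square in the paper's $3\times 3$ verification. The paper spells out the remaining squares (compatibility of pushforwards with maps induced by natural transformations, and the functor identity $\omega_k\alpha=\omega(\alpha)\omega_l$), which you correctly flag as formal; this matches the paper's handling of them.
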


\begin{proof}
The spectra $\THR_\bullet(X)$ admit a simplicial structure by \cite{PatchkSagave}. The map induced by a morphism $\alpha\colon [k]\to [l]$ in $\Delta$ is
\[
\alpha^{\ast}\colon \hocolim_{I^{1+{\bf l}}}\iota_{l}^\ast X\xrightarrow{\lambda_\alpha} \hocolim_{I^{1+{\bf l}}}\alpha^{\ast}\iota_{k}^\ast X\xrightarrow{\alpha_\ast} \hocolim_{I^{1+{\bf k}}}\iota_{k}^\ast X
\]
where $\alpha_\ast$ is the pushforward along the functor $\alpha \colon I^{1+{\bf l}}\to I^{1+{\bf k}}$ from the simplicial structure of $N^{di}_\bullet I$. The first map is the natural transformation $\lambda_\alpha\colon X\circ\iota_l\to X\circ\iota_{k}\circ\alpha$ induced by the morphisms $(\alpha,\id)\colon \underline{i}\to \alpha\underline{i}$ in $\Delta^{op}\wr N^{di}I$, for $\underline{i}\in I^{1+{\bf l}}$.
The levelwise involutions are the maps
\[
w_k\colon \hocolim_{I^{1+{\bf k}}}\iota_{k}^\ast X\xrightarrow{w} \hocolim_{I^{1+{\bf k}}}\omega_{k}^\ast\iota_{k}^\ast X\xrightarrow{(\omega_k)_\ast}  \hocolim_{I^{1+{\bf k}}}\iota_{k}^\ast X
\]
where the first map is induced by the natural transformation $w\colon X\to X\circ \omega$ from the $\mathbb{Z}/2$-structure of $X$, and the second map is the pushforward along the involution $\omega_k\colon I^{1+{\bf k}}\to I^{1+{\bf k}}$.

We verify that $w_k\alpha^{\ast}=\omega(\alpha)^{\ast}w_l$, that is that the outer rectangle in the diagram
\[\xymatrix@C=50pt{
\displaystyle
 \hocolim_{I^{1+{\bf l}}}\iota_{l}^\ast X\ar[d]_{w}\ar[r]^-{\lambda_\alpha}
 &
 \displaystyle
 \hocolim_{I^{1+{\bf l}}}\alpha^{\ast}\iota_{k}^\ast X\ar[r]^-{\alpha_\ast}\ar[d]_{w|_{\alpha}}
 &
 \displaystyle\hocolim_{I^{1+{\bf k}}}\iota_{k}^\ast X\ar[d]^{w}
\\
\displaystyle
\hocolim_{I^{1+{\bf l}}}\omega_{l}^\ast\iota_{l}^\ast X\ar[d]^-{(\omega_l)_\ast}\ar[r]_-{\lambda_{\omega(\alpha)}|_{\omega_l}}
&
\displaystyle
 \hocolim_{I^{1+{\bf l}}}\omega_{l}^\ast(\omega(\alpha))^{\ast}\iota_{k}^\ast X\ar[r]^-{\alpha_\ast}\ar[d]_{(\omega_l)_\ast}
&
\displaystyle
\hocolim_{I^{1+{\bf k}}}\omega_{k}^\ast\iota_{k}^\ast X\ar[d]^-{(\omega_k)_\ast}
\\
\displaystyle\hocolim_{I^{1+{\bf l}}}\iota_{l}^\ast X\ar[r]_{\lambda_{\omega(\alpha)}}
&
\displaystyle
 \hocolim_{I^{1+{\bf l}}}\omega(\alpha)^{\ast}\iota_{k}^\ast X\ar[r]_-{\omega(\alpha)_\ast}
&
\displaystyle
\hocolim_{I^{1+{\bf k}}}\iota_{k}^\ast X
}\]
commutes. The vertical map $w|_\alpha$ and the horizontal map $\alpha_\ast$, respectively in and out of the central term, are well-defined since $\omega_{l}^\ast(\omega(\alpha))^{\ast}=\alpha^\ast\omega^{\ast}_k$. The upper right and bottom left squares commute because of the interaction between the push forward maps on homotopy colimits and the maps induced by natural transformations. The bottom right square commutes because $\omega_k\alpha=\omega(\alpha)\omega_l$ as functors $I^{1+{\bf l}}\to I^{1+{\bf k}}$. The upper left square commutes because $X$ is a $\Z/2$-diagram, and the involution of $\Delta^{op}\wr N^{di}I$ sends the morphism $(\alpha,\id)$ to $(\omega(\alpha),\id)$.
\end{proof}

\begin{rem}\label{ptedvsunptedhocolim}
The homotopy colimit of Proposition \ref{THRofRMCat} is taken in the category of orthogonal spectra, and this is computed levelwise as a homotopy colimit of \textit{pointed} spaces. In order to use the results of \S\ref{sec:realI}, which are about the homotopy type of the homotopy colimit taken in the category of \textit{unpointed}  spaces, we need to observe that if $X\colon I\to \Top_\ast$ is a $\Z/2$-diagram where $X_i$ is a well-pointed $\Z/2$-space for every $i\in I$, then the comparison map
\[
|\coprod_{\underline{i}\in N_\bullet I}X_{i_0}|\longrightarrow|\bigvee_{\underline{i}\in N_\bullet I}X_{i_0}|
\]
from the unpointed homotopy colimit to the pointed homotopy colimit is an equivariant equivalence. Indeed, this map is the cofiber of the $h$-cofibration $|N_\bullet I|\hookrightarrow |\coprod_{\underline{i}\in N_\bullet I}X_{i_0}|$, and the source is equivariantly contractible since $I$ has a $\Z/2$-fixed initial object. We will implicitly use the results of \S\ref{sec:realI} for the pointed homotopy colimit, under this well-pointedness assumption.
\end{rem}

\begin{example}\label{Bokstedtmodel1}
Let $(A,w)$ be an orthogonal spectrum with anti-involution and $(M,j)$ an $(A,w)$-bimodule. There is a $\mathbb{Z}/2$-diagram $\Omega^{\bullet}_I(A;M;\mathbb{S})\colon  \Delta^{op}\wr N^{di}I\to\Sp$ defined on objects by sending $([k],\underline{i})$ to
\[\Omega^{\bullet}_I(A;M;\mathbb{S})_{\underline{i}}:=\Omega^{i_0+\dots+i_k}(\mathbb{S}\wedge M_{i_0}\wedge A_{i_1}\wedge\dots\wedge A_{i_k}),\]
see \cite{PatchkSagave}. The associated simplicial spectrum is B\"{o}kstedt's model for topological Hochschild homology \cite{Bok}.
This diagram has a $\mathbb{Z}/2$-structure $\Omega^{\bullet}_I(A;M;\mathbb{S})_{\underline{i}}\to \Omega^{\bullet}_I(A;M;\mathbb{S})_{\omega_k\underline{i}}$ which postcomposes a loop with the map
\[
\xymatrix@C=40pt{M_{i_0}\wedge A_{i_1}\wedge\dots\wedge A_{i_k}\ar[r]^{j\wedge w^{\wedge {\bf k}}}\ar@{-->}[drr]
&
M_{i_0}\wedge A_{i_1}\wedge\dots\wedge A_{i_k}\ar[r]^{1\wedge \tau_k}
&
M_{i_0}\wedge A_{i_k}\wedge\dots\wedge A_{i_1}\ar[d]^{\tau_{i_0}\wedge\tau_{i_k}\wedge\dots\wedge\tau_{i_1}}
\\
&&M_{i_0}\wedge A_{i_k}\wedge\dots\wedge A_{i_1}
}
\]
and precomposes it with
\[\xymatrix@C=60pt{
S^{i_0}\wedge S^{i_1}\wedge\dots\wedge S^{i_k}\ar[r]^{1\wedge \tau_{k}}&
S^{i_0}\wedge S^{i_k}\wedge\dots\wedge S^{i_1}\ar[r]^{\tau_{i_0}\wedge\tau_{i_k}\wedge\dots\wedge\tau_{i_1}}
&
S^{i_0}\wedge S^{i_k}\wedge\dots\wedge S^{i_1},
}
\]
where $\tau_j$ is the permutation of $\{1,\dots, j\}$ that reverses the order. The associated real simplicial spectrum was introduced by Hesselholt and Madsen in \cite[\S 10]{IbLars}.
\end{example}

\begin{defn}\label{Bokstedtmodel}
Let $(M,j)$ be an $(A,w)$-bimodule. The B\"{o}kstedt model of the real topological Hochschild homology of $A$ with coefficients in $M$ is the geometric realization of the real simplicial spectrum \[\THR_\bullet(A;M):=\THR_\bullet\Omega^{\bullet}_I(A;M;\mathbb{S}),\] which we denote by $\THR(A;M)$. When $A=M$ we write $\THR(A)$ for $\THR(A;A)$, or $\THR(A,w)$ if we want to emphasize the anti-involution.
\end{defn}

\begin{example}\label{middlemodel}
Let $(A,w)$ be an orthogonal ring spectrum with anti-involution and $(M,j)$ an $(A,w)$-bimodule. Consider the $\mathbb{Z}/2$-diagram $\Omega^{\bullet}_I(A;M;\Sh)\colon  \Delta^{op}\wr N^{di}I\to\Sp$ defined by sending  $([k],\underline{i})$ to
\[\Omega^{\bullet}_I(A;M;\Sh)_{\underline{i}}:=\Omega^{i_0+\dots+i_k}(\Sh^{i_0} M\wedge \Sh^{i_1}A\wedge\dots\wedge \Sh^{i_k}A),\]
where $\Sh^{i}X$ denotes the shifted spectrum, given by the sequence of $\mathbb{Z}/2$-spaces $(\Sh^{i}X)_n=X_{i+n}$. This diagram has a $\mathbb{Z}/2$-structure given by postcomposing a loop with the map
\[
\xymatrix@C=40pt{\Sh^{i_0} M\wedge \Sh^{i_1}A\wedge\dots\wedge \Sh^{i_k}A\ar[r]^{j\wedge w^{\wedge {\bf k}}}\ar@{-->}[ddr]
&
\Sh^{i_0} M\wedge \Sh^{i_1}A\wedge\dots\wedge \Sh^{i_k}A\ar[d]^{1\wedge \tau_k}
\\
&\Sh^{i_0} M\wedge \Sh^{i_k}A\wedge\dots\wedge \Sh^{i_1}A\ar[d]^{\tau_{i_0}\wedge\tau_{i_k}\wedge\dots\wedge\tau_{i_1}}
\\
&\Sh^{i_0} M\wedge \Sh^{i_k}A\wedge\dots\wedge \Sh^{i_1}A
}
\]
and precomposing it with
\[\xymatrix@C=60pt{
S^{i_0}\wedge S^{i_1}\wedge\dots\wedge S^{i_k}\ar[r]^{1\wedge \tau_{k}}&
S^{i_0}\wedge S^{i_k}\wedge\dots\wedge S^{i_1}\ar[r]^{\tau_{i_0}\wedge\tau_{i_k}\wedge\dots\wedge\tau_{i_1}}
&
S^{i_0}\wedge S^{i_k}\wedge\dots\wedge S^{i_1}
}.
\]
We will show in Theorem \ref{comparison} that the $\THR$ spectrum associated to $\Omega^{\bullet}_I(A;M;\Sh)$ is equivalent to $\THR(A;M)$.
\end{example}


The B\"{o}kstedt model $\THR(A;M)$ is homotopically better behaved than the dihedral Bar construction $B_{\wedge}^{di}(A;M)$, essentially because it incorporates the derived indexed smash product of $\Z/2$-spectra. We will spend the rest of the section proving the homotopy invariance of this model, as stated in Theorem \ref{invBok} below.
Let $(M,j)$ be an $(A,w)$-module. We say that the $\Z/2$-spectrum $(M,j)$ is levelwise well-pointed if for every finite dimensional $\Z/2$-representation $V$ the pointed $\Z/2$-space $M_V$ obtained by evaluating $M$ at $V$ is well-pointed. We say that $(A,w)$ is levelwise very well-pointed if it is levelwise well-pointed, and if the unit map $S^0\to A_0$ is an $h$-cofibration of $\Z/2$-spaces.

\begin{theorem}\label{invBok}
Let $f\colon (A,w)\to (B,\sigma)$ be a stable $\Z/2$-equivalence of levelwise very well-pointed ring spectra with anti-involution, and $\phi\colon (M,j)\to f^{\ast}(N,l)$ a stable $\Z/2$-equivalence of levelwise well-pointed $(A,w)$-modules. Then the induced map
\[(f,\phi)_\ast\colon \THR(A;M)\longrightarrow \THR(B;N)\]
is a stable equivalence of $\mathbb{Z}/2$-spectra.
\end{theorem}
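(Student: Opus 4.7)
The plan is to reduce the theorem to a levelwise statement about the real simplicial spectrum $\THR_\bullet(A;M)$, and then invoke Lemma \ref{good}. So I would prove two things: (a) that $\THR_\bullet(A;M)$ and $\THR_\bullet(B;N)$ are good real simplicial orthogonal spectra, and (b) that for every $k\geq 0$ the map
\[
(f,\phi)_{\ast,k}\colon \THR_k(A;M)\longrightarrow \THR_k(B;N)
\]
is a stable $\Z/2$-equivalence.

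For (a), I would unwind the Segal edgewise subdivision of the real simplicial spectrum. Its iterated degeneracy $s_{2n+2-i}s_i$ at spectrum level $V$ is, by inspection, built from smashing in two copies of the unit map $S^0\to A_0$ into one of the smash factors, followed by $\Omega^{i_0+\cdots+i_{2n+3}}$ and the homotopy colimit over $I^{1+{\bf 2n+3}}$. The very well-pointedness assumption gives that $S^0\to A_0$ is an $h$-cofibration of $\Z/2$-spaces; levelwise well-pointedness of $A$ and $M$ ensures that smashing with the remaining factors, applying $\Omega^{i_0+\cdots+i_{2n+3}}$, and taking the unpointed homotopy colimit (which agrees with the pointed one by Remark \ref{ptedvsunptedhocolim}) all preserve $h$-cofibrations of $\Z/2$-spaces. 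Hence the degeneracies of $\sd_e\THR_\bullet(A;M)$ are $h$-cofibrations of $\Z/2$-spaces, and likewise for $(B,N)$.

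For (b), the key input is an equivariant, multi-variable B\"{o}kstedt approximation lemma: given levelwise well-pointed orthogonal $\Z/2$-spectra $X_0,\dots,X_k$, the $\Z/2$-diagram $(i_0,\dots,i_k)\mapsto\Omega^{i_0+\cdots+i_k}((X_0)_{i_0}\wedge\cdots\wedge (X_k)_{i_k})$ over $I^{1+{\bf k}}$ (with the $\omega_k$-action from the proof of Proposition \ref{THRofRMCat}) has the property that its homotopy colimit preserves stable $\Z/2$-equivalences in each variable. Non-equivariantly, this is the classical B\"{o}kstedt approximation lemma, which identifies this homotopy colimit with the underlying $\Omega^\infty$ of the smash product. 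On $\Z/2$-fixed points, I would apply an iterated version of the cofinality argument from Proposition \ref{cofinal}, restricting the $\Z/2$-diagram along the $\Z/2$-equivariant inclusion $\mathbb{N}^{1+k}\hookrightarrow I^{1+{\bf k}}$ obtained from $k+1$ copies of the functor $\iota$ (symmetrizing around the fixed subfactors) and using Theorem \ref{genuineloop} to identify the restricted homotopy colimit with a regular-representation $\Omega^{\infty\rho}$ of the smash product. Stable equivalences of levelwise well-pointed $\Z/2$-spectra induce equivalences between these, so the map on the homotopy colimit is an equivalence of $\Z/2$-spaces, hence of $\Z/2$-spectra in each spectrum level. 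Applying this to $X_0=M,N$ and $X_1=\cdots=X_k=A,B$ yields (b).

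The main obstacle is verifying the multi-variable equivariant approximation lemma on fixed points, namely that the relevant injection-monoid action is by equivalences and that the inclusion $\mathbb{N}^{1+k}\hookrightarrow I^{1+{\bf k}}$ is equivariantly cofinal in the appropriate homotopical sense. The non-equivariant half is classical, but for the fixed points one must correctly track the interaction between the simultaneous conjugation by the reversing permutations $\tau_{i_j}$, the swap $1\times\tau_k$ of the trailing factors, and the involutions on each $X_j$, then reduce to the single-variable case by a Grothendieck-construction argument analogous to Lemma \ref{gammaandpi}. Once (a) and (b) are in hand, Lemma \ref{good} immediately gives that $(f,\phi)_\ast\colon \THR(A;M)\to\THR(B;N)$ is a stable $\Z/2$-equivalence.
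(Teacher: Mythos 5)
Your proposal matches the paper's proof in both its overall structure and its key ingredients: (a) is exactly the content of Lemma \ref{goodness}, and (b) corresponds to Lemma \ref{ItoN} plus the levelwise verification of the equivariant approximation on geometric fixed points via the injection-monoid and Grothendieck-construction argument. One small imprecision in (a): the degeneracy $s_{2n+2-i}s_i$ is not simply ``smashing in units, then loop, then hocolim over $I^{1+{\bf 2n+3}}$''---it also involves a pushforward along a functor $F\colon I^{1+{\bf 2n+1}}\to I^{1+{\bf 2n+3}}$ that changes the indexing category, and it is the composite (smash with unit, then $F_\ast$) that the paper shows is a Reedy cofibration of the simplicial $\Z/2$-spaces computing the homotopy colimits; this is the nontrivial step, not the mere fact that smashing, looping, and hocolim preserve $h$-cofibrations of input diagrams over a fixed category. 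Apart from this detail (and the choice of $\iota^{\times 1+k}$ in (b) where the paper uses the diagonal and $\iota\times c^{\times q}\times\iota$ inside the proof of Lemma \ref{ItoN}), this is the argument the paper gives.
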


In order to prove this theorem we need to understand the levelwise homotopy type of the real simplicial spectrum $\THR_\bullet(A;M)$. 

\begin{lemma}\label{goodness}
Let $(M,j)$ be a levelwise well-pointed $(A,w)$-bimodule, and suppose that $(A,w)$ is levelwise very well-pointed. Then $\THR_\bullet (A;M)$ is a good real simplicial spectrum. Moreover, if $(M,j)$ and $(A,w)$  are flat, then $\THR_\bullet\Omega^{\bullet}_I(A;M;\Sh)$ is a good real simplicial spectrum.
\end{lemma}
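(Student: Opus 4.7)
The plan is to unwind the definition of \emph{good} and reduce the goodness of $\THR_\bullet(A;M)$ to the very well-pointedness hypothesis on $A$, using that homotopy colimits, loop spaces, and smash products with well-pointed spaces all preserve $h$-cofibrations. Explicitly, we must show that for every $\Z/2$-representation $V$ and every $n\geq 0$, the composite degeneracy $(s_{2n+2-i}s_i)_V\colon \THR_{2n+1}(A;M)_V\to \THR_{2n+3}(A;M)_V$ is an $h$-cofibration of $\Z/2$-spaces. Since $h$-cofibrations are closed under composition, it is enough to check that each single degeneracy $s_j\colon \THR_k(A;M)_V\to \THR_{k+1}(A;M)_V$ is one.

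By the construction in Proposition~\ref{THRofRMCat}, the degeneracy $s_j$ is the pushforward along the functor $s_j\colon I^{1+{\bf k}}\to I^{1+{\bf k+1}}$ (which inserts the object $0\in I$ in position $j$) of the natural transformation on diagrams that smashes in the unit $\eta_0\colon S^0\to A_0$ as a new factor of $M_{i_0}\wedge A_{i_1}\wedge\cdots\wedge A_{i_k}$, all wrapped inside $\Omega^{|\underline{i}|}$. The Bousfield-Kan model of the homotopy colimit, combined with the comparison between the pointed and unpointed versions in Remark~\ref{ptedvsunptedhocolim} (which applies since $A$ and $M$ are levelwise well-pointed), shows that the induced map on homotopy colimits is an $h$-cofibration of $\Z/2$-spaces provided the underlying natural transformation is an $h$-cofibration at each object of $I^{1+{\bf k}}$.

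Checking this pointwise condition reduces, after applying $\Omega^{|\underline{i}|}$, to the assertion that the map
\[M_{i_0}\wedge A_{i_1}\wedge\cdots\wedge A_{i_k}\wedge S^V \longrightarrow M_{i_0}\wedge A_{i_1}\wedge\cdots\wedge A_0\wedge\cdots\wedge A_{i_k}\wedge S^V\]
obtained by smashing with $\eta_0\colon S^0\to A_0$ in the new position is an $h$-cofibration of $\Z/2$-spaces. The very well-pointedness of $A$ is precisely the statement that $\eta_0$ is an $h$-cofibration, and smashing with well-pointed $\Z/2$-spaces preserves $h$-cofibrations; the remaining smash factors are well-pointed by the levelwise well-pointedness of $A$ and $M$ (together with the evident well-pointedness of $S^V$). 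Finally, $\Omega^{|\underline{i}|}$ preserves $h$-cofibrations.

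For the second statement, the argument is formally identical, except that the inserted smash factor is now an entire shifted spectrum $\Sh^j\mathbb{S}$ mapping into $\Sh^jA$. Under the flatness hypothesis, the unit $\eta\colon\mathbb{S}\to A$ is a $\Z/2$-equivariant flat cofibration of orthogonal spectra, hence a levelwise $h$-cofibration of $\Z/2$-spaces, and this property is preserved by $\Sh^j$ and by smashing with the remaining flat (hence levelwise well-pointed) factors. The main technical step demanding care is the precise identification of the degeneracies with insertions of $\eta$, which follows from the simplicial structure recorded in Proposition~\ref{THRofRMCat} together with the cyclic/dihedral combinatorics of the bar construction; the rest is a formal propagation of the $h$-cofibration property through the operations that build $\THR$.
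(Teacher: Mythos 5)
The main idea---trace the composite degeneracy back to the insertion of the unit $S^0\to A_0$, then propagate the $h$-cofibration property through loops, smashes, and homotopy colimits---is the right one, and it is also what the paper does. However, your opening reduction has a genuine gap. You claim that, since $h$-cofibrations compose, it suffices to show that each \emph{single} degeneracy $s_j\colon \THR_k(A;M)_V\to\THR_{k+1}(A;M)_V$ is an $h$-cofibration. But goodness asks that the composite $(s_{2n+2-i}s_i)_V$ be an $h$-cofibration of $\mathbb{Z}/2$-\emph{spaces}, i.e.\ an equivariant $h$-cofibration, and the single degeneracy $s_j$ of a real simplicial object is \emph{not} $\mathbb{Z}/2$-equivariant: from $\omega(\sigma_i)=\sigma_{n-i}$ one gets $w\circ s_i=s_{n-i}\circ w$, so only the composite $s_{2n+2-i}s_i$ commutes with the involution. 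Showing each $s_j$ is a non-equivariant $h$-cofibration and composing would at best yield a non-equivariant $h$-cofibration, which is strictly weaker than what is needed. The paper therefore treats the composed degeneracy directly: it factors $(s_{2n+2-i}s_i)_V$ as an equivariant map induced by inserting units, followed by an equivariant pushforward along the functor $F\colon I^{1+\mathbf{k}}\to I^{1+\mathbf{k+2}}$ that inserts two $0$'s in the correct positions.

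The second place where your argument is thinner than the paper's is the claim that the induced map on homotopy colimits is a $\mathbb{Z}/2$-$h$-cofibration ``provided the underlying natural transformation is an $h$-cofibration at each object.'' Because the target homotopy colimit is indexed over a larger category, the degeneracy is not just $\hocolim$ applied to a natural transformation; it is a natural transformation \emph{followed by} a pushforward along $F$. The paper proves an auxiliary statement for exactly this situation: for an equivariant functor $F\colon A\to B$ which is faithful and injective on objects and a pointwise $h$-cofibration $f\colon X\to F^*Y$ of pointwise well-pointed $\mathbb{Z}/2$-diagrams, the induced maps of Bousfield--Kan simplicial $\mathbb{Z}/2$-spaces are Reedy cofibrations. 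The Reedy analysis (splitting degenerate and non-degenerate simplices of $N_\bullet A$ and of $N_\bullet B$ relative to $N_\bullet F(A)$) is the real content, and is what lets one conclude that the map on realizations is an equivariant $h$-cofibration. Your ``formal propagation'' remark hides this step, which is not automatic. If you rework the proof to treat $s_{2n+2-i}s_i$ directly as an equivariant map and supply the Reedy-cofibration argument for the pushforward-plus-insertion, you will recover the paper's proof.
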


\begin{proof}
We prove that the composition of degeneracies $s_2s_0\colon \THR_1 (A;M)\to \THR_3 (A;M)$ is a levelwise $h$-cofibration of $\mathbb{Z}/2$-spectra, the argument for the other degeneracies is similar. This map can be written, at a $\Z/2$-representation $V$, as the composite
\[
\xymatrix{
\displaystyle\hocolim_{I\times I}\Omega^{i_0+i_2}(S^V\wedge M_{i_0}\wedge A_{i_2})\ar[r]&\displaystyle \hocolim_{I\times I}\Omega^{i_0+i_2}(S^V\wedge M_{i_0}\wedge A_0\wedge A_{i_2}\wedge A_0)
\ar[d]^-{F_\ast}
\\
&\displaystyle \hocolim_{I\times I\times I\times I}\Omega^{i_0+i_1+i_2+i_3}(S^V\wedge M_{i_0}\wedge A_{i_1}\wedge A_{i_2}\wedge A_{i_3})
}
\]
where the second map is the pushforward by the functor $F\colon I\times I\to I\times I\times I\times I$ given by $F(i_0,i_2) = (i_0,0,i_2,0)$, and the first map is induced by smashing with the unit map $S^0\to A_0$. Since $S^0\to A_0$ is an $h$-cofibration, the $A_i$ and $M_i$ are well-pointed $\Z/2$-spaces, and loop spaces preserve $h$-cofibrations
, the map
\[\Omega^{i_0+i_2}(S^V\wedge M_{i_0}\wedge S^0\wedge A_{i_2}\wedge S^0)\longrightarrow\Omega^{i_0+i_2}(S^V\wedge M_{i_0}\wedge A_0\wedge A_{i_2}\wedge A_0)\]
is a pointwise $h$-cofibration of $\mathbb{Z}/2$-diagrams. That is, it is an equivariant $h$-cofibration with respect to the action of the stabilizer group of the object $(i_0,i_2)\in I\times I$. We show more generally that if $F\colon A\to B$ is an equivariant functor which is faithful and injective on objects, $Y\colon B\to \Top_\ast$ and $X\colon A\to \Top_\ast$ are $\Z/2$-diagrams, and $f\colon X\to F^\ast Y$ is a pointwise $h$-cofibration of pointwise well-pointed $\Z/2$-diagrams, then the maps
\[
\bigvee_{\underline{a}\in N_{k}A}X_{a_0}\xrightarrow{\vee f_{a_o}} \bigvee_{\underline{a}\in N_{k}A}Y_{F(a_0)}\stackrel{F_\ast}{\longrightarrow} \bigvee_{\underline{b}\in N_{k}B}Y_{b_0}
\]
are Reedy cofibration of simplicial $\Z/2$-spaces. The composite will then induce an h-cofibration on geometric realizations between the homotopy colimits, concluding the proof. The Reedy conditions for these maps amount to showing that the maps
\[
\bigvee_{\underline{a}\in (N_{k}A)^{d}}Y_{F(a_0)}\vee \bigvee_{\underline{a}\in (N_{k}A)^{nd}}X_{a_0}\xrightarrow{\id\vee \bigvee f_{a_0}} \bigvee_{\underline{a}\in N_{k}A}Y_{F(a_0)}
\ \ \ \ \ ,\ \ \ \ \ \ 
 \bigvee_{\underline{b}\in (N_{k}B)^{nd}\cup N_{k}F(A)}Y_{b_0}{\longrightarrow} \bigvee_{\underline{b}\in N_{k}B}Y_{b_0}
\]
are $h$-cofibrations of $\Z/2$-spaces, where $(-)^{nd}$ and $(-)^{d}$ denote respectively the subsets of non-degenerate and degenerate simplices. It is easy to verify that the indexed wedge $\bigvee_{\underline{a}} f_{a_0}$ of pointwise $h$-cofibrations is an $h$-cofibration of \textit{pointed} $\Z/2$-spaces. Thus the first map is a pointed $h$-cofibration. The second map the inclusion of a wedge summand, and therefore it is also a pointed $h$-cofibration. Since all the $\Z/2$-spaces involved are well-pointed, these are also unpointed $h$-cofibrations.

An analogous argument shows that $\THR_\bullet\Omega^{\bullet}_I(A;M;\Sh)$ is a good real simplicial spectrum. One will need to observe that the shift functor $\Sh^i$ preserves flat $\Z/2$-spectra. This is done in Appendix \ref{shiftflat}.
\end{proof}

\begin{lemma}\label{ItoN}
Let $(M,j)$ be a levelwise well-pointed $(A,w)$-bimodule, and suppose that $(A,w)$ is levelwise very well-pointed.
The functor $\iota\colon\mathbb{N}\to I$ that sends $n$ to $2n+1$, composed with the diagonal $\Delta\colon I\to I^{\times 1+{\bf k}}$, induces a natural stable equivalence of $\mathbb{Z}/2$-spectra
\[
\hocolim_{\mathbb{N}}\Omega^{n\rho(1+{\bf k})}(\mathbb{S}\wedge M_{n\rho}\wedge A^{\wedge {\bf k}}_{n\rho})\stackrel{\simeq}{\longrightarrow}\THR_k (A;M),
\]
where the involution on ${\bf k}=\{1,\dots,k\}$ reverses the order, and $\rho$ is the regular representation of $\mathbb{Z}/2$.

If $(A,w)$ and $(M,j)$ are moreover flat, the functors $\iota$ and $\Delta$ also induce a natural stable equivalence of $\mathbb{Z}/2$-spectra
\[
\hocolim_{\mathbb{N}}\Omega^{n\rho(1+{\bf k})}(\Sh^{n\rho}M\wedge (\Sh^{n\rho}A)^{\wedge {\bf k}})\stackrel{\simeq}{\longrightarrow}\THR_k\Omega^{\bullet}_I(A;M;\Sh).
\]
\end{lemma}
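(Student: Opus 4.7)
The plan is to factor the natural map through the intermediate homotopy colimit $\hocolim_I \Delta^*(\Omega^\bullet_I(A;M;\mathbb{S})|_k)$ along the diagonal $\Delta\colon I \to I^{\times 1+\bf k}$, which is $\Z/2$-equivariant between $I$ with its involution $\omega$ and $I^{\times 1+\bf k}$ with $\omega_k = (1 \times \tau_k)\omega^{\times(1+k)}$, since the component-permutation $\tau_k$ acts trivially on diagonal tuples. The two resulting maps $\hocolim_\mathbb{N} \iota^* \Delta^* \to \hocolim_I \Delta^*$ and $\hocolim_I \Delta^* \to \hocolim_{I^{\times 1+\bf k}}$ are handled by separate cofinality arguments.

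First, I would identify the composite restriction to $\mathbb{N}$. Since $\Delta \circ \iota$ sends $n$ to $(2n+1, \ldots, 2n+1)$, the restricted $\Z/2$-diagram takes value $\Omega^{(1+k)(2n+1)}(\mathbb{S} \wedge M_{2n+1} \wedge A^{\wedge \bf k}_{2n+1})$ with involution given by conjugation by $\tau_{2n+1}$ on each source sphere factor and target space, combined with $\tau_k$ permuting the smash product factors and the bimodule involution $j$ and anti-involution $w$. As in the identification step in the proof of Theorem \ref{genuineloop}, this is equivariantly $\Omega^{(n\rho+1)(1+k)}(\mathbb{S} \wedge M_{n\rho+1} \wedge A^{\wedge \bf k}_{n\rho+1})$, and suspension by the trivial representation produces a levelwise equivariant equivalence with $\Omega^{n\rho(1+k)}(\mathbb{S} \wedge M_{n\rho} \wedge A^{\wedge \bf k}_{n\rho})$, yielding the claimed source after taking $\hocolim_\mathbb{N}$.

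For the cofinality $\iota\colon \mathbb{N} \to I$, I would invoke Proposition \ref{cofinal} applied levelwise in each $\Z/2$-representation to the real $I$-diagram $\Delta^*\Omega^\bullet_I(A;M;\mathbb{S})|_k$. Its underlying $I$-space is semi-stable because it is constructed from the smash product of orthogonal spectra, and for every $f \in \mathcal{M}R^{\Z/2}$ the action on the homotopy colimit is by a loop map up to homotopy, just as at the end of the proof of Theorem \ref{genuineloop}, hence an equivalence on fixed points by semi-stability of orthogonal $\Z/2$-spectra.

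The remaining step is to prove that $\Delta_*\colon \hocolim_I \Delta^* \to \hocolim_{I^{\times 1+\bf k}}$ is a stable equivalence via an equivariant B\"okstedt cofinality argument. Non-equivariantly this rests on connectivity estimates for the transition maps of the $I^{\times 1+\bf k}$-diagram, which grow arbitrarily large as the coordinates increase because $A$ and $M$ are levelwise well-pointed; equivariantly one needs the analogous estimates on $\Z/2$-fixed points, and this is where the hypothesis that $(A,w)$ is levelwise \emph{very} well-pointed enters, in conjunction with the edgewise subdivision, to reduce fixed-point questions to equivariant connectivity for the spheres $S^{n\rho}$. This B\"okstedt step is the main technical obstacle, since the order-reversal $\tau_k$ on the smash product factors interacts non-trivially with the involution $w$. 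The second assertion for $\THR_\bullet\Omega^\bullet_I(A;M;\Sh)$ follows by the same three-step argument; the added flatness hypothesis is used through Appendix \ref{shiftflat} to ensure that $\Sh^i$ preserves flat $\Z/2$-spectra, so that the semi-stability and connectivity estimates apply to the shifted diagram.
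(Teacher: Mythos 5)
Your factorization through $\hocolim_I \Delta^*$ is a genuinely different decomposition from the one the paper uses, and the gap is in the second step, which you yourself flag as ``the main technical obstacle.'' The paper does not prove anything like an equivariant B\"okstedt approximation for the diagonal $\Delta\colon I \to I^{\times 1+\bf k}$; instead it avoids that entirely. After reducing the non-equivariant statement to the cofinality of $c^{\times 1+\bf k}$ and semi-stability, it attacks the fixed-points side by observing that $(I^{\times 1+{\bf 2q+1}})^{\Z/2}\cong I^{\Z/2}\times I^{\times q}\times I^{\Z/2}$, constructs cofinal replacement functors ($\gamma$, $d$, $\pi$, $p$ as in Lemma \ref{gammaandpi}) into a Grothendieck construction over the product monoid $\mathcal{M}R^{\Z/2}\times\mathcal{M}^{\times q}\times\mathcal{M}R^{\Z/2}$, and applies Quillen's Theorem~B, reducing everything to triviality of the monoid actions on homotopy groups. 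The point of this route is precisely to circumvent the connectivity-estimate approach you sketch. Your proposal replaces the one hard step the paper carries out by a different hard step that you identify but do not carry out; an equivariant B\"okstedt lemma of the required generality is not available off the shelf in this paper, and the interaction of $\tau_k$ with $w$ that you note is exactly where such an argument would need real work.

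There is also a smaller issue in your first step: Proposition \ref{cofinal} requires the underlying $I$-space of $\Delta^*\Omega^\bullet_I(A;M;\mathbb{S})|_k$ to be semi-stable and requires the $\mathcal{M}R^{\Z/2}$-action to be by fixed-point equivalences. The diagram $i\mapsto \Omega^{(1+k)i}(\mathbb{S}\wedge M_i\wedge A_i^{\wedge {\bf k}})$ is not the $I$-space associated to a single orthogonal spectrum (the loop exponent is $(1+k)i$, not $i$, and $M_i\wedge A_i^{\wedge k}$ is the level-$i$ pointwise smash, not the level of a smash product of spectra), so the semi-stability and the loop-map claim you appeal to from Theorem \ref{genuineloop} do not transfer verbatim and would need a separate verification. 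The paper's use of the product category $\mathbb{N}^{\times 1+\bf k}$ and $c^{\times 1+\bf k}$ sidesteps this by never applying Proposition \ref{cofinal} to the diagonal pullback.
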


\begin{proof}
The argument is analogous to the one of \ref{genuineloop}. The first map is the composite of the stabilization with the trivial representation and the map
\[
(\Delta\iota)_\ast\colon \hocolim_{\mathbb{N}}\Omega^{(n\rho+1)(1+{\bf k})}(\mathbb{S}\wedge M_{n\rho+1}\wedge A^{\wedge {\bf k}}_{n\rho+1})\stackrel{}{\longrightarrow}\THR_k (A;M).
\]
We show that $(\Delta\iota)_\ast$ is an equivalence of $\Z/2$-spectra in each simplicial level. Non-equivariantly it is equivalent to the map
\[
c_{\ast}^{\times k+1}\colon\hocolim_{\mathbb{N}^{\times 1+{\bf k}}}\Omega^{n(1+{\bf k})}(\mathbb{S}\wedge M_{n}\wedge A^{\wedge {\bf k}}_{n})\stackrel{\simeq}{\longrightarrow}\THH_k (A;M)
\]
induced by the functor $c\colon \mathbb{N}\to I$ of \ref{cofinal}, which is a level-equivalence of spectra because $A$ and $M$ are orthogonal spectra, and hence semi-stable. We show that the evaluation of $(\Delta\iota)_\ast$ at any $\Z/2$-representation $V$ is an equivalence on fixed points. We treat the case where $k=2q+1$ is odd, the even case is analogous. The fixed points category of $I^{\times 1+{\bf 2q+1}}$ is isomorphic to $I^{\Z/2}\times I^{\times q}\times I^{\Z/2}$, and there is a homotopy commutative diagram
\[
\xymatrix@C=70pt{
\hocolim_{\mathbb{N}^{\times q+2}}(\iota^{\times q+2})^\ast X^{\Z/2}\ar[r]^-{(\iota^{\times 1+{\bf 2q+1}})^{\Z/2}_\ast}
&
\THR_{2q+1} (A;M)^{\Z/2}_V
\\
\hocolim_{\mathbb{N}^{\times q+2}}(\iota\times c^{\times q}\times\iota)^\ast X^{\Z/2}\ar[ur]_-{(\iota\times c^{\times q}\times\iota)_\ast}\ar[u]_-{\simeq}^-{\id\times\lambda^{\times q}\times \id}
}
\]
where $X_{\underline{i}}=\Omega^{i_0+\dots+i_{2q+1}}(S^V\wedge M_{i_0}\wedge A_{i_1}\wedge\dots\wedge A_{i_{2q+1}})$. Here $\lambda\colon c\to \iota$ is the natural transformation of \ref{cofinal}, which induces an ind-equivalence on the homotopy colimit systems.

We need to show that $(\iota\times c^{\times q}\times\iota)_\ast$ is an equivalence. We recall from \ref{gammaandpi} that there are cofinal functors $\pi\colon \mathcal{M}R^{\Z/2}\wr (I^o/\omega)\to I^{\Z/2}$ and $\gamma\colon \mathbb{N}\to I^o/\omega$. Similarly, there are cofinal functors $p\colon  \mathcal{M}\wr (I/\omega)\to I$ and $d\colon \mathbb{N}\to I/\omega$ where $I/\omega$ has objects the injections $\alpha\colon\underline{n}\rightarrowtail \mathbb{N}$ and $ \mathcal{M}=Inj(\mathbb{N},\mathbb{N})$ acts by postcomposition. The maps $p$ and $d$ send $\alpha$ to $n$ and $n$ to $\underline{n}\subset\mathbb{N}$, respectively, and they are right homotopy cofinal essentially by \cite[2.2.9]{Sh00} (or by an argument completely analogous to \ref{gammaandpi}).
Therefore we obtain a right homotopy cofinal functor
\[
(\pi\times p^{\times q}\times\pi)\colon G \longrightarrow I^{\Z/2}\times I^{\times q}\times I^{\Z/2}\cong(I^{\times 1+{\bf 2q+1}})^{\Z/2}
\]
where $G=(\mathcal{M}R^{\Z/2}\times\mathcal{M}^{\times q}\times \mathcal{M}R^{\Z/2})\wr ((I^o/\omega)\times (I/\omega)^{\times q}\times (I^o/\omega))$ is the Grothendieck construction with respect to the componentwise action of the product monoid $\underline{\mathcal{M}}=(\mathcal{M}R^{\Z/2}\times\mathcal{M}^{\times q}\times \mathcal{M}R^{\Z/2})$. This gives a commutative diagram
\[
\xymatrix@C=10pt{\displaystyle
\hocolim_{\mathbb{N}^{\times q+2}}(\iota\times c^{\times q}\times\iota)^\ast X^{\Z/2}\ar[r]^-{(\iota\times c^{\times q}\times\iota)_\ast}\ar[d]_{(\gamma\times d^{\times q}\times\gamma)_\ast}^{\simeq}
&
\displaystyle
\hocolim_{I^{\Z/2}\times I^{\times q}\times I^{\Z/2}} X^{\mathbb{Z}/2}
\\
\displaystyle
\hocolim_{(I^o/\omega)\times(I/\omega)^{\times q}\times (I^o/\omega)}(\pi\times p^{\times q}\times\pi)^\ast X^{\mathbb{Z}/2}\ar[r]
&
\displaystyle
\hocolim_{G}(\pi\times p^{\times q}\times\pi)^\ast X^{\mathbb{Z}/2}\ar[u]_{(\pi\times p^{\times q}\times\pi)_\ast}^{\simeq}\ar[r]
&
B\underline{\mathcal{M}}\simeq \ast \rlap{\ .}
}
\]
If we can show that  $\underline{\mathcal{M}}$ acts by equivalences on $\hocolim_{\mathbb{N}^{\times q+2}}(\iota\times c^{\times q}\times\iota)^\ast X^{\Z/2}$, Lemma \cite[page 98]{Quillen} will show that the bottom sequence is a fiber sequence, which concludes the proof.

We recall that $\hocolim_{\mathbb{N}^{\times q+2}}(\iota\times c^{\times q}\times\iota)^\ast X^{\Z/2}$ is the space
\[
\hocolim_{\mathbb{N}^{\times q+2}}(\Omega^{n_0\rho+1+\rho\otimes(n_1+\dots+n_q)+n_{q+1}\rho+1}(S^V\wedge M_{n_0\rho+1}\wedge A_{n_1}^{\wedge\mathbb{Z}/2}\wedge\dots\wedge A_{n_q}^{\wedge\mathbb{Z}/2}\wedge A_{n_{q+1}\rho+1}))^{\mathbb{Z}/2},
\]
and that the monoid $\underline{\mathcal{M}}$ acts componentwise. By a simple induction argument we can reduce our claim to showing that $\mathcal{M}R^{\mathbb{Z}/2}$ and $\mathcal{M}$ act trivially on the homotopy groups of spaces of the form
\[
\hocolim_{\mathbb{N}}(\Omega^{n\rho+1}E_{n\rho+1})^{\mathbb{Z}/2}\ \ \ \ \ \ \ \ \ \ \mbox{and} \ \ \ \ \ \ \ \ \ \ \ \
\hocolim_{\mathbb{N}}(\Omega^{n\rho}E_{n}^{\wedge \mathbb{Z}/2})^{\mathbb{Z}/2},
\]
respectively, where $E$ is an orthogonal $\mathbb{Z}/2$-spectrum. The action on $\mathcal{M}R^{\mathbb{Z}/2}$ on the first space is trivial on homotopy groups by semi-stability of $E$ (see \cite[3.4]{Schwede}), and it acts by loop maps by the argument of Theorem \ref{genuineloop}. We are not quite able to reduce the triviality of the $\mathcal{M}$-action on the second space to a statement about semi-stability, so we adapt the argument of \cite[3.4]{Schwede} to our situation.

The action of an injection $f\colon\mathbb{N}\to \mathbb{N}$ on the homotopy groups of the second homotopy colimit is defined as follows.
Let
\[
\alpha\colon S^{t+n\rho}\longrightarrow E_{n}^{\wedge \mathbb{Z}/2}\]
be a continuous equivariant map. The restriction of $f$ to the subset $\underline{n}=\{1,\dots,n\}$ defines an inclusion $f\colon \underline{n}\rightarrowtail  \underline{m}$
for some integer $m\geq n$, and thus an equivariant isometric embedding
\[
(f\otimes\rho)\colon n\rho\longrightarrow m\rho.
\]
This further determines an isomorphism $S^{m\rho}\cong S^{(m-n)\rho}\wedge S^{n\rho}$.
The action of $f$ on the homotopy class of $\alpha$ is the homotopy class of the map $f_\ast \alpha$ defined as the composite
\[
\xymatrix@R=20pt{
f_\ast\alpha\colon
S^{u+m\rho}
\ar[r]^-{\cong}
&
 S^{(m-n)\rho}\wedge S^{t+n\rho}
\ar[r]^-{\id\wedge \alpha}
&
S^{(m-n)\rho}\wedge E_{n}^{\wedge \mathbb{Z}/2}\cong (S^{(m-n)}\wedge E_{n})^{\wedge \mathbb{Z}/2}\ar[r]^-{\sigma\wedge \sigma}
&E_{m}^{\wedge \mathbb{Z}/2}
 }
\]
where the map $\sigma$ is the structure map of the orthogonal $\mathbb{Z}/2$-spectrum $E$. Thus we need to show that $f_\ast\alpha$ and $\alpha$ are stably homotopic. Let $i_1\colon m\to m+m$ be the standard inclusion, so that
\[
(i_1\otimes\rho)\colon m\rho\longrightarrow (m+m)\rho \cong m\rho \oplus m\rho
\]
is the first summand inclusion, and $(i_1)_\ast$ is suspension by $S^{m\rho}$.
By \cite[3.4]{Schwede} we can choose an automorphism $\gamma$ of $\mathbb{R}^m$ such that $\gamma\circ f\colon\mathbb{R}^n\to\mathbb{R}^m$ is the standard inclusion $j$, and $i_1\circ\gamma$ and $i_1$ are homotopic isometric embeddings. Then ($[-]$ denotes the stable homotopy class)
\[
[f_\ast \alpha]
=[(i_1)_\ast f_\ast \alpha]
=[(i_1\circ f)_\ast \alpha]
=[(i_1\circ\gamma\circ f)_\ast \alpha]
=[(i_1\circ j)_\ast \alpha]
=[\alpha]
\]
where the third equality holds since if $H_t$ is a one parameter family of isometric embeddings from $i_1$ to $i_1\circ\gamma$, then $(H_t)_\ast$ defines an equivariant homotopy. Thus $f$ acts trivially on the homotopy groups based at the canonical basepoint. Moreover it acts by loop maps, since the diagram
\[
\xymatrix@C=50pt@R=15pt{
\displaystyle\hocolim_{n\in\mathbb{N}}\Omega^{n\rho}(E_{n}^{\wedge \mathbb{Z}/2})\ar[r]^-{f\cdot (-)}\ar[dd]_-{\simeq}\ar[dr]_-{f\cdot (-)}
&
\displaystyle\hocolim_{n\in\mathbb{N}}\Omega^{n\rho}(E_{n}^{\wedge \mathbb{Z}/2})
\\
&\displaystyle\hocolim_{n\geq f(1)}\Omega^{n\rho}(E_{n}^{\wedge \mathbb{Z}/2})\ar[d]_-{\simeq}\ar[u]_-{\simeq}
\\
\Omega\displaystyle\hocolim_{n\in\mathbb{N}}\Omega^{\sigma+(n-1)\rho}(E_{n}^{\wedge \mathbb{Z}/2})\ar[r]^-{\Omega f_1\cdot (-)}
&
\Omega\displaystyle\hocolim_{n\geq f(1)}\Omega^{\sigma+(n-1)\rho}(E_{n}^{\wedge \mathbb{Z}/2})\rlap{\ ,}
}
\]
commutes, where $\sigma\subset\rho$ is the sign representation, the left vertical map is induced by the splitting $S^\rho\cong S^1\wedge S^\sigma$ of the first copy of $\rho$ in $n\rho$, and the lower right vertical map by the same splitting for the copy of  $\rho$ in $n\rho$ indexed by $f(1)\in \underline{n}$. The map $f_1\colon \underline{n}-1\to \underline{f\cdot n}-f(1)$ is the restriction of $f$, and $f_1\cdot(-)$ is defined in a manner completely analogous to $f\cdot (-)$.

The proof for the spectrum $\THR_k\Omega^{\bullet}_I(A;M;\Sh)$ is analogous. One uses the argument above to reduce the proof to the triviality of the action of $f\in \mathcal{M}R^{\Z/2}$ on the fixed points space of $\hocolim_{n\in\mathbb{N}}\Omega^{n\rho}((\Sh^{n}E)^{\wedge \Z/2})_V$, for some flat orthogonal $\Z/2$-spectrum $E$. Since $E$ is flat, the canonical map
\[
\hocolim_{n\in\mathbb{N}}\Omega^{n\rho}((\Sh^{n}E)^{\wedge \Z/2})\stackrel{\simeq}{\longrightarrow} \hocolim_{n\in\mathbb{N}}\Omega^{n\rho}\Sh^{n\rho}(E^{\wedge \Z/2})
\]
is an equivalence. Under this equivalence the action of $f$
sends a map $\alpha\colon S^{n\rho}\to\Sh^{n\rho}(E^{\wedge \Z/2})_{V}= (E^{\wedge \Z/2})_{n\rho+V}$ to the composite
\[
\xymatrix@R=20pt{
f_\ast\alpha\colon
S^{m\rho}
\ar[r]^-{\cong}
&
 S^{(m-n)\rho}\wedge S^{n\rho}
\ar[r]^-{\id\wedge \alpha}
&
S^{(m-n)\rho}\wedge (E^{\wedge \Z/2})_{n\rho+V}\ar[r]^-{}
&(E^{\wedge \Z/2})_{m\rho+V}
 }
\]
where the last map is the structure map of the orthogonal spectrum $E^{\wedge \Z/2}$. This is trivial on homotopy groups by a similar argument.
\end{proof}

\begin{proof}[Proof of \ref{invBok}]
The real simplicial spectra $\THR_\bullet(A;M)$ and $\THR_\bullet(B;N)$ are good by \ref{goodness}, and it is thus sufficient to show that the map
\[
\hocolim_{\underline{i}\in I^{\times 1+{\bf 2k+1}}}\Omega^{i_0+\dots+i_{2k+1}}(\mathbb{S}\wedge M_{i_0}\wedge A_{i_1}\wedge\dots\wedge A_{i_{2k+1}})\longrightarrow\hocolim_{\underline{i}\in I^{\times 1+{\bf 2k+1}}}\Omega^{i_0+\dots+i_{2k+1}}(\mathbb{S}\wedge N_{i_0}\wedge B_{i_1}\wedge\dots\wedge B_{i_{2k+1}})
\]
is an equivalence for every integer $k\geq 0$. This map is naturally equivalent to the map
\[
\hocolim_{\mathbb{N}}\Omega^{n\rho \otimes (k\rho+2)}(\mathbb{S}\wedge M_{n\rho}\wedge A^{\wedge {\bf 2k+1}}_{n\rho})\longrightarrow \hocolim_{\mathbb{N}}\Omega^{n\rho \otimes (k\rho+2)}(\mathbb{S}\wedge N_{n\rho}\wedge B^{\wedge {\bf 2k+1}}_{n\rho})
\]
by \ref{ItoN}, where the action on ${\bf 2k+1}$ reverses the order. This is a non-equivariant weak equivalence by \cite[3.1.2]{Sh00}. We verify that it also induces a weak equivalence on geometric fixed points. The natural transformation $\Phi^{\Z/2}\Omega^V\to \Omega^{V^{\Z/2}}\Phi^{\Z/2}$ is a stable equivalence of spectra. Moreover, the fixed points of the representation $n\rho \otimes (k\rho+2)$ is
\[
(n\rho \otimes (k\rho+2))^{\Z/2}\cong (nk\rho \otimes \rho)^{\Z/2}+(2n\rho)^{\Z/2}\cong 2nk+2n=2n(k+1).
\]
Thus on geometric fixed points the map above is equivalent to
\[
\hocolim_{\mathbb{N}}\Omega^{2n(k+2)}(\mathbb{S}\wedge M_{n\rho}^{\Z/2}\wedge A^{\wedge k}_{2n}\wedge A^{\Z/2}_{n\rho})
\longrightarrow
\hocolim_{\mathbb{N}}\Omega^{2n(k+2)}(\mathbb{S}\wedge N_{n\rho}^{\Z/2}\wedge B^{\wedge k}_{2n}\wedge B^{\Z/2}_{n\rho}).
\]
The diagonal map $\mathbb{N}\to \mathbb{N}^{\times k+2}$ induces an equivalence between this map and
\[
\xymatrix{
\displaystyle\hocolim_{n_0,\dots,n_{k+1}\in \mathbb{N}}\Omega^{n_0+2(n_1+\dots+n_k)+n_{k+1}}(\mathbb{S}\wedge M_{n_0\rho}^{\Z/2}\wedge A_{2n_1}\wedge\dots\wedge A_{2n_k} \wedge A^{\Z/2}_{n_{k+1}\rho})\ar[d]\\
\displaystyle
\hocolim_{n_0,\dots,n_{k+1}\in\mathbb{N}}\Omega^{n_0+2(n_1+\dots+n_k)+n_{k+1}}(\mathbb{S}\wedge N_{n_0\rho}^{\Z/2}\wedge B_{2n_1}\wedge\dots\wedge B_{2n_k} \wedge B^{\Z/2}_{n_{k+1}\rho}).
}
\]
A simple inductive argument shows that this map is an equivalence, provided  we can prove that for every well-pointed space $X$ and equivalence of levelwise well-pointed $\Z/2$-spectra $A\to B$, the maps
\[
\hocolim_{n\in\mathbb{N}}\Omega^{2n}(X\wedge  A_{2n})\longrightarrow \hocolim_{n\in\mathbb{N}}\Omega^{2n}(X\wedge  B_{2n})
\]
\[
\hocolim_{n\in\mathbb{N}}\Omega^{n}(X\wedge  A^{\mathbb{Z}/2}_{n\rho})\longrightarrow \hocolim_{n\in\mathbb{N}}\Omega^{n}(X\wedge  B^{\mathbb{Z}/2}_{n\rho})
\]
are equivalences of spaces. The first map is an equivalence because smashing with a well-pointed space preserves stable equivalences of spectra. The second map is the infinite loop space of the map of the geometric fixed points of the map of $\Z/2$-spectra $X\wedge A\to X\wedge B$, and smashing with a well-pointed space preserves stable equivalences of $\Z/2$-spectra.
\end{proof}

%
%

\subsection{The comparison of the B\"{o}kstedt model and the dihedral Bar construction}\label{seccomp}

In this section we will show that the models for real topological Hochschild homology previously defined in \S\ref{secTHR}  give rise to equivalent $\mathbb{Z}/2$-spectra, under suitable flatness conditions.

\begin{theorem}\label{comparison}
Let $(A,w)$ be a flat ring spectrum with an anti-involution and $(M,j)$ a flat $(A,w)$-bimodule (see Definition \ref{defflat}). Then there is a natural zig-zag of stable equivalences of $\Z/2$-spectra
\[\THR(A;M)\simeq B^{di}_\wedge(A;M).\]
\end{theorem}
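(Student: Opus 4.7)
The plan is to introduce the intermediate real simplicial orthogonal spectrum $Y_\bullet:=\THR_\bullet\Omega^\bullet_I(A;M;\Sh)$ from Example~\ref{middlemodel} and to construct a zig-zag of real simplicial maps
\[
N^{di}_\wedge(A;M)\xrightarrow{\ \eta\ }Y_\bullet\xleftarrow{\ \xi\ }\THR_\bullet(A;M),
\]
whose components in each simplicial level are stable equivalences of $\Z/2$-spectra. Combined with Lemma~\ref{good} and the goodness of all three simplicial spectra (Lemmas~\ref{properness} and \ref{goodness}), this would produce the desired zig-zag of stable equivalences of orthogonal $\Z/2$-spectra after geometric realization.

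I would define $\eta$ in simplicial level $k$ as the inclusion of the summand at the initial object $\underline{0}=(0,\dots,0)\in I^{\times 1+{\bf k}}$,
\[
M\wedge A^{\wedge{\bf k}}\cong\Omega^0(\Sh^0 M\wedge\Sh^0 A\wedge\cdots\wedge\Sh^0 A)\longrightarrow\hocolim_{I^{\times 1+{\bf k}}}\Omega^{i_0+\cdots+i_k}(\Sh^{i_0} M\wedge\cdots\wedge\Sh^{i_k} A),
\]
and $\xi$ as the map induced by the natural transformation of $\Z/2$-diagrams $\Omega^\bullet_I(A;M;\mathbb{S})\to\Omega^\bullet_I(A;M;\Sh)$ whose value at $([k],\underline{i})$ and at a representation $V$ is obtained from the canonical assembly map
\[
S^V\wedge M_{i_0}\wedge A_{i_1}\wedge\cdots\wedge A_{i_k}\longrightarrow(\Sh^{i_0} M\wedge\Sh^{i_1} A\wedge\cdots\wedge\Sh^{i_k} A)_V.
\]
That $\eta$ and $\xi$ commute with faces, degeneracies and $\Z/2$-structures is a direct verification from the formulas in \S\ref{secdihedral} and Example~\ref{middlemodel}, using the permutation identities from Proposition~\ref{antisymmon}.

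To show that $\xi_k$ is a stable equivalence, I would apply Lemma~\ref{ItoN} to both $\THR_k(A;M)$ and $Y_k$, reducing to the natural comparison
\[
\hocolim_\mathbb{N}\Omega^{n\rho(1+{\bf k})}(\mathbb{S}\wedge M_{n\rho}\wedge A^{\wedge{\bf k}}_{n\rho})\longrightarrow\hocolim_\mathbb{N}\Omega^{n\rho(1+{\bf k})}(\Sh^{n\rho} M\wedge(\Sh^{n\rho} A)^{\wedge{\bf k}})
\]
induced by the assembly map, which becomes a stable equivalence of $\Z/2$-spectra in the colimit thanks to flatness of $(A,w)$ and $(M,j)$. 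To show that $\eta_k$ is a stable equivalence, I would again use Lemma~\ref{ItoN} to identify $Y_k$ with $\hocolim_\mathbb{N}\Omega^{n\rho(1+{\bf k})}(\Sh^{n\rho} M\wedge(\Sh^{n\rho} A)^{\wedge{\bf k}})$ and observe that, up to canonical homotopy in the connected category $I^{\times 1+{\bf k}}$, the map $\eta_k$ coincides with the comparison from the flat $\Z/2$-spectrum $M\wedge A^{\wedge{\bf k}}$ to its $\Omega$-replacement with respect to the regular representation of $\Z/2$, which is a stable equivalence since flat orthogonal $\Z/2$-spectra are semistable (as in the final part of the proof of Lemma~\ref{ItoN}).

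The hard part will be the careful bookkeeping of the three $\Z/2$-structures involved: the dihedral involution on $N^{di}_\wedge(A;M)_k=M\wedge A^{\wedge{\bf k}}$ mixes the anti-involutions $w$ and $j$ with the order-reversing permutation on ${\bf k}$, while the real structures on $Y_k$ and on $\THR_k(A;M)$ combine an involution on the indexing category $I^{\times 1+{\bf k}}$ with the internal real structure of $\Omega^\bullet_I(A;M;\Sh)$, respectively $\Omega^\bullet_I(A;M;\mathbb{S})$. Verifying that the inclusion $\eta$ and the assembly-map-induced $\xi$ strictly intertwine these involutions requires extending permutation identities of the shape $\chi_{i,j}\tau_{i+j}=\tau_i+\tau_j$, used in Proposition~\ref{antisymmon}, to smash products of arbitrarily many factors.
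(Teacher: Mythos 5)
Your overall plan — inserting the intermediate model $Y_\bullet=\THR_\bullet\Omega^\bullet_I(A;M;\Sh)$, building the two-map zig-zag through it, checking goodness of the three real simplicial spectra, and reducing the simplicial-level comparisons to the diagonal $\mathbb{N}$-indexed homotopy colimits via Lemma~\ref{ItoN} — is exactly the paper's approach, and your maps $\eta$ and $\xi$ are the ones used there (the paper writes $\eta$ as a two-step composite through $(\hocolim_I\Omega^i\Sh^iA)^{\wedge 1+{\bf k}}$, but the result is the inclusion at $\underline{0}$). Your treatment of $\eta_k$, reducing to the shift comparison $c\colon A\wedge S^{n\rho}\to\Sh^{n\rho}A$ and invoking that flatness is preserved by shifts and suspensions, is also essentially correct.

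The gap is in your handling of $\xi_k$. After applying Lemma~\ref{ItoN} on both sides, you need the map
\[
\hocolim_\mathbb{N}\Omega^{n\rho(1+{\bf k})}\Sigma^\infty(M_{n\rho}\wedge A^{\wedge{\bf k}}_{n\rho})\longrightarrow\hocolim_\mathbb{N}\Omega^{n\rho(1+{\bf k})}\bigl(\Sh^{n\rho} M\wedge(\Sh^{n\rho} A)^{\wedge{\bf k}}\bigr)
\]
to be a stable equivalence, and the assertion that this "becomes a stable equivalence in the colimit thanks to flatness" is not substantiated. The difficulty is that $\Omega^{n\rho}$ does not commute with the smash product, so you cannot pull $\Omega^{n\rho(1+{\bf k})}$ through the $(1+{\bf k})$-fold smash to reduce to the single-factor comparison; moreover neither side is levelwise equivalent to $M\wedge A^{\wedge{\bf k}}$, and the transition maps of the $\mathbb{N}$-system are themselves not equivalences, so no direct colimit argument is available. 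Flatness is necessary but not sufficient here.

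The way the paper handles this is by replacing the loop functor $\Omega^{n\rho}$ with the equivalent free-spectrum functor $F_{n\rho}$, which \emph{does} commute strictly with smash products and preserves flatness, so that $(F_{n\rho}X)^{\wedge 1+{\bf k}}\cong F_{n\rho(1+{\bf k})}X^{\wedge 1+{\bf k}}$ reduces the problem to a $(1+{\bf k})$-fold smash power of the single-factor comparison. The cost is that the composites $F_{n\rho}\Sh^{n\rho}(-)$ are not strictly functorial in $n$, forcing the comparison to take place in the triangulated homotopy category $\Ho(\Sp^{\Z/2})$ via a telescope-style homotopy colimit and a natural isomorphism $F_{n\rho}(-)\cong\Omega^{n\rho}(-)$ built one regular representation at a time from the loop-shift adjunction. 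The heart of the argument then becomes identifying the resulting map with the $(1+{\bf k})$-fold smash power of the canonical homotopy presentation of the flat spectrum $A$ (cf.\ \cite[B.4.3]{HHR}), and \emph{this} is the step where flatness finally enters to show that smash powers of equivalences between flat objects remain equivalences. You should either carry out this $\Omega\rightsquigarrow F$ replacement explicitly or substitute another argument for why the $\mathbb{N}$-colimit comparison above is an equivalence; as written, the step is asserted rather than proved.
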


\begin{rem}
Given any ring spectrum with anti-involution $(A,w)$ which is levelwise well-pointed, Theorem \ref{comparison} together with the homotopy invariance of $\THR$ of Theorem \ref{invBok} give a stable equivalence of $\Z/2$-spectra
\[\THR(A)\simeq\THR(A^\flat)\simeq B^{di}_\wedge(A^\flat),\]
where $A^\flat\stackrel{\simeq}{\to}A$ is a flat ring spectrum with anti-involution replacement of $(A,w)$, from \ref{flatrepl}. 
\end{rem}

\begin{proof}[Proof of \ref{comparison}]
In order to simplify the notation we assume that $(M,j)=(A,w)$, the proof of the general case is formally identical. For an object $\underline{i}=(i_0,\dots,i_k)$ of $I^{\times 1+{\bf k}}$ we denote by $\Omega^{\underline{i}}:=\Omega^{i_0+\dots+i_k}$ the associated loop space.
For any integer $k\geq 0$, we consider the zig-zag
\[
A^{\wedge 1+{\bf k}}\rightarrow(\hocolim_{I}\Omega^{i}\Sh^i A)^{\wedge 1+{\bf k}}\to\hocolim_{I^{\times 1+{\bf k}}}\Omega^{\underline{i}}(\Sh^{i_0} A\wedge\dots\wedge \Sh^{i_k} A)\leftarrow\hocolim_{I^{\times 1+{\bf k}}}\Omega^{\underline{i}}\Sigma^{\infty}\!(A_{i_0}\wedge\dots\wedge A_{i_k})
\]
where the first map is the $(1+{\bf k})$-fold smash power of the map $t\colon A\stackrel{\simeq}{\to}\hocolim_{I}\Omega^{i}\Sh^i A$. We observe that even though $t$ is an equivalence by \ref{genuineloop}, the target of $t$ is in general not flat, and therefore the map $t^{\wedge k+1}$ is not necessarily a weak equivalence.
The second map is the canonical map that commutes the smash product with the homotopy colimits and the loops. The left-pointing map is induced by the map $\Sigma^{\infty}(A_{i_0}\wedge\dots\wedge A_{i_k})\to \Sh^{i_0} A\wedge\dots\wedge \Sh^{i_k} A$ which is adjoint to the identity map
\[A_{i_k}\wedge\dots\wedge A_{i_k}=(\Sh^{i_0} A)_0\wedge\dots\wedge (\Sh^{i_n} A)_0=(\Sh^{i_0} A\wedge\dots\wedge \Sh^{i_k} A)_0.\]

The right-most spectrum is the $k$-simplices of the B\"{o}kstedt model of $\THR(A;A)$ from Example \ref{Bokstedtmodel}. The middle right hand spectrum is the $k$-simplices of the real simplicial spectrum associated to the functor $\Omega^{\bullet}_I(A;A;\Sh)\colon N^{di}I\to\Sp$ of Example \ref{middlemodel}. The leftward pointing map is induced by a morphism of $\mathbb{Z}/2$-diagrams, and it is therefore a map of real simplicial spectra. It is immediate to see that the composite of the rightward pointing maps is also a map of real simplicial spectra. Moreover under our flatness assumptions these real simplicial spectra are good (\ref{properness} and \ref{goodness}), and therefore our theorem follows if we can show that the two maps in the zig-zag above are equivariant weak equivalences for every fixed simplicial degree $k$.

The right pointing map of the zig-zag factors as
\[
\xymatrix{
A^{\wedge 1+{\bf k}}\ar[r]\ar[d]_-{\simeq}&\displaystyle\hocolim_{I^{\times 1+{\bf k}}}\Omega^{\underline{i}}(\Sh^{i_0} A\wedge\dots\wedge \Sh^{i_k} A)
\\
\displaystyle
\hocolim_{n\in\mathbb{N}}\Omega^{n\rho(1+{\bf k})}(A\wedge S^{n\rho})^{\wedge 1+{\bf k}}
\ar[r]^-{\simeq}
&\displaystyle
\hocolim_{n\in\mathbb{N}}\Omega^{n\rho(1+{\bf k})}(\Sh^{n\rho} A)^{\wedge 1+{\bf k}}
\ar[u]^-{\simeq}_{\ref{ItoN}}
}
\]
where the left vertical map is the canonical equivalence of the loop-suspension adjunction, and the lower horizontal map is induced by the equivalence of orthogonal spectra $c\colon A\wedge S^{m\rho}\to \Sh^{m\rho}A$ (see \cite{Schwede}). Since both suspensions and shifts preserve flatness (see Appendix \ref{shiftflat}) the $(1+{\bf k})$-fold smash power $c^{\wedge 1+{\bf k}}$ is also an equivalence, the right vertical map is an equivalence by \ref{ItoN}. Thus the top horizontal map is an equivalence.

The proof that that the leftward pointing map of the zig-zag is an equivalence is more involved.
Our strategy consists of replacing the loop spaces $\Omega^i$ with the equivalent free spectra $F_i$. The advantage of doing this is that $F_i$ commutes strictly with the smash product and that it preserves flatness. This follows from \cite[Section 2.3.3]{Sto}. The trade-off is that the combination of $F_i$ and $\Sh^i$ is not fully functorial in $\mathbb{N}$, and we need to work in the triangulated homotopy category $\Ho(\Sp^{\mathbb{Z}/2})$.
We recall that for any $\Z/2$-representation $V$, the free spectrum functor $F_V$ is the left adjoint of the shift functor $\Sh^V$.
The functors $\Omega^{n\rho}(-)$ and $F_{n\rho}(-) = F_{n\rho} \wedge -$ are homotopical, i.e. they preserve $\mathbb{Z}/2$-equivariant stable equivalences. For $\Omega^{n\rho}(-)$ this follows from \cite[5.4]{Schwede} and for $F_{n\rho}(-)$ from the fact that  $F_{n\rho}$ is flat and smashing with flat equivariant spectra is homotopical (\cite[Section III.V]{Schwedeglobal} and \cite[Proposition 2.10.1]{BrDuSt}). Hence these functors descend to the homotopy category
\[ \Omega^{n\rho}(-),  F_{n\rho}(-) \colon \Ho(\Sp^{\mathbb{Z}/2}) \longrightarrow \Ho(\Sp^{\mathbb{Z}/2}).\]
The shift $\Sh^{n\rho}$ is also homotopical, it descends to an equivalence of categories on the homotopy category $\Sh^{n\rho}\colon \Ho(\Sp^{\mathbb{Z}/2}) \to \Ho(\Sp^{\mathbb{Z}/2})$. These observations can be now used to define a natural transformation $\xi_n \colon F_{n\rho}(-) \to \Omega^{n\rho}(-)$ in the homotopy category $\Ho(\Sp^{\mathbb{Z}/2})$. We point out that this natural transformation is not the canonical one which is adjoint to $F_{n\rho} \wedge S^{n\rho} \wedge A \to A$, but it is defined in the following way. We consider the map in $\Sp^{\mathbb{Z}/2}$ defined as the composite of stable equivalences of $\mathbb{Z}/2$-spectra
\[ F_{\rho} (\Sh^\rho A)  \stackrel{\simeq}{\longrightarrow} A \stackrel{\simeq}{\longrightarrow} \Omega^{\rho} \Sh^{\rho} A.\]
Since $\Sh^{\rho}\colon  \Ho(\Sp^{\mathbb{Z}/2}) \to \Ho(\Sp^{\mathbb{Z}/2})$ is an equivalence of categories, by precomposing this map with the inverse $(\Sh^{\rho})^{-1}$, we get a natural isomorphism $\xi=\xi_1 \colon F_{\rho}(-) \stackrel{\cong}{\to}  \Omega^{\rho}(-)$
in the homotopy category $\Ho(\Sp^{\mathbb{Z}/2})$. By iterating this process we obtain a natural isomorphism
\[\xi_n \colon F_{n\rho}(-) \stackrel{\cong}{\longrightarrow} \Omega^{n\rho}(-)\]
of endofunctors of $\Ho(\Sp^{\mathbb{Z}/2})$. We will use this isomorphism to replace $\Omega^{n\rho}$ by $F_{n\rho}$ in the homotopy colimits of our zig-zag.

Next, we recall a way of computing the sequential homotopy colimit in the triangulated homotopy category. The homotopy colimit of a sequence of maps
\[\xymatrix{X_0 \ar[r]^{\alpha_0} & X_1 \ar[r]^{\alpha_1} & X_2 \ar[r] & \dots}\]
in $\Ho(\Sp^{\mathbb{Z}/2})$ is defined by the mapping cone sequence
\[\xymatrix{\bigvee_{n \in \mathbb{N}} X_n \ar[r]^{1-\alpha} & \bigvee_{n \in \mathbb{N}} X_n \ar[r] & \hocolim^{\Delta}_{\mathbb{N}} X \ar[r] & \Sigma \bigvee_{n\in \mathbb{N}} X_n },\]
where $\alpha$ sends the wedge summand $X_n$ to the summand $X_{n+1}$ via $\alpha_n$. The symbol $\Delta$ in the notation $\hocolim^{\Delta}_{\mathbb{N}} X$ suggests that we are using the triangulated structure to define this object. We note that $\hocolim^{\Delta}_{\mathbb{N}} X$ comes with canonical maps $\iota_n X_n \to \hocolim^{\Delta}_{\mathbb{N}} X$ for every $n \geq 0$, such that the diagrams
\[\xymatrix@C=60pt@R=15pt{X_n \ar[rr]^{\iota_n} \ar[dr]_-{\alpha_n} & & \hocolim^{\Delta}_{\mathbb{N}} X \\ &  X_{n+1} \ar[ur]_-{\iota_{n+1}} } \]
commute. We also note that the construction $\hocolim^{\Delta}_{\mathbb{N}} X$ is a generalization of the classical Bousfield-Kan construction of the homotopy colimit. If the maps $\alpha_n$ are honest maps in $\Sp^{\mathbb{Z}/2}$, there is an isomorphism
\[X_n\longrightarrow{\hocolim_{\mathbb{N}}}^{\Delta} X \stackrel{\cong}{\longrightarrow}  \hocolim_{\mathbb{N}} X,\]
where the composite is the canonical map.

Now we go back to the proof of the desired result. We define a commutative diagram in the homotopy category
\[
\xymatrix@R=20pt{\displaystyle
\displaystyle\hocolim_{I^{\times 1+{\bf k}}}\Omega^{\underline{i}}(\Sh^{i_0} A\wedge\dots\wedge \Sh^{i_k} A)
&
\displaystyle\hocolim_{I^{\times 1+{\bf k}}}\Omega^{\underline{i}}\Sigma^{\infty}(A_{i_0}\wedge\dots\wedge A_{i_k})\ar[l]
\\
\displaystyle\displaystyle\hocolim_{n\in \mathbb{N}}\Omega^{n\rho(1+{\bf k})}(\Sh^{n\rho}A)^{\wedge 1+{\bf k}}\ar[u]_{\cong}^-{\ref{ItoN}}
&
\displaystyle\hocolim_{n\in \mathbb{N}}\Omega^{n\rho(1+{\bf k})} \Sigma^{\infty}A^{\wedge 1+{\bf k}}_{n\rho}\ar[u]_{\cong}^-{\ref{ItoN}}\ar[l]
\\
\displaystyle\displaystyle{\hocolim_{n\in \mathbb{N}}}^\Delta\Omega^{n\rho(1+{\bf k})}(\Sh^{n\rho}A)^{\wedge 1+{\bf k}}\ar[u]_{\cong}
&
\displaystyle{\hocolim_{n\in \mathbb{N}}}^\Delta\Omega^{n\rho(1+{\bf k})} \Sigma^{\infty}A^{\wedge 1+{\bf k}}_{n\rho}\ar[u]_{\cong}\ar[l]
\\
\displaystyle{\hocolim_{n\in \mathbb{N}}}^\Delta F_{n\rho(1+{\bf k})}(\Sh^{n\rho}A)^{\wedge 1+{\bf k}}\ar[u]^{\text{I}}
&
\displaystyle{\hocolim_{n\in \mathbb{N}}}^\Delta F_{n\rho(1+{\bf k})} \Sigma^{\infty}A^{\wedge 1+{\bf k}}_{n\rho}\ar[u]_{\text{II}}\ar[l]
\\
\displaystyle{\hocolim_{n\in \mathbb{N}}}^\Delta (F_{n\rho}\Sh^{n\rho}A)^{\wedge 1+{\bf k}}\ar[u]_{\cong}
&
\displaystyle{\hocolim_{n\in \mathbb{N}}}^\Delta (F_{n\rho}\Sigma^{\infty}A_{n\rho})^{\wedge 1+{\bf k}}.\ar[u]_{\cong}\ar[l]^{\text{III}}\rlap{\ ,}
}
\]
where the top row is the map of our zig-zag, and the maps labelled I, II and III are isomorphisms. The horizontal map in the second row is induced by the canonical map from suspension spectra to shifts, and therefore the upper square commutes. The middle horizontal arrow is then uniquely defined in the homotopy category $\Ho(\Sp^{\mathbb{Z}/2})$.  For convenience let us write $l=1+{\bf k}$. The source of the map $\text{I}$ is defined as the homotopy colimit of the bottom row of the commutative diagram
\[
\xymatrix@C=13pt{
A^{\wedge l}& A^{\wedge l}\ar@{=}[l]\ar[r]& \Omega^{\rho l}(\Sh^{\rho}A)^{\wedge l}& \Omega^{\rho l}(\Sh^{\rho}A)^{\wedge l}\ar@{=}[l]\ar[r]& \Omega^{2\rho l}(\Sh^{2\rho}A)^{\wedge l}&\dots\ar[l]_-\cong\\
A^{\wedge l}\ar@{=}[u]& (F_\rho(S^\rho\wedge A))^{\wedge l}\ar[l]_-\cong\ar[r]^-\cong& (F_\rho\Sh^\rho A)^{\wedge l}\ar[u]^{\cong}&(F_{2\rho}(S^\rho\wedge \Sh^{\rho}A))^{\wedge l}\ar[l]_-\cong\ar[r]^-\cong& (F_{2\rho}\Sh^{2\rho}A)^{\wedge l}\ar[u]^\cong&\dots\ar[l]_-\simeq
}
\]
and the map I is induced by the vertical maps, which are the composite of the isomorphism $\xi_n\colon F_{n\rho}(-) \to \Omega^{n\rho}(-)$ and of the map that commutes the loops and the smash products. It is an isomorphism since the composite
\[(F_{n\rho}X)^{\wedge l}  \stackrel{\simeq}{\leftarrow}(F_{n\rho}X)^{\wedge_{\mathbf{L}} l} \to (\Omega^{n\rho} X)^{\wedge_{\mathbf{L}} l} \to  (\Omega^{n\rho} X)^{\wedge l}  \to \Omega^{n\rho l} (X^{\wedge l}) \]
is an isomorphism in the homotopy category for any flat $X$, where $\wedge_{\mathbf{L}}l$ denotes the derived indexed smash product. Here we are using that shifts preserve flatness (see Proposition \ref{prop:flatness}). Similarly, map $\text{II}$ is induced by the commutative diagram
\[
\xymatrix@C=13pt{
\Sigma^{\infty}A^{\wedge l}_0&\Sigma^{\infty}A^{\wedge l}_0\ar@{=}[l]\ar[r]& \Omega^{\rho l}\Sigma^{\infty}A_{\rho}^{\wedge l}&\Omega^{\rho l}\Sigma^{\infty}A_{\rho}^{\wedge l}\ar@{=}[l]\ar[r]& \Omega^{2\rho l}\Sigma^{\infty}A_{2\rho}^{\wedge l}&\dots\ar[l]_-\cong
\\
\Sigma^{\infty}A^{\wedge l}_0\ar@{=}[u]& (F_{\rho}S^\rho\wedge \Sigma^\infty A_0)^{\wedge l}\ar[l]_-\cong\ar[r]& (F_{\rho}\Sigma^\infty A_\rho)^{\wedge l}\ar[u]^{\cong}& (F_{2\rho}S^\rho\wedge \Sigma^\infty A_\rho)^{\wedge l}\ar[l]_-\cong\ar[r]& (F_{2\rho}\Sigma^\infty A_{2\rho})^{\wedge l}\ar[u]^{\cong}&\dots\ar[l]_-\cong
}
\]
and it is an isomorphism on homotopy colimits. This diagram commutes because in the homotopy category
\[\xymatrix{
 \Sigma^{\infty}X_0 \ar[r] & \Omega^{\rho} \Sh^{\rho}\Sigma^{\infty}X_0  \ar[r]  &  \Omega^\rho \Sigma^{\infty}X_\rho
 \\
 F_\rho (S^\rho \wedge \Sigma^{\infty}X_0) \ar[u] \ar[r] & F_\rho \Sh^{\rho}\Sigma^{\infty}X_0 \ar[r] \ar[u]_{\xi} & F_\rho \Sigma^{\infty}X_\rho \ar[u]_{\xi}
  } \]
commutes by construction of $\xi$.
Finally, the two lower horizontal arrows are uniquely determined by the fact that the vertical maps are equivalences. It remains to show that map III is an isomorphism. We treat the case $l=1$ first.
By construction the diagram
\[\xymatrix@C=50pt{
\displaystyle{\hocolim_{n\in \mathbb{N}}}^\Delta F_{n\rho}\Sigma^{\infty}A_{n\rho} \ar[r]^{\text{III}}  &\displaystyle {\hocolim_{n\in \mathbb{N}}}^\Delta F_{n\rho}\Sh^{n\rho}A \ar[r]^-{\cong}& A
\\
 F_{n\rho}\Sigma^{\infty}A_{n\rho}\ar[r]^{c} \ar[u] & F_{n\rho}\Sh^{n \rho}A  \ar[u] \ar[ur]_{\cong}
 }
 \]
commutes in the homotopy category for all $n \geq 0$, where $c$ is the canonical map from the suspension to the shift, and the diagonal morphism is induced by the canonical equivalence of flat equivariant spectra $F_{n \rho}\Sh^{n \rho} A \to A$.
We recall that on equivariant homotopy groups there is an isomorphism $\pi_{\ast}^{(-)}(\hocolim^{\Delta}_{\mathbb{N}} X) \cong \colim_{\mathbb{N}} \pi_{\ast}^{(-)}X$. Under this isomorphism the composite
\[{\hocolim_{n\in \mathbb{N}}}^\Delta F_{n\rho}\Sigma^{\infty}A_{n\rho}\xrightarrow{\text{III}}
 {\hocolim_{n\in \mathbb{N}}}^\Delta F_{n\rho}\Sh^{n\rho}A\stackrel{\cong}{\longrightarrow}  A \]
is the canonical homotopy presentation of $A$ (see \cite[B.4.3]{HHR}), which is an isomorphism. For $l\geq 1$, the composite
\[({\hocolim_{n\in \mathbb{N}}}^\Delta F_{n\rho}\Sigma^{\infty}A_{n\rho})^{\wedge l}\cong{\hocolim_{n\in \mathbb{N}}}^\Delta (F_{n\rho}\Sigma^{\infty}A_{n\rho})^{\wedge l}\xrightarrow{\text{III}}
 {\hocolim_{n\in \mathbb{N}}}^\Delta( F_{n\rho}\Sh^{n\rho}A)^{\wedge l}\stackrel{\cong}{\to}  A^{\wedge l} \]
is the $l$-fold smash power of the canonical presentation. Since $A$ and $F_{n\rho}\Sigma^{\infty}A_{n\rho}$ are flat, this is an equivalence.
\end{proof}

\subsection{The geometric fixed points of \texorpdfstring{$\THR$}{THR}}\label{secderived}\label{secgeom}

The main tool used in the calculations of \S\ref{seccalc} is a formula for the geometric fixed points of $\THR$. Given a $\Z/2$-spectrum $X$, we let $\Phi^{\Z/2}X$ denote its derived geometric fixed-points spectrum. If $(A,w)$ is a flat ring spectrum with an anti-involution and $(M,j)$ is a flat $(A,w)$-bimodule, we want a model for $\Phi^{\Z/2}M$ which has the structure of an $A$-module. For concreteness, we define
\[
\Phi^{\Z/2}M:=\Phi^{\Z/2}_{\mathcal{M}} (M^c),
\]
where $M^c$ is a cofibrant replacement of $M$ as a right $N^{\mathbb{Z}/2}_e A$-module for the module structure of \S\ref{secdihedral}, and $\Phi^{\Z/2}_{\mathcal{M}}$ denotes the Mandell-May monoidal geometric fixed points \cite[Section V.4]{ManMay} (see also \cite[Appendix B]{HHR}). 
Then the spectrum $\Phi^{\Z/2}M$ is a right $A$-module via the map
\[\Phi^{\Z/2}_{\mathcal{M}} (M^c) \wedge A \cong \Phi^{\Z/2}_{\mathcal{M}}(M^c) \wedge \Phi^{\Z/2}_{\mathcal{M}} (N^{\Z/2}_eA) \longrightarrow  \Phi^{\Z/2}_{\mathcal{M}}(M^c\wedge N^{\Z/2}_eA)\longrightarrow 
\Phi^{\Z/2}_{\mathcal{M}} (M^c),
\]
where the isomorphism is given by the diagonal map $A \to  \Phi^{\Z/2}_{\mathcal{M}}(N^{\mathbb{Z}/2}_e A)$ (see \cite[Theorem 3.2.16]{BrDuSt}, \cite[Proposition 3.4.28]{Sto} and \cite[Section 3.3]{Cary}), and 
the last map is the geometric fixed points of the module structure. Similarly, $\Phi^{\Z/2}_{\mathcal{M}} (A^c)$ is a left $A$-module, where $A^c$ is a cofibrant replacement in the category of left $N^{\mathbb{Z}/2}_e A$-modules.

\begin{rem}\label{Remderivedgeom}
The spectrum $\Phi^{\Z/2}_{\mathcal{M}} (M^c)$ is a model for the derived geometric fixed-points spectrum of $M$. It is therefore fully homotopical and agrees up to equivalence with the geometric fixed-points spectra of \cite{LMS} and \cite{Schwede}. To see this, let $C \stackrel{\sim}{\to} N^{\mathbb{Z}/2}_e A$ be a cofibrant replacement of $N^{\mathbb{Z}/2}_e A$ in the model category of $\Z/2$-equivariant associative algebras \cite[III.7]{ManMay}. Then $C$ is cofibrant as a $\Z/2$-spectrum, and the induced map
\[ \Phi^{\Z/2}_{\mathcal{M}}(C)  \stackrel{\sim}{\longrightarrow}  \Phi^{\Z/2}_{\mathcal{M}}(N^{\mathbb{Z}/2}_e A)\]
is a weak equivalence of associative ring spectra. This uses the fact that $N^{\mathbb{Z}/2}_e A$ is built out of induced regular cells in the sense of \cite[Theorem 3.2.14]{BrDuSt} and \cite[Proposition 3.4.25]{Sto}. It follows from this equivalence and \cite[A.1 Lemma]{BMcyc} that $\Phi^{\Z/2}_{\mathcal{M}} (M^c)$ is equivalent to the derived geometric fixed-points of $M$, by considering $M^c$ as a retract of a cellular $N^{\mathbb{Z}/2}_e A$-module.
Similarly, $\Phi^{\Z/2}_{\mathcal{M}} (A^c)$ computes the derived geometric fixed-points of $A$. 
\end{rem}

\begin{theorem}\label{geofixcalc} Let $(A,w)$ be a flat ring spectrum with an anti-involution and $(M,j)$ a flat $(A,w)$-bimodule. Then there is a natural zig-zag of stable equivalences
\[\Phi^{\Z/2}\THR(A;M) \simeq \Phi^{\Z/2}M \wedge_{A}^{\mathbf{L}} \ \Phi^{\Z/2}A.\]
\end{theorem}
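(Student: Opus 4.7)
The plan is to combine the comparison theorem of the previous subsection with the diagonal identification $\Phi^{\Z/2}(N^{\Z/2}_e A) \simeq A$ and a standard commutation of the monoidal geometric fixed points with bar constructions. By Theorem \ref{comparison} we may replace $\THR(A;M)$ by the dihedral bar construction $B^{di}_\wedge(A;M)$. By Proposition \ref{dihedral-two-sided}, after edgewise subdivision this is the two-sided bar construction $B_\wedge(M, N^{\Z/2}_e A, A)$, and by (the proof of) Corollary \ref{derived smash} together with Lemma \ref{barderived} this computes the derived smash product $M \wedge^{\mathbf{L}}_{N^{\Z/2}_e A} A$. So the task reduces to showing
\[
\Phi^{\Z/2}\bigl(M \wedge^{\mathbf{L}}_{N^{\Z/2}_e A} A\bigr) \simeq \Phi^{\Z/2} M \wedge^{\mathbf{L}}_A \Phi^{\Z/2} A.
\]

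To carry this out I would first replace $M$ by a cofibrant right $N^{\Z/2}_e A$-module $M^c$ and $A$ by a cofibrant left $N^{\Z/2}_e A$-module $A^c$, so that the derived smash product is modeled by the bar construction $B_\wedge(M^c, N^{\Z/2}_e A, A^c)$, which is Reedy cofibrant and whose realization is homotopically well behaved (cf.\ Lemma \ref{barderived}). Since edgewise subdivision of the dihedral bar construction agrees with this two-sided bar construction in each simplicial level up to weak equivalence induced by the cofibrant replacements, and since the Mandell--May geometric fixed points $\Phi^{\Z/2}_\mathcal{M}$ commutes with geometric realizations of good simplicial spectra and is strong symmetric monoidal on flat/cofibrant objects (using that $N^{\Z/2}_e A$ is built from induced regular cells, as in Remark \ref{Remderivedgeom}), we obtain a natural equivalence
\[
\Phi^{\Z/2}_\mathcal{M} B_\wedge(M^c, N^{\Z/2}_e A, A^c) \simeq B_\wedge\bigl(\Phi^{\Z/2}_\mathcal{M} M^c,\; \Phi^{\Z/2}_\mathcal{M} N^{\Z/2}_e A,\; \Phi^{\Z/2}_\mathcal{M} A^c\bigr).
\]

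The final ingredient is the diagonal equivalence of ring spectra $\Delta \colon A \stackrel{\simeq}{\to} \Phi^{\Z/2}_\mathcal{M}(N^{\Z/2}_e A)$ of \cite[Theorem 3.2.16]{BrDuSt} and \cite[Proposition 3.4.28]{Sto}, which moreover is compatible with the module structures on $M$ and $A$ in the sense that the $A$-module structures we defined on $\Phi^{\Z/2} M$ and $\Phi^{\Z/2} A$ are exactly those obtained by restricting along $\Delta$ the $\Phi^{\Z/2}_\mathcal{M}(N^{\Z/2}_e A)$-module structures coming from functoriality of $\Phi^{\Z/2}_\mathcal{M}$. Applying restriction of scalars along $\Delta$ to the right-hand bar construction and invoking Lemma \ref{barderived} once more (since $A$ is flat and the relevant modules are cofibrant in the underlying category) gives
\[
B_\wedge\bigl(\Phi^{\Z/2}_\mathcal{M} M^c,\; \Phi^{\Z/2}_\mathcal{M} N^{\Z/2}_e A,\; \Phi^{\Z/2}_\mathcal{M} A^c\bigr) \simeq \Phi^{\Z/2} M \wedge^{\mathbf{L}}_A \Phi^{\Z/2} A,
\]
and concatenating this zig-zag with the ones above yields the theorem.

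The main obstacle, and the step that requires most care, is verifying that the monoidal geometric fixed points genuinely commutes with the bar construction in the presence of only flatness hypotheses on $(A,w)$ and $(M,j)$ rather than outright cofibrancy as $N^{\Z/2}_e A$-modules; this is where one has to invoke the fact that $N^{\Z/2}_e A$ is constructed from induced regular cells, so that the cofibrant replacements $M^c$ and $A^c$ remain flat as underlying $\Z/2$-spectra, and that the simplicial bar construction is good enough to commute with $\Phi^{\Z/2}_\mathcal{M}$ on realizations. Once this compatibility is in place, identifying the resulting $A$-module structure with the one defined before the statement is a direct diagram chase via the diagonal $\Delta$.
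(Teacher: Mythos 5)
Your proposal is essentially the same proof as the paper's, with one organizational difference: the paper introduces a cofibrant replacement $C \stackrel{\sim}{\to} N^{\Z/2}_e A$ as an associative algebra and a further cofibrant replacement $\overline{M^c}, \overline{A^c}$ of $M^c, A^c$ as $C$-modules, so that $\overline{M^c}\wedge_C\overline{A^c}$ is a cofibrant $\Z/2$-spectrum on the nose, while you apply $\Phi^{\Z/2}_{\mathcal{M}}$ directly to $B_\wedge(M^c, N^{\Z/2}_e A, A^c)$. The route you take is exactly what the paper records afterwards as Remark \ref{noC}; the paper goes through $C$ first, precisely so that the identification $\Phi^{\Z/2}_{\mathcal{M}}(\overline{M^c}\wedge_C\overline{A^c})\simeq\Phi^{\Z/2}(M\wedge^{\mathbf{L}}_{N^{\Z/2}_eA}A)$ follows immediately from $\overline{M^c}\wedge_C\overline{A^c}$ being cofibrant, and then deduces the $C$-free version as a consequence.

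One remark on the way you justify the key step. You write that the obstacle is handled because ``the cofibrant replacements $M^c$ and $A^c$ remain flat as underlying $\Z/2$-spectra''. Flatness alone does not give you that $\Phi^{\Z/2}_{\mathcal{M}}$ of a flat spectrum computes the derived geometric fixed points --- that is exactly the delicacy the paper is working around. The input you actually need is the one stated in Remark \ref{Remderivedgeom}: $M^c$ and $A^c$ are retracts of \emph{cellular} $N^{\Z/2}_e A$-modules, and because $N^{\Z/2}_e A$ is built out of induced regular cells, $\Phi^{\Z/2}_{\mathcal{M}}$ is homotopical on such retracts (via \cite[A.1 Lemma]{BMcyc}). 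Since you do cite Remark \ref{Remderivedgeom}, you are gesturing at the right argument, but the phrasing conflates flatness with this stronger cellularity property; the cleanest fix is either to quote Remark \ref{Remderivedgeom} verbatim for $M^c, A^c$ and note that $\Phi^{\Z/2}_{\mathcal{M}}$ commutes with smash products and realizations to handle the bar construction, or to adopt the paper's $C$-based route where cofibrancy of the derived smash is manifest.
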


\begin{proof} 
By Theorem \ref{comparison} and Corollary \ref{derived smash} we need to compute the geometric fixed points
\[ \Phi^{\Z/2}(M \wedge^{\mathbf{L}}_{N^{\mathbb{Z}/2}_e A} A).\]
Let $M^c$ and $A^c$ denote cofibrant replacements of $M$ and $A$, as right and left $N^{\mathbb{Z}/2}_e A$-modules, respectively. Then the derived smash product $M \wedge^{\mathbf{L}}_{N^{\mathbb{Z}/2}_e A} A$ is modeled by
\[ M^c \wedge_{N^{\mathbb{Z}/2}_e A} A^c.\]
Now we consider $M^c$ and $A^c$ as right and left $C$-modules respectively, via the cofibrant replacement map $C \stackrel{\sim}{\to} N^{\mathbb{Z}/2}_e A$. Let $\overline{M^c}$ and $\overline{A^c}$ denote the cofibrant replacements of $M^c$ and $A^c$ as $C$-modules. Then Lemma \ref{barderived} implies that there is a natural stable equivalence of $\Z/2$-spectra
\[M^c \wedge_{N^{\mathbb{Z}/2}_e A} A^c \simeq \overline{M^c} \wedge_C \overline{A^c}.\]
Since $\overline{M^c} \wedge_C \overline{A^c}$ is cofibrant as a $\Z/2$-spectrum we have equivalences
\[\Phi^{\Z/2}(M \wedge^{\mathbf{L}}_{N^{\mathbb{Z}/2}_e A} A) \simeq \Phi^{\Z/2}_{\mathcal{M}}(\overline{M^c} \wedge_C \overline{A^c}).\]
For the right-hand term we have an equivalence
\[\Phi^{\Z/2}_{\mathcal{M}}(\overline{M^c} \wedge_C \overline{A^c}) \cong\Phi^{\Z/2}_{\mathcal{M}}(\overline{M^c}) \wedge_{\Phi^{\Z/2}_{\mathcal{M}}(C)} \Phi^{\Z/2}_{\mathcal{M}}(\overline{A^c}) \stackrel{\simeq}{\longrightarrow} \Phi^{\Z/2}_{\mathcal{M}}(M^c) \wedge_A \Phi^{\Z/2}_{\mathcal{M}}(A^c)= \Phi^{\Z/2}M \wedge_A \Phi^{\Z/2}A,\]
where the isomorphism is from the proof of \cite[Proposition B.203]{HHR}. Indeed, the maps $\Phi^{\Z/2}_{\mathcal{M}}(\overline{M^c}) \to\Phi^{\Z/2}_{\mathcal{M}}(M^c)$ and $\Phi^{\Z/2}_{\mathcal{M}}(\overline{A^c}) \to\Phi^{\Z/2}_{\mathcal{M}}(A^c)$ are equivalences by Remark \ref{Remderivedgeom}. Now the map above can be seen to be an equivalence by comparing the bar constructions for the smash products, and by using Lemma \ref{barderived} since all the $\Z/2$-spectra involved are flat.
\end{proof}

\begin{rem} \label{noC} The proof above shows that the spectrum $\Phi^{\Z/2}_{\mathcal{M}}(M^c \wedge_{N^{\mathbb{Z}/2}_e A} A^c)$ has the correct homotopy type. More precisely, there is a commutative diagram
\[\xymatrix{ \Phi^{\Z/2}_{\mathcal{M}}(\overline{M^c}) \wedge_{\Phi^{\Z/2}_{\mathcal{M}}(C)} \Phi^{\Z/2}_{\mathcal{M}}(\overline{A^c}) \ar[r]^-{\simeq} \ar[d]^-{\cong} & \Phi^{\Z/2}_{\mathcal{M}}(M^c) \wedge_A \Phi^{\Z/2}_{\mathcal{M}}(A^c) \ar[d]^-{\cong}\\ \Phi^{\Z/2}_{\mathcal{M}}(\overline{M^c} \wedge_C \overline{A^c}) \ar[r] & \Phi^{\Z/2}_{\mathcal{M}}(M^c \wedge_{N^{\mathbb{Z}/2}_e A} A^c)}\] 
where the vertical maps are isomorphisms by \cite[Proposition B.203]{HHR} and \cite[Theorem 3.2.16]{BrDuSt}. It follows that the bottom horizontal map is an equivalence. Since $\overline{M^c} \wedge_C \overline{A^c}$ is a cofibrant replacement of $M^c \wedge_{N^{\mathbb{Z}/2}_e A} A^c$ as a $\Z/2$-spectrum, this shows that $\Phi^{\Z/2}_{\mathcal{M}}(M^c \wedge_{N^{\mathbb{Z}/2}_e A} A^c)$ computes the derived geometric fixed-points of $M^c \wedge_{N^{\mathbb{Z}/2}_e A} A^c$.
\end{rem}

\begin{cor}\label{geomzero} Let $(A,w)$ be a flat ring spectrum with an anti-involution and $(M,j)$ a flat $(A,w)$-bimodule. Suppose that the underlying $\Z/2$-spectrum of $(A,w)$ is a module over $H\Z [\tfrac{1}{2}]$, where $\Z[\tfrac{1}{2}]$ has the trivial involution. Then the geometric fixed points spectrum $\Phi^{\mathbb{Z}/2}\THR(A;M)$ is contractible. This is for example the case when $A=HR$ is the Eilenberg-MacLane spectrum of a discrete ring with anti-involution, with $1/2\in R$.
\end{cor}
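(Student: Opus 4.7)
The plan is to invoke Theorem \ref{geofixcalc} first, which yields the equivalence
\[\Phi^{\Z/2}\THR(A;M) \simeq \Phi^{\Z/2}M \wedge^{\mathbf{L}}_A \Phi^{\Z/2}A,\]
reducing the problem to showing that $\Phi^{\Z/2}A$ is contractible. Since geometric fixed points are lax monoidal, the hypothesis that $A$ is a module over $H\Z[\tfrac{1}{2}]$ (as a $\Z/2$-spectrum) makes $\Phi^{\Z/2}A$ into a module over the ring spectrum $\Phi^{\Z/2}H\Z[\tfrac{1}{2}]$. Thus it is enough to prove that $\Phi^{\Z/2}H\Z[\tfrac{1}{2}] \simeq \ast$.

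To prove this, I would run a ``$2$ is simultaneously a unit and nullhomotopic'' argument on $\Phi^{\Z/2}H\Z[\tfrac{1}{2}]$. Multiplication by $2$ is a self-map of $H\Z[\tfrac{1}{2}]$ in $\Z/2$-spectra; since it is an isomorphism both on underlying $\pi_\ast$ and on $\pi_\ast^{\Z/2}$ (where the fixed-point Mackey functor is the constant $\Z[\tfrac{1}{2}]$), it is a stable $\Z/2$-equivalence, and hence induces an equivalence after $\Phi^{\Z/2}$. On the other hand, for the constant Mackey functor $\underline{\Z[\tfrac{1}{2}]}$ the transfer $\mathrm{tr}\colon \underline{\Z[\tfrac{1}{2}]}(\Z/2/e)\to \underline{\Z[\tfrac{1}{2}]}(\Z/2/\Z/2)$ is multiplication by $2$, so $2=\mathrm{tr}(1)$ in $\pi_0^{\Z/2}H\Z[\tfrac{1}{2}]$. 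The transfer element factors through the induced spectrum $(\Z/2)_+$, whose derived geometric fixed points are contractible, so $\mathrm{tr}(1)$, and hence multiplication by $2$, acts as zero on $\Phi^{\Z/2}H\Z[\tfrac{1}{2}]$. A self-equivalence which is simultaneously null on $\pi_0$ forces all homotopy groups to vanish, so $\Phi^{\Z/2}H\Z[\tfrac{1}{2}]\simeq \ast$.

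Combining these two steps: $\Phi^{\Z/2}A$ is a module over a contractible ring spectrum, hence contractible, and therefore $\Phi^{\Z/2}\THR(A;M)\simeq \Phi^{\Z/2}M\wedge^{\mathbf{L}}_A \Phi^{\Z/2}A \simeq \ast$. For the example: if $R$ is a discrete ring with $1/2\in R$, then $R$ is canonically a $\Z[\tfrac{1}{2}]$-module, so by lax monoidality of the Eilenberg-MacLane construction $HR$ is a module over $H\Z[\tfrac{1}{2}]$, and the corollary applies. The most delicate step is the identification $2=\mathrm{tr}(1)$ as \emph{self-maps} (not merely as homotopy classes of elements) of $H\Z[\tfrac{1}{2}]$; this can be read off from the explicit Dold--Thom model of $HR$ for the constant Mackey functor, or bypassed by exhibiting the multiplication-by-$2$ map directly as a composite $H\Z[\tfrac{1}{2}]\to (\Z/2)_+\wedge H\Z[\tfrac{1}{2}]\to H\Z[\tfrac{1}{2}]$ through the stable transfer, which manifestly becomes null after $\Phi^{\Z/2}$.
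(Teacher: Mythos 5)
Your proof is correct and follows the same strategy as the paper: apply Theorem \ref{geofixcalc} to reduce to showing $\Phi^{\Z/2}A\simeq\ast$, and then use the module structure over $\Phi^{\Z/2}H\Z[\tfrac{1}{2}]$. The only difference is that you supply a complete proof of the contractibility of $\Phi^{\Z/2}H\Z[\tfrac{1}{2}]$ via the ``$2$ is simultaneously invertible and factors through the transfer'' argument, whereas the paper states this as a known fact; your self-contained argument, including the final remark identifying multiplication by $2$ with the composite $H\Z[\tfrac{1}{2}]\to(\Z/2)_+\wedge H\Z[\tfrac{1}{2}]\to H\Z[\tfrac{1}{2}]$ through the stable transfer, is sound.
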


\begin{proof} Since $(A,w)$ is a module over $H\Z[\tfrac{1}{2}]$, the geometric fixed points $\Phi^{\mathbb{Z}/2}A$ are a module over $\Phi^{\mathbb{Z}/2} H\Z[\tfrac{1}{2}]$, which is contractible. It follows that $\Phi^{\mathbb{Z}/2}A$ is also contractible. Now the geometric fixed point formula \ref{geofixcalc} gives a stable equivalence
\[ \Phi^{\Z/2}\THR(A;M) \simeq \Phi^{\Z/2}M \wedge_{A}^{\mathbf{L}} \ \Phi^{\Z/2}A,\]
and the smash factor on the right hand side is contractible.
\end{proof}

\section{\texorpdfstring{$\THR$}{THR} of Wall antistructures and categories with duality}\label{secgen}

A more general input for real $K$-theory than a ring with anti-involution is a Wall antistructure \cite{Wall}. This structure allows one to ``twist'' Hermitian forms by a unit in the underlying ring. We present a variation of our construction of $\THR$ which accepts this more general input.

\subsection{Categories with duality}\label{seccatdual}

Let $\mathscr{C}$ be a category enriched in orthogonal spectra. We recall that a duality on $\mathscr{C}$ is an enriched functor $D\colon \mathscr{C}^{op}\to \mathscr{C}$ together with a natural isomorphism $\eta\colon \id\to D^2$ such that $D(\eta_c)\eta_{Dc}=\id$. We say that the duality is strict if $\eta$ is the identity natural transformation.

\begin{defn}\label{defbimod} Let $(\mathscr{C},D,\eta)$ be a spectral category with duality. A $(\mathscr{C},D,\eta)$-bimodule is a functor $\mathscr{M}\colon \mathscr{C}^{op}\wedge \mathscr{C}\to \Sp$ with an enriched natural transformation $J\colon \mathscr{M}(c,d)\to \mathscr{M}(Dd,Dc)$ such that
\[
\xymatrix@C=50pt{
\mathscr{M}(c,d)\ar[r]^-J\ar[d]_-{\eta\circ(-)}&\mathscr{M}(Dd,Dc)\ar[d]^-J\\
\mathscr{M}(c,D^2d)\ar[r]_-{(-)\circ\eta^{-1}}&\mathscr{M}(D^2c,D^2d)
}
\]
commutes. We observe that if the duality on $\mathscr{C}$ is strict, then $J^2=\id$.
\end{defn}

We remark that the mapping spectra of the form $\mathscr{C}(Dc,c)$, as well as the spectra $\mathscr{M}(c,Dc)$, inherit strict $\mathbb{Z}/2$-actions defined by the maps
\[
\xymatrix@R=10pt{
\mathscr{C}(Dc,c)\ar[r]^{D}&\mathscr{C}(Dc,D^2c)\ar[r]^-{\eta^{-1}\circ(-)}&\mathscr{C}(Dc,c)
\\
\mathscr{M}(c,Dc)\ar[r]^{J}&\mathscr{M}(D^2c,Dc)\ar[r]^-{\eta^\ast}&\mathscr{M}(c,Dc)\rlap{\ .}
}
\]
The construction of $\THR$ we will propose depends on the genuine equivariant homotopy types of these spectra.

\begin{example}\label{excatdual}
\begin{enumerate}
\item A spectral category with duality with one object is a triple $(A,w,\epsilon)$ consisting of an orthogonal ring spectrum $A$, a unit $\epsilon\in A^{\times}_0$, and a morphism of orthogonal ring spectra $w\colon A^{op}\to A$ such that $w^2(a)=\epsilon a\epsilon^{-1}$ and $w(\epsilon)=\epsilon^{-1}$. The Eilenberg MacLane construction of a Wall antistructure as defined in \cite{Wall}, or of a simplicial ring with involution as in \cite{BF}, provides such an object. We will therefore call a spectral category with duality with one object a \textit{spectral antistructure}.

An example of interest of a spectral anti-structure that does not arise as an Eilenberg-MacLane construction is the spherical group-ring. If $G$ is a well-pointed topological group and $\epsilon$ an element in the center of $G$, the triple $(\mathbb{S}\wedge G_+,w,\epsilon)$ is a spectral antistructure, where $w$ is induced by inversion in $G$.
\item Let $R$ be a discrete ring. A wall antistructure $(R,w,\epsilon)$ defines a duality on the category $\mathcal{P}_R$ of finitely generated projective right $R$-modules. It is defined on objects by the abelian group of morphisms of $R$-module maps
\[DP:=\hom_R(P,R),\]
 where $R$ is a right $R$-module by $ar=w(r)\epsilon a$, and $R$ acts on $DP$ by pointwise right multiplication on $R$. There is an isomorphism $\eta\colon P\to D^2P$ that sends $p$ to the map that sends $\lambda\colon P\to R$ to $w(\lambda(p))\epsilon$. This is a linear category with duality in the sense of \cite{Schlichting}, and applying the Eilenberg-MacLane construction on the abelian groups of morphisms gives rise to a spectral category with duality.
\end{enumerate}
\end{example}

\begin{rem}
One may wish to consider a framework where the spectral category $\mathscr{C}$ has a notion of weak equivalences as in \cite{BMlocTHH}, and where the map $\eta$ is required only to be a weak equivalence. The construction of this section require $\eta$ to be invertible, but an extension of our constructions to an appropriate context with weak equivalences is suggested in Remark \ref{catdualwe}.
\end{rem}

There is a variation of the Segal edgewise subdivision of the spectrally enriched cyclic nerve $N^{cy}_{\wedge}(\mathscr{C};\mathscr{M})$ which supports a strict $\mathbb{Z}/2$-action.
Namely, we define a simplicial spectrum with $k$-simplices
\begin{align*}
\bigvee_{(c_0,\dots,c_{2k+1})}\mathscr{M}(c_0,Dc_{2k+1})\wedge \mathscr{C}(c_1,c_0)\wedge\dots\wedge \mathscr{C}(c_k,c_{k-1})\wedge
\\
 \mathscr{C}(Dc_{k+1},c_{k})\wedge  \mathscr{C}(Dc_{k+2},Dc_{k+1})\wedge\dots\wedge  \mathscr{C}(Dc_{2k+1},Dc_{2k})
\end{align*}
and the standard simplicial structure of the Segal edgewise subdivision of $N^{cy}_{\wedge}(\mathscr{C};\mathscr{M})$. Since the wedge summands are reindexed from the usual summands of the cyclic nerve via an equivalence of categories, it easy to see that this simplicial spectrum is equivalent to the Segal edgewise subdivision of $N^{cy}_{\wedge}(\mathscr{C};\mathscr{M})$. By abuse of notation we represent an element of the cyclic nerve as a string of composable arrows
\begin{equation}\label{mf}
(m,\underline{f})=Dc_{2k+1}\stackrel{m}{\longleftarrow}c_0\stackrel{f_1}{\longleftarrow}c_1\stackrel{}{\longleftarrow}\dots \longleftarrow c_k\stackrel{f_{k+1}}{\longleftarrow}Dc_{k+1}\stackrel{f_{k+2}}{\longleftarrow}Dc_{k+2}\longleftarrow\dots\stackrel{f_{2k+1}}{\longleftarrow}Dc_{2k+1}.
\end{equation}
The simplicial spectrum above has a simplicial involution, which is defined by sending the $(c_0,\dots,c_{2k+1})$-summand to the $(c_{2k+1},\dots,c_{0})$-summand via the map which sends $(m,\underline{f})$ to
\[
\xymatrix@C=40pt{
Dc_{0}&c_{2k+1}\ar[l]_-{J(m)\eta}&c_{2k}\ar[l]_-{\eta^{-1}D(f_{2k+1})\eta}&\dots\ar[l]&c_{k+1}\ar[l]_-{\eta^{-1}D(f_{k+2})\eta}\\
Dc_{0}\ar[r]^{Df_1}
&
 Dc_1\ar[r]^{Df_2}
&\dots\ar[r]
&Dc_{k-1}\ar[r]_-{Df_k}
&Dc_{k}\ar[u]_-{\eta^{-1}D(f_{k+1})}&.
}
\]

\begin{defn} The resulting $\Z/2$-spectrum is called the dihedral Bar construction of  $(\mathscr{C},D,\eta)$ with coefficients in $(M,J)$. It is denoted $B^{di}_\wedge(\mathscr{C};\mathscr{M},\eta)$, or by $B^{di}_\wedge(\mathscr{C},\eta)$ when $(\mathscr{M},J)=(\Hom_{\mathscr{C}}(-,-),D)$. We will write  $B^{di}_\wedge(A,w,\epsilon)$ for the dihedral Bar construction of a spectral antistructure $(A,w,\epsilon)$.
\end{defn}

Similarly, the variation of the diagram $I^{\times 2k+2}\to \Sp$ that defines the B\"{o}kstedt model
\[
\Omega^{i_0+\dots+i_{2k+1}}\bigvee_{c_0,\dots,c_{2k+1}\in Ob\mathscr{C}}\mathbb{S}\wedge\mathscr{M}(c_0,Dc_{2k+1})_{i_0}\wedge \mathscr{C}(c_1,c_0)_{i_1}\wedge\dots\wedge \mathscr{C}(Dc_{2k+1},Dc_{2k})_{i_{2k+1}}
\]
from \cite{ringfctrs} admits an analogous $\mathbb{Z}/2$-structure. It is defined by replacing the map $M_{i_0}\wedge A_{i_1}\wedge\dots\wedge A_{i_{2k+1}}\to M_{i_0}\wedge A_{i_{2k+1}}\wedge\dots\wedge A_{i_{1}}$ of Example \ref{Bokstedtmodel1} with the map
\[
\xymatrix@R=10pt{
\mathscr{M}(c_0,Dc_{2k+1})_{i_0}\wedge \mathscr{C}(c_1,c_0)_{i_1}\wedge\dots\wedge \mathscr{C}(Dc_{2k+1},Dc_{2k})_{i_{2k+1}}\ar[d]\\
 \mathscr{M}(c_{2k+1},Dc_{0})_{i_{2k+1}}\wedge \mathscr{C}(c_{2k},c_{2k+1})_{i_{2k}}\wedge\dots\wedge \mathscr{C}(Dc_{0},Dc_{1})_{i_{0}}
}
\]
that sends $m\wedge f_1\wedge\dots\wedge f_{2k+1}$ to
\[
J(m)\eta\wedge \eta^{-1}D(f_{2k+1})\eta\wedge\dots\wedge \eta^{-1}D(f_{k+2})\eta\wedge \eta^{-1}D(f_{k+1})\wedge D(f_{k})\wedge\dots\wedge D(f_1).
\]
\begin{defn}
The resulting $\mathbb{Z}/2$-spectrum is denoted by $\THR(\mathscr{C},\eta;\mathscr{M})$, or $\THR(\mathscr{C},\eta)$ if $(\mathscr{M},J)=(\Hom_{\mathscr{C}}(-,-),D)$. We will write  $\THR(A,w,\epsilon)$ in the case of a spectral antistructure $(A,w,\epsilon)$.
\end{defn}

\begin{defn}
We say that a spectral category with duality  $(\mathscr{C},D,\eta)$ is flat if the mapping spectra are flat, and if the mapping spectra of the form $\mathscr{C}(c,Dc)$ are flat as orthogonal $\mathbb{Z}/2$-spectra for every object $c$ in $\mathscr{C}$, with respect to the involutions defined after Definition \ref{defbimod}.
Similarly, a $(\mathscr{C},D,\eta)$-bimodule $(\mathscr{M},J)$ is flat if it takes values in flat orthogonal spectra and if the  orthogonal $\mathbb{Z}/2$-spectra $\mathscr{M}(c,Dc)$ are flat.
\end{defn}

The proof of Theorem \ref{comparison} can be adapted to the categorical framework, giving the following.

\begin{theorem}\label{categoricalcomp}
Let  $(\mathscr{M},J)$ be a flat bimodule over a flat spectral category with duality $(\mathscr{C},D,\eta)$. Then there is a zig-zag of stable equivalences of $\mathbb{Z}/2$-spectra
\[
\THR(\mathscr{C},\eta;\mathscr{M})\simeq B^{di}_{\wedge}(\mathscr{C},\eta;\mathscr{M}).
\]
\end{theorem}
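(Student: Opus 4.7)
The plan is to mimic the proof of Theorem \ref{comparison} summand by summand. At each simplicial level $k$, the $k$-simplices of the subdivided dihedral bar construction $\sd_e N^{di}_\wedge(\mathscr{C},\eta;\mathscr{M})$ are a wedge, indexed by tuples $\underline{c}=(c_0,\dots,c_{2k+1})$, of smash products of the form
\[
S(\underline{c}):=\mathscr{M}(c_0,Dc_{2k+1})\wedge \mathscr{C}(c_1,c_0)\wedge\dots\wedge\mathscr{C}(Dc_{2k+1},Dc_{2k}).
\]
For each fixed $\underline{c}$, I will write down the zig-zag of \ref{comparison}, with the successive tensor factors of $S(\underline{c})$ playing the role of the various copies of $A$ (and the factor $\mathscr{M}(c_0,Dc_{2k+1})$ the role of $M$). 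Wedging these zig-zags over all tuples $\underline{c}$ and commuting $\bigvee$ past $\hocolim$ and $\Omega^{\underline{i}}$ yields a zig-zag of the form
\[
\textstyle\bigvee_{\underline{c}} S(\underline{c})\longrightarrow \hocolim_{I^{\times 2k+2}} \Omega^{\underline{i}}\big(\bigvee_{\underline{c}} \Sh^{i_0}\mathscr{M}(c_0,Dc_{2k+1})\wedge\cdots\wedge \Sh^{i_{2k+1}}\mathscr{C}(Dc_{2k+1},Dc_{2k})\big)\longleftarrow \THR_{2k+1}(\mathscr{C},\eta;\mathscr{M}),
\]
exactly analogous to the zig-zag in the proof of \ref{comparison}.

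Next I need to check that this is actually a zig-zag of \emph{real simplicial} spectra, i.e.\ that the maps are compatible with the involution. The simplicial maps are standard, and the only non-formal point is the involution, which in the categorical framework is built using $J$, $D$, and the natural isomorphism $\eta$. Since the involution acts by reindexing summands $\underline{c}\mapsto \underline{c}^{rev}$ and within each summand by a map built naturally from $J$, $D$, and $\eta$, the compatibility reduces to the compatibility already established in the one-object case in \S\ref{seccomp}, applied factorwise. The only extra verification is that the conjugating factors of $\eta^{\pm 1}$ and the duality $D$ applied to each mapping spectrum commute with the lax monoidal structure map $\phi$ from Proposition \ref{antisymmon} and with the canonical map from $\Sigma^{\infty}$ to shift; both are formal since $\eta$ is a natural isomorphism and $D$ is enriched.

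With the zig-zag in place as a zig-zag of real simplicial spectra, goodness of both simplicial spectra is proved exactly as in Lemma \ref{properness} and Lemma \ref{goodness}: the degeneracies insert unit maps $\mathbb{S}\to \mathscr{C}(c,c)$, which under the flatness hypothesis on $(\mathscr{C},D,\eta)$ and $(\mathscr{M},J)$ are flat cofibrations of orthogonal $\Z/2$-spectra on each relevant summand $\mathscr{C}(c,Dc)$ or $\mathscr{M}(c,Dc)$; smashing with flat spectra and taking wedges preserves the $h$-cofibration property. By Lemma \ref{good}, it then suffices to show that each map in the zig-zag is a stable $\Z/2$-equivalence on $k$-simplices.

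The levelwise equivalence is proved summand by summand as in \ref{comparison}: the rightward map is handled by the categorical analog of Lemma \ref{ItoN}, applied to the real $I^{\times 2k+2}$-diagram whose $(c_0,\dots,c_{2k+1})$-summand is $\Omega^{\underline{i}}\bigl(\Sh^{i_0}\mathscr{M}(c_0,Dc_{2k+1})\wedge\cdots\bigr)$; the flatness hypothesis on $(\mathscr{C},D,\eta)$ and $(\mathscr{M},J)$ gives that each $\mathscr{C}(c,Dc)$ and $\mathscr{M}(c,Dc)$ is flat as a $\Z/2$-spectrum, so the argument with free spectra $F_{n\rho}$ and the homotopy colimit presentation goes through verbatim on each summand. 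Indexed wedges of $\Z/2$-equivalences between flat $\Z/2$-spectra are again $\Z/2$-equivalences, so the summandwise conclusion suffices. The main obstacle, and the only place where genuine categorical care is needed, is the verification that the involution on each summand is compatible with the free-spectrum replacement $\xi_n\colon F_{n\rho}(-)\xrightarrow{\cong}\Omega^{n\rho}(-)$ in the equivariant homotopy category; this follows from the naturality of $\xi_n$ together with the fact that $\eta$ and $D$ act by isomorphisms in $\Ho(\Sp^{\Z/2})$, so that all the bookkeeping squares in the proof of \ref{comparison} commute up to canonical isomorphism.
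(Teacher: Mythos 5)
Your overall approach is the paper's: the authors state \ref{categoricalcomp} as an adaptation of \ref{comparison} and leave the proof to the reader, and your proposal is exactly that adaptation, including the right identification of where the flatness hypotheses on $\mathscr{C}(c,Dc)$ and $\mathscr{M}(c,Dc)$ get used, the observation that the involution on each wedge summand is built from $J$, $D$, $\eta$ factorwise, and the reliance on a categorical version of Lemma \ref{ItoN}.

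There is, however, one genuine gap in the reduction to the one-object case, concentrated in the sentence ``Indexed wedges of $\Z/2$-equivalences between flat $\Z/2$-spectra are again $\Z/2$-equivalences, so the summandwise conclusion suffices.''  In both the middle term of your zig-zag and in $\THR_{2k+1}(\mathscr{C},\eta;\mathscr{M})$, the wedge over tuples $\underline{c}$ of objects sits \emph{inside} the loop functor $\Omega^{\underline{i}}$, which is a right adjoint and does not preserve the (typically infinite) wedge. Consequently the two maps in the zig-zag are not wedges of maps of $\hocolim_I \Omega^{\underline{i}}$-spectra, and you cannot conclude summand by summand as stated. What is needed is the further claim that the natural transformation
\[
\bigvee_{\underline{c}} \hocolim_{I^{\times 2k+2}} \Omega^{\underline{i}}\big(\Sh^{i_0}\mathscr{M}(\cdots)\wedge\cdots\big) \longrightarrow \hocolim_{I^{\times 2k+2}} \Omega^{\underline{i}}\Big(\bigvee_{\underline{c}} \Sh^{i_0}\mathscr{M}(\cdots)\wedge\cdots\Big)
\]
(and likewise for the $\Sigma^\infty$-term) is a stable $\Z/2$-equivalence; this is a B\"okstedt approximation/connectivity statement and must also be checked on geometric fixed points, not just on underlying spectra. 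Equivalently, the categorical analog of Lemma \ref{ItoN} should be proved \emph{for the full diagram with the wedge inside the loop}, rather than for the individual $\underline{c}$-summands as you describe. This is a standard ingredient in B\"okstedt-style THH of categories and fills in the same way here, but it is not a ``verbatim'' application of the one-object case and needs to be stated.
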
\qed

\begin{cor}
Under the assumptions of \ref{categoricalcomp}, the spectrum $\Phi^{\mathbb{Z}/2}\THR(\mathscr{C},\eta;\mathscr{M})$ is equivalent to the geometric realization of the simplicial spectrum
\[
\bigvee_{c_0,\dots,c_{k}\in Ob\mathscr{C}}\Phi^{\mathbb{Z}/2}(\mathscr{M}(c_0,Dc_0))\wedge \mathscr{C}(c_1,c_0)\wedge\dots\wedge \mathscr{C}(c_k,c_{k-1})\wedge \Phi^{\mathbb{Z}/2}(\mathscr{C}(Dc_k,c_{k})).
\]
The first and last face maps are induced by maps $\Phi^{\mathbb{Z}/2}(\mathscr{M}(c,Dc))\wedge \mathscr{C}(d,c)\to \Phi^{\mathbb{Z}/2}(\mathscr{M}(d,Dd))$ and $\mathscr{C}(c,d)\wedge \Phi^{\mathbb{Z}/2}(\mathscr{C}(Dc,c))\to \Phi^{\mathbb{Z}/2}(\mathscr{C}(Dd,d))$ defined in a manner analogous to \ref{geofixcalc}.
\end{cor}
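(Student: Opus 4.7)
The plan is to reduce to the simplicial level via the Segal edgewise subdivision and then analyze each level using the norm-diagonal isomorphism, in parallel to the proof of Theorem \ref{geofixcalc}. First, by Theorem \ref{categoricalcomp}, $\THR(\mathscr{C},\eta;\mathscr{M})$ is equivalent to $B^{di}_\wedge(\mathscr{C},\eta;\mathscr{M})$ as $\Z/2$-spectra, so their derived geometric fixed points agree. As in Remark \ref{subdivision}, the canonical isomorphism $|\sd_e B^{di}_\wedge(\mathscr{C},\eta;\mathscr{M})| \cong |B^{di}_\wedge(\mathscr{C},\eta;\mathscr{M})|$ is $\Z/2$-equivariant; on the subdivision the action is levelwise, and an argument adapting Lemma \ref{properness} to the categorical setting shows that our flatness hypotheses make the subdivided simplicial spectrum good. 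Consequently $\Phi^{\Z/2}$ commutes with its geometric realization, and it suffices to identify $\Phi^{\Z/2}$ on each simplicial level.

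At level $k$ the subdivision is a wedge indexed by tuples $(c_0, \dots, c_{2k+1})$, and the action described in \S\ref{seccatdual} permutes these summands according to $(c_0, \dots, c_{2k+1}) \mapsto (c_{2k+1}, \dots, c_0)$. Since $\Phi^{\Z/2}$ of a wedge with free summand-permuting action is contractible, only the fixed summands contribute, and these are indexed by palindromic tuples, i.e.\ by $(k+1)$-tuples $(c_0, \dots, c_k)$ via the identification $c_i = c_{2k+1-i}$.

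On such a fixed summand the $\Z/2$-action pairs the factors: the central factor $\mathscr{M}(c_0, Dc_0)$ inherits the self-involution defined after Definition \ref{defbimod} and contributes $\Phi^{\Z/2}\mathscr{M}(c_0, Dc_0)$; for each $i = 1, \dots, k$, the factors $\mathscr{C}(c_i, c_{i-1})$ and $\mathscr{C}(Dc_{i-1}, Dc_i)$ are exchanged by the composite of swap and duality, and this pair is, up to the duality identification on one factor, the norm $N^{\Z/2}_e \mathscr{C}(c_i, c_{i-1})$, whose derived geometric fixed points recover $\mathscr{C}(c_i, c_{i-1})$ via the Mandell--May diagonal; the remaining factor $\mathscr{C}(Dc_k, c_k)$ again carries its self-involution and contributes $\Phi^{\Z/2}\mathscr{C}(Dc_k, c_k)$. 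Making these identifications homotopically meaningful uses the flatness hypotheses on $(\mathscr{C}, D, \eta)$ and $(\mathscr{M}, J)$, so that geometric fixed points commute with the relevant smash products and the diagonal isomorphism of \cite[Proposition B.203]{HHR} and \cite[Theorem 3.2.16]{BrDuSt} applies, exactly as in the proof of Theorem \ref{geofixcalc}. Combining over $k$ gives the displayed wedge decomposition, and the outer face maps of $\sd_e B^{di}_\wedge$ restrict on fixed summands to the maps described in the statement, since they are induced by composing a mapping spectrum with a $\mathscr{M}$-factor or a $\mathscr{C}(Dc, c)$-factor.

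The main obstacle is the multi-object bookkeeping: identifying the paired factors with norm constructions in a way compatible with both the duality structure maps $\eta$ and the face maps of the subdivision, and then verifying that the derived geometric fixed points can be computed via the diagonal uniformly in $(c_0, \dots, c_k)$. The one-object version of this analysis is the core of the proof of Theorem \ref{geofixcalc}; the present generalization is formally parallel, but requires that cofibrant replacements of the spectra $\mathscr{M}(c_0, Dc_0)$ and $\mathscr{C}(Dc_k, c_k)$ can be chosen sufficiently functorially in the objects so that the wedge sums and face maps interact correctly with $\Phi^{\Z/2}$.
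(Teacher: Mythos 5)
Your proof follows essentially the same route as the paper's: pass to $B^{di}_\wedge$ via Theorem \ref{categoricalcomp}, commute $\Phi^{\Z/2}$ with geometric realization and with the indexed wedge to isolate the palindromic summands, and then use the norm-diagonal isomorphism on the indexed smash product to identify the paired factors with $\mathscr{C}(c_i,c_{i-1})$ and the central/last factors with their own geometric fixed points. The only extra worry you raise — functoriality of cofibrant replacement in the objects — does not actually arise, since the paper computes with the Mandell--May monoidal $\Phi^{\Z/2}_{\mathcal{M}}$, which is a strict functor, and the flatness hypotheses on the mapping spectra (in particular on $\mathscr{M}(c,Dc)$ and $\mathscr{C}(Dc,c)$) are precisely what guarantee it agrees with the derived geometric fixed points without any object-by-object replacement.
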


\begin{proof}
By Theorem \ref{categoricalcomp} the geometric fixed-point spectrum of $\THR(\mathscr{C};\mathscr{M})$ is equivalent to the geometric fixed points of the realization of the Segal edgewise subdivision of $N^{di}_{\wedge}(\mathscr{C};\mathscr{M})$. Since the geometric fixed points functor commutes with realizations this is the geometric realization of a simplicial spectrum with $k$-simplices
\[
\Phi^{\mathbb{Z}/2}\big(\bigvee_{c_0,\dots,c_{2k+1}\in Ob\mathscr{C}}\mathscr{M}(c_0,c_{2k+1})\wedge \mathscr{C}(c_1,c_0)\wedge\dots\wedge \mathscr{C}(c_{2k+1},c_{2k})\big).
\]
The geometric fixed points commute with indexed coproducts, in the sense that $\Phi^G(\bigvee_{i\in I}X_i)\simeq \bigvee_{i\in I^G}\Phi^G(X_i)$ for every finite $G$-set $I$ and $I$-indexed family of spectra $\{X_i\}$. Thus the spectrum above is equivalent to
\[
\bigvee\Phi^{\mathbb{Z}/2}\big(\mathscr{M}(c_0,Dc_{0})\wedge \mathscr{C}(c_1,c_0)\wedge\dots\wedge  \mathscr{C}(c_{k},c_{k-1})\wedge \mathscr{C}(Dc_k,c_{k})\wedge  \mathscr{C}(Dc_{k-1},Dc_{k})\wedge\dots\wedge \mathscr{C}(Dc_{0},Dc_{1})\big),
\]
where the wedge runs through the collections of objects $c_0,\dots,c_{k}\in Ob\mathscr{C}$.
The action on the smash product is indexed over the involution on $\{1,\dots,2k+1\}$ which reverses the order, which has a unique fixed-point $k+1$. Thus the spectrum above is equivalent to
\[
\bigvee_{c_0,\dots,c_{k}\in Ob\mathscr{C}}\Phi^{\mathbb{Z}/2}(\mathscr{M}(c_0,Dc_{0}))\wedge \mathscr{C}(c_1,c_0)\wedge\dots\wedge  \mathscr{C}(c_{k},c_{k-1})\wedge \Phi^{\mathbb{Z}/2}(\mathscr{C}(Dc_k,c_{k})).\]
\end{proof}

\subsection{Functoriality of \texorpdfstring{$\THR$}{THR}}

We will explain the functoriality of the $\THR$ construction for categories with duality, and determine which natural transformations induce equivariant homotopies on $\THR$.

\begin{defn}\!\!\cite[3.2]{Schlichting}
A morphisms of spectral categories with non-strict duality is a pair $(F,\xi)\colon(\mathscr{C},D,\eta)\to (\mathscr{C}',D',\eta')$ of a spectrally enriched functor $F\colon\mathscr{C}\to \mathscr{C}'$ and a natural isomorphism $\xi\colon FD\to D'F$ such that
\[
\xymatrix{
F(c)\ar[r]^{F(\eta_c)}\ar[d]_-{\eta_{F(c)}'}&FD^2(c)\ar[d]^{\xi_{Dc}}\\
(D')^2F(c)\ar[r]_-{D'\xi_c}&D'FD(c)
}
\]
commutes for every object $c$ of $\mathscr{C}$.
\end{defn}

A morphism $(F,\xi)\colon(\mathscr{C},D,\eta)\to (\mathscr{C}',D',\eta')$ induces a map of $\mathbb{Z}/2$-spectra on $\THR$ and on the dihedral nerves. It sends a string of arrows $(m,\underline{f})$ as in (\ref{mf}) to
\[
D'\!Fc_{2k+1}\!\!\xleftarrow{\xi F(m)}\!Fc_0\!\!\xleftarrow{F(f_1)}\dots \xleftarrow{F(f_k)}\! Fc_{k}\!\!\xleftarrow{F(f_{k+1})\xi^{-1}}\! D'\!Fc_{k+1}\!\!\xleftarrow{\xi F(f_{k+2})\xi^{-1}}\dots\xleftarrow{\xi F(f_{2k+1})\xi^{-1}}\!D'\!Fc_{2k+1}.
\]

\begin{defn}
Let $(F,\xi),(G,\chi)\colon(\mathscr{C},D,\eta)\to (\mathscr{C}',D',\eta')$ be two morphisms of spectral categories with non-strict duality. Given a natural transformation $U\colon F\to G$ we let $\overline{U}\colon G\to F$ be the natural transformation defined as the composite
\[
\xymatrix{
\overline{U}\colon G(c)\ar[r]^-{G(\eta_c)}
&
G(D^2c)\ar[r]^{\chi_{Dc}}
& D'GD(c)\ar[r]^-{D'U_{Dc}}
&
D'FD(c)
\ar[r]^-{\xi_{Dc}^{-1}}
&
FD^2(c)\ar[r]^-{F(\eta_{c}^{-1})}
&F(c)
}.
\]
We say that a natural isomorphism $U\colon F\to G$ is equivariant if $\overline{U}=U^{-1}$.
\end{defn}

\begin{prop}\label{homotopiesTHR}
Let $(F,\xi),(G,\chi)\colon(\mathscr{C},D,\eta)\to (\mathscr{C}',D',\eta')$ be two morphisms of spectral categories with non-strict duality, and $U\colon F\to G$ an equivariant natural isomorphism. Then the induced morphisms
\[F_\ast, G_\ast\colon \THR(\mathscr{C},\eta)\longrightarrow \THR(\mathscr{C}',\eta')\]
and
\[F_\ast, G_\ast\colon B^{di}_\wedge(\mathscr{C},\eta)\longrightarrow B^{di}_\wedge(\mathscr{C}',\eta')\]
are equivariantly homotopic.
\end{prop}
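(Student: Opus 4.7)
The plan is to use the natural isomorphism $U$ to construct a simplicial homotopy between $F_\ast$ and $G_\ast$ on the dihedral (equivalently, Segal-subdivided) Bar constructions and on the $\Z/2$-diagrams defining $\THR$, and then to show that the condition $\overline{U}=U^{-1}$ is precisely what makes this simplicial homotopy $\Z/2$-equivariant.

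First, the classical ``insertion of $U$'' construction, adapted to strings of composable morphisms, yields for each simplicial degree $k$ and each $0\leq j\leq k$ a map $h_j\colon N^{di}_{\wedge,k}(\mathscr{C},\eta)\to N^{di}_{\wedge,k+1}(\mathscr{C}',\eta')$. Concretely, $h_j$ applies $F$ (together with $\xi$) to the objects and arrows indexed by $0,\dots,j$; inserts $U^{\pm 1}_{c_j}$ at position $j$ (with appropriate $\xi$, $\chi$, $\eta$, $D'$ conjugations when $j$ lies on the ``dual half'' of the string, all forced by type-checking so that the result is a well-formed string in $\mathscr{C}'$); and applies $G$ (together with $\chi$) to the remaining objects and arrows. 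Direct verification of the simplicial identities then shows that these maps assemble into a simplicial homotopy from $F_\ast$ to $G_\ast$.

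Second, I would verify that this simplicial homotopy, viewed on the subdivision $\sd_e N^{di}_\wedge$, commutes with the simplicial $\Z/2$-involution. The involution reverses strings, dualizes arrows via $D'$, conjugates by $\eta$, and uses $J$ on the bimodule factor; it sends position $j$ in a $(2k+1)$-simplex to position $2k+1-j$. The compatibility condition then amounts to checking that applying the involution to the morphism $U_{c_j}$ inserted at position $j$ by $h_j$ reproduces the morphism inserted at the mirror position by $h_{2k+1-j}$, modulo the $D'$, $\xi$, $\chi$, $\eta$ corrections. Comparing the two expressions one finds that they differ by exactly the composite defining $\overline{U}_{c_j}$. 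Hence the hypothesis $\overline{U}=U^{-1}$ is precisely what forces the simplicial homotopy to be $\Z/2$-equivariant. Passing to geometric realization and invoking the equivariant identification $|\sd_e X|\cong |X|$ from Remark \ref{subdivision} then yields the equivariant homotopy on $B^{di}_\wedge$.

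Third, for $\THR$, the same insertion construction lifts to a simplicial homotopy of $\Z/2$-diagrams $\Delta^{op}\wr N^{di}I\to \Sp$, by performing the insertion of $U$ on the appropriate smash factor of $\Omega^{\bullet}_I(\mathscr{C};\cdots;\mathbb{S})$. The equivariance argument is identical to that for the dihedral nerve, and after taking homotopy colimits over $I^{\times 1+\mathbf{k}}$ and geometric realization one obtains the desired equivariant homotopy on $\THR$. The main obstacle will be the algebraic bookkeeping of $\xi$, $\chi$, $\eta$, and $D'$ corrections in the definition of $h_j$ on the dual half of the string, and matching these precisely against the composite defining $\overline{U}$; once the formulas are pinned down, the simplicial identities and the equivariance check are straightforward calculations.
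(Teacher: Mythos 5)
Your proposal is correct and follows essentially the same route as the paper's proof: you construct a simplicial homotopy on the Segal-subdivided dihedral nerve by applying $F$ on the outer part of the string, $G$ on the inner part, and inserting $\overline{U}$ (equivalently $U^{-1}$, using the equivariance hypothesis) at the transitions, and then check that the hypothesis $\overline{U}=U^{-1}$ makes these prism maps commute with the simplicial $\Z/2$-involution. The paper writes out this formula explicitly and then asserts the simplicial identities and equivariance as a direct computation, with the $\THR$ case stated to be analogous, matching your outline precisely.
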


\begin{proof}
We prove the proposition for the dihedral nerve, the argument for $\THR$ is analogous. We show that the Segal edgewise subdivisions of $F_\ast$ and $G_\ast$ are simplicially equivariantly homotopic.
We define a simplicial homotopy in degree $k$ by sending $((m,\underline{f}),\sigma\in\Delta_{k}^1)$ to $F_\ast (m,\underline{f})$ if $\sigma=0$, to $G_\ast (m,\underline{f})$ if $\sigma=1$, and to
the string of morphisms
\[\xymatrix{
D'Fc_{2k+1}
&Fc_0\ar[l]_-{\xi F(m)}&Fc_1\ar[l]_-{F(f_1)}&\ar[l]_-{F(f_2)}\dots&Fc_{n_\sigma-1} \ar[l]_-{F(f_{n_\sigma-1})}&
\\
Gc_{k}\ar[r]^{G(f_k)}&Gc_{k-1}\ar[r]&\dots\ar[r]\ar[r]&Gc_{n_\sigma+1}\ar[r]^-{G(f_{n_\sigma+1})}&Gc_{n_\sigma}\ar[u]_-{\overline{U}G(f_{n_{\sigma}})}
\\
D'Gc_{k+1}\ar[u]^-{G(f_{k+1})\chi^{-1}}&&D'Gc_{k+2}\ar[ll]^-{\chi G(f_{k+2})\chi^{-1}}&\dots\ar[l]&D'Gc_{2k+1-n_\sigma}\ar[l]
\\
D'Fc_{2k+1}\ar[r]^-{\chi F(f_{2k+1})\chi^{-1}}&\dots\ar[r]&D'Fc_{2k+3-n_\sigma}\ar[rr]^-{\chi F(f_{2k+3-n_\sigma})\chi^{-1}}&&D'Fc_{2k+2-n_\sigma}\ar[u]_-{\chi G(f_{2k+2-n_\sigma})\chi^{-1}D'\overline{U}}
}\]
otherwise, where $0< n_{\sigma}< k+1$ is the cardinality of the preimage of $0$ of $\sigma\colon [k]\to [1]$.
\end{proof}

\begin{rem}
It is possible to have homotopies on $\THR$ induced by natural transformations which are not isomorphisms, if one is willing to work with non-unital functors. Namely, a functor $(F,\xi)\colon(\mathscr{C},D,\eta)\to (\mathscr{C}',D',\eta')$ which does not preserve the identities still induces a morphism of semisimplicial objects on the dihedral nerve and $\THR_\bullet$, and therefore a map of $\Z/2$-spectra on thick realizations. If the categories are flat this is equivalent to the thin realization. Given a natural transformation $U\colon F\to G$, one requires that
\[
\xymatrix{
F(c)\ar[d]_-{F(f)}\ar[r]^-{U_c}
&
G(c)\ar[d]^-{\overline{U}_c}
\\
F(d)&F(c)\ar[l]_-{F(f)}
}
\ \ \ \ \ \ \ \ \ \ \ \ \ \ \ \
\xymatrix{
G(c)\ar[d]_-{G(f)}\ar[r]^-{G(f)}
&
G(d)\ar[d]^-{\overline{U}_d}
\\
G(d)&F(d)\ar[l]_-{U_d}
}
\]
commute for every morphism $f\colon c\to d$ in $\mathscr{C}$. Then a formula similar to the proof of \ref{homotopiesTHR} defines a semisimplicial homotopy between the maps induced by $F$ and $G$. We will find ourselves in a similar situation in the proof of the cofinality Theorem \ref{cofinality}.
\end{rem}

\subsection{Strictification of the duality}

A spectral category with non-strict duality $(\mathscr{C},D,\eta)$ can be replaced with an equivalent spectral category with strict duality $(\mathcal{D}\mathscr{C}, D)$. We finish the section with a comparison of the corresponding real topological Hochschild homologies.

The spectral category $\mathcal{D}\mathscr{C}$ has objects the triples $(c,d,\phi)$ where $c$ and $d$ are objects of $\mathscr{C}$ and $\phi\colon d\to Dc$ is an isomorphism in the underlying morphism set $\mathscr{C}(d,Dc)_0$. The spectrum of morphisms from $(c,d,\phi)$ to $(c',d',\phi')$ is defined as the pullback
\[
\xymatrix{
\mathcal{D}\mathscr{C}((c,d,\phi),(c',d',\phi'))\ar[r]\ar[d]
&\mathscr{C}(c,c')\ar[d]^-{D(-)\circ\phi'}\\
\mathscr{C}(d',d)\ar[r]^-{\cong}_-{\phi\circ (-)}&
\mathscr{C}(d',Dc).
}
\]
The strict duality is the enriched functor that sends an object $(c,d,\phi)$ to $(d,c,D\phi\circ\eta_c)$, and a morphism $(f,g)$ to $(g,f)$. The projection onto the first coordinate defines an enriched equivalence $\mathcal{D}\mathscr{C}\to\mathscr{C}$. In particular the corresponding topological Hochschild homologies and cyclic nerves are equivalent.

\begin{cor}\label{strictifyTHR}
Let $(\mathscr{C},D,\eta)$ be a spectral category with non-strict duality. There are natural stable equivalences of $\mathbb{Z}/2$-spectra
\[
\THR(\mathscr{C},\eta)\simeq \THR(\mathcal{D}\mathscr{C},\id)\ \ \ \ \ \ \ \ \ \ \mbox{and} \ \ \ \ \ \ \ \ \ \ \ \ B^{di}_\wedge(\mathscr{C},\eta)\simeq B^{di}_\wedge(\mathcal{D}\mathscr{C},\id).
\]
\end{cor}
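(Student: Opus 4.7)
The plan is to realize the projection $P\colon\mathcal{D}\mathscr{C}\to\mathscr{C}$ as a morphism of spectral categories with non-strict duality $(\mathcal{D}\mathscr{C},\id)\to(\mathscr{C},\eta)$, and to produce a section $F$ going the other way so that Proposition \ref{homotopiesTHR} forces $P_\ast$ and $F_\ast$ to be mutually inverse equivariant homotopy equivalences on both $\THR$ and $B^{di}_\wedge$. The structural map for $P$ is $(\xi_P)_{(c,d,\phi)}=\phi\colon Pd=d\to Dc=DPc$; the pentagon comparing $\xi_P$ with $\eta$ and the identity on $\mathcal{D}\mathscr{C}$ unravels to the identity $\phi=D(D\phi\circ\eta_c)\circ\eta_d$, which is built into the definition of the strict duality $D_{\mathcal{D}\mathscr{C}}(c,d,\phi)=(d,c,D\phi\circ\eta_c)$ via the triangle identity $D\eta\circ\eta=\id$.

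Next I would define the section $F\colon\mathscr{C}\to\mathcal{D}\mathscr{C}$ by $F(c)=(c,Dc,\id_{Dc})$ on objects and by $F(f\colon c\to c')=(f,Df)$ on morphisms (this lands in the pullback by the naturality of $D$), with structure morphism $(\xi_F)_c=(\id_{Dc},\eta_c)\colon(Dc,D^2c,\id)\to (Dc,c,\eta_c)$, which is invertible since $\eta$ is. The composite $PF$ equals the identity morphism $(\id_{\mathscr{C}},\id)$ on the nose: this is immediate on objects and morphisms, and on structure maps one computes $\xi_P\circ F(\xi_F)^{-1}=\id$. For $FP$, define an enriched natural transformation $U\colon FP\Rightarrow\id_{\mathcal{D}\mathscr{C}}$ by $U_{(c,d,\phi)}=(\id_c,\phi)\colon(c,Dc,\id)\to(c,d,\phi)$. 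The naturality square for a morphism $(f,g)\colon(c,d,\phi)\to(c',d',\phi')$ amounts precisely to $\phi\circ g=Df\circ\phi'$, which is the defining pullback condition for morphisms in $\mathcal{D}\mathscr{C}$. Since $\phi$ is invertible in $\mathscr{C}$, each $U_{(c,d,\phi)}$ is an isomorphism with inverse $(\id_c,\phi^{-1})$.

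The technical heart of the argument is to verify that $U$ is \emph{equivariant}, i.e.\ that $\overline{U}=U^{-1}$ in the sense of the definition preceding Proposition \ref{homotopiesTHR}. This is a diagram chase: $\overline{U}_{(c,d,\phi)}$ is the composite
\[
(c,d,\phi)\xrightarrow{\eta^{\mathcal{D}\mathscr{C}}}D_{\mathcal{D}\mathscr{C}}^{2}(c,d,\phi)\xrightarrow{\xi_{D_{\mathcal{D}\mathscr{C}}(c,d,\phi)}}D_{\mathcal{D}\mathscr{C}}FPD_{\mathcal{D}\mathscr{C}}(c,d,\phi)\xrightarrow{D_{\mathcal{D}\mathscr{C}}U_{D_{\mathcal{D}\mathscr{C}}(c,d,\phi)}}D_{\mathcal{D}\mathscr{C}}^{2}FP(c,d,\phi)\xrightarrow{(\eta^{\mathcal{D}\mathscr{C}})^{-1}}FP(c,d,\phi),
\]
and unpacking each arrow using $\xi_F=(\id,\eta)$, $\xi_P=\phi$, the formula $D_{\mathcal{D}\mathscr{C}}(f,g)=(g,f)$, and the identity $\eta^{\mathcal{D}\mathscr{C}}=\id$ reduces $\overline{U}$ to $(\id_c,\phi^{-1})$, exactly $U^{-1}$. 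With $U$ equivariant, Proposition \ref{homotopiesTHR} gives an equivariant homotopy $(FP)_\ast\simeq\id$ on both $\THR(\mathcal{D}\mathscr{C},\id)$ and $B^{di}_\wedge(\mathcal{D}\mathscr{C},\id)$; combined with $PF=\id$ this proves that $P_\ast$ is an equivariant stable equivalence in both cases. The main obstacle is purely bookkeeping: the equivariance of $U$ is a lengthy but mechanical diagram chase, and the only subtlety is to keep the non-strict structure data of $P$ and $F$ straight when substituting into the formula for $\overline{U}$.
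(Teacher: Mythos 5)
Your proof is correct and follows essentially the same route as the paper's: you realize the projection $p\colon\mathcal{D}\mathscr{C}\to\mathscr{C}$ as a morphism of categories with non-strict duality with structure map $\phi$, construct the section $s(c)=(c,Dc,\id_{Dc})$ with structure map $(\id_{Dc},\eta_c)$, observe $ps=\id$, and exhibit the natural isomorphism $U_{(c,d,\phi)}=(\id_c,\phi)\colon sp\Rightarrow\id$, reducing everything to checking $\overline{U}=U^{-1}$ and invoking Proposition \ref{homotopiesTHR}. The paper carries out exactly this argument (with $p$, $s$ in place of your $P$, $F$), including the same final equivariance computation for $U$.
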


\begin{proof} We prove the claim for the dihedral Bar construction.
The functor $p\colon\mathcal{D}\mathscr{C}\to\mathscr{C}$ can be lifted to a morphism of categories with duality, by the natural isomorphism
\[\xi=\phi\colon pD(c,d,\phi)=d\to Dc.\]
 The functor $p$ has an inverse $s\colon\mathscr{C}\to \mathcal{D}\mathscr{C}$ that sends $c$ to $(c,Dc,\id_{Dc})$ and a morphism $f$ to $(f,Df)$. Also $s$ admits a natural isomorphism \[\chi=(\id_{Dc},\eta)\colon sDc=(Dc,D^2c,\id_{D^2c})\stackrel{}{\to}Dsc=(Dc,c,\eta)\]
 which defines a morphism of categories with duality.
Clearly $ps$ is the identity, and there is a natural isomorphism
\[U=(\id_c,\phi)\colon sp(c,d,\phi)=(c,Dc,\id_{Dc})\longrightarrow (c,d,\phi).\]
By \ref{homotopiesTHR} it suffices to verify that $U$ is equivariant. The natural transformation $\overline{U}\colon \id\to sp$ is the composite
\[
\xymatrix{
\overline{U}\colon (c,d,\phi)\ar[rrr]^-{DU_{D}=(D\phi\circ \eta,\id_d)}
&&&
(Dd,d,\eta_d)\ar[rrr]^{((D\phi\circ\eta)^{-1},\phi^{-1})}&&&(c,Dc,\id_{Dc})
}
\]
and therefore $\overline{U}=U^{-1}$.
\end{proof}

\begin{rem}
Let $(R,w,\epsilon)$ be a Wall antistructure and $\mathscr{C}=H\mathcal{P}_R$ the Eilenberg-MacLane construction of the linear category $\mathcal{P}_R$ of finitely generated projective $R$-modules, with the duality discussed in Example \ref{excatdual}. The spectrum $\THR(\mathcal{D}H\mathcal{P}_R,\id)$ is the natural recipient of a trace map from the real $K$-theory spectrum $\KR(R,w,\epsilon)$ (\cite{thesis}, \cite{IbLars}).
On the other hand by Morita invariance the $\mathbb{Z}/2$-spectra $\THR(H\mathcal{P}_R,\eta)$ and $\THR(HR,w,\epsilon)$ are equivalent (see Theorem \ref{Morita}). Thus combined with \ref{strictifyTHR} this produces a natural map
\[\tr\colon \KR(R,w,\epsilon)\longrightarrow \THR(HR,w,\epsilon)\]
 in the homotopy category of $\Z/2$-spectra.
\end{rem}

\begin{rem}\label{catdualwe}
The construction $\mathcal{D}\mathscr{C}$ makes sense also when the map $\eta\colon \id\to D^2$ is required only to be a weak equivalence instead of an isomorphism. The cyclic nerve $N^{cy}_\wedge\mathscr{C}$ and $\THH(\mathscr{C})$ themselves do not support a strict involution in this case, since our definition of the involution requires inverting $\eta$. In this case one should take $N^{di}_\wedge(\mathcal{D}\mathscr{C},\id)$, or the equivalent $\THR(\mathcal{D}\mathscr{C},\id)$, as models for real topological Hochschild homology.
\end{rem}

\section{Fundamental properties of \texorpdfstring{$\THR$}{THR}}\label{secprop}

\subsection{Multiplicative structures}\label{secmult}

Let $(A,w)$ and $(B,\sigma)$ be two ring spectra with anti-involution. The smash product $A\wedge B$ is canonically a ring spectrum, and the map $w\wedge \sigma$ defines an anti-involution on $A\wedge B$. We observe that there is a natural isomorphism
\[
B^{di}(A\wedge B,w\wedge \sigma)\cong B^{di}(A,w)\wedge B^{di}(B,\sigma),
\]
induced by the levelwise shuffling of the smash factors
\[
(A\wedge B)^{\wedge 1+{\bf k}}\cong A^{\wedge 1+{\bf k}}\wedge B^{\wedge 1+{\bf k}}
\]
and by commuting the smash product with the geometric realizations.
This isomorphism makes $B^{di}$ into a symmetric monoidal functor from the category of orthogonal ring spectra with anti-involution to the category of orthogonal $\mathbb{Z}/2$-spectra.

If the underlying ring spectrum $A$ is commutative, the map $w$ is simply a multiplicative involution on $A$, and $(A,w)$ is a commutative orthogonal $\mathbb{Z}/2$-ring spectrum.

\begin{prop}\label{multBdi}
Let $(A,w)$ be a commutative orthogonal $\mathbb{Z}/2$-ring spectrum. Then $B^{di}(A,w)$ is a commutative augmented $A$-algebra in the category of orthogonal $\mathbb{Z}/2$-spectra.
\end{prop}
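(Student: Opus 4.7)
The plan is to promote the real simplicial spectrum $N^{di}_\wedge(A,w)$ to a simplicial commutative $\mathbb{Z}/2$-equivariant ring spectrum and to transfer this structure along geometric realization. Since $A$ is commutative in orthogonal $\mathbb{Z}/2$-spectra, each indexed smash power $A^{\wedge 1+{\bf k}}$ is a commutative $\mathbb{Z}/2$-equivariant ring spectrum under the componentwise multiplication. The structural face maps of the cyclic bar construction, which multiply adjacent smash factors, are ring maps precisely because $A$ is commutative (the component permutation required to identify $A^{\wedge {\bf k+1}}\wedge A^{\wedge {\bf k+1}}$ with $A^{\wedge {\bf k}}\wedge A^{\wedge {\bf k}}$ after pair-wise multiplication uses the symmetry), and the degeneracies, which insert the unit $\mathbb{S}\to A$, are ring maps automatically. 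The involution on $A^{\wedge 1+{\bf k}}$ that defines the real structure reverses the order of the last ${\bf k}$ factors and applies $w$ to each factor; it is a ring map because $A$ is commutative and $w\colon A\to A$ is multiplicative. Thus $N^{di}_\wedge(A,w)$ lands in the category of simplicial commutative $\mathbb{Z}/2$-equivariant ring spectra.

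Next I would invoke the fact that geometric realization of simplicial orthogonal $\mathbb{Z}/2$-spectra is strong symmetric monoidal, so that it sends simplicial commutative ring spectra to commutative ring spectra. This produces the desired commutative $\mathbb{Z}/2$-equivariant ring structure on $B^{di}_\wedge(A,w)$. The $A$-algebra structure comes from the unit map $A=N^{di}_\wedge(A,w)_0 \hookrightarrow B^{di}_\wedge(A,w)$, which is a morphism of commutative $\mathbb{Z}/2$-equivariant ring spectra since it is the inclusion of the $0$-skeleton of a simplicial commutative ring spectrum.

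For the augmentation, I would use the iterated multiplications $\mu_k\colon A^{\wedge 1+{\bf k}}\to A$, which assemble into a simplicial map from $N^{di}_\wedge(A,w)$ to the constant simplicial object at $A$; these are maps of commutative ring spectra by commutativity of $A$, and the required simplicial identities amount to associativity and unitality of $\mu$. Equivariance of the augmentation is the key point: under the involution an element $a_0\wedge a_1\wedge\cdots\wedge a_k$ is sent to $w(a_0)\wedge w(a_k)\wedge\cdots\wedge w(a_1)$, and the identity
\[
w(a_0)\,w(a_k)\cdots w(a_1)=w(a_0\,a_1\cdots a_k)
\]
follows from $w$ being a ring homomorphism together with commutativity of $A$. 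After realization this yields an $A$-algebra augmentation $B^{di}_\wedge(A,w)\to A$, retracting the unit.

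The only nontrivial step is the strong symmetric monoidality of realization for $\mathbb{Z}/2$-equivariant orthogonal spectra, which lets us pass a simplicial commutative ring structure to its realization; the rest is a sequence of direct checks using commutativity of $A$ and multiplicativity of $w$.
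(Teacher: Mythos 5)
Your proof is correct, and it works slightly differently from the paper's. The paper's proof is a one-line appeal to abstract nonsense: just before this proposition it is observed that the levelwise shuffle isomorphism $(A\wedge B)^{\wedge 1+{\bf k}}\cong A^{\wedge 1+{\bf k}}\wedge B^{\wedge 1+{\bf k}}$ makes $B^{di}_\wedge$ a symmetric monoidal functor from ring spectra with anti-involution to $\mathbb{Z}/2$-spectra; since a commutative orthogonal $\mathbb{Z}/2$-ring spectrum is precisely a commutative monoid in ring spectra with anti-involution, and symmetric monoidal functors preserve commutative monoids, $B^{di}_\wedge(A,w)$ is commutative. You instead unpack this levelwise: each $A^{\wedge 1+{\bf k}}$ is a commutative $\mathbb{Z}/2$-ring under the componentwise multiplication, the faces (which multiply adjacent factors) and the involution (which applies $w$ and reverses the order of the last ${\bf k}$ factors) are ring maps precisely because $A$ is commutative, and then you cite strong symmetric monoidality of geometric realization. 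The content is the same, because the paper's ``$B^{di}_\wedge$ is symmetric monoidal'' is proved by exactly the levelwise shuffle that you use implicitly when claiming the face maps are ring maps; yours is simply a hands-on rearrangement of the same observations, while the paper's formulation has the advantage of having already isolated the monoidality of $B^{di}_\wedge$ for reuse. Both treatments of the augmentation and the $A$-algebra structure (inclusion of $0$-simplices, iterated multiplication) coincide. If you wanted to tighten your write-up you could make explicit the check that the degeneracies, which insert the unit $\mathbb{S}\to A$, are also ring maps (trivially so), and note that strong symmetric monoidality of realization for simplicial $\mathbb{Z}/2$-orthogonal spectra follows from the usual Eilenberg--Zilber-type coend argument since the smash product preserves colimits in each variable.
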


\begin{proof} A commutative orthogonal $\mathbb{Z}/2$-ring spectrum $(A,w)$ defines a commutative monoid object in the category of ring spectra with anti-involution. Since $B^{di}$ is a symmetric monoidal functor it follows that $B^{di}(A,w)$ is a commutative orthogonal $\mathbb{Z}/2$-ring spectrum.

The algebra structure is induced by the inclusion of the zero-simplices $A \to B^{di}(A,w)$ which is a monoidal natural transformation. The augmentation map $B^{di}(A,w)\to A$ is the geometric realization of the map of real simplicial spectra defined levelwise by the iterated multiplication $\mu\colon A^{\wedge 1+{\bf k}}\longrightarrow A$.
\end{proof}

There is a similar monoid structure on the B\"{o}kstedt model $\THR(A,w)$ defined in \cite[1.7.1]{Wittvect}, which we recall below. We prove in \ref{comparisonmult} that the comparison of Theorem \ref{comparison} is in fact an equivalence of associative ring spectra (see \ref{Eoo} below  for a remark about the commutative structures). The natural transformation $\THR(A,w)\wedge \THR(B,\sigma)\to \THR(A\wedge B,w\wedge \sigma)$ is obtained by geometrically realizing the maps of $\mathbb{Z}/2$-spectra
\begin{equation}\label{THRmult}
\xymatrix@R=15pt{\displaystyle
(\hocolim_{I^{\times 1+{\bf k}}}\Omega^{\underline{i}}(\mathbb{S} \wedge A_{i_0}\wedge\dots\wedge A_{i_k}))\wedge (\hocolim_{I^{\times 1+{\bf k}}}\Omega^{\underline{j}}(\mathbb{S} \wedge B_{j_0}\wedge\dots\wedge B_{j_k}))\ar[d]^{\cong}
\\
\displaystyle
\hocolim_{I^{\times 1+{\bf k}}\times I^{1+{\bf k}}}(\Omega^{\underline{i}}(\mathbb{S} \wedge A_{i_0}\wedge\dots\wedge A_{i_k}))\wedge (\Omega^{\underline{j}}(\mathbb{S} \wedge B_{j_0}\wedge\dots\wedge B_{j_k}))\ar[d]
\\
\displaystyle \hocolim_{I^{\times 1+{\bf k}}\times I^{1+{\bf k}}}\Omega^{\underline{i}+\underline{j}}((\mathbb{S}  \wedge A_{i_0}\wedge\dots\wedge A_{i_k})\wedge (\mathbb{S} \wedge B_{j_0}\wedge\dots\wedge B_{j_k}))\ar[d]^{\cong}
\\
\displaystyle \hocolim_{I^{\times 1+{\bf k}}\times I^{1+{\bf k}}}\Omega^{\underline{i}+\underline{j}}(\mathbb{S} \wedge A_{i_0}\wedge B_{j_0}\wedge\dots\wedge A_{i_k}\wedge B_{j_k})\ar[d]
\\
\displaystyle \hocolim_{I^{\times 1+{\bf k}}\times I^{1+{\bf k}}}\Omega^{\underline{i}+\underline{j}}(\mathbb{S} \wedge (A\wedge B)_{i_0+j_0}\wedge\dots\wedge (A\wedge B)_{i_k+j_k})\ar[d]^{+_\ast}
\\
\displaystyle \hocolim_{I^{1+{\bf k}}}\Omega^{\underline{l}}(\mathbb{S} \wedge (A\wedge B)_{l_0}\wedge\dots\wedge (A\wedge B)_{l_k})
}
\end{equation}
where the isomorphisms are the canonical ones. The first non-isomorphism commutes smash products and loops and the second is the canonical bimorphism of the smash product of orthogonal spectra. The map labelled $+_\ast$ is the pushforward on homotopy colimits induced by the addition functor $+\colon I\times I\to I$. It is immediate to verify that this natural transformation is equivariant for the $\mathbb{Z}/2$-action on $\THR$ (cf. the proof of \ref{antisymmon}). Thus this natural transformation makes $\THR$ into a monoidal functor from orthogonal ring spectra with anti-involution to orthogonal $\mathbb{Z}/2$-spectra.
The same argument as in \ref{multBdi} proves the following. 

\begin{cor}\label{multTHR}
Let $(A,w)$ be a commutative orthogonal $\mathbb{Z}/2$-ring spectrum. Then $\THR(A,w)$ is an associative $\mathbb{Z}/2$-equivariant ring spectrum.
\end{cor}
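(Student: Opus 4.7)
The plan is to follow the proof of Proposition \ref{multBdi} almost verbatim, with the caveat that the lax monoidal structure on $\THR$ is only monoidal, not symmetric monoidal. First I would verify that the natural transformation
\[
\THR(A,w)\wedge\THR(B,\sigma)\longrightarrow\THR(A\wedge B,\,w\wedge\sigma)
\]
constructed in display \eqref{THRmult}, together with the unit $\mathbb{S}\to\THR(\mathbb{S})$ induced by the levelwise inclusion of the unit object into the Bökstedt homotopy colimit, equips $\THR$ with the structure of a (non-symmetric) monoidal functor from the category of ring spectra with anti-involution to orthogonal $\Z/2$-spectra. The associativity of this transformation follows from three facts: the associativity of the disjoint union $+\colon I\times I\to I$ on the pushforward, the associativity of the smash product of orthogonal spectra at the level of the canonical bimorphisms $A_{i}\wedge B_{j}\to (A\wedge B)_{i+j}$, and the commutation of smash products with loops and with homotopy colimits. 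Equivariance of the transformation was already observed after \eqref{THRmult}.

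Next I would observe that a commutative orthogonal $\Z/2$-ring spectrum $(A,w)$ is the same data as a commutative monoid object in the symmetric monoidal category of ring spectra with anti-involution: the multiplication $\mu\colon A\wedge A\to A$ is a morphism of orthogonal ring spectra because $A$ is commutative, and it intertwines the anti-involutions $w\wedge w$ and $w$ because $w$ is a ring homomorphism; similarly for the unit $\eta\colon\mathbb{S}\to A$.

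Applying the monoidal functor $\THR$ to this monoid then produces an associative monoid structure on $\THR(A,w)$ in orthogonal $\Z/2$-spectra, with multiplication given by the composite
\[
\THR(A,w)\wedge\THR(A,w)\longrightarrow \THR(A\wedge A,\,w\wedge w)\xrightarrow{\THR(\mu)}\THR(A,w),
\]
and with unit $\mathbb{S}\to\THR(\mathbb{S})\to\THR(A,w)$. Associativity and unitality in $\Z/2$-spectra follow formally from the corresponding diagrams in ring spectra with anti-involution after applying the monoidal functor $\THR$.

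The only delicate point — and precisely the reason we do not claim commutativity, paralleling the discussion preceding the statement — is that the natural transformation \eqref{THRmult} fails to be symmetric: the functor $+\colon I\times I\to I$ is not strictly commutative, and the shuffle of wedge factors $(A\wedge B)^{\wedge 1+{\bf k}}\cong A^{\wedge 1+{\bf k}}\wedge B^{\wedge 1+{\bf k}}$ combined with the braiding on orthogonal spectra does not make \eqref{THRmult} compatible with the symmetry isomorphisms on either side. This is not an obstacle to associativity, however, and is the only mild subtlety in translating the argument of \ref{multBdi}.
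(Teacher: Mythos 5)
Your proposal is correct and follows exactly the paper's argument: the paper proves \ref{multTHR} by remarking that the natural transformation \eqref{THRmult} makes $\THR$ a (non-symmetric) monoidal functor, then invokes ``the same argument as in \ref{multBdi}'', i.e.\ a commutative $\Z/2$-ring spectrum is a commutative monoid in ring spectra with anti-involution, and a monoidal functor carries it to an associative monoid. Your added explanations of why \eqref{THRmult} is associative and why it fails to be symmetric (via the non-commutativity of $+\colon I\times I\to I$) match what the paper sketches in the surrounding text and in Remark \ref{Eoo}.
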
\qed

\begin{rem}\label{Eoo}
The transformation $\THR(A,w)\wedge \THR(B,\sigma)\to \THR(A\wedge B,w\wedge \sigma)$ is lax monoidal, but it is not symmetric. The problem can be traced back to the fact that the monoidal structure $+\colon I\times I\to I$ is not strictly symmetric.  Nevertheless, $\THH(A)$ has an $E_\infty$-structure when $A$ is commutative (see \cite[1.7.1]{Wittvect}). A comparison of the $E_\infty$-structures on $\THH(A)$ and of an infinity-categorical version of the cyclic Bar construction $B^{cy}A$ is carried out in \cite{NikSchol}.

In our equivariant context, the dihedral Bar construction $B^{di}(A,w)$ is strictly commutative, hence it admits an action of the $\mathbb{Z}/2$-equivariant $E_{\infty}$-operad. We believe that  $\THR(A,w)$ also admits such an action, and we leave the comparison of these two structures as an open question. In the following result we prove that the comparison of \ref{comparison} holds as associative monoids. We see this result as allowing a transition from the B\"{o}kstedt model to the more manageable dihedral Bar construction, and we will only make use of the commutative structure on the latter.
\end{rem}

\begin{theorem}\label{comparisonmult}
Let $(A,w)$ be a commutative flat orthogonal $\mathbb{Z}/2$-ring spectrum. Then the equivalence
\[
\THR(A,w)\simeq B^{di}(A,w)
\]
of Theorem \ref{comparison} is a stable equivalence of associative orthogonal $\mathbb{Z}/2$-ring spectra.
\end{theorem}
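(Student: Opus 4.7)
The plan is to upgrade each step of the comparison zig-zag in the proof of Theorem \ref{comparison} to a morphism of lax monoidal functors, so that after evaluating on a commutative $\Z/2$-equivariant ring spectrum $(A,w)$ the entire zig-zag consists of maps of associative $\Z/2$-ring spectra. The endpoints already carry compatible multiplicative structures: $B^{di}$ is in fact strong symmetric monoidal via the shuffle $(A\wedge B)^{\wedge 1+\mathbf{k}}\cong A^{\wedge 1+\mathbf{k}}\wedge B^{\wedge 1+\mathbf{k}}$ (as used in Proposition \ref{multBdi}), while $\THR$ is lax monoidal via the natural transformation in (\ref{THRmult}) (as used in Corollary \ref{multTHR}). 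What remains is to put the intermediate ``shift model'' $|\THR_\bullet \Omega^\bullet_I(A;A;\Sh)|$ and the two connecting natural transformations into this monoidal framework.

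First I would equip the shift-model functor $A\mapsto |\THR_\bullet\Omega^\bullet_I(A;A;\Sh)|$ with a lax monoidal transformation exactly analogous to (\ref{THRmult}), using the canonical maps $\Sh^{i}X\wedge \Sh^{j}Y\to \Sh^{i+j}(X\wedge Y)$ of the shift functor in place of the structure maps of suspension spectra, together with the pushforward along $+\colon I\times I\to I$. The equivariance of this transformation for the dihedral $\Z/2$-action is verified by the same computation as in Proposition \ref{antisymmon}, using the identities $\omega(\alpha+\beta)=\omega(\beta)+\omega(\alpha)$ and $\chi_{i,j}\tau_{i+j}=\tau_i+\tau_j$.

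Next I would check that both arrows in the zig-zag
\[
B^{di}_\wedge(A)\longrightarrow |\THR_\bullet\Omega^\bullet_I(A;A;\Sh)|\longleftarrow \THR(A)
\]
are lax monoidal natural transformations. The right-pointing arrow factors, in each simplicial degree, as
\[
A^{\wedge 1+\mathbf{k}}\longrightarrow \bigl(\hocolim_I\Omega^i\Sh^iA\bigr)^{\wedge 1+\mathbf{k}}\longrightarrow \hocolim_{I^{\times 1+\mathbf{k}}}\Omega^{\underline{i}}(\Sh^{i_0}A\wedge\cdots\wedge\Sh^{i_k}A);
\]
the first map is the smash power of the lax monoidal unit $\id\to\hocolim_I\Omega^i\Sh^i(-)$ (obtained from the loop-shift adjunction and the lax monoidal structure of $\hocolim_I$), and the second is the standard comparison that commutes smash products past homotopy colimits and loop spaces. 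The leftward arrow is induced by the lax monoidal natural transformation $\Sigma^\infty(-)\to\Sh^\bullet(-)$ adjoint to the identity at level $0$. In both cases the lax monoidal coherence reduces, after unwinding, to the compatibility of $\Sigma^\infty$, $\Sh$, $\Omega$, and $\hocolim_I$ with the appropriate shuffle permutations, which is the same bookkeeping as in the proof of Proposition \ref{antisymmon}.

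Finally, given a commutative $\Z/2$-ring spectrum $(A,w)$, we view $A$ as a commutative monoid in the symmetric monoidal category of ring spectra with anti-involution. Each lax monoidal functor then sends $A$ to an associative monoid in orthogonal $\Z/2$-spectra, recovering the ring structures on $\THR(A)$ and $B^{di}_\wedge(A)$ and equipping the intermediate term with a compatible one. Lax monoidal natural transformations between lax monoidal functors automatically send monoids to monoid morphisms, so each map in the zig-zag is a morphism of associative $\Z/2$-ring spectra. Combined with Theorem \ref{comparison}, which already provides the stable $\Z/2$-equivalence, this yields the desired equivalence in the homotopy category of associative $\Z/2$-equivariant ring spectra. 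The main obstacle is purely organisational: keeping track of the reversal permutations $\tau_n$ on the tensor factors and on the sphere coordinates, and verifying that the monoidal coherence squares defining the lax structures of the zig-zag maps commute strictly rather than only up to homotopy. Since all ingredients ($\Sigma^\infty$, $\Sh$, $\Omega$, $\hocolim_I$ with its $I$-equivariance) have explicit and well-understood monoidal coherences, this is a finite, if notationally heavy, diagram chase.
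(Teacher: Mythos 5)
Your proposal matches the paper's proof: the paper also equips the intermediate shift model with the multiplication built from $+_\ast$ and the maps $\Sh^i A\wedge \Sh^j B\to\Sh^{i+j}(A\wedge B)$, and then verifies that the two arrows of the zig-zag from Theorem \ref{comparison} are monoidal by explicit diagram chases (the left-pointing one reducing to the square comparing $\Sigma^\infty$ with $\Sh$ at level $0$, the right-pointing one reducing to the inclusion of the zero vertex). Packaging this as "lax monoidal natural transformations send monoids to monoid morphisms" is the same argument with the bookkeeping made explicit.
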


\begin{proof}
We need to show that the middle $\mathbb{Z}/2$-spectrum of the zig-zag
\[
A^{\wedge 1+{\bf k}}\longrightarrow\hocolim_{I^{\times 1+{\bf k}}}\Omega^{\underline{i}}(\Sh^{i_0} A\wedge\dots\wedge \Sh^{i_k} A)\stackrel{}{\longleftarrow}\hocolim_{I^{\times 1+{\bf k}}}\Omega^{\underline{i}}(\mathbb{S} \wedge A_{i_0}\wedge\dots\wedge A_{i_k})
\]
from \ref{comparison} admits an associative multiplication, and that the maps are multiplicative. The multiplication is defined by a sequence of maps analogous to the one on $\THR(A,w)$ from diagram (\ref{THRmult}), except that the fourth map is replaced with the one induced by the maps
\[
(\Sh^{i}A)\wedge (\Sh^{j}B)\longrightarrow \Sh^{i+j}(A\wedge B).
\]
These are the maps which correspond to the bimorphism
\[
(\Sh^{i}A)_n\wedge (\Sh^{j}B)_m=A_{i+n}\wedge B_{j+m}\longrightarrow (A\wedge B)_{i+n+j+m}\cong (A\wedge B)_{i+j+n+m}=(\Sh^{i+j}(A\wedge B))_{n+m},
\]
where the isomorphism is induced by the block permutation that shuffles $j$ and $n$.

The multiplicativity of the left-pointing map can be easily reduced to the commutativity of the square
\[
\xymatrix{
(\Sh^{i}A)\wedge (\Sh^{j}B)\ar[d]&\mathbb{S} \wedge A_i\wedge B_j\ar[l]\ar[d]\\
\Sh^{i+j}(A\wedge B)&\mathbb{S} \wedge (A\wedge B)_{i+j}\rlap{\ .}\ar[l]
}
\]
The horizontal maps are adjoint to the maps $A_i\wedge B_j= (\Sh^i A)_0\wedge (\Sh^j A)_0= (\Sh^i A\wedge \Sh^j B)_0$, and therefore this square commutes because the square
\[
\xymatrix{
(\Sh^{i}A\wedge \Sh^{j}B)_0\ar[d]& (\Sh^i A)_0\wedge (\Sh^j A)_0\ar@{=}[l]&A_i\wedge B_j\ar@{=}[l]\ar[d]\\
(\Sh^{i+j}(A\wedge B))_0&& (A\wedge B)_{i+j}\rlap{\ .}\ar@{=}[ll]
}
\]
commutes.

The right-pointing map is the canonical map induced by the inclusion of the object $\underline{i}=(0,\dots ,0)$ into $I^{\times 1+{\bf k}}$. To show that this map is multiplicative we need to show that the diagram
\[
\xymatrix@R=15pt{
A^{\wedge 1+{\bf k}}\wedge B^{\wedge 1+{\bf k}}\ar[r]\ar[d]_-{\cong}
&
\displaystyle \hocolim_{I^{\times 1+{\bf k}}\times I^{1+{\bf k}}}\Omega^{\underline{i}+\underline{j}}(\Sh^{i_0}A\wedge\dots\wedge \Sh^{i_k}A\wedge \Sh^{j_0}B\wedge\dots\wedge \Sh^{j_k}B)\ar[d]^{\cong}
\\
(A\wedge B)^{\wedge 1+{\bf k}}\ar[r]\ar[]!<-1ex,0ex>;[ddr]!<-23ex,0ex>
&
\displaystyle \hocolim_{I^{\times 1+{\bf k}}\times I^{1+{\bf k}}}\Omega^{\underline{i}+\underline{j}}(\Sh^{i_0}A\wedge \Sh^{j_0}B\wedge\dots\wedge \Sh^{i_k}A\wedge \Sh^{j_k}B)\ar[d]
\\
&
\displaystyle \hocolim_{I^{\times 1+{\bf k}}\times I^{1+{\bf k}}}\Omega^{\underline{i}+\underline{j}}(\Sh^{i_0+j_0}(A\wedge B)\wedge\dots\wedge \Sh^{i_k+j_k}(A\wedge B))\ar[d]^{+_\ast}
\\
&
\displaystyle \hocolim_{I^{1+{\bf k}}}\Omega^{\underline{l}}(\Sh^{l_0} (A\wedge B)\wedge\dots\wedge \Sh^{l_k}(A\wedge B))
}
\]
commutes.
This is clear since all the non-vertical maps are the inclusion of the objects $\underline{i}=\underline{j}=(0,\dots ,0)$ and the two lowest vertical maps are the identity on these objects.
\end{proof}

Let $(A,w)$ be a commutative orthogonal $\mathbb{Z}/2$-ring spectrum. Then $A$ is an algebra over the norm $N^{\mathbb{Z}/2}_eA$ in orthogonal $\Z/2$-spectra (see \S\ref{secdihedral}), and it can be replaced by a cofibrant associative $N^{\mathbb{Z}/2}_eA$-algebra $A^c$. Thus the derived smash product  
\[
A\wedge^{\mathbf{L}}_{N^{\mathbb{Z}/2}_eA}A:=A^c\wedge_{N^{\mathbb{Z}/2}_eA}A^c
\]
has the structure of an associative $\Z/2$-equivariant ring spectrum. Similarly, the geometric fixed-points spectrum $\Phi^{\Z/2}_{\mathcal{M}}A^c$ is a $\Phi^{\Z/2}_{\mathcal{M}}N^{\Z/2}_eA$-algebra, and if $A$ is flat $\Phi^{\Z/2}_{\mathcal{M}}N^{\Z/2}_eA\cong A$. Thus the derived smash product 
\[
\Phi^{\Z/2}A\wedge^{\mathbf{L}}_{A}\Phi^{\Z/2}A:=\Phi^{\Z/2}_{\mathcal{M}}A^c\wedge_A\Phi^{\Z/2}_{\mathcal{M}}A^c
\]
is an associative ring spectrum.

\begin{cor}\label{cor:fixed-pt-alg-str}
Let $(A,w)$ be a commutative orthogonal $\mathbb{Z}/2$-ring spectrum, whose underlying $\Z/2$-spectrum is flat. There is a stable  equivalence of associative  $\Z/2$-equivariant ring spectra
\[
\THR(A)\simeq A\wedge^{\mathbf{L}}_{N^{\mathbb{Z}/2}_eA}A,
\]
and a stable equivalence of associative ring spectra between the corresponding geometric fixed points
\[
\Phi^{\Z/2}\THR(A)\simeq \Phi^{\Z/2}A\wedge^{\mathbf{L}}_{A}\Phi^{\Z/2}A.
\]
\end{cor}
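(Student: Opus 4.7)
The plan is to combine Theorem \ref{comparisonmult} with multiplicative enhancements of Proposition \ref{dihedral-two-sided}, Corollary \ref{derived smash}, and Theorem \ref{geofixcalc}. By Theorem \ref{comparisonmult}, we already have a stable equivalence $\THR(A) \simeq B^{di}_\wedge(A,w)$ of associative $\Z/2$-equivariant ring spectra, so it suffices to upgrade the identification $B^{di}_\wedge(A,w) \simeq A \wedge^{\mathbf{L}}_{N^{\Z/2}_e A} A$ to an equivalence of associative ring spectra, and similarly for its geometric fixed points.

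When $A$ is commutative, $N^{\Z/2}_e A = A \wedge A$ is a commutative $\Z/2$-equivariant ring spectrum, and the module action defined in \S\ref{secdihedral} makes $A$ into a commutative $N^{\Z/2}_e A$-algebra. First I would verify that the subdivision isomorphism $\sd_e N^{di}_\wedge(A;A) \cong N_\wedge(A, N^{\Z/2}_e A, A)_\bullet$ of Proposition \ref{dihedral-two-sided} is an isomorphism of simplicial associative $\Z/2$-equivariant ring spectra: the defining map is given by a smash-factor permutation and by copies of $(1\wedge w)$, both of which are maps of ring spectra when $A$ is commutative and $w$ is multiplicative. Next I would adapt the proof of Lemma \ref{barderived} to show $B_\wedge(A, N^{\Z/2}_e A, A) \simeq A \wedge^{\mathbf{L}}_{N^{\Z/2}_e A} A$ as associative ring spectra. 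The key observation is that the bar resolution $B_\wedge(N^{\Z/2}_e A, N^{\Z/2}_e A, A) \stackrel{\sim}{\to} A$ is a resolution in the category of commutative $N^{\Z/2}_e A$-algebras; smashing over $N^{\Z/2}_e A$ with a cofibrant associative $N^{\Z/2}_e A$-algebra replacement $A^c \stackrel{\sim}{\to} A$ produces $B_\wedge(A^c, N^{\Z/2}_e A, A)$, which is an associative ring model for $A^c \wedge_{N^{\Z/2}_e A} A$, and homotopy invariance of the bar construction (using the Reedy cofibrancy already established in Lemma \ref{barderived}) passes this to the desired equivalence of associative ring spectra with $B_\wedge(A, N^{\Z/2}_e A, A)$, which in turn is equivalent to $B^{di}_\wedge(A,w)$ via the subdivision.

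For the second equivalence, apply the Mandell--May monoidal geometric fixed-points functor $\Phi^{\Z/2}_{\mathcal{M}}$ to the cofibrant model $A^c \wedge_{N^{\Z/2}_e A} A^c$ of $A \wedge^{\mathbf{L}}_{N^{\Z/2}_e A} A$. Since $\Phi^{\Z/2}_{\mathcal{M}}$ is strong symmetric monoidal on flat cofibrant spectra, this yields a natural isomorphism of associative ring spectra
\[
\Phi^{\Z/2}_{\mathcal{M}}(A^c \wedge_{N^{\Z/2}_e A} A^c) \;\cong\; \Phi^{\Z/2}_{\mathcal{M}}(A^c) \wedge_{\Phi^{\Z/2}_{\mathcal{M}} N^{\Z/2}_e A} \Phi^{\Z/2}_{\mathcal{M}}(A^c).
\]
The diagonal map $A \cong \Phi^{\Z/2}_{\mathcal{M}} N^{\Z/2}_e A$ is a (commutative) ring isomorphism, so one recovers $\Phi^{\Z/2} A \wedge^{\mathbf{L}}_A \Phi^{\Z/2} A$ as an associative ring spectrum. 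The argument of Remark \ref{noC} (applied to the cofibrant algebra resolution rather than just a cofibrant module resolution) then shows that the resulting zig-zag is an equivalence of associative ring spectra.

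The main obstacle will be the multiplicative enhancement of Lemma \ref{barderived}: one must check that all cofibrant replacements used in forming the derived smash product can be chosen in the category of associative $N^{\Z/2}_e A$-algebras rather than merely in the category of $N^{\Z/2}_e A$-modules, and that the resulting cellular filtration of the bar construction remains compatible with the ring structure. This reduces to the standard fact that for a commutative monoid $R$ in a cofibrantly generated monoidal model category, the two-sided bar construction $B_\wedge(R,R,N)$ is not only a cofibrant left $R$-module replacement of $N$ but also carries a natural $R$-algebra structure when $N$ is an $R$-algebra, and the equivalence $B_\wedge(R,R,N) \stackrel{\sim}{\to} N$ is multiplicative.
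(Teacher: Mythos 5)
Your proposal is correct and follows essentially the same route as the paper: invoke Theorem \ref{comparisonmult} to reduce to the dihedral Bar construction, run the argument of Corollary \ref{derived smash} (Lemma \ref{barderived}) in associative $N^{\Z/2}_e A$-algebras, and then apply the monoidality of $\Phi^{\Z/2}_{\mathcal{M}}$ together with the diagonal isomorphism and Remark \ref{noC}. You are somewhat more explicit than the paper about checking multiplicativity of the subdivision isomorphism and about the role of the commutative bar resolution $B_\wedge(N^{\Z/2}_e A, N^{\Z/2}_e A, A) \to A$, but the obstacle you flag at the end (upgrading Lemma \ref{barderived} to associative algebras) is precisely the point the paper itself treats tersely, so there is no substantive gap relative to the paper's own argument.
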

\begin{proof} Theorem \ref{comparisonmult} identifies $\THR(A)$ and $B^{di}A$ multiplicatively, and the same argument of Corollary \ref{derived smash} shows that $B^{di}A$ computes the derived smash product as associative rings. Next, the proof of Theorem \ref{geofixcalc} and Remark \ref{noC} show that the derived geometric fixed points $\Phi^{\Z/2}\THR(A)$ is equivalent as an algebra to $\Phi_{\mathcal{M}}^{\Z/2}(A^c\wedge_{N^{\mathbb{Z}/2}_eA}A^c)$ by making sure that all the cofibrant replacements take place in the category of associative algebras. But the latter is isomorphic to the derived smash $\Phi^{\Z/2}_{\mathcal{M}}A^c\wedge_A\Phi^{\Z/2}_{\mathcal{M}}A^c$ by \cite[Proposition B.203]{HHR}
\end{proof}


%
%

\subsection{Cofinality}\label{seccof}

We recall that a full subcategory $B\subset C$ is cofinal if every object in $C$ is a retract of an object in $B$. Given a spectral category $\mathscr{C}$, we say that a full subcategory $\mathscr{B}\subset \mathscr{C}$ is cofinal if the inclusion of the underlying categories $B\subset C$, obtained by taking the $0$-th space of the mapping spectra, is cofinal.

\begin{theorem}\label{cofinality}
Let $(\mathscr{C},D,\eta)$ be a flat spectral category with duality, and $\mathscr{B}\subset \mathscr{C}$ a cofinal subcategory which is invariant under the duality. Then the inclusion $\iota\colon \mathscr{B}\to \mathscr{C}$ induces a stable equivalence of $\Z/2$-spectra
\[
\iota \colon \THR(\mathscr{B},\eta)\stackrel{\simeq}{\longrightarrow}\THR(\mathscr{C},\eta).
\]
\end{theorem}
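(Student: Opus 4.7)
The plan is to adapt the classical cofinality argument for $\THH$ to the dihedral setting. By Theorem \ref{categoricalcomp} it suffices to prove the analogous statement for the dihedral Bar construction, and by Corollary \ref{strictifyTHR} one may assume that $D^2=\id$ and $\eta=\id$; I therefore work with $B^{di}_\wedge \mathscr{C}$ in the strict case. For every $c \in \Ob \mathscr{C}$ I choose $\sigma(c) \in \Ob \mathscr{B}$ and morphisms $s_c\colon c \to \sigma(c)$, $r_c\colon \sigma(c) \to c$ in the underlying category with $r_c s_c = \id_c$, and set $\sigma(c)=c$, $s_c=r_c=\id_c$ for $c \in \Ob \mathscr{B}$. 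Crucially, no $D$-equivariance of $\sigma$ is required: the duality will be handled through contravariance at the level of the nerve, using only that $\mathscr{B}$ is $D$-invariant.

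Next I would define a composition-preserving, equivariant, non-unital map $P_* \colon \sd_e N^{di}_\wedge \mathscr{C} \to \sd_e N^{di}_\wedge \mathscr{B}$ of simplicial $\Z/2$-spectra. On indexing, it sends the summand for $(c_0, \ldots, c_{2k+1})$ to the one for $(\sigma(c_0), \ldots, \sigma(c_{2k+1}))$, conjugating each morphism by the appropriate retraction data:
\[ P_*(m) = Dr_{c_{2k+1}} \circ m \circ r_{c_0}, \qquad P_*(f_i) = s_{c_{i-1}} \circ f_i \circ r_{c_i}\ \ (1\le i \le k),\]
together with $P_*(f_{k+1})=s_{c_k}\circ f_{k+1}\circ Ds_{c_{k+1}}$ and $P_*(f_j)=Dr_{c_{j-1}}\circ f_j\circ Ds_{c_j}$ for $k+2 \le j \le 2k+1$. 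The general rule is: at a $c$-object on the source use $r_c$, at a $Dc$-object on the source use $Ds_c$, at a $c$-object on the target use $s_c$, and at a $Dc$-object on the target use $Dr_c$. The identity $r_c s_c = \id_c$ and its $D$-image show that $P_*$ preserves face maps, while naturality of $J$ together with $D(fg) = Dg\circ Df$ yield the equivariance $P_*\circ \tau = \tau \circ P_*$; for instance one checks directly that $J(Dr_{c_{2k+1}}\circ m\circ r_{c_0}) = Dr_{c_0}\circ J(m)\circ r_{c_{2k+1}}$. Since $P_*(\id_c) = s_c r_c \ne \id_{\sigma(c)}$ in general, $P_*$ is only semi-simplicial; but by flatness the dihedral nerve is good (the categorical analogue of Lemma \ref{properness}), so thick and thin realizations agree equivariantly and $P_*$ descends to a map $B^{di}_\wedge \mathscr{C} \to B^{di}_\wedge \mathscr{B}$.

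Finally I would construct an equivariant semi-simplicial homotopy between $\iota\circ P_*$ and the identity of $\sd_e N^{di}_\wedge \mathscr{C}$, indexed by simplicial maps $\phi\colon [k]\to [1]$: on a $k$-simplex with object string $(c_0,\ldots,c_{2k+1})$ the homotopy leaves those positions with $\phi=0$ unchanged and replaces those with $\phi=1$ by $\sigma(c_i)$, inserting $s_{c_i}$, $r_{c_i}$ (or their $D$-images in the $Dc$-region) at every interface where $\phi$ jumps. At $\phi\equiv 0$ and $\phi\equiv 1$ the homotopy recovers the identity and $\iota \circ P_*$, respectively; face compatibility reduces to $r_c s_c = \id_c$, and equivariance to the same naturality/contravariance argument of the previous step. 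The reverse composite $P_*\circ \iota$ is strictly the identity of $B^{di}_\wedge \mathscr{B}$. Taking thick geometric realization and transferring back through Theorem \ref{categoricalcomp} yields the required equivariant stable equivalence.

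The main obstacle is the last step: writing down the homotopy formula explicitly and verifying simplicial and equivariant compatibility in every edge-case, namely the transitions across the $c/Dc$-boundary at position $k+1$, the $m$-position wrap-around, and the interfaces prescribed by $\phi$. Each individual check reduces to $r_c s_c = \id_c$ combined with contravariance of $D$, but the book-keeping is extensive.
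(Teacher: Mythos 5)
Your proposal follows exactly the same strategy as the paper: the retraction map $P_*$ coincides (up to notation, with your $s_c$, $r_c$, $\sigma$ matching the paper's $i_c$, $p_c$, $r$) with the map $r\colon \sd_e N^{di}_\wedge\mathscr{C}\to \sd_e N^{di}_\wedge\mathscr{B}$ in the paper, and the semisimplicial homotopy indexed by the cut-point of $\phi\colon[k]\to[1]$ is the paper's $h_{k+1,l}$, with the thick-realization device used for the same reason (the retraction does not preserve degeneracies). The paper elects to run the argument directly in the non-strict setting without passing through Corollary \ref{strictifyTHR} first, but that is a cosmetic difference.
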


\begin{proof}
For every object $c$ in $C$ let us chose an object $r(c)$ in $B$ and maps $i_c\colon c\to r(c)$, $p_c\colon r(c)\to c$ such that $p_c\circ i_c=\id_c$. We make this choice in such a way that if $c$ is an object of $\mathscr{B}$ then $r(c)=c$ and $i_c=p_c=\id_c$. We define a map $r\colon \sd_eN^{di}_\wedge\mathscr{C}\to \sd_eN^{di}_\wedge\mathscr{B}$ as follows. By abuse of notation we represent an element of $\sd_eN^{di}_\wedge\mathscr{C}$ as a string of arrows
\[\underline{f}=(Dc_{2k+1}\stackrel{f_0}{\longleftarrow}c_0\stackrel{f_1}{\longleftarrow}c_1\stackrel{}{\longleftarrow}\dots\stackrel{f_k}{\longleftarrow}c_{k}\stackrel{f_{k+1}}{\longleftarrow}Dc_{k+1}\stackrel{f_{k+2}}{\longleftarrow}\dots\stackrel{f_{2k+1}}{\longleftarrow}Dc_{2k+1}).
\]
For simplicity let us denote $p_j:=p_{c_j}$ and $i_j:=i_{c_j}$. Then $r$ sends $\underline{f}$ to
\[
\xymatrix@C=35pt{Drc_{2k+1}&&\ar[ll]_-{D(p_{2k+1})f_0p_0}
rc_0&\ar[l]_-{i_{0}f_1p_1}
rc_1
&
\dots\ar[l]_-{i_{1}f_2p_2}
&
rc_{k}\ar[l]_-{i_{k-1}f_kp_k}
\\
\ar[rr]_-{D(p_{2k}) f_{2k+1}D(i_{2k+1})}Drc_{2k+1}&&Drc_{2k}\ar[r]&\dots\ar[rr]_-{D(p_{k+1}) f_{k+2}D(i_{k+2})}
&&
Drc_{k+1}\ar[u]_-{i_k f_{k+1}D(i_{k+1})}&.
}
\]
The compositions by the maps in the underlying category are defined by the maps of spectra
\[
\mathscr{C}(d,c)_0\wedge \mathscr{C}(c',d)\longrightarrow \mathscr{C}(c',c)
\]
obtained by restricting the composition maps, and similarly for the left compositions.
It is easy to verify that this map commutes with the anti-involution regardless of the choices of $r(c)$, $i_c$ and $p_c$, and that it commutes with the face maps. This map does not preserve degeneracies, but since $(\mathscr{C},D,\eta)$ is flat we can just as well work with thick realizations, by \ref{properness}. Since we chose $r(c)=c$ when $c$ lies in $\mathscr{B}$ the map $r$ is a retraction for $\iota$.

We recall that a homotopy of semisimplicial objects is a collection of maps $h_{k+1,l}\colon X_{k}\to Y_{k+1}$, for $0\leq l\leq k$ that satisfies the compatibility conditions with the face maps of \cite[5.1]{MaySimp}. In the presence of degeneracies this is the same datum of a simplicial map $X\times \Delta^1\to Y$, but for semisimplicial objects this is generally more structure. Such a collection of maps induces a homotopy from the thick realization of $d_{0}h_{k+1,0}\colon X_k\to Y_k$ to the thick realization of $d_{k+1}h_{k+1,k}\colon X_k\to Y_k$ (see e.g. \cite{ERW}).

Our goal is to construct a homotopy between the identity functor of $\mathscr{C}$ and the composite $\iota\circ r$. We define a semisimplicial homotopy on the subdivided dihedral nerve
\[h_{k+1,l}\colon (N^{di}_\wedge \mathscr{C})_{2k+1}\longrightarrow (N^{di}_\wedge \mathscr{C})_{2k+3}\]
by a formula similar to the homotopy of \ref{homotopiesTHR}. We  send $\underline{f}$ to
\[\xymatrix@C=23pt{
&Drc_{2k+1}
&&\ar[ll]_-{D(p_{2k+1})f_0p_0}
rc_0
&rc_1\ar[l]_-{i_{0}f_1p_1}
&\ar[l]_-{i_{1}f_2p_2}\dots
&rc_{l} \ar[l]_-{i_{l-1}f_lp_l}&
\\
&&c_{k}\ar[r]^{f_k}
&c_{k-1}\ar[r]^{f_{k-1}}
&\dots\ar[r]^{f_{l+2}}
&c_{l+1}\ar[r]^-{f_{l+1}}
&c_l\ar[u]_-{i_l}
\\
&&Dc_{k+1}\ar[u]^-{f_{k+1}}
&Dc_{k+2}\ar[l]_-{f_{k+2}}
&\dots\ar[l]_-{f_{k+3}}
&&Dc_{2k+1-l}\ar[ll]_-{f_{2k+1-l}}
\\
Drc_{2k+1}\ar[rr]_-{D(p_{2k})f_{2k+1}D(i_{2k+1})}
&&\dots\ar[r]
&Drc_{2k+2-l}\ar[rrr]_-{D(p_{2k+1-l})f_{2k+2-l}D(i_{2k+2-l})}
&&
&Drc_{2k+1-l}\ar[u]_-{D(i_{2k+1-l})} \rlap{\ .}
}\]
A direct calculation shows that the maps $h_{k+1,l}$ are equivariant and that they define a simplicial homotopy. The map $d_0h_{k+1,0}$ sends $\underline{f}$ to
\[
d_0(Drc_{2k+1}\xleftarrow{D(p_{2k+1})f_0p_0}rc_0\xleftarrow{i_0}c_0\stackrel{f_1}{\longleftarrow}c_1\leftarrow\dots\xleftarrow{f_{2k+1}}Dc_{2k+1}\xleftarrow{D(i_{2k+1})}Drc_{2k+1} )=\underline{f}.
\]
The map $h_{k+1,k}$ sends $\underline{f}$ to
\[
\xymatrix{
Drc_{2k+1}
&&
\ar[ll]_-{D(p_{2k+1})f_0p_0} rc_0
&\ar[l]_-{i_0f_1p_1}rc_1
&\ar[l]\dots
&
\ar[l]_-{i_{k-1}f_{k}p_{k}}rc_{k}
&\ar[l]_-{i_k}c_k
\\
&Drc_{2k+1}
\ar[rr]_-{D(p_{2k})f_{2k+1}D(i_{2k+1})}
&
&
\ar[rr]_-{D(p_{k+1})f_{k+2}D(i_{k+2})}
\dots
&&
\ar[r]_-{D(i_{k+1})} Drc_{k+1}
&
\ar[u]_-{f_{k+1}}Dc_{k+1}\rlap{\ ,}
}
\]
and $d_{k+1}h_{k+1,k}$ is therefore equal to $\iota\circ r$.
\end{proof}

Let $(R,w,\epsilon)$ be a  Wall antistructure. We recall from Example \ref{excatdual} that the category $\mathcal{P}_R$ of finitely generated projective $R$-modules inherits a non-strict duality. We let $\mathcal{F}_R$ be the full subcategory of $\mathcal{P}_R$ of free $R$-modules. This is clearly a cofinal subcategory, and therefore Theorem \ref{cofinality} gives the following.

\begin{cor}\label{cofinalitymodules}
The inclusion $\mathcal{F}_R\subset\mathcal{P}_R$ induces a stable equivalence of $\Z/2$-spectra
\[
\THR(H\mathcal{F}_R,\eta)\stackrel{\simeq}{\longrightarrow}\THR(H\mathcal{P}_R,\eta).\]
\end{cor}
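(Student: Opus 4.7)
The statement is a direct application of the cofinality Theorem \ref{cofinality}, so my plan is to verify that its three hypotheses are met for the inclusion of the Eilenberg--MacLane spectral category $H\mathcal{F}_R \subset H\mathcal{P}_R$ equipped with the duality described in Example \ref{excatdual}(2).

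First, I would check cofinality of the underlying ordinary subcategory $\mathcal{F}_R \subset \mathcal{P}_R$. By definition, an $R$-module is finitely generated projective exactly when it is a direct summand of some finitely generated free module $R^n$; in particular every object of $\mathcal{P}_R$ is a retract of an object of $\mathcal{F}_R$, which is precisely the cofinality condition required in Theorem \ref{cofinality}. Since the Eilenberg--MacLane construction $H$ does not change the underlying $0$-th space of mapping spectra up to the zero-simplices we care about, cofinality transfers verbatim to the spectral enrichment $H\mathcal{F}_R \subset H\mathcal{P}_R$.

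Second, I would check that $\mathcal{F}_R$ is closed under the duality $D(P)=\hom_R(P,R)$ (with its twisted right $R$-module structure from the antistructure $(R,w,\epsilon)$). Since $D$ is additive and $D(R)$ is, as a right $R$-module, free of rank one (the anti-involution $w$ and the unit $\epsilon$ only re-label the action), we have $D(R^n) \cong R^n$ in $\mathcal{F}_R$. Hence $D$ restricts to an endofunctor of $\mathcal{F}_R$, and the duality-invariance hypothesis of Theorem \ref{cofinality} is satisfied.

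Third, I would verify that $H\mathcal{P}_R$ is flat as a spectral category with duality in the sense of the definition preceding Theorem \ref{categoricalcomp}. The mapping spectra of $H\mathcal{P}_R$ are Eilenberg--MacLane spectra on abelian groups $\hom_R(P,Q)$, which are flat as orthogonal spectra (see Remark \ref{flatreplcomm}). For objects of the form $\mathscr{C}(P,DP) = H\hom_R(P,DP)$, the $\Z/2$-action is the Eilenberg--MacLane spectrum of an involution on an abelian group (using the natural isomorphism $\eta$ to identify $D^2 P$ with $P$), and the Dold--Thom construction of such a $\Z/2$-module yields a flat orthogonal $\Z/2$-spectrum. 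Thus $H\mathcal{P}_R$ is flat.

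Having verified all three hypotheses, Theorem \ref{cofinality} applies and yields the desired stable equivalence $\THR(H\mathcal{F}_R,\eta) \stackrel{\simeq}{\to} \THR(H\mathcal{P}_R,\eta)$. The only step requiring any real thought is the duality-invariance check, where one must keep track of how the anti-involution $w$ and central unit $\epsilon$ act on the dual modules; everything else is essentially immediate from standard module theory and the setup of the Eilenberg--MacLane construction.
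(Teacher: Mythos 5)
Your overall strategy coincides with the paper's: the result is a direct application of Theorem \ref{cofinality} after observing that $\mathcal{F}_R\subset\mathcal{P}_R$ is cofinal and duality-invariant. Both of those checks in your proposal are correct, and the paper leaves them implicit (its ``proof'' is essentially the single sentence that $\mathcal{F}_R$ is clearly a cofinal subcategory of $\mathcal{P}_R$, followed by the QED symbol).

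The flatness check is where your argument goes astray. You assert that the Dold--Thom construction yields flat orthogonal $\Z/2$-spectra and cite Remark \ref{flatreplcomm} in support, but that remark says precisely the opposite: it states that the classical Eilenberg--MacLane construction gives a flat $\Z/2$-equivariant \emph{symmetric} spectrum, and that it is ``unclear to the authors if it is flat as a $\Z/2$-equivariant \emph{orthogonal} spectrum.'' So the flatness of the spectral category $(H\mathcal{P}_R, D, \eta)$ cannot be justified by appealing to that remark, and your citation actually cuts against you. The paper's terse presentation sidesteps the flatness hypothesis of Theorem \ref{cofinality} entirely, so you are not alone in glossing over it; the clean fix is to stipulate that $H$ denotes a flat model for the Eilenberg--MacLane spectral category, or to precompose with a flat replacement (in the spirit of Remark \ref{flatreplcomm} and Proposition \ref{antimodelstructure}) before invoking the cofinality theorem.
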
\qed

\subsection{Morita Invariance}\label{secmorita}

We prove that the dihedral Bar construction satisfies a real version of Morita invariance, and we deduce that the dihedral Bar construction of a discrete ring is equivalent to the the dihedral Bar construction of its category of finitely generated projective modules. The analogous statements for the B\"{o}kstedt model of $\THR$ have been proven in the first author's thesis \cite{thesis}.

Let $(A,w)$ be an orthogonal ring spectrum with anti-involution, and $n\geq 1$ an integer. We consider the non-unital ring spectrum $M^{\vee}_nA:=\bigvee_{n\times n}A$ as a model for the matrix ring of $A$. We recall that its multiplication is defined by the map
\[
(\bigvee_{n\times n}A)\wedge (\bigvee_{n\times n}A)\cong \bigvee_{n\times n\times n\times n}A\wedge A\longrightarrow \bigvee_{n\times n}A
\]
which sends the $(i,j,k,l)$-summand to the $(i,l)$-summand via the multiplication map of $A$ if $j=k$, and to the basepoint otherwise. This ring spectrum has an anti-involution
\[
w^T\colon\bigvee_{n\times n}A\stackrel{\bigvee w}{\longrightarrow}\bigvee_{n\times n}A\stackrel{\tau}{\longrightarrow}\bigvee_{n\times n}A
\]
where $\tau$ is the automorphism of $n\times n$ that swaps the product factors.
\begin{rem}\label{discretematrices}
Let $(R,w)$ be a discrete ring with anti-involution. The matrix ring $M_nR=\bigoplus_{n\times n}R$ has an anti-involution defined by applying $w$ to the entries and by transposing the matrix. The inclusion of indexed wedges into indexed products defines an equivalence of ring spectra with anti-involution
\[
M^{\vee}_nHR\stackrel{\simeq}{\longrightarrow}\prod_{n\times n}HR\cong HM_nR.
\]
\end{rem}

Now suppose that $(A,w,\epsilon)$ is a spectral antistructure. Since $M^{\vee}_nA$ does not contain the diagonal matrices, $\epsilon$ does not define a unit in $(M^{\vee}_nA)_0$ and $M^{\vee}_nA$ cannot be considered as a spectral antistructure in the sense of Example \ref{excatdual}. However, $\epsilon$ acts on $M^{\vee}_nA$ by entrywise left and right multiplication, and the definition of the involution on $N^{di}_\wedge$ of \S\ref{seccatdual} makes sense for $M_{n}^{\vee}A$.
Moreover since $M^{\vee}_nA$ is not unital the dihedral nerve $N^{di}_\wedge M^{\vee}_nA$ does not have degeneracies. We define the dihedral Bar construction  $B^{di}_\wedge M^{\vee}_nA$ to be the thick geometric realization of the semisimplicial spectrum defined by the Segal edgewise subdivision of $N^{di}_\wedge M^{\vee}_nA$. The same considerations apply to B\"{o}kstedt's model.

\begin{theorem}\label{Morita} Let $(A,w,\epsilon)$ be a flat spectral antistructure.
Then the inclusion $A\to M^{\vee}_nA$ of the $(1,1)$-wedge summand induces a homotopy equivalence of $\mathbb{Z}/2$-spectra
\[
\THR(A,w,\epsilon)\stackrel{\simeq}{\longrightarrow} \THR(M^{\vee}_nA,w^T,\epsilon)
\]
for every $n\geq 1$.
\end{theorem}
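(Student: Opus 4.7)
The plan is to factor the $(1,1)$-wedge inclusion $A \hookrightarrow M^\vee_n A$ through an auxiliary flat spectral category with duality and combine cofinality with a direct simplicial comparison. Define $\underline{\mathcal{M}}^A_n$ to be the spectral category with object set $\{1, \dots, n\}$, every hom-spectrum $\underline{\mathcal{M}}^A_n(i, j) = A$, and composition given by the multiplication on $A$. Equip it with the duality $D$ that fixes each object and sends $a \in \underline{\mathcal{M}}^A_n(i, j) = A$ to $w(a) \in \underline{\mathcal{M}}^A_n(j, i) = A$. Since $w^2(a) = \epsilon a \epsilon^{-1}$, the natural isomorphism $\eta_i$ is given by $\epsilon \in A = \underline{\mathcal{M}}^A_n(i, i)$, and $(\underline{\mathcal{M}}^A_n, D, \eta)$ is a flat spectral category with duality in the sense of \S\ref{seccatdual} because $(A, w, \epsilon)$ is flat.

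For the first equivalence, observe that the full subcategory of $\underline{\mathcal{M}}^A_n$ on the single object $1$ is isomorphic, as a one-object spectral category with duality, to $(A, w, \epsilon)$. This subcategory is $D$-invariant because $D$ fixes every object, and every other object $i$ is a retract of $1$ in the underlying discrete category via the pair $(f, g) = (1_A, 1_A) \in A_0 \times A_0$ whose composite is $1_A = \id_i$. Hence Theorem \ref{cofinality} applies and yields a stable equivalence
\[
\THR(A, w, \epsilon) \stackrel{\simeq}{\longrightarrow} \THR(\underline{\mathcal{M}}^A_n, \eta).
\]

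For the second equivalence, construct a map of real semisimplicial orthogonal spectra
\[
\Phi_\bullet \colon N^{di}_\wedge(\underline{\mathcal{M}}^A_n, D) \longrightarrow N^{di}_\wedge(M^\vee_n A, w^T, \epsilon)
\]
that at level $k$ sends the wedge summand indexed by the cyclic string $i_0 \to i_1 \to \cdots \to i_k \to i_0$ (a copy of $A^{\wedge k+1}$) isomorphically onto the $((i_0, i_1), (i_1, i_2), \dots, (i_k, i_0))$-summand of $(M^\vee_n A)^{\wedge k+1}$. A direct calculation using the formulas in \S\ref{seccatdual} shows $\Phi_\bullet$ is $\Z/2$-equivariant. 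Its image is exactly the sub-semisimplicial spectrum of $N^{di}_\wedge M^\vee_n A$ spanned by summands whose matrix indices compose cyclically; the complementary summands admit at least one face map that is constant at the basepoint, and an explicit filtration by the number of non-matching consecutive index pairs shows that their thick geometric realization is equivariantly null. The same comparison carries over to B\"{o}kstedt's model, and Theorem \ref{categoricalcomp} applied to $\underline{\mathcal{M}}^A_n$ (together with the non-unital variant used for $M^\vee_n A$) then yields a stable equivalence
\[
\THR(\underline{\mathcal{M}}^A_n, \eta) \stackrel{\simeq}{\longrightarrow} \THR(M^\vee_n A, w^T, \epsilon).
\]

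Tracing the cyclic tuple $(1, \dots, 1)$ through the composite identifies the resulting map with the $(1,1)$-wedge inclusion of the statement, completing the proof. The main obstacle is the contractibility of the complement in the second step: one must verify carefully that the non-composable summands are killed equivariantly, tracking the $\Z/2$-action (which permutes matrix indices by transposition while applying $w$) and working at the level of thick geometric realizations (since $M^\vee_n A$ is non-unital).
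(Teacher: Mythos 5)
Your overall strategy is sound and the cofinality factorization is a genuinely different and appealing decomposition of the problem. The auxiliary category $\underline{\mathcal{M}}^A_n$ is a legitimate flat spectral category with duality, the full subcategory on the object $1$ is indeed $D$-invariant and cofinal, and Theorem \ref{cofinality} gives the first equivalence $\THR(A,w,\epsilon) \simeq \THR(\underline{\mathcal{M}}^A_n,\eta)$ cleanly. The map $\Phi_\bullet$ is likewise well defined and identifies $N^{di}_\wedge(\underline{\mathcal{M}}^A_n,D)$ with the sub-semisimplicial spectrum of $N^{di}_\wedge M^\vee_n A$ of ``composable'' matrix-index tuples; that sub-object is closed under faces and under the $\Z/2$-action, so this is a genuine map of real semisimplicial spectra. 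For comparison, the paper proves the theorem in one step: it defines a trace retraction $\tr\colon \sd_e N^{di}_\wedge M^\vee_n A \to \sd_e N^{di}_\wedge A$ that collapses exactly the non-composable summands (and further projects all matrix indices to $1$), and then builds an explicit equivariant semisimplicial homotopy $h_{k+1,l}$ from the identity to $\iota\circ\tr$ by inserting the unit of $A$ into $(1,1)$-slots. Your factoring peels off the ``all indices equal to $1$'' collapse (handled by cofinality, which the paper proves by a similar explicit homotopy), leaving only the collapse of non-composable tuples to the basepoint.

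The gap is precisely in that remaining collapse. You assert that ``an explicit filtration by the number of non-matching consecutive index pairs shows that their thick geometric realization is equivariantly null,'' but this is not developed, and it is not an elementary observation. The subquotients $F_m/F_{m+1}$ (tuples with exactly $m$ mismatches) are semisimplicial $\Z/2$-subspectra with the property that the face maps composing a mismatch are constant; however, that alone does not make the thick realization contractible — you need a contracting null-homotopy, and the natural candidate inserts the unit of $A$ into the appropriate $(j,k)$-slot of $M^\vee_n A$ to bridge the mismatch. Constructing that operation, checking it is a semisimplicial homotopy, and verifying $\Z/2$-equivariance (the involution reverses the cyclic order, transposes matrix indices, and applies $w$, which moves the mismatch positions around) is exactly the work the paper's $h_{k+1,l}$ does. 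As written, your proposal flags this as ``the main obstacle'' but does not resolve it, so the argument is incomplete at the step where all the substance lies. If you flesh out that null-homotopy (or an equivariant cone on each $F_m/F_{m+1}$) you would have a complete and somewhat more modular alternative to the paper's direct proof; without it the proposal is only a strategy.
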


\begin{rem}
A statement analogous to \ref{Morita} holds for $\THR$ with coefficients in an $(A,w,\epsilon)$-bimodule $(M,j)$, as well as for categories with duality (see \cite{thesis}).
\end{rem}

\begin{rem} Even though $M^{\vee}_nA$ is not unital, the geometric fixed points of $B^{di}_\wedge(M^{\vee}_nA,w^T,\epsilon)$ are equivalent to the realization of a simplicial spectrum.
We recall that there is an isomorphism of semisimplicial orthogonal spectra between the geometric fixed points of $B^{di}_\wedge(M^{\vee}_nA,w^T,\epsilon)$ and the thick realization of the two-sided Bar construction
\[
\Phi^{\Z/2}\sd_e N^{di}_\wedge M^{\vee}_nA\cong N_\wedge(\Phi^{\Z/2}M^{\vee}_nA,M^{\vee}_nA,\Phi^{\Z/2}M^{\vee}_nA).
\]
Since geometric fixed points commute with indexed coproducts there is an isomorphism
\[\Phi^{\Z/2}M^{\vee}_nA=\Phi^{\Z/2}\bigvee_{n\times n}A\cong \bigvee_n\Phi^{\Z/2}A.\]
Under this isomorphism the left action of $M^{\vee}_nA$ on $\Phi^{\Z/2}M^{\vee}_nA$ of \ref{geofixcalc} corresponds to the action
\[
M^{\vee}_nA\wedge \bigvee_n\Phi^{\Z/2}A\cong\bigvee_{n\times n\times n}A\wedge \Phi^{\Z/2}A \longrightarrow \bigvee_n\Phi^{\Z/2}A
\]
which sends the summands $(i,j,j)$ to the $i$-summand via the action map $A\wedge \Phi^{\Z/2}A \to \Phi^{\Z/2}A$ of \ref{geofixcalc} , and it is trivial on the components $(i,j,k)$ with $k\neq j$. The right action admits a similar description. This gives an isomorphism of semisimplicial spectra
\[
\Phi^{\Z/2}\sd_eN^{di}_\wedge M^{\vee}_nA\cong N_\wedge(\bigvee_n\Phi^{\Z/2}A,M^{\vee}_nA,\bigvee_n\Phi^{\Z/2}A).
\]
Now we observe that the left action of $M^{\vee}_nA$ on $\bigvee_n\Phi^{\Z/2}A$ above extends to an action of the unital matrix ring $M_nA=\prod_{n}\bigvee_nA$, by the composite
\[
(\prod_{n}\bigvee_nA)\wedge (\bigvee_n\Phi^{\Z/2}A)\longrightarrow\bigvee_n\prod_n\bigvee_{n}(A\wedge \Phi^{\Z/2}A) \longrightarrow \bigvee_n\Phi^{\Z/2}A
\]
of the canonical map and the map that sends the $(k,j_1,\dots,j_n)$-summand to the $j_k$-summand via the action map $A\wedge \Phi^{\Z/2}A \to \Phi^{\Z/2}A$. A similar extension exists for the right action. Thus the equivalence $M^{\vee}_nA\stackrel{\simeq}{\to}M_nA$ induces a levelwise equivalence of spectra
\[
\Phi^{\Z/2}B^{di}_\wedge M^{\vee}_nA\cong B_\wedge(\bigvee_n\Phi^{\Z/2}A,M^{\vee}_nA,\bigvee_n\Phi^{\Z/2}A)\stackrel{\simeq}{\longrightarrow}B_\wedge(\bigvee_n\Phi^{\Z/2}A,M_nA,\bigvee_n\Phi^{\Z/2}A).
\]
where the target admits degeneracies.
\end{rem}

\begin{proof}[Proof of \ref{Morita}]
We drop the anti-involution $w$ and the unit $\epsilon$ from the notation.
By \ref{properness} the thick and the standard realizations of the Segal edgewise subdivision of $N^{di}_\wedge A$ are equivalent. It is therefore sufficient to show that the map
\[
\iota\colon \sd_e N^{di}_\wedge A\stackrel{}{\longrightarrow}\sd_e N^{di}_\wedge M^{\vee}_nA
\]
induces an equivalence on thick geometric realizations.
The $k$-simplices of the dihedral nerve $N^{di}_\wedge M^{\vee}_nA$ are isomorphic to
\[
(N_{\wedge}^{di}M^{\vee}_nA)_k=((n\times n)_+\wedge A)^{\wedge k+1}\cong (n\times n)^{\times k+1}_+\wedge A^{\wedge k+1}.
\]
We define a ``trace'' map $\tr\colon \sd_e N^{di}_\wedge M^{\vee}_nA\to \sd_e N^{di}_\wedge A$
by sending the summand indexed by $(\underline{i},\underline{j})=(i_{2k+2},j_0),(i_1,j_1),\dots ,(i_{2k+1},j_{2k+1})$ to
\[
\tr(\underline{i},\underline{j},x)=\left\{
\begin{array}{ll}
x&\mbox{if $j_0=i_1, j_1=i_2,\dots,j_{2k}=i_{2k+1}, j_{2k+1}=i_{2k+2}$},
\\
\ast&\mbox{otherwise}.
\end{array}
\right.
\]
This is analogous to the trace map for Hochschild homology, defined for example in \cite[1.2.1]{Loday}. A similar map is defined in \cite[1.6.18]{ringfctrs} for $\THH$ but it is unfortunately not semisimplicial.
The map $\tr$ is clearly an equivariant retraction for $\iota$.

We define a semisimplicial homotopy $h_{k+1,l}\colon (N^{di}_\wedge M^{\vee}_nA)_{2k+1}\to (N^{di}_\wedge M^{\vee}_nA)_{2k+3}$ on the subdivided dihedral nerve.
We define $C_0=(n\times n)^{\times 2k+2}$, and for every $1\leq l\leq k$ we let $C_l$ be the subset of $C_0$ of the sequences $(i_{2k+2},j_0),(i_{1},j_1),\dots,(i_{2k+1},j_{2k+1})\in n\times n$ that satisfy
\[
\begin{array}{llll}
j_0=i_1& j_1=i_2&\dots& j_{l-1}=i_l\\
i_{2k+2}=j_{2k+1}& i_{2k+1}=j_{2k}&\dots&  i_{2k+3-l}=j_{2k+2-l}.
\end{array}
\]
We define $h_{k+1,l}$ by sending $(\underline{i},\underline{j},x)=((i_{2k+2},j_0),(i_{1},j_1),\dots ,(i_{2k+1},j_{2k+1}),x)$ to
\[((1,1),\overbrace{(1,1),\dots, (1,1)}^{l},(1,j_{l}),(i_{l+1},j_{l+1}),
\dots,(i_{2k+1-l},j_{2k+1-l}),(i_{2k+2-l},1),\overbrace{(1,1),\dots, (1,1)}^{l},s_lx)
\]
if $(\underline{i},\underline{j})\in C_l$, and to the basepoint otherwise.
Here $s_l\colon A^{\wedge 2k+1}\to A^{\wedge 2k+3}$ is the $l$-degeneracy map of the subdivision of $N^{di}_\wedge A$. It is straightforward to verify that these maps define a semisimplicial homotopy, and that $d_0h_{k+1,0}=\id$. The map  $d_{k+1}h_{k+1,k}$ sends $(\underline{i},\underline{j},x)$ to
\[
d_{k+1}d_{k+2}(\!\overbrace{(1,1),\dots, (1,1)}^{k+1},(1,j_{k}),(i_{k+1},j_{k+1}),(i_{k+2},1),\overbrace{(1,1),\dots, (1,1)}^{k},s_kx\!)\!=\!(\!\overbrace{(1,1),\dots, (1,1)}^{2k+2},x\!)
\]
if $(\underline{i},\underline{j})\in C_k$ and $j_{k}=i_{k+1}, j_{k+1}=i_{k+2}$, and to the basepoint otherwise. This is precisely the map $\iota\circ\tr$
\end{proof}

Now let $(R,w,\epsilon)$ be a Wall antistructure, considered as an $Ab$-enriched category with duality with one object. Let $(\mathcal{P}_R,D,\eta)$ be the category of finitely generated projective right $R$-modules with the duality that sends an $R$-module $P$ to the $R$-module
\[D(P):=\hom_R(P,R)\]
of Example \ref{excatdual}. There is a morphism of categories with duality $\iota\colon R\to \mathcal{P}_R$ whose underlying functor sends the unique object to $R$ with right multiplication, and a morphism $r\in R$ to $r\cdot(-)\colon R\to R$. The compatibility between the dualities is given by the canonical isomorphism $R\cong \hom_R(R,R)$ that sends $1$ to $w$.

\begin{cor}\label{Moritamodules}
The functor $\iota\colon R\to \mathcal{P}_R$ induces a stable equivalence of $\Z/2$-spectra
\[\THR(R,w,\epsilon)\stackrel{\simeq}{\longrightarrow}\THR(H\mathcal{P}_R,D,\eta).\]
\end{cor}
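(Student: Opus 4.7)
By Corollary \ref{cofinalitymodules}, the inclusion $H\mathcal{F}_R \hookrightarrow H\mathcal{P}_R$ induces a stable equivalence on $\THR$, so it suffices to show that the restricted functor $\iota_{\mathcal{F}}\colon R \to H\mathcal{F}_R$, which sends the unique object of $R$ to $R \in \mathcal{F}_R$ with identity on morphisms, is a stable equivalence of $\Z/2$-spectra. The strategy will be to filter $\mathcal{F}_R$ by duality-invariant finite full subcategories, reduce to a matrix-valued computation on each piece, and combine Morita invariance with cofinality.

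Concretely, write $\mathcal{F}_R = \colim_n \mathcal{F}_R^{\leq n}$, where $\mathcal{F}_R^{\leq n}\subset \mathcal{F}_R$ denotes the duality-invariant full subcategory on the free modules $\{R^k : 0 \leq k \leq n\}$. Since $\THR$ is built out of mapping spectra and Bousfield-Kan homotopy colimits, both of which commute with filtered colimits of spectral categories along fully faithful inclusions, the problem reduces to showing that $\iota_{\mathcal{F}}^{(n)}\colon R \to H\mathcal{F}_R^{\leq n}$ is a stable equivalence for each $n \geq 1$.

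For fixed $n \geq 1$, combine Theorems \ref{cofinality} and \ref{Morita}. The one-object full subcategory on $R^n$ is a duality-invariant cofinal subcategory of $\mathcal{F}_R^{\leq n}$: every $R^k$ with $k \leq n$ is a direct summand of $R^n$, and $D(R^n)\cong R^n$. By Theorem \ref{cofinality}, this gives a stable equivalence $\THR(HM_n R, w^T, \epsilon) \stackrel{\simeq}{\to} \THR(H\mathcal{F}_R^{\leq n}, D, \eta)$. Morita invariance (Theorem \ref{Morita}) together with Remark \ref{discretematrices} provides a stable equivalence $\THR(HR, w, \epsilon) \stackrel{\simeq}{\to} \THR(M_n^\vee HR, w^T, \epsilon) \simeq \THR(HM_n R, w^T, \epsilon)$. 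Composing these yields a stable equivalence $\THR(HR) \simeq \THR(H\mathcal{F}_R^{\leq n})$ for each $n$.

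The main obstacle is that this Morita-cofinality composite is not induced by $\iota_{\mathcal{F}}^{(n)}$ on the nose: the composite sends the object of $R$ to $R^n$ via the $(1,1)$-block inclusion $r \mapsto E_{11}r \in M_n R$, whereas $\iota_{\mathcal{F}}^{(n)}$ sends it to $R$ with the identity on morphisms. However, the first-coordinate inclusion $s\colon R \hookrightarrow R^n$ and projection $p\colon R^n \to R$ exhibit $R$ as a retract of $R^n$ in $\mathcal{F}_R^{\leq n}$, and $s$ satisfies the non-unital naturality relation $s\cdot r = (srp)\cdot s$ for $r \in R$. This retract data is precisely what is needed to construct, following the templates of the semisimplicial homotopies in the proofs of Theorems \ref{cofinality} and \ref{Morita}, an explicit equivariant homotopy on the subdivided dihedral nerves between $(\iota_{\mathcal{F}}^{(n)})_\ast$ and the Morita-cofinality composite. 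Building this homotopy, combining the retraction $R \leftrightarrows R^n$ with the trace formula from the proof of Theorem \ref{Morita}, is the technical heart of the argument and will upgrade the diagram to one commuting up to equivariant homotopy, completing the proof.
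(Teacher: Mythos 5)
Your overall strategy — reduce to $\mathcal{F}_R$ via cofinality, filter by $\mathcal{F}^{\leq n}_R$, and combine Theorem~\ref{cofinality} with Theorem~\ref{Morita} — is the same as the paper's. But the way you invoke these results leaves a genuine gap. You reduce the problem to showing that $\iota_{\mathcal{F}}^{(n)}\colon R\to H\mathcal{F}_R^{\leq n}$ is an equivalence for each fixed $n$, and then correctly notice that the Morita--cofinality composite $\THR(HR)\to\THR(HM_nR)\to\THR(H\mathcal{F}_R^{\leq n})$ sends the unique object to $R^n$ (via the $(1,1)$-block), whereas $\iota_{\mathcal{F}}^{(n)}$ sends it to $R$. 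Your fix is to assert, without construction, an explicit equivariant semisimplicial homotopy between the two maps on subdivided dihedral nerves. That is not a small omission. Such a homotopy has to: (i) satisfy \emph{both} of the non-unital naturality conditions of the remark following Proposition~\ref{homotopiesTHR} — you quote only $s\cdot r=(srp)\cdot s$, i.e.\ the left-hand square; (ii) be checked to interact correctly with the semisimplicial face maps; and (iii) be $\Z/2$-equivariant, which means verifying that the ``conjugate'' transformation $\overline{U}=\eta^\ast\circ D(s)$ agrees with $p$ under the identification $D(R^n)\cong R^n$. None of this is carried out, and it is exactly the technical content that would make your argument correct.

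The paper sidesteps the need for any such homotopy by a different diagrammatic organization. Instead of working with a single fixed $n$, it packages all three stages — $M_n^\vee HR$, $HM_nR$, $H\mathcal{F}_R^{\leq n}$ — into a homotopy colimit over $n$, and then observes that the map $\iota$ factors through the $n=1$ term, where $M_1R=R$ and the cofinal one-object subcategory of $\mathcal{F}^{\leq 1}_R$ is literally $\{R\}$; at $n=1$ the cofinal inclusion \emph{is} $\iota_{\mathcal{F}}^{(1)}$ on the nose, so commutativity of the paper's diagram is immediate and the Morita and cofinality equivalences can be applied with no further alignment argument. If you wish to keep your fixed-$n$ reduction, you must actually construct and verify the homotopy; otherwise you should restructure as in the paper, routing $\iota$ through the $n=1$ term of the colimit.
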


\begin{proof}
Let $\mathcal{F}_R$ be the category of free modules, and $\mathcal{F}^{\leq n}_R$ its full subcategory of modules of rank less than or equal to $n$. The duality of $\mathcal{P}_R$ restricts to both $\mathcal{F}_R$ and $\mathcal{F}^{\leq n}_R$ and the map $\iota$ factors as
\[
\xymatrix{
B^{di}_\wedge(R,w,\epsilon)\ar[d]_{\iota}\ar[r]^-{\simeq}
&
\hocolim_{n}B^{di}_\wedge(M^{\vee}_n(HR),w,\epsilon)\ar[r]^-{\simeq}
&
\hocolim_{n}B^{di}_\wedge(HM_nR,w,\epsilon)\ar[d]
\\
B^{di}_\wedge(H\mathcal{P}_R,D,\eta)&
B^{di}_\wedge(H\mathcal{F}_R,D,\eta)\ar[l]
&
\hocolim_{n}B^{di}_\wedge(H\mathcal{F}^{\leq n}_R,D,\eta)
\ar[l]
}
\]
where the homotopy colimits are taken with respect to the maps $M^{\vee}_nHR\to M^{\vee}_{n+1}HR$, $M_nR\to M_{n+1}R$ and $\mathcal{F}^{\leq n}_R\to \mathcal{F}^{\leq n+1}_R$ that increase the size of a matrix by adding a row and a column of zeros. These operations are not unital, and therefore the spectra of the diagrams are all obtained by taking thick realizations. The top row of the diagram consists of equivalences by \ref{Morita} and \ref{discretematrices}.

The inclusion $M_nR\to  \mathcal{F}^{\leq n}_R$ is clearly cofinal, and therefore the vertical map is an equivalence by \ref{cofinality}. Similarly, the last map is an equivalence by \ref{cofinalitymodules}. The remaining map is the thick realization of the equivalence
\[
\hocolim_{n}\bigvee_{0\leq a_0,\dots,a_k\leq n}(HM_{a_0,a_k}R)\wedge\dots\wedge (HM_{a_k,a_{k-1}}R)\stackrel{\simeq}{\longrightarrow}\!\bigvee_{0\leq a_0,\dots,a_k}\!(HM_{a_0,a_k}R)\wedge\dots\wedge (HM_{a_k,a_{k-1}}R)
\]
where $M_{a,b}R:=\bigoplus_{a\times b}R$ is the abelian group of $a\times b$-matrices.
\end{proof}

\section{Calculations}\label{seccalc}

\subsection{The Mackey functor of components of \texorpdfstring{$\THR$}{THR}}\label{secpi0}

Let $(A,w)$ be a flat ring spectrum with anti-involution, whose underlying $\Z/2$-spectrum is connective.
The aim of this section is to compute the Mackey functor of components
\[
\xymatrix@C=40pt{
\pi_0 \THR(A) \ar@(dl,dr)_-{w}\ar@<.5ex>[r]^-{\tran}&\pi^{\mathbb{Z}/2}_0 \THR(A) \ar@<.5ex>[l]^-{\res}
}
\]
based on the description of $\THR(A)$ as the derived smash product $A \wedge^{\mathbf {L}}_{N^{\mathbb{Z}/2}_e A} A$.

We start by describing the multiplicative structure on $\underline{\pi}_0 A$ induced by the multiplication of $A$. The abelian group $\pi_0A$ has a ring structure, and $\pi_0(w)=w$ is an anti-involution.
There is also a multiplicative action
\[\pi_0A\otimes \pi^{\mathbb{Z}/2}_0A\xrightarrow{N\otimes\id}\pi^{\mathbb{Z}/2}_0(N^{\Z/2}_eA)\otimes \pi^{\mathbb{Z}/2}_0A \stackrel{\wedge}{\longrightarrow} \pi^{\mathbb{Z}/2}_0(N^{\Z/2}_eA\wedge A)\longrightarrow
 \pi^{\mathbb{Z}/2}_0 A,\]
where $N$ is the external norm and the last map is the left $N^{\mathbb{Z}/2}_e A$-module structure on $A$ of \S\ref{secdihedral}. We will denote the value of this map at an element $(a, x)$ by $a\cdot x$. Although $\pi^{\mathbb{Z}/2}_0 A$ is a priori not a ring, it still has a preferred unit element $1 \in \pi^{\mathbb{Z}/2}_0 A$ which restricts to $1 \in \pi_0A$. This element is given by the homotopy class of the $\Z/2$-equivariant unit map $\mathbb{S} \to A$.
Combining the action above with the unit $1 \in \pi^{\mathbb{Z}/2}_0 A$, we get a multiplicative transfer
\[N\colon\pi_0A \longrightarrow \pi^{\mathbb{Z}/2}_0 A\]
by sending $a$ to its multiplicative action on the unit $a \cdot 1$. When $(A,w)$ is a commutative $\Z/2$-equivariant ring spectrum this multiplicative transfer coincides with the multiplicative norm defined in \cite{Brun}.

\begin{theorem}\label{pi0relations} Let $(A,w)$ be a flat ring spectrum with anti-involution whose underlying orthogonal $\Z/2$-spectrum is connective. Then the Mackey functor $\underline{\pi}_0\THR(A)$ is naturally isomorphic to the Mackey functor
\[
\xymatrix@C=40pt{
\pi_0A/[\pi_0A, \pi_0A] \ar@<6ex>@(dl,dr)_-{w}\ar@<.5ex>[r]^-{\tran}& (\pi^{\mathbb{Z}/2}_0 A \otimes \pi^{\mathbb{Z}/2}_0 A)/T \ar@<.5ex>[l]^-{\res}
},
\]
where $[\pi_0A, \pi_0A]$ is the commutator subgroup, and $T$ is the subgroup generated by the following elements:
\begin{itemize}
\item[i)] $x \otimes a \cdot y-\omega (a) \cdot x \otimes y$, for $x,y \in \pi^{\mathbb{Z}/2}_0 A$ and $a \in \pi_0A$;
\item[ii)] $x \otimes \tran(a \res(y) w(b))-\tran(w(b)\res(x)a) \otimes y$, for $x,y \in \pi^{\mathbb{Z}/2}_0 A$ and $a, b \in \pi_0A$.
\end{itemize}
The restriction and the transfer are given respectively by
\[\res(x \otimes y)=\res(x)\res(y)\ \ \ \ \ \ \ \ \ \ \  \mbox{and} \ \ \ \ \ \ \ \ \ \ \ \ \  \tran(a)= \tran(a) \otimes 1,\]
where $1 \in \pi^{\mathbb{Z}/2}_0 A$ is the above mentioned unit.
\end{theorem}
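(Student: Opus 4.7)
The plan is to apply the derived smash product description $\THR(A)\simeq A \wedge^{\mathbf{L}}_{N^{\Z/2}_eA} A$ of Corollary \ref{derived smash} together with the spectral sequence of Mackey functors
\[
E^2_{p,q}=\Tor^{\underline{\pi}_\ast(N^{\Z/2}_eA)}_{p,q}(\underline{\pi}_\ast A,\underline{\pi}_\ast A)\Rightarrow \underline{\pi}_{p+q}\THR(A)
\]
mentioned in the introduction, built from a cofibrant replacement of $A$ by cells of the form $(\Z/2\times\Sigma_n)/\Gamma_+ \wedge N^{\Z/2}_eA$ and tracking Mackey functors of homotopy groups. Since $A$ is connective and $N^{\Z/2}_eA$ is connective, the spectral sequence is first-quadrant and strongly convergent, and in total degree zero it collapses onto the edge
\[
\underline{\pi}_0\THR(A)\ \cong\ \underline{\pi}_0 A\ \Box_{\underline{\pi}_0 N^{\Z/2}_eA}\ \underline{\pi}_0 A,
\]
where $\Box$ denotes the box product of Mackey functors. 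So the task reduces to identifying $\underline{\pi}_0 N^{\Z/2}_eA$ as a Green functor and then computing this coend of Mackey functors level by level.

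First I would compute the underlying (non-equivariant) level. The non-equivariant underlying spectrum of $\THR(A)$ is $\THH(A)$, whose classical $\pi_0$ is $\pi_0A/[\pi_0A,\pi_0A]$, and this matches the box product computed on underlying levels since $N^{\Z/2}_eA$ underlies $A\wedge A^{op}$ (via the isomorphism $1\wedge w\colon N^{\Z/2}_eA\cong A^e$ of Section \ref{secdihedral}). The involution on this quotient is induced by $w$, explaining the top row of the Mackey functor.

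The main computation is at the $\Z/2$-fixed level. For this I would exploit that for a flat connective $(A,w)$, the Mackey functor $\underline{\pi}_0 N^{\Z/2}_eA$ is generated at the fixed level by external norms $N(a)$ for $a\in\pi_0A$ together with transfers from the underlying level (by the Tambara-functor description of $\underline{\pi}_0$ of the multiplicative norm, see \cite{Sto, BrDuSt} and the proof of \cite[Theorem 3.4.22]{Sto}). Concretely, the map induced by the left module structure of $A$ over $N^{\Z/2}_eA$ gives the $\cdot$-action of $\pi_0 A$ on $\pi^{\Z/2}_0 A$ appearing in relation i), and the anti-involution accounts for the appearance of $\omega(a)$ rather than $a$ on one side of this action. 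Unwinding the general formula
\[
(M\Box_R N)(\Z/2)\ =\ \bigl(M(\Z/2)\otimes N(\Z/2)\ \oplus\ (M(e)\otimes N(e))_{\Z/2}\bigr)\big/\langle\text{Frobenius}+ R\text{-linearity}\rangle,
\]
the transfer summand $(\pi_0A\otimes\pi_0A)_{\Z/2}$ is absorbed by the relation $\tran(a)\otimes 1=1\otimes\tran(a)$ (or more generally by relation ii) with suitable $b=1$ and $x=1$) together with the formula $\tran(a)\cdot 1=\tran(a)\otimes 1$, which yields the stated formula for the transfer. The $R$-linearity over $\underline{\pi}_0 N^{\Z/2}_eA$ produces exactly relation i) (for the generators $N(a)$) and relation ii) (for the transfer generators, using $\tran(a\res(y)w(b))$ arising from Frobenius reciprocity $N(a)\cdot y\cdot N(b)$-type terms on the two sides of the tensor). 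The restriction formula $\res(x\otimes y)=\res(x)\res(y)$ is the standard multiplicativity of restriction applied to the edge of the spectral sequence.

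The main obstacle I anticipate is the bookkeeping in the last step: verifying that the subgroup $T$ coincides exactly with the relations imposed by $\underline{\pi}_0 N^{\Z/2}_eA$-bilinearity plus the Mackey coequalizer identifications, and checking that no extra relations are needed because $\underline{\pi}_0 N^{\Z/2}_eA$ is generated by norms and transfers (so it suffices to impose linearity on these generators). Once this is done, the commutative case follows by observing that both relations in $T$ are stable under the multiplicative norm of $\underline{\pi}_0 A$ (which exists because $A$ is a commutative $\Z/2$-equivariant ring spectrum and its $\underline{\pi}_0$ is a Tambara functor by \cite{Brun}), and hence $T$ is a Tambara ideal; the multiplicative norm descends as $a\mapsto N(a)\otimes 1$, as the inclusion of the zero-simplices $A\to B^{di}_\wedge A$ is a map of $\Z/2$-equivariant commutative ring spectra by Proposition \ref{multBdi}, making $\underline{\pi}_0\THR(A)$ a Tambara functor under $\underline{\pi}_0 A$.
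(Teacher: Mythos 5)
Your plan is essentially the same as the paper's proof: both use the Künneth-type spectral sequence of Mackey functors to reduce $\underline{\pi}_0\THR(A)$ to the box product $\underline{\pi}_0 A \mathbin{\Box}_{\underline{\pi}_0 N^{\Z/2}_e A} \underline{\pi}_0 A$, identify $\underline{\pi}_0 N^{\Z/2}_e A$ via the isotropy separation sequence as generated at the fixed level by external norms $N(a)$ and transfers, and then simplify the explicit box-product presentation at the fixed level using the unit $1\in\pi^{\Z/2}_0 A$ to absorb the transfer summand (the paper phrases this as $a\otimes b \sim ab\otimes\res(1) \sim \tran(ab)\otimes 1$). The one cosmetic difference is that the paper packages $\underline{\pi}_0 N^{\Z/2}_e A$ as the Witt-vector Green functor $\mathbb{W}^{\otimes}_2(\pi_0A)$ and introduces Hermitian Mackey functors to organize the module structure, whereas you work directly with the generators; this is harmless here, and the abstraction is mainly there for reuse elsewhere in the paper.
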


\begin{cor} \label{compi0} Let $(A,w)$ be a connective $\Z/2$-equivariant commutative ring spectrum with flat underlying $\Z/2$-spectrum. Then $[\pi_0A, \pi_0A]=0$, and the relations in Theorem \ref{pi0relations} simplify as follows:
\begin{itemize}
\item[i)] $x \otimes N(a)y-xN(a) \otimes y$, where $N\colon\pi_0A \to \pi_0^{\Z/2}A$ is the multiplicative norm;
\item[ii)] $x \otimes \tran(a)y- x \tran(a) \otimes y$.
\end{itemize}
Moreover the additive relations $i)$ and $ii)$ generate an ideal, and $(\pi^{\mathbb{Z}/2}_0 A \otimes \pi^{\mathbb{Z}/2}_0 A)/T$ is a commutative ring. The Tambara structure on $\underline{\pi}_0\THR(A)$ from Proposition \ref{multBdi} is given by  the multiplicative transfer $a\mapsto N(a) \otimes 1$.
\end{cor}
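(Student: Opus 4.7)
Three assertions need to be verified: the simplified form of the generators of $T$, the ideal property of $T$, and the formula $a \mapsto N(a) \otimes 1$ for the multiplicative norm. All follow from Theorem \ref{pi0relations} combined with the extra structure available when $A$ is commutative: the ring $\pi_0 A$ is commutative (so $[\pi_0 A, \pi_0 A] = 0$ and $\pi_0 \THR(A) \cong \pi_0 A$), the Mackey functor $\underline{\pi}_0 A$ is a Tambara functor with multiplicative norm $N$, and Frobenius reciprocity is available.

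For relation (i), when $A$ is commutative the left $N^{\mathbb{Z}/2}_e A$-module structure map from \S\ref{secdihedral} factors $\mathbb{Z}/2$-equivariantly as $N^{\mathbb{Z}/2}_e A \wedge A \xrightarrow{\phi \wedge 1} A \wedge A \xrightarrow{\mu} A$, where $\phi(a \wedge b) := a w(b)$ is $\mathbb{Z}/2$-equivariant and $\mu$ is the (equivariant) multiplication on $A$. Applied to the external norm $a \wedge a$ this yields $\phi_*(a \wedge a) = N(a) \in \pi_0^{\mathbb{Z}/2} A$, the multiplicative norm of $\underline{\pi}_0 A$; hence $a \cdot y = N(a) \cdot y$ in $\pi_0^{\mathbb{Z}/2} A$. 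Moreover $N(w(a)) = N(a)$, either by Weyl-invariance of the Tambara norm or directly from the identity $\phi \circ (w \wedge w) = w \circ \phi$ together with the fact that $N(a)$ is $w$-fixed. This reduces relation (i) to $x \otimes N(a)y - xN(a) \otimes y$ after using commutativity of $\pi_0^{\mathbb{Z}/2} A$. For relation (ii), Frobenius reciprocity in $\underline{\pi}_0 A$ combined with commutativity of $\pi_0 A$ yields
\[\tran(a\res(y)w(b)) = y\tran(aw(b)), \qquad \tran(w(b)\res(x)a) = x\tran(aw(b)),\]
and since $c := aw(b)$ ranges over all of $\pi_0 A$ (take $b=1$), relation (ii) reduces to $x \otimes \tran(c)y - x\tran(c) \otimes y$ for $c \in \pi_0 A$.

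Both simplified families have the shape $x \otimes uz - xu \otimes z$ for a fixed $u \in \pi_0^{\mathbb{Z}/2} A$; multiplying by a basic tensor $y_1 \otimes y_2$ and moving $u$ past $y_2$ using commutativity produces another relation of the same form, so the subgroup they generate is a two-sided ideal and the quotient is a commutative ring. Finally, Proposition \ref{multBdi} makes the inclusion of $0$-simplices $A \to B^{di}_\wedge A \simeq \THR(A)$ a morphism of commutative $\mathbb{Z}/2$-equivariant ring spectra, hence a morphism of Tambara functors on $\underline{\pi}_0$. Under the isomorphism of Theorem \ref{pi0relations}, the induced map $\pi_0^{\mathbb{Z}/2} A \to (\pi_0^{\mathbb{Z}/2} A \otimes \pi_0^{\mathbb{Z}/2} A)/T$ should be $y \mapsto y \otimes 1$, a fact that can be read off either from consistency with the transfer formula $\tran(a) = \tran(a) \otimes 1$ or by tracking the inclusion of one smash factor in the derived smash product $A \wedge^{\mathbf{L}}_{N^{\mathbb{Z}/2}_e A} A$ of Corollary \ref{derived smash}. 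Compatibility of Tambara norms with this morphism then gives the formula $a \mapsto N(a) \otimes 1$ for the norm of $\underline{\pi}_0 \THR(A)$. The most delicate step is this identification of the fixed-points map with $y \mapsto y \otimes 1$, since Theorem \ref{pi0relations} pins down the Mackey-functor structure only via the formulas for $\tran$ and $\res$.
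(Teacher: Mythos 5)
Your proof is correct and fills in the details that the paper leaves implicit; its one genuine divergence from the paper concerns the norm formula. The paper's own proof is very terse: it remarks that the simplification of the relations follows from the definition of the norm and Frobenius reciprocity, that the ring structure is clear, and that the norm formula follows from Strickland [Prop.\ 9.1]. Your treatment of the relations and the ideal property matches the intent of the first two remarks (and is careful about the needed points: $a\cdot y = N(a)y$ via the Frobenius map $N^{\Z/2}_e A\to A$, Weyl-invariance $N(w(a))=N(a)$, the commutativity needed to rearrange $\tran(a\res(y)w(b))=\tran(aw(b))y$, and the fact that left- or right-multiplying a relation $x\otimes uz-xu\otimes z$ by a basic tensor produces another relation of the same shape).

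For the norm formula, you argue from first principles that $A\to B^{di}_\wedge A$ is a map of commutative $\Z/2$-ring spectra, hence induces a Tambara morphism on $\underline{\pi}_0$, and that the fixed-points map is $y\mapsto y\otimes 1$; compatibility with norms then gives $a\mapsto N(a)\otimes 1$. This is a valid alternative route that avoids the citation of Strickland. The step you rightly flag as delicate — identifying the fixed-points map with $y\mapsto y\otimes 1$ under the isomorphism of Theorem \ref{pi0relations} — is morally clear but left as a sketch: consistency with the transfer formula $\tran(a)=\tran(a)\otimes 1$ determines the map only on the image of the transfer, so one must actually trace the edge map of the K\"unneth/box-product spectral sequence through the identification of the unit $A\to B^{di}_\wedge A$ with the map $a\mapsto a\wedge 1$ into the bar construction $B_\wedge(A,N^{\Z/2}_e A,A)$. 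Once written out, that tracing goes through, so the argument is complete; it simply replaces an external reference by a direct verification.
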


The rest of the subsection is devoted to the proof of these two results.
We will make use of the box product $\Box$ of Mackey functors (see e.g., \cite{LewisGreen} and \cite{Bouc}), and the fact that the monoids for $\Box$ are the Green functors. We recall that the structure of a $\Z/2$-Green functor on a Mackey functor
\[M=\big(
\xymatrix@C=40pt{
M(\mathbb{Z}/2)\ar@(dl,dr)_-{\tau}\ar@<.5ex>[r]^-{\tran}&M(\ast)\ar@<.5ex>[l]^-{\res}
}\big)
\]
is the datum of ring structures on $M(\ast)$ and $M(\Z/2)$ such that the restriction $\res \colon M(\ast) \to M(\Z/2)$ and involution $\tau \colon M(\Z/2) \to M(\Z/2)$ are ring homomorphisms, and where the Frobenius reciprocity relations
\[\tran (a \res(x))=\tran(a)x\ \ \ \ \  \mbox{and} \ \ \ \ \  \tran(\res(x)a)=x\tran(a)\]
hold for all $a\in M(\Z/2)$ and $x\in M(\ast)$.
It follows from \cite{LM06} that the graded homotopy Mackey functor $\underline{\pi}_\ast$ is a lax symmetric monoidal functor with respect to the smash and the box product. In particular $\underline{\pi}_0$ sends equivariant ring spectra to Green functors and equivariant module spectra to Mackey modules over Green functors.

Let $(A,w)$ be a flat ring spectrum with anti-involution whose underlying $\Z/2$-spectrum is  connective. We know from \cite{LM06} that there is a strongly convergent spectral sequence of Mackey functors
\[ \Tor_{p,q}^{{\underline{\pi}}_\ast(N^{\mathbb{Z}/2}_e A)}({\underline{\pi}}_\ast A, {\underline{\pi}}_\ast A) \Rightarrow {\underline{\pi}}_{p+q}(A \wedge^{\mathbf {L}}_{N^{\mathbb{Z}/2}_e A} A), \]
where $\Tor$ is the left derived functor of the $\Box$-product of Mackey functors. It follows immediately from this spectral sequence that we get an isomorphism of Mackey functors
\[\underline{\pi}_0\THR(A) \cong \underline{\pi}_0(A \wedge^{\mathbf {L}}_{N^{\mathbb{Z}/2}_e A} A) \cong \underline{\pi}_0(A) \Box_{{\underline{\pi}}_0(N^{\mathbb{Z}/2}_e A)} \underline{\pi}_0(A). \]

In order to make the latter expression more computable we have to understand the Green functor ${\underline{\pi}}_0(N^{\mathbb{Z}/2}_e A)$ and the left and right ${\underline{\pi}}_0(N^{\mathbb{Z}/2}_e A)$-module structures on $\underline{\pi}_0A$.
The Mackey functor of components of the norm construction has already been computed in several places in the literature in terms of the norm of Mackey functors, see for example \cite{Hoy, Maz, Ul2}. For the purpose of our computation it is convenient to give an explicit description of ${\underline{\pi}}_0(N^{\mathbb{Z}/2}_e A)$ in terms of generators and relations, which is similar to a ring of $2$-truncated Witt vectors. This is essentially contained in \cite{Maz}.

\begin{defn} \label{non-comm Witt} Let $S$ be a ring. We let $(S \otimes S)_{\Z/2}$ denote the coinvariants with respect to the flip automorphism $\tau(a \otimes b)=b \otimes a$ of $S\otimes S$.
The $2$-truncated non-commutative ring of Witt vectors $W^{\otimes}_2(S)$ is the set $S \times (S \otimes S)_{\Z/2}$ with the ring structure defined by the operations
\begin{itemize}
\item[i)] $(a,c)+(a',c')= (a+a', c+c'-a \otimes a')$
\item[ii)] $(a,c)(a',c')=(aa', (a\otimes a)c'+c(a' \otimes a') + cc'+ c\tau(c'))$,
\end{itemize}
for every $a,a'\in S$ and $c,c'\in (S \otimes S)_{\Z/2}$. We note the expression ii) is symmetric, since $\tau(c)c'=c\tau(c')$ in $(S \otimes S)_{\Z/2}$.
\end{defn}

It is easy to check that these formulas give an associative ring with zero $(0,0)$ and unit $(1,0)$. We also note that the expressions
\[\mbox{$(a\otimes a)c'$,\ \ \ $c(a' \otimes a')$\ \ \ and\ \ \ $cc'+ c\tau(c')$}\]
are well defined in $(S \otimes S)_{\Z/2}$ (here we use the ring structure on $S\otimes S$) although $(S \otimes S)_{\Z/2}$ is not a ring itself.

\begin{rem}
If the ring $R$ is a commutative and solid ring, i.e. the multiplication $R \otimes_{\Z} R \to R$ is an isomorphism, then $W^{\otimes}_2(R)$ is isomorphic to the ring of $2$-truncated Witt vectors $W_2(R)$. For example, we see that $W^{\otimes}_2(\mathbb{F}_2) \cong \Z/4$ and $W^{\otimes}_2(\mathbb{F}_p) \cong \mathbb{F}_p \times \mathbb{F}_p$ for every odd prime $p$.
There are also lifts of the ghost coordinates and of the Verschiebung to the non-commutative case. These are given by the ring homomorphisms
\[W^{\otimes}_2(S) \xrightarrow{(w_0, w_1)}S \times (S \otimes S)^{\Z/2} \]
defined by $w_0(a,c)=a$ and $w_1(a,c)=a \otimes a+ c+\tau(c)$, for every $a\in S$ and $c\in (S \otimes S)_{\Z/2}$, and by
\[V\colon S\otimes S\xrightarrow{(0,\id)} S\times S\otimes S\longrightarrow W^{\otimes}_2(S),\]
respectively, where the second map is the product of the identity and the projection map $S\otimes S\to (S\otimes S)_{\Z/2}$.
\end{rem}

The ring $W^{\otimes}_2(S)$ is in fact part of a $\Z/2$-Green functor $\mathbb{W}^{\otimes}_2(S)$, which is defined by
\[\mathbb{W}^{\otimes}_2(S)=\big(
\xymatrix@C=40pt{
S\otimes S\ar@(dl,dr)_-{\tau}\ar@<.5ex>[r]^-{V}&W^{\otimes}_2(S)\ar@<.5ex>[l]^-{w_1}
}\big).
\]
Checking the Frobenius reciprocity laws is straightforward. The Mackey functor $\underline{\pi_0}A$ is naturally a left and right module over the Green functor $\mathbb{W}^{\otimes}_2(\pi_0A)$, where $\pi_0A$ here is just considered as a ring (without an anti-involution). The action maps of the left module structure are
\[\xymatrix@R=7pt{(\pi_0A \otimes \pi_0A)\otimes \pi_0A\ar[r]^-{\mu^{l}_1}& \pi_0A\\
a \otimes a' \otimes b \ar@{|->}[r] & abw(a')
}
\ \ \ \ \ \ \ \ \
\xymatrix@R=7pt{W^{\otimes}_2(\pi_0 A) \otimes \pi^{\mathbb{Z}/2}_0 A\ar[r]^-{\mu^{l}_2}& \pi^{\mathbb{Z}/2}_0 A\\
(a,c) \otimes x\ar@{|->}[r]& a \cdot x+ \tran(\mu^{l}_1(c\otimes\res(x)))}
\]
where $a \in \pi_0 A$, $c \in (\pi_0 A \otimes \pi_0 A)_{\Z/2}$. We show that this is a well-defined module structure at the end of the section by using that $\underline{\pi}_0A$ has a natural ``Hermitian structure'' (see \ref{pi0Herm}). We also remark that since the transfer is constant on orbits, the expression $\tran(\mu^{l}_1(c\otimes\res(x)))$ is independent on the representative of $c$.
The right $\mathbb{W}^{\otimes}_2(\pi_0A)$-module structure on $\underline{\pi}_0A$ is defined similarly by the formulas
\[\mu^{r}_1(b\otimes a \otimes a')=w(a') b a\ \ \ \ \ \ \mbox{and}\ \ \ \ \ \mu^r_2(x\otimes (a,c))= w(a) \cdot x+\tran(\mu^{r}_1(\res(x)\otimes c)).\]

\begin{prop}\label{pi_0THR} Let $(A,w)$ be a flat ring spectrum with anti-involution whose underlying orthogonal $\Z/2$-spectrum is connective. Then the Mackey functor $\underline{\pi}_0\THR(A)$ is naturally isomorphic to the box product
\[\underline{\pi}_0(A) \Box_{\mathbb{W}^{\otimes}_2(\pi_0 A)} \underline{\pi}_0(A).\]
\end{prop}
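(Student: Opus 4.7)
The plan is to apply the K\"unneth-type Tor spectral sequence of Lewis--Mandell \cite{LM06} to the derived smash product description of $\THR$ already established in Corollary \ref{derived smash}, and thereby reduce the proposition to an identification of the Green functor $\underline{\pi}_0(N^{\Z/2}_eA)$ with $\mathbb{W}^\otimes_2(\pi_0 A)$ and a matching of the induced module structures on $\underline{\pi}_0 A$ with the maps $\mu^l_\bullet,\mu^r_\bullet$ defined above. Explicitly, the spectral sequence
\[\Tor_{p,q}^{\underline{\pi}_\ast(N^{\Z/2}_eA)}(\underline{\pi}_\ast A,\underline{\pi}_\ast A)\Rightarrow \underline{\pi}_{p+q}\THR(A)\]
has its edge corner at $(0,0)$ equal to $\underline{\pi}_0 A\Box_{\underline{\pi}_0(N^{\Z/2}_eA)}\underline{\pi}_0 A$, and connectivity of $A$ together with right-exactness of $\Box$ makes this edge map an isomorphism onto $\underline{\pi}_0\THR(A)$.

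Next I would compute $\underline{\pi}_0(N^{\Z/2}_eA)$ by two standard inputs. Connectivity and flatness give $\pi_0(A\wedge A)\cong \pi_0 A\otimes \pi_0 A$ at the underlying level. For the fixed-point level I would feed the isotropy separation cofiber sequence into $\pi_0^{\Z/2}$, using the Adams isomorphism identification $\pi_0^{\Z/2}((N^{\Z/2}_eA)\wedge E\Z/2_+)\cong (\pi_0 A\otimes \pi_0 A)_{\Z/2}$ and the flat-diagonal identification $\Phi^{\Z/2}N^{\Z/2}_eA\simeq A$ (cf.\ \cite[Thm.~3.2.16]{BrDuSt}) to produce a short exact sequence
\[0\longrightarrow (\pi_0 A\otimes \pi_0 A)_{\Z/2}\xrightarrow{\ \tran\ }\pi_0^{\Z/2}(N^{\Z/2}_eA)\longrightarrow \pi_0 A\longrightarrow 0,\]
split as abelian groups by the diagonal $A\to \Phi^{\Z/2}N^{\Z/2}_eA$. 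This exhibits the underlying set of $\pi_0^{\Z/2}(N^{\Z/2}_eA)$ as $\pi_0A\times (\pi_0 A\otimes \pi_0 A)_{\Z/2}$, with the restriction and transfer maps matching $w_1$ and $V$ from the discussion following Definition \ref{non-comm Witt}; the non-commutative product formula of Definition \ref{non-comm Witt} then follows by expanding $(a,c)(a',c')=((a,0)+V(c))\cdot((a',0)+V(c'))$ using the componentwise multiplication on $A\wedge A$, the Frobenius reciprocity identities $\tran(c)\cdot(a',0)=\tran(c\cdot(a'\otimes a'))$, $(a,0)\cdot\tran(c')=\tran((a\otimes a)\cdot c')$, and the double-coset formula $\tran(c)\cdot\tran(c')=\tran(cc'+c\tau(c'))$. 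This is essentially the computation of \cite{Maz}, phrased for a non-commutative $\pi_0A$.

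Finally I would verify that the two $\underline{\pi}_0(N^{\Z/2}_eA)$-module structures on $\underline{\pi}_0 A$ induced by the maps in \S\ref{secdihedral} agree with $(\mu^l_1,\mu^l_2)$ and $(\mu^r_1,\mu^r_2)$. At the underlying level one just applies $\pi_0$ to the composite $A\wedge A\wedge A\xrightarrow{1\wedge w\wedge 1}A\wedge A^{op}\wedge A\xrightarrow{1\wedge\tau}A\wedge A\wedge A^{op}\xrightarrow{\mu}A$, reading off $(a\otimes a')\cdot b=abw(a')$, which is $\mu^l_1$; the right module case is symmetric and produces $w(a')ba$. At the fixed-point level one decomposes any $(a,c)$ as $(a,0)+V(c)$: the action of $(a,0)$ on $x\in\pi_0^{\Z/2}A$ equals $a\cdot x$ by definition of the multiplicative action (since $(a,0)$ is the diagonal image $N(a)\in\pi_0^{\Z/2}N^{\Z/2}_eA$), while the action of $V(c)=\tran(c)$ is computed via Frobenius reciprocity applied to the equivariant module map $N^{\Z/2}_eA\wedge A\to A$, giving $\tran(\mu^l_1(c\otimes\res(x)))$. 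Adding these two contributions gives exactly the stated $\mu^l_2$. The hardest part of this last step is carefully tracking the twist by $w$ and the flip $\tau$ through the module structure maps, which is what forces the appearance of $w(a')$ in $\mu^l_1$ and $w(a)$ in $\mu^r_2$; once these signs are in place, the Frobenius and double-coset identities do the rest of the work, and the $\Box$-product expression of the first paragraph finishes the proof.
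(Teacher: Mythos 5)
Your overall strategy is exactly the paper's: use the Lewis--Mandell K\"unneth/Tor spectral sequence applied to $A\wedge^{\mathbf L}_{N^{\Z/2}_eA}A$ to get $\underline{\pi}_0\THR(A)\cong\underline{\pi}_0 A\Box_{\underline{\pi}_0(N^{\Z/2}_eA)}\underline{\pi}_0 A$, then identify $\underline{\pi}_0(N^{\Z/2}_eA)$ with $\mathbb{W}^\otimes_2(\pi_0A)$ and match the module structures. The first and third parts of your argument are fine.

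The second part contains a genuine error. You claim the short exact sequence
\[
0\longrightarrow (\pi_0 A\otimes \pi_0 A)_{\Z/2}\xrightarrow{\ \tran\ }\pi_0^{\Z/2}(N^{\Z/2}_eA)\longrightarrow \pi_0 A\longrightarrow 0
\]
is ``split as abelian groups by the diagonal $A\to\Phi^{\Z/2}N^{\Z/2}_eA$.'' This is wrong on two counts. First, the identification $\Phi^{\Z/2}N^{\Z/2}_eA\cong A$ is what provides the \emph{quotient} term of the sequence; it does not produce a section of the surjection $\pi_0^{\Z/2}(N^{\Z/2}_eA)\to\pi_0 A$. The only natural section is the external multiplicative norm $N\colon\pi_0 A\to\pi_0^{\Z/2}(N^{\Z/2}_eA)$. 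Second, and more seriously, $N$ is \emph{not} additive, and the sequence does not in general split as abelian groups: for $A=H\F_2$ one has $\pi_0^{\Z/2}(N^{\Z/2}_eH\F_2)\cong W^\otimes_2(\F_2)\cong\Z/4$, so $0\to\F_2\to\Z/4\to\F_2\to 0$ is not split. The correct statement is that $N$ together with $t$ provides a \emph{set-theoretic} bijection $\pi_0 A\times(\pi_0 A\otimes\pi_0 A)_{\Z/2}\cong\pi_0^{\Z/2}(N^{\Z/2}_eA)$ sending $(a,c)\mapsto N(a)+t(c)$.

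Because of this, your verification of the ring structure is incomplete: you only check the multiplication formula of Definition \ref{non-comm Witt}, but you never verify the twisted addition $(a,c)+(a',c')=(a+a',\,c+c'-a\otimes a')$. This step is not free; it requires the non-additivity formula for the external norm,
\[
N(a+b)=N(a)+N(b)+t(a\otimes b),
\]
which the paper records as part $iii)$ of its auxiliary lemma. Without it, the claimed identification of $\pi_0^{\Z/2}(N^{\Z/2}_eA)$ with the non-commutative Witt ring $W^\otimes_2(\pi_0A)$ is not established. Once you insert that formula and deduce the twisted addition, the rest of your argument (Frobenius reciprocity, double-coset, and the matching of $\mu^l$, $\mu^r$) does go through as in the paper.
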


The first step in the proof of Proposition \ref{pi_0THR} is to compute $\pi_0^{\Z/2}(N^{\mathbb{Z}/2}_e A)$.

\begin{lemma} \label{Witt vectors} Let $A$ be a flat connective orthogonal ring spectrum. Then
\begin{itemize}
\item[i)] There is a short exact sequence $0 \to(\pi_0(A) \otimes \pi_0(A))_{\Z/2} \xrightarrow{t} \pi_0^{\Z/2}(N^{\mathbb{Z}/2}_e A) \xrightarrow{\Phi^{\Z/2}} \pi_0(A) \to 0$.
\item[ii)] The diagram
\[\xymatrix{\pi_0(A) \otimes \pi_0(A) \ar[r]^-{\proj} \ar[dr]_-\tran & (\pi_0(A) \otimes \pi_0(A))_{\Z/2} \ar[d]^-t \ar[r]^-{N_\oplus} & (\pi_0(A) \otimes \pi_0(A))^{\Z/2} \\ & \pi_0^{\Z/2}(N^{\mathbb{Z}/2}_e A), \ar[ur]_-\res & }  \]
commutes, where $N_\oplus$ is the additive norm sending $a \otimes b$ to $a \otimes b + b \otimes a$, $\tran$ is the transfer of the Mackey structure of ${\underline{\pi}}_0(N^{\mathbb{Z}/2}_e A)$ and $\res$ becomes the Mackey restriction after composing with the inclusion $(\pi_0(A) \otimes \pi_0(A))^{\Z/2} \hookrightarrow \pi_0(A) \otimes \pi_0(A)$.
\item[iii)] The external norm $N\colon \pi_0(A) \to \pi_0^{\Z/2}(N^{\mathbb{Z}/2}_e A)$ is multiplicative and satisfies the identities
\[\Phi^{\Z/2} \circ N=1\ \ \ \ \ \ \ \ \mbox{and}\ \ \ \ \ \ \ \ N(a+b)=N(a)+N(b)+t(a \otimes b).\]
\end{itemize}
\end{lemma}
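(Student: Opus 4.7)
The natural tool is the isotropy separation cofiber sequence
\[
E\Z/2_+ \wedge N^{\Z/2}_e A \longrightarrow N^{\Z/2}_e A \longrightarrow \widetilde{E}\Z/2 \wedge N^{\Z/2}_e A
\]
of orthogonal $\Z/2$-spectra. Taking genuine $\Z/2$-fixed points, the right-hand term models $\Phi^{\Z/2} N^{\Z/2}_e A$, which is equivalent to $A$ via the diagonal (using that $A$ is flat and connective). The left-hand term is $\Z/2$-free, so by the Adams isomorphism its fixed-point spectrum is equivalent to $(N^{\Z/2}_e A)_{h\Z/2}$. I would then run the homotopy-orbit spectral sequence $H_p(\Z/2;\pi_q(N^{\Z/2}_e A)) \Rightarrow \pi_{p+q}((N^{\Z/2}_e A)_{h\Z/2})$. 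Connectivity kills everything in negative total degree, so $\pi_{-1}=0$, and the K\"unneth isomorphism $\pi_0(N^{\Z/2}_e A) \cong \pi_0 A \otimes \pi_0 A$ (as $\Z/2$-modules, via the flip) gives $\pi_0((N^{\Z/2}_e A)_{h\Z/2})\cong(\pi_0 A \otimes \pi_0 A)_{\Z/2}$. The long exact sequence on $\pi_0$ from the isotropy separation sequence then collapses to the desired short exact sequence of (i), and we define $t$ as the map on $\pi_0^{\Z/2}$ induced by the first map of the cofiber sequence.

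For part (ii), I would identify the Mackey transfer on $\pi_0$ spectrum-theoretically. The Mackey transfer $\tran\colon \pi_0(N^{\Z/2}_e A)\to \pi_0^{\Z/2}(N^{\Z/2}_e A)$ is the one associated to the collapse map $\mathbb{S}\to \Z/2_+$, and coincides with the composite
\[
\pi_0(N^{\Z/2}_e A) \twoheadrightarrow (\pi_0(N^{\Z/2}_e A))_{\Z/2} \cong \pi_0((N^{\Z/2}_e A)_{h\Z/2}) \xrightarrow{\ t\ } \pi_0^{\Z/2}(N^{\Z/2}_e A),
\]
which proves the triangle involving $t$ and $\tran$. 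The identification $\res\circ t = N_\oplus$ on the quotient then follows from the Mackey double-coset formula $\res\circ\tran = \id + \tau$ applied to $a\otimes b$, which gives $a\otimes b + b\otimes a$.

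For part (iii), multiplicativity of $N$ is immediate from the (lax) symmetric monoidal structure of the norm $N^{\Z/2}_e$ (see \cite[\S A.3]{HHR} and \cite[3.3]{Sto}), and the identity $\Phi^{\Z/2}\circ N = \id$ is the standard diagonal property of the norm. The additivity formula $N(a+b)=N(a)+N(b)+t(a\otimes b)$ is the most substantive point. I would prove it by analyzing the distributor decomposition
\[
N^{\Z/2}_e(A\vee A) \;\simeq\; N^{\Z/2}_e A \,\vee\, N^{\Z/2}_e A \,\vee\, \bigl(\Z/2_+\wedge (A\wedge A)\bigr),
\]
where the first two summands correspond to the constant $\Z/2$-equivariant functions $\Z/2\to\{1,2\}$ and the last summand comes from the two non-equivariant functions, freely induced. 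Writing $a+b\colon \mathbb{S}\to A$ as the composite $\mathbb{S}\xrightarrow{\nabla}\mathbb{S}\vee\mathbb{S}\xrightarrow{a\vee b}A\vee A\xrightarrow{\nabla}A$, applying $N^{\Z/2}_e$, tracking the effect on each summand, and taking $\pi_0^{\Z/2}$ produces exactly the three terms on the right-hand side: the two diagonal summands give $N(a)+N(b)$, while the induced summand contributes $\tran(a\otimes b)=t(a\otimes b)$ since $\pi_0^{\Z/2}(\Z/2_+\wedge X)\cong\pi_0 X$ via the transfer.

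The main obstacle will be justifying the distributor decomposition of the norm on a wedge at the point-set level in a way compatible with $\pi_0^{\Z/2}$; this is well-known (and follows from \cite[\S A.3]{HHR}), but care is required to match the summand $\Z/2_+\wedge (A\wedge A)$ with the transfer $t$ on homotopy groups. Once this identification is in place, the additivity formula, together with (i) and the verified behaviour on $\res$ and $\Phi^{\Z/2}$, pins down the claim uniquely.
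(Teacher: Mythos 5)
Your framework for part (i) — isotropy separation sequence, homotopy orbit spectral sequence, identification of $\pi_0$ of homotopy orbits with $(\pi_0 A\otimes\pi_0 A)_{\Z/2}$ — is the same as the paper's, but you have a genuine gap: connectivity only gives $\pi_{-1}((N^{\Z/2}_eA)_{h\Z/2})=0$, which proves surjectivity of $\Phi^{\Z/2}$ on the right of the putative short exact sequence. It does \emph{not} give injectivity of $t$ on the left. That requires showing the boundary map $\pi_1(A)\to(\pi_0 A\otimes\pi_0 A)_{\Z/2}$ vanishes, equivalently that $\Phi^{\Z/2}\colon\pi_1^{\Z/2}(N^{\Z/2}_eA)\to\pi_1(A)$ is surjective. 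The paper handles this with an Euler class argument: for $f\colon \mathbb{S}^1\to A$ the derived norm gives $N^{\Z/2}_e(f)\colon S^\rho\to N^{\Z/2}_eA$, and precomposing with the Euler class $e\colon S^1\to S^\rho$ produces a class in $\pi_1^{\Z/2}(N^{\Z/2}_eA)$ hitting $f$ under $\Phi^{\Z/2}$, since geometric fixed points send $e$ to the identity. Without something like this, your argument only produces a four-term exact sequence, not the claimed short exact sequence.

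For (ii) your appeal to the double-coset formula matches the paper. For (iii) you take a genuinely different route: where the paper simply cites the external analog of \cite[Proposition 10.9(vi)]{Schwede}, you propose to deduce additivity from the decomposition $N^{\Z/2}_e(X\vee Y)\simeq N^{\Z/2}_eX\vee N^{\Z/2}_eY\vee(\Z/2_+\wedge X\wedge Y)$, tracking the three wedge summands through $\pi_0^{\Z/2}$. This decomposition and identification of the induced summand's contribution with the transfer is correct and self-contained; you correctly flag that matching the free summand to $t$ on $\pi_0^{\Z/2}$ via the Wirthm\"uller isomorphism is the step requiring care. Once (i) is repaired, this alternative derivation of the additivity formula is a reasonable and arguably more transparent route.
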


\begin{proof} The multiplicative norm $N\colon \pi_0(A) \to \pi_0^{\Z/2}(N^{\mathbb{Z}/2}_e A)$ (see e.g. \cite[\S 9,\S10]{Schwede}) is multiplicative and $\Phi^{\Z/2} \circ N=1$ (first part of (iii)). Because $A$ is connective, the isotropy separation sequence for $\Z/2$ gives an exact sequence
\[\xymatrix{\pi_1^{\Z/2}(N^{\mathbb{Z}/2}_e A) \ar[r]^-{\Phi^{\Z/2}} &  \pi_1(A) \ar[r] & (\pi_0(A) \otimes \pi_0(A))_{\Z/2} \ar[r]^-{t} & \pi_0^{\Z/2}(N^{\mathbb{Z}/2}_e A) \ar[r]^-{\Phi^{\Z/2}} & \pi_0(A) ,} \]
where the last map is surjective. The first map is surjective because $\Phi^{\Z/2} \circ N^{\mathbb{Z}/2}_e \cong 1$ and since geometric fixed points send Euler classes to $1$. Indeed, given a stable map $f \colon \mathbb{S}^1 \to A$, we can apply the derived norm \cite{HHR} functor $N^{\mathbb{Z}/2}_e$ to $f$ and get a map in the homotopy category of $\Z/2$-spectra
\[\xymatrix{S^{\rho} \simeq N^{\mathbb{Z}/2}_e(\mathbb{S}^1) \ar[r]^-{N^{\mathbb{Z}/2}_e(f)} & N^{\mathbb{Z}/2}_e A. }\]
Now there is a $\Z/2$-map $e \colon S^1 \to S^\rho$ which is the suspended Euler class of the sign representation. Then $\Phi^{\Z/2}(N^{\mathbb{Z}/2}_e(f) \circ e)=f$.

Part $ii)$ follows from the construction of the isotropy separation sequence and the double coset formula. The last formula in $iii)$ follows from the external version of \cite[Proposition 10.9 (vi)]{Schwede}. Note that since we only work externally, commutativity of $A$ is not needed.
\end{proof}

\begin{cor} \label{2 inverted} Let $A$ be a flat connective ring spectrum. Suppose that $\frac{1}{2} \in \pi_0(A)$. Then the ring maps $\Phi^{\Z/2}\colon \pi_0^{\Z/2}(N^{\mathbb{Z}/2}_e A) \to  \pi_0(A)$ and $\res \colon  \pi_0^{\Z/2}(N^{\mathbb{Z}/2}_e A) \to  (\pi_0(A) \otimes \pi_0(A))^{\Z/2}$ induce a ring isomorphism
\[\pi_0^{\Z/2}(N^{\mathbb{Z}/2}_e A) \cong  \pi_0(A) \times  (\pi_0(A) \otimes \pi_0(A))^{\Z/2}. \]
\end{cor}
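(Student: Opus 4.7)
The plan is to combine the two ring homomorphisms into a single map
\[
(\res,\Phi^{\Z/2})\colon \pi_0^{\Z/2}(N^{\mathbb{Z}/2}_e A)\longrightarrow (\pi_0 A\otimes \pi_0 A)^{\Z/2}\times \pi_0 A
\]
and show it is bijective; since both components are ring homomorphisms, so is the product. The whole proof will run off Lemma \ref{Witt vectors} and the single new ingredient that, with $\tfrac{1}{2}\in\pi_0 A$, the additive norm $N_\oplus\colon(\pi_0 A\otimes \pi_0 A)_{\Z/2}\to (\pi_0 A\otimes \pi_0 A)^{\Z/2}$ sending $a\otimes b$ to $a\otimes b+b\otimes a$ is a $\Z$-module isomorphism (this is the standard fact that the norm map on Tate cohomology of $\Z/2$ with coefficients in a module where $2$ acts invertibly is an isomorphism).

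For injectivity, suppose $(\res(x),\Phi^{\Z/2}(x))=(0,0)$. Part (i) of Lemma \ref{Witt vectors} gives $x=t(c)$ for some $c\in (\pi_0 A\otimes \pi_0 A)_{\Z/2}$, and part (ii) identifies $\res\circ t$ with $N_\oplus$. Hence $N_\oplus(c)=0$, so $c=0$ by the invertibility of $N_\oplus$, whence $x=0$. For surjectivity, fix $(y,a)\in (\pi_0 A\otimes \pi_0 A)^{\Z/2}\times \pi_0 A$. By part (iii) of Lemma \ref{Witt vectors} the multiplicative norm $N$ satisfies $\Phi^{\Z/2}\circ N=\id$, so $N(a)$ lifts $a$; the restriction $\res(N(a))$ is the element $a\otimes a\in (\pi_0 A\otimes\pi_0 A)^{\Z/2}$ (this identification is an external analogue of the norm-restriction relation for the Hill--Hopkins--Ravenel norm, and follows directly from the construction of $N$). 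Then $y-a\otimes a$ lies in $(\pi_0 A\otimes \pi_0 A)^{\Z/2}$, so by invertibility of $N_\oplus$ there is a unique $c\in (\pi_0 A\otimes \pi_0 A)_{\Z/2}$ with $N_\oplus(c)=y-a\otimes a$. Setting $x=N(a)+t(c)$, the short exact sequence gives $\Phi^{\Z/2}(x)=a$, and part (ii) gives $\res(x)=a\otimes a+N_\oplus(c)=y$.

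This establishes that $(\res,\Phi^{\Z/2})$ is a bijective ring homomorphism, hence a ring isomorphism. I do not expect any serious obstacle: the only subtle point is invoking at the right moments the splitting from multiplicative norm (for the $\pi_0 A$-factor) together with the inverse of $N_\oplus$ (for the $(\pi_0 A\otimes \pi_0 A)^{\Z/2}$-factor), and keeping track of the fact that the failure of $N$ to be additive, recorded by the last identity of Lemma \ref{Witt vectors}(iii), lives in the kernel of $\Phi^{\Z/2}$ and therefore does not affect the argument.
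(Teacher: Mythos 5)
Your proof is correct and follows exactly the same route as the paper's: both reduce to Lemma \ref{Witt vectors} and the observation that inverting $2$ makes the additive norm $N_\oplus$ an isomorphism. The paper's proof is a one-liner that leaves the diagram chase implicit, whereas you spell out the injectivity/surjectivity argument explicitly (including the needed identity $\res(N(a))=a\otimes a$, which appears in the proof of Proposition \ref{pi_0norm}); the content is the same.
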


\begin{proof} This is an immediate consequence of part ii) of Lemma \ref{Witt vectors}. We only need to observe that since $\frac{1}{2} \in \pi_0(A)$, the additive norm map $N_\oplus$ is an isomorphism.
 \end{proof}

If $2$ is not invertible in $\pi_0A$, then $\pi_0^{\Z/2}(N^{\mathbb{Z}/2}_e A)$ does not necessarily split. 

\begin{prop}\label{pi_0norm}  Let $A$ be a flat connective orthogonal ring spectrum. Then the Green functor ${\underline{\pi}}_0(N^{\mathbb{Z}/2}_e A)$ is naturally isomorphic to the Green functor $\mathbb{W}^{\otimes}_2(\pi_0A)$.
\end{prop}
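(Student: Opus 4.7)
The plan is to construct an explicit natural isomorphism of Mackey functors
\[
\phi \colon \mathbb{W}^{\otimes}_2(\pi_0 A) \stackrel{\cong}{\longrightarrow} {\underline{\pi}}_0(N^{\mathbb{Z}/2}_e A),
\]
then verify it is multiplicative. At the underlying level, both Mackey functors take value $\pi_0A \otimes \pi_0A$ (the Künneth isomorphism $\pi_0(A\wedge A)\cong \pi_0A\otimes \pi_0A$ holds because $A$ is flat and connective), and $\phi$ is defined to be this identification. Under this identification the flip involution $\tau$ on $\pi_0A \otimes \pi_0A$ corresponds to the action coming from the $\Z/2$-action on $N^{\mathbb{Z}/2}_eA=A\wedge A$. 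At the fixed-point level, define
\[
\phi(a,\bar c) := N(a) + t(c),
\]
where $N\colon \pi_0A \to \pi_0^{\Z/2}(N^{\mathbb{Z}/2}_eA)$ is the external multiplicative norm and $t$ is the injection from Lemma \ref{Witt vectors}. This is well-defined because $t$ factors through coinvariants.

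First I would verify that $\phi$ is a bijection at the fixed level using the short exact sequence of Lemma \ref{Witt vectors}(i). Given $x \in \pi_0^{\Z/2}(N^{\mathbb{Z}/2}_eA)$, set $a = \Phi^{\Z/2}(x)\in \pi_0 A$; since $\Phi^{\Z/2}\circ N = 1$, the difference $x - N(a)$ lies in the kernel of $\Phi^{\Z/2}$ and hence has a unique preimage $\bar c$ under $t$. Thus the pair $(a,\bar c)$ is the unique element mapping to $x$. Next I would check compatibility with restriction, transfer, and involution: Lemma \ref{Witt vectors}(ii) gives $\res(t(c)) = c + \tau(c)$ and the standard property of the norm yields $\res(N(a)) = a \otimes a$, so $\res(\phi(a,\bar c)) = a\otimes a + c + \tau(c) = w_1(a,\bar c)$. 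For the transfer, Lemma \ref{Witt vectors}(ii) says $\tran(c) = t(\bar c) = \phi(0,\bar c) = \phi(V(c))$. Compatibility with the involution on underlying is automatic.

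Next I would verify additivity at the fixed level. Using the formula $N(a+a') = N(a)+N(a')+t(a\otimes a')$ from Lemma \ref{Witt vectors}(iii),
\[
\phi((a,\bar c)+(a',\bar c')) = \phi(a+a', \bar c+\bar c'-\overline{a\otimes a'}) = N(a)+N(a')+t(c+c') = \phi(a,\bar c) + \phi(a',\bar c'),
\]
where the $t(a\otimes a')$ from $N(a+a')$ cancels the $-t(a\otimes a')$ coming from the $-a\otimes a'$ term in the Witt vector sum. For multiplicativity, the key input is Frobenius reciprocity in the Green functor ${\underline{\pi}}_0(N^{\mathbb{Z}/2}_eA)$, which gives $N(a)\cdot t(c') = t((a\otimes a)\cdot c')$, $t(c)\cdot N(a') = t(c\cdot (a'\otimes a'))$, and $t(c)\cdot t(c') = t(c\cdot(c'+\tau(c')))$. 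Combined with $N(a)N(a') = N(aa')$, expanding $(N(a)+t(\bar c))(N(a')+t(\bar c'))$ produces exactly the Witt vector product formula of Definition \ref{non-comm Witt}(ii).

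The main (though routine) obstacle is keeping careful track of the three separate identities provided by Frobenius reciprocity and of the sign conventions in the Witt sum; the rest of the argument is essentially forced by the short exact sequence of Lemma \ref{Witt vectors}. Naturality in $A$ is automatic from naturality of $N$, $t$, $\Phi^{\Z/2}$ and the Künneth isomorphism.
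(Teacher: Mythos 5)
Your proposal matches the paper's proof essentially verbatim: both identify the fixed level via $(a,\bar c)\mapsto N(a)+t(c)$ using the uniqueness of the decomposition from Lemma \ref{Witt vectors}(i), check additivity via $N(a+a')=N(a)+N(a')+t(a\otimes a')$, derive multiplicativity from Frobenius reciprocity (expanding $(N(a)+t(c))(N(a')+t(c'))$), and then verify compatibility with restriction via $\res(N(a)+t(c))=a\otimes a+c+\tau(c)$ and with transfer and involution directly. The approach and all key ingredients coincide with the paper's argument.
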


\begin{proof}
By Lemma \ref{Witt vectors} any element $x$ of $\pi_0^{\Z/2}(N^{\mathbb{Z}/2}_e A)$ can be uniquely written as
\[x=N(a)+t(c),\]
where $a=\Phi^{\Z/2}x \in \pi_0A$ and  $t(c)=x-N\Phi^{\mathbb{Z}/2}x$. This determines a bijection $\pi_0^{\Z/2}(N^{\mathbb{Z}/2}_e A)\cong W_2(\pi_0A)$ which is additive, since
\[N(a)+t(c)+N(a')+t(c')=N(a+a')+t(c+c'-a \otimes a').\]
To compare the multiplications we use Frobenius reciprocity, and see that
\begin{align*} (N(a)+t(c))(N(a')+t(c'))=N(aa')+t(c \res(\tran(c'))+c\res(N(a'))+\res(N(a))c') \\ =N(aa') + t(cc'+c\tau(c')+c(a'\otimes a') + (a \otimes a)c').\end{align*}
This shows that the bijection above is a ring isomorphism $\pi_0^{\Z/2}(N^{\mathbb{Z}/2}_e A)\cong W_2(\pi_0A)$.

Now we compare the Mackey structures. The underlying ring of the Green functor ${\underline{\pi}}_0(N^{\mathbb{Z}/2}_e A)$ is $\pi_0(A)\otimes \pi_0(A)$, with the involution $\tau$. The pair of ring isomorphisms
\[
W_2(\pi_0A)\stackrel{\cong}{\longrightarrow}\pi_0^{\Z/2}(N^{\mathbb{Z}/2}_e A)\ \ \ \ \ \ \ \ \ \ \ \ \pi_0(A)\otimes \pi_0(A)\stackrel{\id}{\longrightarrow}\pi_0(A)\otimes \pi_0(A)
\]
is clearly compatible with the involution and with the transfers. The compatibility with the restrictions follows from the formula
\[\res(N(a)+t(c))=a \otimes a+ c+\tau(c) \]
for $c\in (\pi_0(A)\otimes \pi_0(A))_{\mathbb{Z}/2}$ and $a\in\pi_0A$.
\end{proof}

Now let $(A,w)$ be a ring spectrum with an anti-involution such that $A$ is flat and connective as a $\Z/2$-spectrum. It remains to show that the $\mathbb{W}^{\otimes}_2(\pi_0 A)$-module structures on $\underline{\pi}_0A$ are well-defined, and that they agree with the ${\underline{\pi}}_0(N^{\mathbb{Z}/2}_e A)$-module structures under the isomorphism of Proposition \ref{pi_0norm}. It is convenient to isolate the structure on $\underline{\pi}_0A$ that is used to define the actions of $\mathbb{W}^{\otimes}_2(\pi_0 A)$.

\begin{defn}[\cite{DO}]\label{defHermMackey} A \emph{
Hermitian Mackey} functor is a $\Z/2$-Mackey functor $M$ with a multiplicative monoid structure on $M(\Z/2)$ which makes it into a ring, and a left action of this monoid on the abelian group $M(\ast)$, denoted by $a \cdot x$, satisfying the following properties:
\begin{itemize}
\item[i)] $w(ab)=w(b)w(a)$ for $a, b \in M(\Z/2)$ and $w(1)$=1, where $w$ is the involution on $M(\Z/2)$. In other words, $M(\Z/2)$ is a ring with the anti-involution $w$;
\item[ii)] $\res(a \cdot x)=a\res(x)w(a)$ for $a \in M(\Z/2)$ and $x \in M(\ast)$;
\item[iii)] $\tran(abw(a))=a \cdot \tran(b)$ for $a, b \in M(\Z/2)$;
\item[iv)] $(a+b) \cdot x= a \cdot x + b \cdot x+ \tran(a \res(x) w(b))$ for $a, b \in M(\Z/2)$ and $x \in M(\ast)$.
 \end{itemize}
\end{defn}

\begin{prop}\label{pi0Herm} Let $(A,w)$ be a flat ring spectrum with anti-involution whose underlying orthogonal $\Z/2$-spectrum is  connective. Then $\underline{\pi}_0(A)$ has a natural Hermitian Mackey functor structure.
\end{prop}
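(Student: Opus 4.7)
The plan is to derive the Hermitian Mackey functor structure from the $N^{\Z/2}_e A$-module structure on $A$ defined in \S\ref{secdihedral}. Applying the lax symmetric monoidal functor $\underline{\pi}_0 \colon \Ho(\Sp^{\Z/2}) \to \mathrm{Mackey}_{\Z/2}$ to that module structure yields an action of the Green functor $\underline{\pi}_0(N^{\Z/2}_e A)$ on $\underline{\pi}_0 A$. The $\pi_0 A$-action on $\pi_0^{\Z/2} A$ defined in the paragraph preceding Theorem \ref{pi0relations} is then obtained by precomposing with the multiplicative external norm $N\colon \pi_0 A \to \pi_0^{\Z/2}(N^{\Z/2}_e A)$. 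So structurally everything is already in place; what remains is to check the four axioms of Definition \ref{defHermMackey}.

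First I would verify the underlying ring and anti-involution data. Axiom (i) is just the statement that $(\pi_0 A, w)$ is a ring with anti-involution, which is immediate from the definition of a ring spectrum with anti-involution. The key preliminary computation is to identify the restriction of the module structure: under $\res \colon \pi_0^{\Z/2}(N^{\Z/2}_e A) \to \pi_0(N^{\Z/2}_e A) = \pi_0 A \otimes \pi_0 A$, the element $N(a)$ maps to $a \otimes a$, and unpacking the composite
\[
A \wedge A \wedge A \xrightarrow{1 \wedge w \wedge 1} A \wedge A^{op} \wedge A \xrightarrow{1 \wedge \tau} A \wedge A \wedge A^{op} \xrightarrow{\mu} A
\]
on $\pi_0$ shows that the underlying $\pi_0 A \otimes \pi_0 A$-action on $\pi_0 A$ is the sandwich $(a \otimes a') \cdot x = a x w(a')$. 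Substituting $N(a) \cdot x$ and restricting then gives $\res(a \cdot x) = a \res(x) w(a)$, which is axiom (ii).

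Next I would verify (iii) and (iv) via Frobenius reciprocity and the additivity of the external norm. Axiom (iv) follows from Lemma \ref{Witt vectors}(iii), which gives $N(a+b) = N(a) + N(b) + t(a \otimes b)$. Applying this to $x$ and using that $t(a \otimes b) = \tran(a \otimes b)$ as Mackey transfers yields
\[
(a+b)\cdot x = a \cdot x + b \cdot x + \tran(a \otimes b) \cdot x.
\]
Frobenius reciprocity for the Green functor action then gives $\tran(a \otimes b) \cdot x = \tran((a \otimes b) \cdot \res(x)) = \tran(a \res(x) w(b))$, by the sandwich formula established above. For axiom (iii), apply Frobenius reciprocity in the other direction: $N(a) \cdot \tran(b) = \tran(\res(N(a)) \cdot b) = \tran((a \otimes a) \cdot b) = \tran(a b w(a))$.

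The naturality in $(A,w)$ is clear since every piece of structure is functorial. The main obstacle I anticipate is purely bookkeeping: namely, carefully identifying the underlying $\pi_0(N^{\Z/2}_e A)$-action on $\pi_0 A$ as the sandwich product $a x w(a')$, since this requires tracking the twist $1 \wedge w \wedge 1$ together with the flip $\tau$ on homotopy groups. Once this formula is in hand, axioms (ii), (iii), (iv) reduce to standard manipulations with the transfer, restriction, and Mackey-module Frobenius reciprocity, together with the norm additivity formula already established in Lemma \ref{Witt vectors}.
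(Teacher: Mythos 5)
Your proposal follows the same route as the paper: derive the Hermitian structure from the $\underline{\pi}_0(N^{\Z/2}_eA)$-module structure on $\underline{\pi}_0(A)$, verify axiom (i) from the anti-involution, deduce (ii) and (iii) from the Mackey-module structure and Frobenius reciprocity, and deduce (iv) from the norm additivity formula of Lemma \ref{Witt vectors}(iii). The paper's proof is terser, but your elaboration (the sandwich formula $(a\otimes a')\cdot x = axw(a')$ and the explicit Frobenius reciprocity computations) is exactly what the paper leaves implicit, and it is correct.
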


\begin{proof} As we saw above, the $\Z/2$-equivariant left module structure $(N^{\mathbb{Z}/2}_e A) \wedge A \to A$ gives the multiplicative action
\[\pi_0(A) \otimes \pi_0^{\Z/2}(A)\longrightarrow  \pi_0^{\Z/2}(A).\]
defined by $a \cdot x:=N(a)x$, where $a \in \pi_0(A)$ and $x \in \pi_0^{\Z/2}(A)$.
The first axiom is clear since $w$ is an anti-involution on $A$. The second and third axioms
\[\res(N(a) x)=a\res(x)w(a)\ \ \ \ \mbox{and}\ \ \ \ N(a) \tran(b)=\tran(abw(a)) \]
hold because $\underline{\pi}_0(A)$ is a $\underline{\pi_0}(N^{\mathbb{Z}/2}_e A)$-module. Finally, the fourth axiom follows from Lemma \ref{Witt vectors} $iii)$.
\end{proof}

A Hermitian Mackey functor $M$ is a left module over the Green functor $\mathbb{W}^{\otimes}_2(M(\Z/2))$, where $M(\Z/2)$ is just considered as a ring (without an anti-involution). The action maps are
\[\xymatrix@C=14pt@R=7pt{M(\Z/2) \otimes M(\Z/2) \otimes M(\Z/2)\ar[r]^-{\mu^{l}_1}& M(\Z/2)\\
a \otimes a' \otimes b \ar@{|->}[r] & abw(a')
}
\ \ \ \ \
\xymatrix@C=0pt@R=7pt{W^{\otimes}_2(M(\Z/2)) \otimes M(\ast)\ar[r]^-{\mu^{l}_2}&M(\ast)\\
\hspace{-1cm}(a,c) \otimes x\ar@{|->}[r(.25)]& \hspace{-1cm} a \cdot x+ \tran(\mu^{l}_1(c\otimes \res(x))).}
\]

\noindent Axioms $ii)$ and $iii)$ ensure that the action is compatible with the transfers and the restrictions of the Mackey structures, and axiom $iv)$ implies that the action is compatible with the addition in $\mathbb{W}^{\otimes}_2(M(\Z/2))$. We also see that when $M=\underline{\pi}_0A$ this is precisely the $\mathbb{W}^{\otimes}_2(\underline{\pi}_0A)$-module structure defined at the beginning of the section. Using the anti-involution $w$ one can also define a right $\mathbb{W}^{\otimes}_2(M(\Z/2))$-module structure on $M$. This generalizes the right module structure described above for $M=\underline{\pi}_0A$. Hence for any Hermitian Mackey functor $M$ we can define the canonical box product $M \Box_{\mathbb{W}^{\otimes}_2(M(\Z/2))} M$.

\begin{proof}[Proof of Proposition \ref{pi_0THR}]

Frobenius reciprocity implies that
\[(N(a)+t(c))x= N(a)x+\tran(c\res(x)),\]
which is equivalent to the statement that the left $\mathbb{W}^{\otimes}_2(\underline{\pi}_0A)$-module and $\underline{\pi_0}(N^{\mathbb{Z}/2}_e A)$-module  structures on $\underline{\pi_0}A$ are compatible under the isomorphism $\mathbb{W}^{\otimes}_2(\underline{\pi}_0A)\stackrel{\cong}{\to}\underline{\pi_0}(N^{\mathbb{Z}/2}_e A)$ of Proposition \ref{pi_0norm}.

An analogous argument holds for the right actions as well. This can be deduced from the relation $N(a)x=xN(w(a))$, which follows from the following observation: There is a $\Z/2$-equivariant map (in fact an anti-homomorphism) $\Omega \colon  N^{\mathbb{Z}/2}_e R \to N^{\mathbb{Z}/2}_e R$ sending $a\wedge b$ to $\omega(b) \wedge \omega(a)$ such that the diagram
\[\xymatrix{ N^{\mathbb{Z}/2}_e R \wedge R \ar[r] \ar[d]^{\Omega \wedge 1} & R \\ N^{\mathbb{Z}/2}_e R \wedge R \ar[r]^{twist} & R \wedge N^{\mathbb{Z}/2}_e R \ar[u] } \]
commutes.
\end{proof}
We are now ready to complete the proof of the main result of this subsection.
\begin{proof}[Proof of Theorem \ref{pi0relations}] The essential extra structure which allows us to further simplify the box product
\[\underline{\pi}_0(A) \Box_{\mathbb{W}^{\otimes}_2(\pi_0 A)} \underline{\pi}_0(A)\]
is the unit $1 \in \pi_0^{\Z/2}A$, with the property that $\res(1)=1$ in $\pi_0A$. The value of the Box product at the $\Z/2$-set $\Z/2/\Z/2=\ast$ (see e.g. \cite[Proposition 1.5.1]{Bouc}) is
\[ [(\pi_0 A \otimes \pi_0 A) \oplus  (\pi^{\mathbb{Z}/2}_0 A \otimes \pi^{\mathbb{Z}/2}_0 A)]/\mathcal{R},\]
where the subgroup $\mathcal{R}$ is generated by the relations:
\item[i)] $\omega(a) \otimes \omega(b) -a \otimes b$, for $a, b \in \pi_0A$;
\item[ii)] $x \otimes \tran(a)-\res(x) \otimes a$ and $\tran(a) \otimes x-a \otimes \res(x)$, for $a \in \pi_0A$ and $x \in \pi_0^{\Z/2}A$;
\item[iii)] $x \otimes a \cdot y-\omega (a) \cdot x \otimes y$, for $x,y \in \pi^{\mathbb{Z}/2}_0 A$ and $a \in \pi_0A$;
\item[iv)] $x \otimes \tran(a \res(y) w(b))-\tran(w(b)\res(x)a) \otimes y$, for $x,y \in \pi^{\mathbb{Z}/2}_0 A$ and $a, b \in \pi_0A$.
\item[v)] $a \otimes b a'b'-b'ab \otimes a'$, for $a,a',b, b' \in \pi_0A$.

These relations can be substantially simplified using the unit $1 \in \pi^{\mathbb{Z}/2}_0 A$. Indeed, we see that any element $a \otimes b \in \pi_0 A \otimes \pi_0 A$ is getting identified to some element in $\pi^{\mathbb{Z}/2}_0 A \otimes \pi^{\mathbb{Z}/2}_0 A$ via the chain of equivalences
\[a \otimes b \sim a b \otimes 1 = a b \otimes \res(1) \sim \tran(a b) \otimes 1. \]
Hence
$(\underline{\pi}_0(A) \Box_{\mathbb{W}^{\otimes}_2(\pi_0 A)} \underline{\pi}_0(A)) (\ast)$
is in fact a quotient of $\pi^{\mathbb{Z}/2}_0 A \otimes \pi^{\mathbb{Z}/2}_0 A$. We claim that after this identification all the relations follow from iii) and iv). This follows from the equivalences
\[\res(x) \otimes a \sim \tr(\res(x)a) \otimes 1 =\tran(w(1)\res(x)a) \otimes 1 \sim x \otimes \tran(a\res(1)w(1))=x \otimes \tran(a), \]
\[ \tran(a b) \otimes 1 = \tran(w(w(a)) \res(1) b )  \otimes 1 \sim 1 \otimes \tran(b \res(1) w(w(a)))= 1 \otimes \tran(b a).\]
This completes the additive identification of the Box product at $\Z/2/\Z/2=\ast$. The value at $\Z/2$ is simply the quotient of $\pi_0 A \otimes \pi_0 A$ by the relation v). This gives the usual formula for the zero-th Hochschild homology group
\[ \pi_0A/[\pi_0A, \pi_0A]. \]
Finally the structure maps are readily identified using the formulas in \cite[Proposition 1.5.1]{Bouc}.
\end{proof}

We note that the previous argument applies to any Hermitian Mackey functor with a unit, i.e. with an element $1 \in M(\ast)$ such that $\res(1)=1$ in $M(\Z/2)$. That is, the relations in the canonical box product $M \Box_{\mathbb{W}^{\otimes}_2(M(\Z/2))} M$ can be simplified as in the proof of Theorem \ref{pi0relations} in the presence of this extra unit.

\begin{proof}[Proof of Corollary \ref{compi0}] The simplification of the relations follows from the definition of the norm and from Frobenius reciprocity. The claim about the ring structure of $(\pi^{\mathbb{Z}/2}_0 A \otimes \pi^{\mathbb{Z}/2}_0 A)/T$ is clear. The formula for the norm follows from \cite[Proposition 9.1]{Strickland}. \end{proof}

\subsection{Group-rings}\label{grouprings}

Let $M$ be a topological monoid with an anti-involution $\iota\colon M^{op}\to M$. The case we will be most interested in is when $M$ is a topological group and $\iota$ is the inversion map. Given a ring spectrum with anti-involution $(A,w)$ we can form the monoid-ring $A[M]:=A\wedge M_+$. This is a ring spectrum with the multiplication
\[
A[M]\wedge A[M]\cong A\wedge A\wedge (M\times M)_+\longrightarrow A\wedge M_+
\]
where the last map is the smash product of the multiplications of $A$ and $M$. The ring spectrum $A[M]$ acquires an anti-involution
\[
A[M]^{op}\cong A^{op}\wedge M^{op}_+\xrightarrow{w\wedge \iota} A[M].
\]
\begin{example}
Let $(R,w)$ be a discrete ring with anti-involution, and $(M,\iota)$ a discrete monoid with anti-involution. The Eilenberg-MacLane functor $H$ from ${\Z/2}$-abelian groups to orthogonal $\Z/2$-spectra commutes up to homotopy with indexed coproducts, so we get a stable equivalence of ring spectra with anti-involution 
\[
(HR)[M]=\bigvee_{M}HR\stackrel{\simeq}{\longrightarrow}H (\bigoplus_M R ) = H(R[M])
\]
where $R[M]=\oplus_MR$ is the monoid ring, with the anti-involution induced by sending $r\cdot m$ to $w(r) \cdot\iota (m)$.
\end{example}
The following result was originally proved in \cite{Amalie} when $A=\mathbb{S}$, by working directly on the B\"{o}kstedt model.

\begin{prop}\label{assembly}
Let $(A,w)$ be a flat ring spectrum with anti-involution and $(M,\iota)$ a topological monoid with anti-involution which is well-pointed at the identity as a $\Z/2$-space. Then the assembly map
\[\THR(A)\wedge (B_{\times}^{di}M)_+\stackrel{\simeq}{\longrightarrow}\THR(A[M])\]
is a stable equivalence. In particular we recover the equivalence $\THR(\mathbb{S}[M])\simeq \Sigma^\infty(B_{\times}^{di}M)_+$
from \cite{Amalie}.
\end{prop}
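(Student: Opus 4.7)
The plan is to model both sides of the assembly map by the dihedral Bar construction and then exhibit the assembly as the geometric realization of a levelwise isomorphism of real simplicial $\Z/2$-spectra.

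First, use Theorem \ref{comparison} to replace $\THR(A)$ by $B^{di}_\wedge A$ and $\THR(A[M])$ by $B^{di}_\wedge(A[M])$. For the latter to apply, one needs $A[M]=A\wedge M_+$ to be a flat ring spectrum with anti-involution. The well-pointedness of $M$ at its unit (as a $\Z/2$-space) implies that the unit map $\mathbb{S}\to \mathbb{S}\wedge M_+$ is a flat equivariant cofibration, so smashing with the flat $(A,w)$ yields a flat ring spectrum with anti-involution $A[M]$, with $w\wedge\iota$ providing the anti-involution.

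The key input is the (strong) symmetric monoidality of the indexed smash product: for any orthogonal $\Z/2$-spectrum $X$ and any pointed $\Z/2$-space $Y$ there is a natural shuffling isomorphism of $\Z/2$-spectra
\[
(X\wedge Y)^{\wedge \mathbf{k+1}}\;\xrightarrow{\ \cong\ }\;X^{\wedge \mathbf{k+1}}\wedge Y^{\wedge \mathbf{k+1}}.
\]
The main technical point is that this isomorphism is strictly $\Z/2$-equivariant, which follows from the fact that the order-reversing involution on $\mathbf{k+1}$ permutes the factors on the two sides by the same permutation. Applying this with $X=A$ and $Y=M_+$ yields an isomorphism
\[
N^{di}_\wedge(A[M])_k=(A\wedge M_+)^{\wedge\mathbf{k+1}}\cong A^{\wedge\mathbf{k+1}}\wedge (M^{\times k+1})_+=N^{di}_\wedge(A)_k\wedge (N^{di}_\times M)_{k+}
\]
of $\Z/2$-spectra. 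One then checks that these isomorphisms assemble into an isomorphism of real simplicial $\Z/2$-spectra: the face maps factor through the componentwise multiplication on $A\wedge M_+$ as the smash of $\mu_A$ and $\mu_M$; the degeneracies insert the unit $1_A\wedge 1_M$; and the dihedral involution $w\wedge \iota$ applied componentwise and reversed in order separates into the dihedral involutions of $N^{di}_\wedge A$ and $N^{di}_\times M$.

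Finally, by Lemma \ref{properness} the real simplicial spectra $N^{di}_\wedge A$ and $N^{di}_\wedge(A[M])$ are good, and the well-pointedness of $M$ together with Remark \ref{ptedvsunptedhocolim} guarantees that $N^{di}_\times M$ is a good real simplicial space. Since geometric realization commutes with the smash product and with adjoining a disjoint equivariant basepoint, we obtain an isomorphism of $\Z/2$-spectra
\[
B^{di}_\wedge A\wedge (B^{di}_\times M)_+\;\xrightarrow{\ \cong\ }\; B^{di}_\wedge(A[M]).
\]
A direct inspection identifies this isomorphism, under the equivalences of Theorem \ref{comparison}, with the assembly map in the statement. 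The hard part is the $\Z/2$-equivariance of the shuffling; once that is in place the rest is a compatibility check and an application of the comparison theorem. The equivalence in the special case $A=\mathbb{S}$ then recovers the equivalence $\THR(\mathbb{S}[M])\simeq \Sigma^\infty(B^{di}_\times M)_+$ of \cite{Amalie}.
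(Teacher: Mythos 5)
Your proof takes essentially the same approach as the paper: realize the assembly on the dihedral nerve as the levelwise shuffle isomorphism $(N^{di}_\wedge A)\wedge(N^{di}_\times M)_+\cong N^{di}_\wedge A[M]$, and transport this across the comparison of Theorem \ref{comparison}. The paper is a bit more explicit than your closing ``direct inspection'': it writes out compatible assembly maps on all three terms of the comparison zig-zag and observes that the dihedral-nerve assembly is an isomorphism, which is exactly the compatibility you are gesturing at.

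One small caveat: you assert that well-pointedness of $M$ at the identity forces $\mathbb{S}\to\mathbb{S}\wedge M_+$ to be a \emph{flat} cofibration, and hence $A[M]$ to be flat. Well-pointedness only gives an $h$-cofibration, which is strictly weaker than flatness in the sense of Definition \ref{defflat}; for $A\wedge M_+$ to be flat one wants $M$ to be cofibrant as a $\Z/2$-space. This is needed for Theorem \ref{comparison} to apply literally to $A[M]$. The paper's own proof is terse on this point too, so your gap mirrors an implicit assumption in the source rather than a new error, but it is worth being aware that the stated hypothesis ``well-pointed'' alone does not justify the flatness claim as you have phrased it.
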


\begin{rem}\label{remark:freeloops}
When the monoid $M$ is group-like, the dihedral Bar construction $B_{\times}^{di}M$ is a model for the free loop space $\map(S^{\sigma},B^{\sigma}M)$, where $S^{\sigma}$ is the sign-representation sphere and $B^{\sigma}M$ is the realization of the real simplicial space $NM$ with levelwise involution
\[
M^{\times k}\stackrel{\iota^{\times k}}{\longrightarrow} M^{\times k}\stackrel{\tau_k}{\longrightarrow}M^{\times k}.
\]
Here $\tau_k$ is the permutation of ${\bf k}=\{1,\dots,k\}$ that reverses the order. This is proved as in \cite{Goodwilliecyclic} by comparing the sequences
\[
\xymatrix{
M\ar[d]\ar[r]&B_{\times}^{di}M\ar[d]\ar[r]&B^{\sigma}M\ar@{=}[d]\\
\Omega^\sigma B^{\sigma}M\ar[r]&\map(S^{\sigma},B^{\sigma}M)\ar[r]_-{\ev}&B^{\sigma}M
}
\]
where the middle vertical map is induced by the $S^{1}$-action on $B_{\times}^{di}M$. The lower sequence is a fiber sequence. When $M$ is group-like the upper one is also a fiber sequence and the map on fibers is a $\Z/2$-equivalence by \cite[\S 6.2]{thesis} and \cite{Stiennon}.
\end{rem}

\begin{proof}[Proof of \ref{assembly}]
The assembly map of the statement is defined as the geometric realization of the map of real simplicial spectra defined in simplicial degree $k$ by the assembly map of the loop space and the shuffle of the smash summands
\[
\xymatrix{
\displaystyle(\hocolim_{I^{\times 1+{\bf k}}}\Omega^{\underline{i}}(\Sigma^{\infty}A_{i_0}\wedge\dots\wedge A_{i_k}))\wedge (M_+)^{\wedge 1+{\bf k}}\ar[r]\ar@{-->}[dr]
&
\displaystyle\hocolim_{I^{\times 1+{\bf k}}}\Omega^{\underline{i}}(\Sigma^{\infty}A_{i_0}\wedge\dots\wedge A_{i_k}\wedge (M_+)^{\wedge 1+{\bf k}})\ar[d]
\\
&\displaystyle\hocolim_{I^{\times 1+{\bf k}}}\Omega^{\underline{i}}(\Sigma^{\infty}A_{i_0}\wedge M_+\wedge\dots\wedge A_{i_k}\wedge M_+)\rlap{\ .}
}
\]
A similar assembly map can be constructed for the middle spectrum in the zig-zag of the comparison Theorem \ref{comparison}, and is easy to verify that these maps are compatible with the assembly of the cyclic nerve, defined by the shuffle isomorphism
\[
(N^{di}_\wedge A)\wedge (N^{di}_\times M)_+\cong (N^{di}_\wedge A)\wedge N^{di}_\wedge(M_+)\stackrel{\cong}{\longrightarrow} N^{di}_\wedge (A\wedge M_+)=N^{di}_\wedge A[M].\]
Since the assembly for the dihedral nerve is an isomorphism, the claim follows.
When $A=\mathbb{S}$ is the sphere spectrum this gives an equivalence of real simplicial spectra
\[
\Sigma^\infty  (N^{di}_\times M)_+\cong  (N^{di}_\wedge \mathbb{S})\wedge (N^{di}_\times M)_+\cong N^{di}_\wedge \mathbb{S}[M]\simeq \THR(\mathbb{S}[M]).
\]
\end{proof}

From now on we will denote $B_{\times}^{di}M$ by $B^{di}M$ keeping in mind that the dihedral bar construction is taken with respect to the cartesian product.

\;

\;

We will now analyze the Mackey functor $\underline{\pi}_0 \THR(A[M])$, the fundamental case is when $A = \mathbb{S}$. Assume from now on that $M$ is cofibrant as a $\Z/2$-space. The space of fixed points of $M$ under the $\mathbb{Z}/2$-action $m \mapsto \iota (m) = \bar{m}$ has a left $M$-action given by $m \cdot n = mn\bar{m}$ and a right action given by $n \cdot m =  \bar{m}nm$.
There is a natural homeomorphism $(B^{di}M)^{\mathbb{Z}/2} \cong B(M^{\Z/2},M,M^{\Z/2})$, obtained by subdividing the dihedral bar construction on $M$ and taking $\mathbb{Z}/2$-fixed points levelwise before geometric realization. We write $(\pi_0 M)_{conj}$ for $\pi_0 B^{di}M$. When $\pi_0 M$ is a group, this is just the set of conjugacy classes of elements of $\pi_0 M$. Proposition \ref{assembly} and the tom-Dieck splitting give a sequence of stable equivalences
\[ \THR(\mathbb{S}[M])^{\mathbb{Z}/2} \simeq (\Sigma^\infty B^{di}M_+)^{\mathbb{Z}/2} \simeq \Sigma^\infty ((B^{di}M)_{h\mathbb{Z}/2})_+ \vee \Sigma^\infty B(M^{\Z/2},M,M^{\Z/2})_+.\]
It follows that the Mackey functor $\underline{\pi}_0\THR(\mathbb{S}[M])$ can be presented as
\[\underline{\pi}_0(\Sigma^\infty B^{di}M_+)\cong \big(\!\!\xymatrix@C=40pt{
\mathbb{Z}[(\pi_0M)_{conj}]\ar@(dl,dr)_-{\pi_0(\iota)}\ar@<.5ex>[r]^-{\tran}&\Z[(\pi_0M)_{conj}]_{\mathbb{Z}/2} \oplus\Z [\pi_0 (M^{\Z/2}) \times_{\pi_0 M} \pi_0{(M^{\Z/2})}] \ar@<.5ex>[l]^-{\res}
}\!\!\big),\]
where $\Z[-]$ denotes the free abelian group functor. The involution is well defined on conjugacy classes and gives the action $\pi_0(\iota)$ of $\Z/2$ on $\Z[(\pi_0 M)_{conj}]$. The transfer is just the projection map to the coinvariants, and the restriction map is given by the additive norm on $\Z[(\pi_0M)_{conj}]_{\Z/2}$ and by $[m,m'] \mapsto [m m']$ on the basis elements $[m,m']$ of the free abelian group $\Z [\pi_0 M^{\Z/2} \times_{\pi_0 M} \pi_0 M^{\Z/2}]$.

\begin{cor}\label{cor:monring}
Let $(A,w)$ be a flat ring spectrum with anti-involution whose underlying orthogonal $\Z/2$-spectrum is connective and let $(M,\iota)$ be a topological monoid with anti-involution which is cofibrant as a $\Z/2$-space. Then there is an isomorphism of Mackey functors
\[
\underline{\pi}_0\THR(A[M])\cong \underline{\pi}_0\THR(A)\Box \underline{\pi}_0(\Sigma^\infty B^{di}M_+),
\]
where $\underline{\pi}_0\THR(A)$ is calculated in Theorem \ref{pi0relations} and $\underline{\pi}_0(\Sigma^\infty B^{di}M_+)$ is calculated above.
\end{cor}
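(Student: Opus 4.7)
The plan is to reduce everything to the assembly equivalence of Proposition \ref{assembly} and then to a degeneration of the Künneth spectral sequence for Mackey functors. First, I would apply Proposition \ref{assembly} to obtain a stable equivalence of genuine $\Z/2$-spectra
\[
\THR(A[M]) \;\simeq\; \THR(A) \wedge \Sigma^\infty B^{di}M_+.
\]
Applying the Mackey functor $\underline{\pi}_0$ then reduces the claim to the identification
\[
\underline{\pi}_0\bigl(\THR(A) \wedge \Sigma^\infty B^{di}M_+\bigr) \;\cong\; \underline{\pi}_0\THR(A) \,\Box\, \underline{\pi}_0 \Sigma^\infty B^{di}M_+,
\]
with the box product taken over the Burnside Mackey functor $\underline{\pi}_0\mathbb{S}$, which is the monoidal unit for $\Box$.

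Second, to verify this identification I would invoke the Künneth spectral sequence of \cite{LM06} (already used in the proof of Proposition \ref{pi_0THR}), which has the form
\[
E^2_{p,q} \;=\; \Tor^{\underline{\pi}_*\mathbb{S}}_{p,q}\!\bigl(\underline{\pi}_*\THR(A),\, \underline{\pi}_*\Sigma^\infty B^{di}M_+\bigr) \;\Longrightarrow\; \underline{\pi}_{p+q}\!\bigl(\THR(A) \wedge \Sigma^\infty B^{di}M_+\bigr).
\]
Under our hypotheses, both $\THR(A)$ (since $A$ is connective) and $\Sigma^\infty B^{di}M_+$ are connective $\Z/2$-spectra, so their graded homotopy Mackey functors are concentrated in non-negative degrees. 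For total degree $p+q=0$, the only possibility with $p,q \ge 0$ is $p=q=0$, giving
\[
E^2_{0,0} \;=\; \underline{\pi}_0\THR(A) \,\Box\, \underline{\pi}_0 \Sigma^\infty B^{di}M_+.
\]
The spectral sequence therefore collapses in this total degree, and the edge map yields the desired isomorphism. Combining with the previous step produces the stated isomorphism of Mackey functors, with $\underline{\pi}_0\THR(A)$ given by Theorem \ref{pi0relations} and $\underline{\pi}_0\Sigma^\infty B^{di}M_+$ computed by the tom Dieck splitting as recalled just before the corollary.

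The only really delicate point is verifying that the edge map of the Künneth spectral sequence in total degree zero agrees, under our identification of $E^2_{0,0}$ with the box product, with the natural assembly map induced by the lax symmetric monoidality of $\underline{\pi}_0$; this is standard in the construction of \cite{LM06} and essentially formal. The rest is bookkeeping: one checks that the isomorphism is natural in $(A,w)$ and $(M,\iota)$, so that the transfer, restriction, and involution on each side match under the canonical structure maps coming from the smash product decomposition.
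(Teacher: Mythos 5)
Your proposal is correct and follows the same route as the paper: apply Proposition \ref{assembly} to rewrite $\THR(A[M])$ as $\THR(A)\wedge\Sigma^\infty B^{di}M_+$, then observe that the Künneth (box-product) spectral sequence of Lewis--Mandell degenerates in total degree zero for connectivity reasons. The paper's own proof is just a one-line version of exactly this argument, so your write-up simply supplies the details the authors left implicit.
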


\begin{proof} This follows immediately from Proposition \ref{assembly} and the box-product spectral sequence for a smash product of $\Z/2$-spectra.
\end{proof}

\begin{cor}\label{cor:grpring}
Let $G$ be a discrete group with the inversion involution. Then there is a natural isomorphism
\[
\underline{\pi}_0\THR(\Z[G])\cong \big(\xymatrix@C=40pt{
\mathbb{Z}[G_{conj}]\ar@(dl,dr)_-{(-)^{-1}}\ar@<.5ex>[r]^-{\tran}& (\Z[G_{conj}]_{\Z/2} \oplus \Z[G^{\Z/2} \times_G G^{\Z/2}])/D \ar@<.5ex>[l]^-{\res}
}\big),\]
where $D$ is the subgroup generated by the elements $2[g,g'] - [gg']$, for all $[g,g'] \in G^{\Z/2} \times_G G^{\Z/2}$. The Mackey structure maps are the same as in Corollary \ref{cor:monring}.
\end{cor}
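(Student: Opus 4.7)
The plan is to derive the statement from Corollary \ref{cor:monring} applied to $A = H\Z$ (with trivial involution) and $M = G$ (with inversion). Both hypotheses are satisfied since $H\Z$ is flat and connective, and a discrete group is automatically cofibrant as a $\Z/2$-space. The corollary then gives a natural isomorphism of Mackey functors
\[
\underline{\pi}_0 \THR(\Z[G]) \;\cong\; \underline{\pi}_0 \THR(H\Z) \;\Box\; \underline{\pi}_0(\Sigma^\infty B^{di}G_+).
\]

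First I would compute $\underline{\pi}_0\THR(H\Z)$ using Corollary \ref{compi0}. Here $\pi_0 H\Z = \pi_0^{\Z/2} H\Z = \Z$, with restriction the identity and transfer multiplication by $2$. The identity $N(a+b) = N(a)+N(b)+\tran(ab)=N(a)+N(b)+2ab$ together with $N(1)=1$ forces $N(a)=a^2$ by induction. Since $\Z$ is commutative, both families of relations in Corollary \ref{compi0} vanish in $\Z\otimes\Z \cong \Z$, so $\underline{\pi}_0\THR(H\Z)$ is the constant Mackey functor $\underline{\Z}$ with restriction the identity and transfer equal to multiplication by $2$. Note this is \emph{not} the Burnside Mackey functor and hence the box product below is nontrivial.

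The computation of $\underline{\pi}_0(\Sigma^\infty B^{di}G_+)$ via the tom Dieck splitting is spelled out in the paragraph preceding the statement. It remains to evaluate the box product $\underline{\Z}\,\Box\,\underline{\pi}_0(\Sigma^\infty B^{di}G_+)$. At the orbit $\Z/2$ this is $\Z\otimes\Z[G_{conj}]=\Z[G_{conj}]$ with the inversion involution. At the orbit $\ast$, the standard presentation of the box product gives a quotient of
\[
\Z \otimes (\Z[G_{conj}]_{\Z/2} \oplus \Z[G^{\Z/2}\times_G G^{\Z/2}]) \;\oplus\; (\Z \otimes \Z[G_{conj}])_{\Z/2}
\]
by the Frobenius reciprocity relations $\tran_1(a)\otimes y = a \otimes \res_2(y)$ and $x\otimes\tran_2(b)=\res_1(x)\otimes b$. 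The second relation identifies the summand $\Z\otimes\Z[G_{conj}]_{\Z/2}$ tautologically with the third summand $(\Z\otimes\Z[G_{conj}])_{\Z/2}$, reducing everything to $\Z[G_{conj}]_{\Z/2}\oplus\Z[G^{\Z/2}\times_G G^{\Z/2}]$. Applied with $a=1$ and $y=(0,[g,g'])$, the first relation becomes $2[g,g']=[gg']_{\Z/2}$ (using $\res_2([g,g'])=[gg']$ and $\tran_1=2$), which is exactly the subgroup $D$. On the summand $\Z[G_{conj}]_{\Z/2}$ the first relation is automatic since the additive norm sends $[h]$ to $h+h^{-1}$, whose class in coinvariants is $2[h]$.

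Finally, the structure maps of the resulting Mackey functor are read off from the definition of the box product: the transfer $g\mapsto ([g]_{\Z/2},0)$ is visibly well-defined, and the restriction $(u,v)\mapsto N_\oplus(u)+\mu(v)$ descends to the quotient by $D$ because $\mu(2[g,g']) - N_\oplus([gg']) = 2gg'-(gg'+(gg')^{-1})=0$ in $\Z[G_{conj}]$, using that $(gg')^{-1}=g'g$ is conjugate to $gg'$. The main bookkeeping obstacle is carefully tracking how the two Frobenius relations collapse the coinvariant summand and generate the subgroup $D$; once this is set up, the remainder is direct verification.
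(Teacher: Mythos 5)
Your proof is correct and follows essentially the same route as the paper: apply Corollary \ref{cor:monring} and then unwind Bouc's explicit box-product formula. The paper is terser — it simply cites the box-product formula and verifies that $\res$ kills $D$ — whereas you spell out the intermediate computation of $\underline{\pi}_0\THR(\Z)$ via Corollary \ref{compi0} and track exactly how the two Frobenius relations collapse the coinvariant summand and produce $D$; that extra bookkeeping is accurate and is implicit in the paper's appeal to \cite[Proposition 1.5.1]{Bouc}.
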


\begin{proof} The identification of the groups follows from Corollary \ref{cor:monring} and the explicit formula for the box-product \cite[1.5.1]{Bouc}. To see that the restriction map vanishes on $D$, note that for $[g,g'] \in G^{\Z/2} \times_G G^{\Z/2} $ we have $res([g \cdot g']) = [gg'] + [(gg')^{-1}] = 2[gg']$ and $res([g,g']) = [g \cdot g']$.

\end{proof}

We end this section with two instructive examples:

\begin{example}\label{ZZ} Let $G =\Z= \langle t \rangle$ be the cyclic group of infinite order with the inversion involution. We write $\mathbb{S}[t,t^{-1}]:=\mathbb{S}[\Z]$ for the corresponding spherical group-ring, where the involution sends $t$ to $t^{-1}$. By Remark \ref{remark:freeloops} there is a weak equivalence $B^{di}\Z \simeq \map(S^{\sigma},B^{\sigma}\Z)$. There are weak equivalences $B^\sigma \Z \simeq S^1$, where $S^1$ is the usual circle with the trivial $\Z/2$-action, and $\map(S^\sigma, S^1) \simeq \Z \times S^1$, where $\Z$ has the inversion action. We get a weak equivalence of $\Z/2$-spectra $\THR(\mathbb{S}[t,t^{-1}]) \simeq \Sigma^\infty (\Z \times S^1)_+$.
There is an isomorphism of Mackey functors
\[\underline{\pi}_0(\THR(\mathbb{S}[t,t^{-1}])) \cong \big(\xymatrix@C=40pt{
\mathbb{Z}[t,t^{-1}]\ar@(dl,dr)_-{\Z[t^n \mapsto t^{-n}]}\ar@<.5ex>[r]^-{\tran}&\Z[t,t^{-1}]_{\mathbb{Z}/2} \oplus \Z \ar@<.5ex>[l]^-{\res}
}\big),\]
where the transfer maps trivially into the second summand. With integral coefficients, we get the formula
\[ \underline{\pi}_0(\THR(\Z[t,t^{-1}])) \cong \big(\xymatrix@C=40pt{
\mathbb{Z}[t,t^{-1}]\ar@(dl,dr)_-{\Z[t^n \mapsto t^{-n}]}\ar@<.5ex>[r]^-{\tran}& t\Z[t] \oplus \Z \ar@<.5ex>[l]^-{\res}
}\big),\]
where the transfer sends $t^n$ and $t^{-n}$ to $t^n$ in the first summand for $n >0$, and it sends $1$ to $2$ in the second summand.

\end{example}

\begin{example}\label{ZZ2}
Let $G = C_2$, the cyclic group of order 2. Note that every involution on this group is trivial. There is a weak equivalence
\[B^{di}C_2 \simeq  C_2 \times B^\sigma C_2, \]
since $C_2$ is abelian and has the trivial involution. By Corollary \ref{cor:monring} we get a stable equivalence of $\Z/2$-spectra $\THR(\mathbb{S}[C_2]) \simeq \Sigma^\infty (C_2 \times B^\sigma C_2)_+$.
With sphere coefficients we get
\[\underline{\pi}_0(\THR(\mathbb{S}[C_2])) \cong \big(\xymatrix@C=40pt{
\mathbb{Z}[C_2]\ar@(dl,dr)_-{ id}\ar@<.5ex>[r]^-{\tran}&\Z[C_2] \oplus \Z[C_2 \times C_2] \ar@<.5ex>[l]^-{\res}
}\big), \]
and with integer coefficients
\[\underline{\pi}_0(\THR(\mathbb{Z}[C_2])) \cong \big(\xymatrix@C=40pt{
\mathbb{Z}[C_2]\ar@(dl,dr)_-{ id}\ar@<.5ex>[r]^-{\tran}&\Z[C_2] \oplus (\Z/2)^2 \ar@<.5ex>[l]^-{\res}
}\big), \]
where the transfer is the multiplication by 2 map into the first summand and the 0 map to the second summand, and $\res$ is projection on the first summand.
\end{example}

\subsection{The homotopy type of $\THR(\mathbb{F}_p)$}\label{secFp}

Let $p$ be a prime number. Since $H \F_p$ is a commutative ring spectrum, $\THH(\F_p)$ is a ring spectrum. B{\"o}kstedt \cite{Bok2} and Breen \cite{breen} showed that there is an isomorphism of graded rings $\pi_\ast \THH(\F_p) \cong \F_p[x]$, where $x$ is a generator in $\pi_2 \THH(\F_p) \cong \F_p$. The purpose of this section is to carry out an analogous calculation for $\THR(\F_p):=\THR(H\F_p)$, where $H\F_p$ is the Eilenberg MacLane ring spectrum of $\F_p$ with the trivial anti-involution.
Our calculations will rely on the knowledge of the results of B{\"o}kstedt and Breen.

We recall that the homotopy groups of a $\mathbb{Z}/2$-equivariant ring spectrum $A$ naturally form a $\Z \times \Z$-graded ring, whose $(n,k)$-graded piece is the group of equivariant stable homotopy classes of maps
\[
\pi_{n,k}A:=[S^{n,k},A]^{\Z/2},
\]
where $S^{n,k}=S^{n-k}\wedge S^{k\sigma}$ and $\sigma$ denotes the sign representation of $\Z/2$. We write $\Sigma^{n,k}$ for the corresponding suspension functor. We let
\[
T_{H\F_p}(S^{2,1}) = \bigvee^\infty_{n=0} \Sigma^{2n,n} H\F_p
\]
denote the free associative $H\F_p$-algebra on $S^{2,1}$.

\begin{theorem}\label{theorem:thrfp} There is a stable equivalence of $\Z/2$-equivariant ring spectra
\[T_{H\F_p}(S^{2,1})\stackrel{\simeq}{\longrightarrow} \THR(\F_p).\]
In particular, there is an isomorphism of bigraded rings ${H\F_p}_{\ast,\ast}[\tilde{x}]\cong \pi_{\ast,\ast}\THR(\F_p)$,
where $\tilde{x}$ has bidegree $(2,1)$.
\end{theorem}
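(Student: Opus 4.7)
The plan is to construct a map $T_{H\F_p}(S^{2,1})\to \THR(\F_p)$ from a cleverly chosen class $\tilde{x}\in \pi_{2,1}\THR(\F_p)$ lifting B\"okstedt's generator $x\in\pi_2\THH(\F_p)$, and then verify it is a stable equivalence of $\Z/2$-spectra by checking both the underlying equivalence and the geometric fixed points equivalence, with the two primes handled very differently.

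First I would construct $\tilde{x}$. Using the description $\THR(\F_p)\simeq H\F_p\wedge^{\mathbf{L}}_{N^{\Z/2}_eH\F_p}H\F_p$ from the geometric smash product theorem and the Tor spectral sequence
\[\Tor^{\underline{\pi}_\ast(N^{\Z/2}_e H\F_p)}_{p,q}(\underline{\pi}_\ast H\F_p,\underline{\pi}_\ast H\F_p)\Longrightarrow \underline{\pi}_{p+q}\THR(\F_p),\]
I would produce a class in $\pi_{2,1}\THR(\F_p)$ whose image under the forgetful map $\pi_{2,1}\THR(\F_p)\to\pi_{2}\THH(\F_p)\cong\F_p$ is B\"okstedt's generator. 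Since $\THR(\F_p)$ is an associative $\Z/2$-equivariant $H\F_p$-algebra by Corollary \ref{multTHR} (applied to the commutative ring spectrum $H\F_p$ with trivial involution), the universal property of the free associative $H\F_p$-algebra yields the desired map $f\colon T_{H\F_p}(S^{2,1})\to\THR(\F_p)$.

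Next I would show $f$ is an underlying equivalence. Forgetting equivariance, $T_{H\F_p}(S^{2,1})$ becomes $\bigvee_{n\geq 0}\Sigma^{2n}H\F_p$, and $f$ becomes an $H\F_p$-algebra map to $\THH(\F_p)$ sending the generator to $x$. Since $\pi_\ast\THH(\F_p)\cong\F_p[x]$ is free on $x$ over $H\F_p$ (by B\"okstedt and Breen), this map must agree with the classical equivalence of B\"okstedt and Breen, hence is a non-equivariant stable equivalence.

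Finally I would verify $f$ is a $\Phi^{\Z/2}$-equivalence, handling the two prime regimes separately. For $p$ odd, $H\F_p$ is naturally a module over $H\Z[\tfrac{1}{2}]$, so Corollary \ref{geomzero} gives $\Phi^{\Z/2}\THR(\F_p)\simeq\ast$; on the other hand $\Phi^{\Z/2}T_{H\F_p}(S^{2,1})\simeq\bigvee_{n\geq 0}\Sigma^n\Phi^{\Z/2}H\F_p\simeq\ast$ since $\Phi^{\Z/2}H\F_p\simeq\ast$ as well, so $\Phi^{\Z/2}f$ is automatically an equivalence. For $p=2$ the problem is genuine: Hu-Kriz's calculation gives $\pi_\ast\Phi^{\Z/2}H\F_2\cong \F_2[u]$ with $|u|=1$, and the geometric fixed points formula of Theorem \ref{geofixcalc} yields
\[\Phi^{\Z/2}\THR(\F_2)\simeq\Phi^{\Z/2}H\F_2\wedge^{\mathbf{L}}_{H\F_2}\Phi^{\Z/2}H\F_2,\]
whose homotopy ring I would identify as $\F_2[u_L,u_R]$ with both generators in degree $1$ (the $H\F_2$-module structure on $\Phi^{\Z/2}H\F_2$ via the diagonal $H\F_2\to \Phi^{\Z/2}N^{\Z/2}_eH\F_2\cong\Phi^{\Z/2}H\F_2$ is just the unit on $\pi_\ast$). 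On the other hand $\pi_\ast\Phi^{\Z/2}T_{H\F_2}(S^{2,1})\cong\F_2[u,a]$ with $|u|=|a|=1$.

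The main obstacle is the $p=2$ step: even though both rings are abstractly $\F_2[u_L,u_R]$, verifying that $f$ is a ring isomorphism amounts to showing that the class $\Phi^{\Z/2}\tilde{x}\in\pi_1\Phi^{\Z/2}\THR(\F_2)=\F_2\{u_L\}\oplus\F_2\{u_R\}$ is linearly independent from the image of the unit class $u$. This requires the construction of $\tilde{x}$ in the first step to be carried out carefully, so that its image under $\Phi^{\Z/2}$ can be identified explicitly via the Tor spectral sequence (or via the dihedral Bar model of Theorem \ref{comparisonmult}) with a nontrivial combination of $u_L$ and $u_R$. Once this independence is in hand, $\Phi^{\Z/2}f$ is a map of polynomial rings sending two degree-$1$ algebra generators to two linearly independent degree-$1$ elements, hence is an isomorphism, which combined with the underlying equivalence above completes the proof via the standard criterion for stable $\Z/2$-equivalences.
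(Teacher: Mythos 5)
Your overall strategy --- lift B\"okstedt's generator to $\tilde{x}\in\pi_{2,1}\THR(\F_p)$, use the $H\F_p$-algebra structure of $\THR(\F_p)$ to build the map from the free algebra $T_{H\F_p}(S^{2,1})$, then check both the underlying and geometric fixed-point equivalences, with odd $p$ handled by vanishing and $p=2$ by a polynomial-ring comparison --- is exactly the paper's. The odd-$p$ case and the underlying case are fine and match the paper. But the $p=2$ case, which is the real content of the theorem, is left unfinished, and the construction of $\tilde{x}$ is vaguer than what is actually needed.

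On the construction of $\tilde{x}$: the paper does not extract it from the Tor spectral sequence. Instead it uses the inclusion of $0$-simplices $H\F_p\to\THR(\F_p)$ together with the circle action to produce a map $S^{1,1}_+\wedge H\F_p\to\THR(\F_p)$, then reduces mod $p$ to get $\Sigma^{2,1}H\F_p\to\THR(\F_p)/p$, and finally combines this with the vanishing $\pi_{1,1}\THR(\F_p)=0$ (a short exact-sequence argument) to lift back to $\pi_{2,1}\THR(\F_p)$. Extracting such a class from the Tor spectral sequence would require knowing the higher homotopy Mackey functors of $N^{\Z/2}_eH\F_p$ and the module structure; this is not obviously simpler, and you do not carry it out.

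On $p=2$: you correctly identify the crux --- that $\Phi^{\Z/2}\tilde{x}$ should be linearly independent from the image of the unit class in $\pi_1\Phi^{\Z/2}\THR(\F_2)\cong\F_2\oplus\F_2$ --- but you leave it unproved, deferring it to ``careful construction.'' This is precisely the hard point and cannot be deferred. Notably, the paper does not identify $\Phi^{\Z/2}\tilde{x}$ directly. Instead it reduces the $\Phi^{\Z/2}$-isomorphism (both sides being polynomial on two degree-$1$ generators) to the claim that $T_{\tilde x}$ is an isomorphism on $\pi_{1,0}$, and then proves by an independent argument --- the isotropy separation sequence for $\THR(\F_2)$, the homotopy orbit spectral sequence, and the computation $\pi_1\Phi^{\Z/2}\THR(\F_2)\cong\F_2\oplus\F_2$ --- that $\pi_{1,0}\THR(\F_2)\cong\F_2$; since $\tilde{x}$ is seen to induce an injection from $\pi_{1,0}\Sigma^{2,1}H\F_2\cong\F_2$, it is automatically an isomorphism. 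This indirect counting argument sidesteps exactly the computation your plan would still have to supply. Without either that counting argument or a concrete identification of $\Phi^{\Z/2}\tilde{x}$, the proof is incomplete.
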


Before giving a proof of this theorem we deduce the associative ring structures on the fixed point spectrum of $\THR(\F_p)$.

\begin{cor} \label{poddTHRFp} For $p$ odd there is an isomorphism of graded rings \[\pi_\ast (\THR(\F_p)^{\Z/2}) \cong \F_p [y], \]
where $y$ is a generator in degree $4$ which maps to $x^2$ in $\F_p[x] \cong \pi_\ast \THH(\F_p)$ under the restriction map.
\end{cor}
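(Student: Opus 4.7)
First I would invoke Corollary \ref{geomzero}: since $\F_p$ is a $\Z[\tfrac12]$-algebra for odd $p$, the underlying $\Z/2$-spectrum of $H\F_p$ is a module over $H\Z[\tfrac{1}{2}]$, so $\Phi^{\Z/2}\THR(\F_p) \simeq \ast$. The isotropy separation cofibre sequence
\[
\THR(\F_p)_{h\Z/2} \longrightarrow \THR(\F_p)^{\Z/2} \longrightarrow \Phi^{\Z/2}\THR(\F_p)
\]
then yields a stable equivalence $\THR(\F_p)^{\Z/2} \simeq \THR(\F_p)_{h\Z/2}$, reducing the question to a homotopy orbit computation.

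Combining this with the splitting $\THR(\F_p) \simeq \bigvee_{n \geq 0} \Sigma^{2n,n}H\F_p$ of Theorem \ref{theorem:thrfp}, and noting that taking $\Z/2$-homotopy orbits commutes with wedges, I would obtain
\[
\THR(\F_p)^{\Z/2} \simeq \bigvee_{n \geq 0} (\Sigma^{2n,n}H\F_p)_{h\Z/2}.
\]
The underlying spectrum of $\Sigma^{2n,n}H\F_p = \Sigma^n(S^{n\sigma}\wedge H\F_p)$ is $\Sigma^{2n}H\F_p$, and the residual $\Z/2$-action multiplies $\pi_{2n} = \F_p$ by $(-1)^n$, this being the degree of the underlying involution on $S^{n\sigma}$. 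Since $2$ is a unit in $\F_p$, the homotopy orbit spectral sequence collapses at $E_2$ onto the invariants, which contribute $\F_p$ when $n$ is even and $0$ when $n$ is odd. Thus additively $\pi_\ast \THR(\F_p)^{\Z/2}\cong \F_p[y]$ with $|y| = 4$, the generator $y$ arising from the wedge summand at $n=2$.

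For the multiplicative structure I would use that the equivalence of Theorem \ref{theorem:thrfp} is one of $\Z/2$-equivariant associative ring spectra, so applying the (lax monoidal) fixed-point functor produces a ring map $T_{H\F_p}(S^{2,1})^{\Z/2} \to \THR(\F_p)^{\Z/2}$ which the preceding calculation identifies as an equivalence. The generator $y$ then corresponds to $\tilde{x}^2 \in \pi_{4,2}\THR(\F_p)$, and since the restriction map sends $\tilde{x} \mapsto x$ under the identification $\pi_\ast\THH(\F_p) \cong \F_p[x]$, it sends $y \mapsto x^2$, yielding the claimed ring isomorphism.

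The one delicate point is pinning down the sign of the involution on the underlying $\pi_\ast\THH(\F_p) = \F_p[x]$; this requires tracing through Theorem \ref{theorem:thrfp} carefully to confirm that $x^n$ corresponds to the image of $\tilde x^n$ in the $n$-th wedge summand, whose involution then manifestly acts by $(-1)^n$ via the underlying $S^{n\sigma}$. Once this identification is in place, the rest of the argument is essentially formal from the vanishing $\Phi^{\Z/2}H\F_p \simeq \ast$.
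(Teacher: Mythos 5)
Your route is genuinely different from the paper's, which dispatches the corollary in one line: since $(H\F_p)_{\ast,\ast}\cong\F_p[u^{\pm 1}]$ with $|u|=(0,2)$ for $p$ odd, Theorem \ref{theorem:thrfp} gives $\pi_{\ast,\ast}\THR(\F_p)\cong\F_p[u^{\pm 1}][\tilde x]$, and the $(\ast,0)$-graded subring is visibly $\F_p[u^{-1}\tilde x^2]$ with $u^{-1}\tilde x^2$ in bidegree $(4,0)$. Your alternative via $\Phi^{\Z/2}\THR(\F_p)\simeq\ast$, the isotropy separation sequence, and a homotopy-orbit computation on the wedge summands is a legitimate consistency check: the additive part is correct, and the sign $(-1)^n$ of the underlying involution on $S^{n\sigma}$ is right, so you recover $\F_p$ in degrees $0,4,8,\dots$ and $0$ elsewhere.

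However, the multiplicative part has a genuine problem. You write that the generator $y$ corresponds to $\tilde x^2\in\pi_{4,2}\THR(\F_p)$, but $y$ lives in $\pi_4(\THR(\F_p)^{\Z/2})=\pi_{4,0}\THR(\F_p)$, a different bigraded group. The correct identification (which is what the paper uses) is $y=u^{-1}\tilde x^2$, where $u$ is the invertible class in $(H\F_p)_{0,2}$. This matters for two reasons. First, your claimed deduction ``restriction sends $\tilde x\mapsto x$, hence $y\mapsto x^2$'' is not quite an argument since $y\neq\tilde x^2$; one also needs that $u$ restricts to a unit in $\pi_0 H\F_p$. Second, and more importantly, you never actually establish that the graded ring is polynomial (as opposed to having nilpotent generators): the additive computation only shows one copy of $\F_p$ in each degree $4k$, and the ring equivalence $T_{H\F_p}(S^{2,1})^{\Z/2}\to\THR(\F_p)^{\Z/2}$ merely transfers the question to the source without resolving it. In the paper's approach this is automatic because $u^{-1}\tilde x^2$ is manifestly a non-zero-divisor in $\F_p[u^{\pm1}][\tilde x]$. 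To close the gap you would either invoke $(H\F_p)_{\ast,\ast}$ anyway (in which case the isotropy-separation detour is superfluous) or argue directly that the multiplication $\Sigma^{4,2}H\F_p\wedge_{H\F_p}\Sigma^{4,2}H\F_p\to\Sigma^{8,4}H\F_p$ on wedge summands of the free algebra is an equivalence, hence induces an isomorphism on $\pi_{\ast,0}$.
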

\begin{proof}Since $p$ is odd there is an isomorphism $(H\F_p)_{\ast,\ast} \cong \F_p[u^{\pm 1}]$ with $u$ in bidegree $(0,2)$ (see \cite[page 338]{HuKriz}). Therefore Theorem \ref{theorem:thrfp} gives an isomorphism $\pi_{\ast,0} \THR(\F_p) \cong \F_p[y]$, where $y = u^{-1}\cdot \tilde{x}^2$.
\end{proof}

\begin{cor}There is an isomorphism of graded rings
\[\pi_\ast (\THR(\F_2)^{\Z/2}) \cong \F_2[\bar{x},y],\]
where $y$ is of degree $1$ and $\bar{x}$ is of degree $2$. Under the restriction map
the element $\bar{x}$ maps to the multiplicative generator $x$ of $\F_2[x] \cong \pi_\ast \THH(\F_2)$ and $y$ maps to $0$.
\end{cor}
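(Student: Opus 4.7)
The plan is to read off the fixed-point ring directly from the preceding Theorem \ref{theorem:thrfp}. Since $\pi_n\THR(\F_2)^{\Z/2} = \pi_{n,0}\THR(\F_2)$ and Theorem \ref{theorem:thrfp} provides an isomorphism $\pi_{\ast,\ast}\THR(\F_2)\cong (H\F_2)_{\ast,\ast}[\tilde x]$ of bigraded rings with $|\tilde x|=(2,1)$, freeness of $\pi_{\ast,\ast}\THR(\F_2)$ as an $(H\F_2)_{\ast,\ast}$-module on the basis $\{\tilde x^k\}_{k\geq 0}$ yields
\[\pi_{n,0}\THR(\F_2) \;=\; \bigoplus_{k \geq 0}(H\F_2)_{n-2k,\,-k} \cdot \tilde x^k.\]
So the computation reduces to understanding $(H\F_2)_{m,w}$ in the half-plane $w\leq 0$, and the only non-formal input will be the Hu-Kriz description of these groups.

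Next, I would invoke \cite{HuKriz}: in the region $w\leq 0$ the ring $(H\F_2)_{\ast,\ast}$ is the polynomial algebra $\F_2[\rho,\tau]$, where $\rho\in (H\F_2)_{-1,-1}$ is the equivariant Euler class of $\sigma$ and $\tau\in (H\F_2)_{0,-1}$ is the inverse of the orientation class. In particular $(H\F_2)_{m,w}=\F_2\cdot\rho^{-m}\tau^{m-w}$ when $w\leq m\leq 0$ and vanishes elsewhere in this half-plane. Plugging in $m=n-2k$, $w=-k$, the summand $(H\F_2)_{n-2k,-k}$ is one-dimensional exactly for $\lceil n/2\rceil \leq k \leq n$, with generator $\tilde x^k\rho^{2k-n}\tau^{n-k}$; thus $\dim_{\F_2}\pi_{n,0}\THR(\F_2)=\lfloor n/2\rfloor+1$ for $n\geq 0$ and $0$ for $n<0$.

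The final step is to identify the ring. Setting $\bar x:=\tilde x\tau \in \pi_{2,0}\THR(\F_2)$ and $y:=\tilde x\rho\in \pi_{1,0}\THR(\F_2)$, the monomial $\bar x^a y^b=\tilde x^{a+b}\tau^a\rho^b$ sits in bidegree $(2a+b,0)$ and, upon setting $k:=a+b$ so that $a=n-k$ and $b=2k-n$, corresponds bijectively to the basis element above. Hence the ring homomorphism $\F_2[\bar x,y]\to\pi_{\ast,0}\THR(\F_2)$ is an isomorphism in each degree. For the restriction map, multiplicativity together with the facts that the equivalence of Theorem \ref{theorem:thrfp} sends $\tilde x$ to the Bökstedt generator $x\in\pi_2\THH(\F_2)$, that $\tau$ restricts to $1\in \pi_0 H\F_2$ and that $\rho$ restricts to $0$ (the Euler class of $\sigma$ is null non-equivariantly) immediately give $\res(\bar x)=x$ and $\res(y)=0$. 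The main obstacle is really only to correctly cite the Hu-Kriz computation in the half-plane $w\leq 0$; everything else is bookkeeping forced by the bidegrees.
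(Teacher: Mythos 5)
Your proposal is correct and follows essentially the same route as the paper. You use the names $\rho$ (bidegree $(-1,-1)$, the Euler class) and $\tau$ (bidegree $(0,-1)$) where the paper writes $a$ and $u$, and set $\bar{x} = \tilde{x}\tau$, $y = \tilde{x}\rho$, which is the paper's choice $\bar{x} = \tilde{x}u$, $y = \tilde{x}a$ up to renaming. The only difference is presentational: you carry out the dimension count $\dim_{\F_2}\pi_{n,0}\THR(\F_2) = \lfloor n/2\rfloor + 1$ explicitly and justify $\res(y)=0$ via $\res(\rho)=0$, whereas the paper simply observes that $y$ lands in $\pi_1\THH(\F_2)=0$; both are valid and the content is the same.
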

\begin{proof}The bi-negative part of $(H\F_2)_{\ast,\ast}$ is a polynomial ring on the Euler class $a$, which has bidegree $(-1,-1)$, and a generator $u$ of bidegree $(0,-1)$ (see \cite[Proposition 6.2]{HuKriz}). From this and Theorem \ref{theorem:thrfp} we see that
\[\pi_{\ast,0} \THR(\F_2) \cong \F_2[\bar{x},y],\]
where $\bar{x} = \tilde{x} u$ and $y  = \tilde{x} a$.

From the proof of \ref{theorem:thrfp} we will see that the map $\pi_{2,0} \THR(\F_2) \to \pi_2 \THH(\F_2)$ sends $\bar{x} = \tilde{x} u$ to $x$. The element $y$ lands in the group $\pi_1\THH(\F_p) $ which is trivial.
\end{proof}

The strategy of the proof of Theorem \ref{theorem:thrfp} is to construct a map $T_{H\F_p}(S^{2,1})\to \THR(\F_p)$ exploiting the fact that  $T_{H\F_p}(S^{2,1})$ is a free $H\F_p$-algebra, and then show that this map induces an equivalence on geometric fixed points by means of Theorem \ref{geofixcalc}. The following key lemma will allow us to construct the map.

\begin{lemma}\label{keylemma}\begin{enumerate}
\item \label{lemma:Fp-i} $\pi_{1,1} \THR(\F_p) = 0$.
\item \label{lemma:Fp-ii} The restriction map $\pi_{2,1}\THR(\mathbb{F}_p) \to \pi_2 \THH(\mathbb{F}_p)$
is surjective.
\end{enumerate}
\end{lemma}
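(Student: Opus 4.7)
The plan is to treat each part with a long exact sequence coming from a cofiber sequence of $\Z/2$-spheres built around the Euler class $a\colon S^0 \to S^\sigma$, combining this with the $\pi_0$-calculation of Corollary \ref{compi0} and with B\"okstedt's computation of $\pi_\ast\THH(\F_p) = \F_p[x]$.

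For part (i), I would apply $[-, \THR(\F_p)]^{\Z/2}$ to the cofiber sequence of pointed $\Z/2$-spectra
\[\Z/2_+ \longrightarrow S^0 \xrightarrow{\,a\,} S^\sigma,\]
and use the adjunction $[\Z/2_+ \wedge Y, X]^{\Z/2} \cong [Y, \res X]$. The Puppe long exact sequence then reads
\[\pi_1\THH(\F_p) \longrightarrow \pi_{1,1}\THR(\F_p) \longrightarrow \pi_{0,0}\THR(\F_p) \xrightarrow{\,\res\,} \pi_0\THH(\F_p).\]
B\"okstedt's calculation gives $\pi_1\THH(\F_p) = 0$, and by Corollary \ref{compi0} (with $A = H\F_p$ commutative) both values of $\underline{\pi}_0\THR(\F_p)$ are $\F_p$; since the unit $\mathbb{S}\to\THR(\F_p)$ supplies a class restricting to $1$, the right-hand restriction map is an isomorphism. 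Exactness yields $\pi_{1,1}\THR(\F_p) = 0$.

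For part (ii), I would use the analogous cofiber sequence whose cofiber is identified (equivariantly, by filling the free $\Z/2$-cell of $S^{2\sigma}$) as
\[\Z/2_+ \wedge S^1 \longrightarrow S^\sigma \longrightarrow S^{2\sigma},\]
and then smash with $S^1$ to obtain $\Z/2_+ \wedge S^2 \to S^{1+\sigma} \to S^{1+2\sigma}$. Applying $[-, \THR(\F_p)]^{\Z/2}$, extending by Puppe, and invoking part (i), I arrive at
\[\pi_{2,1}\THR(\F_p) \xrightarrow{\,\res\,} \pi_2\THH(\F_p) \xrightarrow{\,\delta\,} \pi_{2,2}\THR(\F_p) \longrightarrow 0.\]
Thus surjectivity of $\res$ is equivalent to the vanishing of $\pi_{2,2}\THR(\F_p)$. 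For odd $p$, Corollary \ref{geomzero} shows $\Phi^{\Z/2}\THR(\F_p) \simeq \ast$, so $\THR(\F_p)$ is Borel, and I would compute $\pi_{2,2}\THR(\F_p)$ by the $\RO(\Z/2)$-graded homotopy fixed-point spectral sequence: the twist by $2\sigma$ combined with the known action $\tau(x) = -x$ leaves a sign action on the relevant $E_2$-entries, all of which vanish since $|\Z/2|=2$ is a unit. This forces $\pi_{2,2}\THR(\F_p) = 0$ for odd $p$ and completes the argument.

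The main obstacle is the case $p = 2$. Here $\Phi^{\Z/2}\THR(\F_2)$ is nontrivial, the Borel reduction fails, and the LES only exhibits $\pi_{2,2}\THR(\F_2)$ as a quotient of $\F_2$. The natural way forward is not further diagram chasing but explicit construction of $\tilde x$: lift B\"okstedt's class by first lifting the dual Steenrod generator $\xi_1 \in \pi_1(H\F_2 \wedge H\F_2)$ to an equivariant class $\tilde\xi_1 \in \pi^{\Z/2}_\sigma N^{\Z/2}_e H\F_2$ using the Hu--Kriz calculation of the real dual Steenrod algebra, and then apply the B\"okstedt suspension coming from the dihedral bar structure on $\THR(\F_2)$ to produce $\tilde x \in \pi_{2,1}\THR(\F_2)$ whose underlying class is $x$.
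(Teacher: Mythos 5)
Your part (i) is essentially the paper's own argument: the same exact sequence
\[
\pi_1\THH(\F_p)\to\pi_{1,1}\THR(\F_p)\to\pi_{0,0}\THR(\F_p)\to\pi_0\THH(\F_p),
\]
Bökstedt's vanishing on the left, and the isomorphism on $\pi_0$ from Corollary~\ref{compi0}.

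For part (ii) you take a genuinely different route, and it has a real gap at $p=2$. Your Puppe sequence from $\Z/2_+\wedge S^2\to S^{1+\sigma}\to S^{1+2\sigma}$ does produce, using part (i) and $\pi_1\THH(\F_p)=0$, the exact sequence
\[
\pi_{2,1}\THR(\F_p)\xrightarrow{\res}\pi_2\THH(\F_p)\xrightarrow{\delta}\pi_{2,2}\THR(\F_p)\to 0,
\]
so surjectivity of $\res$ is indeed equivalent to $\pi_{2,2}\THR(\F_p)=0$. For odd $p$ your vanishing claim is correct: $\THR(\F_p)$ is Borel by Corollary~\ref{geomzero}, the conjugation action on $\pi_2\THH(\F_p)$ is $-1$ by the twist by $2\sigma$ (degree $+1$ underlying) combined with $w=(-1)$ on $\pi_2$, and so the fixed points of the sign action on $\F_p$ vanish since $2$ is invertible. (I would state this a bit more carefully --- the homotopy fixed-point spectral sequence collapses because everything is an $H\F_p$-module with $p$ odd, not merely because ``$2$ is a unit in $\Z/2$''.) But for $p=2$ your reduction leaves you needing $\pi_{2,2}\THR(\F_2)=0$, which you cannot establish: the geometric fixed points are now nontrivial (Proposition~\ref{geomTHRFp}), the Borel reduction fails, and --- as you say --- your sequence only exhibits $\pi_{2,2}\THR(\F_2)$ as a quotient of $\F_2$, which could be either $0$ or $\F_2$. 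The sketch you offer instead (lifting $\xi_1$ through the real dual Steenrod algebra and applying a ``Bökstedt suspension'') is not worked out; it introduces substantial unverified input (the equivariant Bökstedt suspension, the identification of $\tilde\xi_1$ in $\underline{\pi}_\star N^{\Z/2}_eH\F_2$, and the claim that this recovers $x$ underlyingly), so as written it does not close the gap.

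The paper's actual proof of (ii) is uniform in $p$ and avoids any $\RO(\Z/2)$-graded vanishing statement. The key move is to use the equivariant circle action: the inclusion of the $0$-skeleton and the $S^{1,1}$-action give a map $S^{1,1}_+\wedge H\F_p\to\THR(\F_p)$, whose restriction to the top cell and mod-$p$ reduction produces a class $\alpha\colon\Sigma^{2,1}H\F_p\to\THR(\F_p)/p$. Since $\underline{\F}_p$ is a constant Mackey functor, $\res\colon\pi_{2,1}\Sigma^{2,1}H\F_p\to\pi_2\Sigma^2H\F_p$ is an isomorphism, and B\"okstedt's theorem says $\alpha$ hits the generator of $\pi_2\THH(\F_p)/p\cong\F_p$. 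A two-square diagram chase, using that $\pi_{2,1}\THR(\F_p)\to\pi_{2,1}\THR(\F_p)/p$ is surjective (its cokernel injects into $\pi_{1,1}\THR(\F_p)=0$ by part (i)), then shows that $\res\colon\pi_{2,1}\THR(\F_p)\to\pi_2\THH(\F_p)$ is surjective. This is a cleaner path because it constructs the lift explicitly from the circle action rather than trying to show that an obstruction group vanishes; you should either adopt this argument or supply an actual computation of $\pi_{2,2}\THR(\F_2)$ via the isotropy separation sequence together with Proposition~\ref{geomTHRFp}.
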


\begin{proof} To see that part $\ref{lemma:Fp-i}$ holds, consider the exact sequence of homotopy groups
\[\pi_1\THH(\F_p) \to \pi_{1,1}\THR(\F_p) \to \pi_{0,0}\THR(\F_p) \to \pi_0\THH(\F_p).\]
The left hand group is trivial and the right hand map is an isomorphism, by Theorem \ref{pi0relations} and Corollary \ref{compi0}. Hence the claim follows.
To prove part $\ref{lemma:Fp-ii}$ we note that the inclusion of the $0$-skeleton $H\F_p \to \THR(\F_p)$, along with the circle action on the target induces a map of $\Z/2$-spectra $S^{1,1}_+ \wedge H\F_p \to \THR(\F_p)$. The left hand side splits stably as $S^{1,1}_+ \wedge H\F_p \simeq H\F_p \vee \Sigma^{1,1}H\F_p$ and we are interested in the map out of the suspended part.
Moding out $p$ gives a map
\[ (\Sigma^{1,1}H\F_p)/p \to (\THR(\F_p))/p,\]
where the source again splits as $(\Sigma^{1,1}H\F_p)/p \simeq \Sigma^{1,1}H\F_p \vee \Sigma^{2,1}H\F_p$. We consider the resulting map $\alpha \colon \Sigma^{2,1} H\F_p \to \THR(\F_p)/p$. On homotopy groups there is an induced diagram
\[\xymatrix{\pi_{2,1} \Sigma^{2,1} H\F_p \ar[d] \ar[r]^-{\alpha_\ast} & \pi_{2,1} \THR(\F_p)/p \ar[d] \\
\pi_{2} \Sigma^{2} H\F_p  \ar[r]^-{\alpha_\ast} & \pi_{2} \THH(\F_p)/p
 \rlap{\ .}}\]
The left hand vertical map is an isomorphism since it is equivalent to the restriction of the constant Mackey functor $\mathbb{F}_p$. The lower horizontal map is an isomorphism by B{\"o}kstedt's calculation \cite{Bok2} (see also \cite[Section 4.2]{Wittvect}), and hence the right hand vertical map is surjective. The natural map $\THR(\F_p) \to \THR(\F_p)/p $ induces a diagram of homotopy groups
 \[\xymatrix{\pi_{2,1} \THR(\F_p) \ar[d] \ar[r] & \pi_{2,1} \THR(\F_p)/p \ar[d] \\
\pi_{2} \THH(\F_p)  \ar[r] & \pi_{2} \THH(\F_p)/p
 \rlap{\ ,}}\]
where the lower horizontal map is an isomorphism. We have just seen that the right hand vertical map is surjective. The top horizontal map is surjective, since its cokernel injects into $\pi_{2,1}\Sigma^{1,0}\THR(\F_p) \cong \pi_{1,1} \THR(\F_p)$, which is zero by part $\ref{lemma:Fp-i}$. The result follows.
\end{proof}

By the previous lemma we can choose a lift $\tilde{x} \in \pi_{2,1}\THR(\F_p)$ of the generator $x \in \pi_2\THH(\F_p)$. Since $\THR(\F_p)$ is an $H\F_p$-algebra by \ref{multTHR}, the map $\tilde{x} \colon S^{2,1} \to \THR(\F_p)$ induces an algebra map from the free associative $H\F_p$-algebra
\[T_{\tilde{x}} \colon T_{H\F_p}(S^{2,1}) \to \THR(\F_p).\]
In order to show that this map induces an equivalence on geometric fixed points, we start by computing the homotopy groups of the geometric fixed points of $\THR(\F_p)$ abstractly.

\begin{prop}\label{geomTHRFp} For $p$ an odd prime, the spectrum $\Phi^{\Z/2} \THR(\F_p)$ is contractible. For $p=2$ there is an isomorphism of graded rings
\[ \pi_\ast \Phi^{\Z/2} \THR(\F_2) \cong \F_2[w_1,w_2], \]
where $|w_1| = |w_2| = 1$. 
\end{prop}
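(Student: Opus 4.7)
The plan is to reduce the statement to a computation of a derived smash product of Eilenberg--MacLane spectra by means of Corollary~\ref{cor:fixed-pt-alg-str}. First I would choose a flat commutative $\Z/2$-equivariant orthogonal ring-spectrum model for $H\F_p$ (see Remark~\ref{flatreplcomm}), so that Corollary~\ref{cor:fixed-pt-alg-str} applies and yields a stable equivalence of associative ring spectra
\[\Phi^{\Z/2}\THR(\F_p)\simeq \Phi^{\Z/2}H\F_p\wedge^{\mathbf{L}}_{H\F_p}\Phi^{\Z/2}H\F_p.\]
The whole proof then amounts to computing the right-hand side as a ring.

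For $p$ odd the element $1/2$ lies in $\F_p$, so the underlying $\Z/2$-spectrum of $H\F_p$ is an $H\Z[\tfrac{1}{2}]$-module. Corollary~\ref{geomzero} then immediately gives that $\Phi^{\Z/2}\THR(\F_p)$ is contractible. Equivalently, one of the two smash factors $\Phi^{\Z/2}H\F_p$ is itself contractible (being a module over $\Phi^{\Z/2}H\Z[\tfrac{1}{2}]\simeq\ast$), so the derived smash product vanishes.

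For $p=2$ I would invoke the calculation of Hu and Kriz \cite{HuKriz}, which identifies $\pi_\ast\Phi^{\Z/2}H\F_2$ with the polynomial ring $\F_2[u]$ on a single generator $u$ in degree $1$. Since $\pi_\ast \Phi^{\Z/2}H\F_2\cong \F_2[u]$ is a free module over $\pi_\ast H\F_2\cong\F_2$, the K\"unneth/$\mathrm{Tor}$ spectral sequence
\[ E^2_{p,q}=\mathrm{Tor}^{\pi_\ast H\F_2}_{p,q}(\pi_\ast\Phi^{\Z/2}H\F_2,\pi_\ast\Phi^{\Z/2}H\F_2)\Rightarrow \pi_{p+q}(\Phi^{\Z/2}H\F_2\wedge^{\mathbf{L}}_{H\F_2}\Phi^{\Z/2}H\F_2)\]
collapses onto the line $p=0$ and yields a ring isomorphism
\[\pi_\ast\Phi^{\Z/2}\THR(\F_2)\cong\F_2[u]\otimes_{\F_2}\F_2[u]\cong\F_2[w_1,w_2]\]
with $w_1=u\otimes 1$ and $w_2=1\otimes u$, both of degree $1$. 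The multiplicative structure on the right-hand side is inherited from Corollary~\ref{cor:fixed-pt-alg-str}, which guarantees that the equivalence with the derived smash product is one of associative ring spectra, so the tensor-product ring structure on $\pi_\ast$ is the correct one.

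The only subtle point in this plan is ensuring that the multiplication matches — that is, that the K\"unneth isomorphism on homotopy groups transports the ring structure of $\Phi^{\Z/2}\THR(\F_2)$ to the usual tensor-product ring structure on $\F_2[u]\otimes_{\F_2}\F_2[u]$. This is exactly what Corollary~\ref{cor:fixed-pt-alg-str} provides, and the collapse of the spectral sequence (due to flatness of $\pi_\ast\Phi^{\Z/2}H\F_2$ over $\F_2$) means there are no multiplicative extensions to worry about. Everything else is a direct application of the previously established results.
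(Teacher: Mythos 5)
Your proposal is correct and takes essentially the same route as the paper: invoking Corollary~\ref{cor:fixed-pt-alg-str} to rewrite $\Phi^{\Z/2}\THR(\F_p)$ as the derived smash product $\Phi^{\Z/2}H\F_p\wedge^{\mathbf{L}}_{H\F_p}\Phi^{\Z/2}H\F_p$, observing for odd $p$ that $\Phi^{\Z/2}H\F_p$ is contractible, and for $p=2$ applying the K\"unneth isomorphism over the field $\F_2$ together with the Hu--Kriz computation $\pi_\ast\Phi^{\Z/2}H\F_2\cong\F_2[u]$. Your extra remarks about the collapse of the Tor spectral sequence and the absence of multiplicative extensions make explicit what the paper leaves implicit, but the argument is the same.
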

\begin{proof}
By Theorem \ref{geofixcalc} and Corollary \ref{cor:fixed-pt-alg-str} there is a natural equivalence of associative ring spectra
\[\Phi^{\Z/2} \THR(\F_p) \simeq \Phi^{\Z/2} H\F_p \wedge_{H\F_p}^{\mathbf{L}} \Phi^{\Z/2} H\F_p\]
for every prime, where $H\F_p$ is a flat model for the Eilenberg-MacLane spectrum of $\F_p$ as a commutative $\Z/2$-orthogonal ring spectrum (see \ref{flatreplcomm}).
 If $p$ is odd  $\Phi^{\Z/2} H\F_p$ is contractible and thus so is $\Phi^{\Z/2} \THR(\F_p)$. For $p=2$
the K{\"u}nneth formula gives an isomorphism \[\pi_\ast (\Phi^{\Z/2} H\F_2 \wedge_{H\F_2}^{\mathbf{L}} \Phi^{\Z/2} H\F_2) \cong (\pi_\ast \Phi^{\Z/2} H\F_2) \otimes_{\F_2} (\pi_\ast \Phi^{\Z/2} H\F_2)\] and it follows from \cite{HuKriz} (see also \cite{Wilson}) that $\Phi^{\Z/2} H\F_2 \cong \F_2[w]$, where $|w|=1$. The result follows.
\end{proof}

\begin{proof}[Proof of \ref{theorem:thrfp}] The map induces a stable equivalence of underlying non-equivariant spectra by B{\"o}kstedt's calculations. It therefore suffices to show that it also induces a stable equivalence on $\Z/2$-geometric fixed points.

When $p$ is odd this holds because the geometric fixed points of both source and target are contractible. For $p = 2$ by identifying $\Phi^{\Z/2}(T_{H\F_2}(S^{2,1})) \simeq T_{\Phi^{\Z/2}(H\F_2)}(S^{1})$, we get an isomorphism
\[\pi_\ast \Phi^{\Z/2}(T_{H\F_2}(S^{2,1})) \cong \pi_\ast(\Phi^{\Z/2}{H\F_2})[v] \cong \F_2[v,w].\]


To see that the map $\pi_\ast \Phi^{\Z/2}(T_{H\F_2}(S^{2,1})) \to \pi_\ast \Phi^{\Z/2} \THR(\F_2)$ is an isomorphism, it suffices to show that it is an isomorphism in degree $1$, since the source and the target are both polynomial rings over $\F_2$ on two generators in that degree. By considering the long exact sequences of homotopy groups associated to the isotropy separation sequence we see that it suffices to prove that the map $\pi_{1,0} T_{H\F_2}(S^{2,1}) \to \pi_{1,0} \THR(\F_2)$ is an isomorphism. Since the canonical map $\Sigma^{2,1}H\F_2 \to T_{H\F_2}(S^{2,1})$ induces an isomorphism on $\pi_{1,0}$ we are left with showing that the map $\tilde{x} \colon \Sigma^{2,1}H\F_2 \to \THR(\F_2)$ induces an isomorphism on $\pi_{1,0}$. There is a commutative diagram with exact rows
\[\xymatrix@C=15pt@R=15pt{\pi_{2} \Sigma^{2}H\F_2 \ar[d]^-{\cong} \ar[r]^-{\tran=0} & \pi_{2,1} \Sigma^{2,1}H\F_2 \ar@{>->}[d] \ar[r]^{\cong} & \pi_{1,0} \Sigma^{2,1}H\F_2 \ar[d] \ar[r] & \pi_1 H\F_2=0 \ar@{=}[d] \\
\pi_{2} \THH(\F_2)  \ar[r]^-0 & \pi_{2,1} \THR(\F_2)  \ar[r]_-{\cong}^-f & \pi_{1,0} \THR(\F_2)\ar[r]&  \pi_1 \THH(\F_2) = 0. } \]
The upper middle horizontal map is an isomorphism since the transfer map $\tran$ is equal to $2$ and hence to $0$, and the map $f$ is surjective, since $\pi_1 \THH(\F_2) = 0$. The left hand vertical map is an isomorphism, hence the lower left hand horizontal map is zero. It follows that $f$ is an isomorphism. From the commutative square
\[
\xymatrix@R=15pt@C=40pt{
\pi_{2,1} \Sigma^{2,1}H\F_2\ar[r]^{\cong}\ar[d]^{\tilde{x}}&\pi_{2} \Sigma^{2}H\F_2\ar[d]^{x}_{\cong}\\
\pi_{2,1} \THR(\F_2)\ar[r]&\pi_{2} \THH(\F_2)
}
\]
we see that the map $\tilde{x}$ is injective in $\pi_{2,1}$. Therefore the map $\pi_{1,0} \Sigma^{2,1}H\F_2 \to \pi_{1,0} \THR(\F_2)$ is an injective map with the source isomorphic to $\F_2$. Thus it remains to show that $\pi_{1,0} \THR(\F_2)$ is isomorphic to $\F_2$.

We argue by considering the isotropy separation sequence of $\THR(\F_2)$ and the equivariant map $H\F_2 \to \THR(\F_2)$. In low degrees this is the sequence
\[
\xymatrix@C=10pt@R=5pt{
\pi_1\THR(\F_2)_{h\Z/2}\ar[r]
&
\pi_{1,0}\THR(\F_2)\ar[r]
&
\pi_{1}\Phi^{\Z/2}\THR(\F_2)\ar[r]
&
\pi_0\THR(\F_2)_{h\Z/2}\ar[r]
&
\pi_{0,0}\THR(\F_2)
\\
\F_2\ar@{=}[u]\ar[r]
&
?\ar@{=}[u]\ar[r]
&
\F_2\oplus\F_2\ar@{=}[u]\ar[r]
&
\F_2\ar@{=}[u]\ar[r]_-0
&
\F_2\ar@{=}[u]
}
\]
The description of the first and fourth term in this sequence follows from the homotopy orbit spectral sequence. The third group is identified using Proposition \ref{geomTHRFp} and the last term is computed using Theorem \ref{pi0relations}. We claim that the first map in this sequence is zero. Indeed, the map $H\F_2 \to \THR(\F_2)$ induces a commutative diagram
\[\xymatrix{ \pi_1((H\F_2)_{h\Z/2}) \ar[r] \ar[d]^\cong & \pi_{1,0} H \F_2=0 \ar[d] \\ \pi_1\THR(\F_2)_{h\Z/2} \ar[r]
& \pi_{1,0}\THR(\F_2). }\]
The left hand vertical map is an isomorphism by the homotopy orbit spectral sequence. Thus the lower horizontal map is zero, and we get a short exact sequence
\[
\xymatrix{
0\ar[r]
&
\pi_{1,0}\THR(\F_2)\ar@{>->}[r]
&
\F_2\oplus\F_2\ar@{->>}[r]
&
\F_2\ar[r]
&0.
}
\]
This shows that $\pi_{1,0}\THR(\F_2)$ is isomorphic to $\F_2$, which concludes the proof.
\end{proof}

\subsection{Towards $\THR(\mathbb{Z})$}\label{secTHRZ}

This section describes parts of the homotopy type and multiplicative structure of $\THR(\Z)$. We begin by computing its $\Z/2$-geometric fixed points and go on to describe the $\Z/2$-equivariant homotopy type of $\THR(\Z)[\tfrac{1}{2}]$.

\begin{theorem}\label{thm:phiTHR(Z)} There is an isomorphism of graded rings
\[ \pi_\ast \Phi^{\Z/2} \THR(\Z)\cong \F_2[b_1,b_2,e]/e^2,\]
where $|b_1|=|b_2|= 2$ and $|e|=1$.
\end{theorem}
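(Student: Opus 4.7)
By Corollary \ref{cor:fixed-pt-alg-str} applied to $(A,w) = (H\Z, \id)$, there is a stable equivalence of associative ring spectra
\[\Phi^{\Z/2}\THR(\Z) \simeq \Phi^{\Z/2}H\Z \wedge^{\mathbf{L}}_{H\Z} \Phi^{\Z/2}H\Z,\]
so the problem reduces to a derived smash product calculation. The first task is to compute $\pi_*\Phi^{\Z/2}H\Z$ as a graded ring. Applying $\Phi^{\Z/2}$ to the cofiber sequence $H\Z \xrightarrow{2} H\Z \to H\F_2$ gives a cofiber sequence, and since $\pi_0\Phi^{\Z/2}H\Z = \Z/2$ (either by direct inspection of the Mackey functor $\underline{\Z}$ or via \ref{compi0}), multiplication by $2$ acts as zero on the entire $\pi_*\Phi^{\Z/2}H\Z$-module. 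The long exact sequence thus breaks into short exact sequences
\[0 \to \pi_n\Phi^{\Z/2}H\Z \to \pi_n\Phi^{\Z/2}H\F_2 \to \pi_{n-1}\Phi^{\Z/2}H\Z \to 0.\]
Using $\pi_*\Phi^{\Z/2}H\F_2 \cong \F_2[w]$ with $|w|=1$ (from \cite{HuKriz}, as in Proposition \ref{geomTHRFp}), an immediate induction on degree gives $\pi_*\Phi^{\Z/2}H\Z \cong \F_2[b]$ with $|b|=2$, with the ring map to $\F_2[w]$ being the inclusion $b \mapsto w^2$.

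Next I would invoke the Künneth/Tor spectral sequence for the derived smash product over $H\Z$:
\[E^2_{p,q} = \Tor_{p,q}^{\pi_*H\Z}\bigl(\pi_*\Phi^{\Z/2}H\Z,\; \pi_*\Phi^{\Z/2}H\Z\bigr) \Rightarrow \pi_{p+q}\Phi^{\Z/2}\THR(\Z).\]
Since $\pi_*H\Z = \Z$ is concentrated in degree $0$, this reduces to $\Tor^\Z_*(\F_2[b], \F_2[b])$. The free resolution $0 \to \Z[b] \xrightarrow{2} \Z[b] \to \F_2[b] \to 0$ yields $\Tor_0 \cong \F_2[b_1, b_2]$ and $\Tor_1 \cong \F_2[b_1, b_2]$, with $b_1 = b \otimes 1$, $b_2 = 1 \otimes b$ in internal degree $2$; all higher Tor vanishes. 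Since only the columns $p=0,1$ are nonzero, all differentials $d_r$ for $r \geq 2$ vanish for bidegree reasons, and the sequence collapses at $E_2$. Additively this identifies $\pi_*\Phi^{\Z/2}\THR(\Z)$ with $\F_2[b_1,b_2]\{1,e\}$, where $e$ is a generator of $\Tor_1$ sitting in total degree $1$ and filtration $1$.

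It remains to establish the ring structure, in particular $e^2 = 0$. The spectral sequence is multiplicative, so $e^2$ has filtration $2$; since the $E_\infty$-page is concentrated in filtrations $0$ and $1$, $e^2$ must in fact lie in filtration $0$, i.e.\ $e^2 = \lambda_1 b_1 + \lambda_2 b_2$ for some $\lambda_i \in \F_2$. To rule this out I would exploit the map of associative ring spectra $\Phi^{\Z/2}\THR(\Z) \to \Phi^{\Z/2}\THR(\F_2)$ induced by the reduction $\Z \to \F_2$. Theorem \ref{theorem:thrfp} and the Künneth spectral sequence over the field $H\F_2$ (which has no higher Tor) give $\pi_*\Phi^{\Z/2}\THR(\F_2) \cong \F_2[w_1, w_2]$, and the naturality of the Künneth spectral sequence shows that $e \mapsto 0$ while $b_i \mapsto w_i^2$. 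Hence $0 = (\lambda_1 w_1^2 + \lambda_2 w_2^2)$ in $\F_2[w_1, w_2]$, forcing $\lambda_1 = \lambda_2 = 0$. This yields the desired isomorphism $\pi_*\Phi^{\Z/2}\THR(\Z) \cong \F_2[b_1, b_2, e]/e^2$.

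\textbf{Main obstacle.} The step most likely to cause trouble is the extension problem for $e^2$: the Tor spectral sequence only detects the associated graded, and without further input one cannot exclude a nontrivial lift into filtration $0$. The comparison map to $\Phi^{\Z/2}\THR(\F_2)$ provides the cleanest way to resolve this, but its effectiveness relies on knowing $\pi_*\Phi^{\Z/2}\THR(\F_2)$ explicitly and verifying naturality of the Künneth spectral sequence across the base change $\Z \to \F_2$.
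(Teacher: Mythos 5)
Your reduction to the derived smash product $\Phi^{\Z/2}H\Z \wedge^{\mathbf{L}}_{H\Z} \Phi^{\Z/2}H\Z$ via Corollary \ref{cor:fixed-pt-alg-str}, the computation $\pi_*\Phi^{\Z/2}H\Z \cong \F_2[b]$ (via the cofiber sequence for $\times 2$, a nice alternative to simply citing \cite{HuKriz}), the K\"unneth spectral sequence, and the identification of its $E^2$-page all agree with the paper's proof, as does the observation that $b_i \mapsto w_i^2$ under the comparison with $\Phi^{\Z/2}\THR(\F_2)$. The gap is in the claim that ``naturality of the K\"unneth spectral sequence shows that $e \mapsto 0$.'' This does not follow. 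The filtration arising from the bar construction is \emph{increasing}, $F_0 \subseteq F_1 \subseteq \cdots$, with $E^\infty_{p,\ast} \cong F_p/F_{p-1}$. The class $e$ has filtration exactly $1$ in the source, meaning $e \in F_1 \setminus F_0$. For the target, the K\"unneth spectral sequence over $H\F_2$ degenerates with everything in filtration $0$, so $F_0 = F_1 = \pi_1\Phi^{\Z/2}\THR(\F_2) \cong \F_2\{w_1,w_2\}$. A filtration-preserving map therefore sends $F_1(\text{source})$ into $F_1(\text{target}) = \pi_1(\text{target})$, which places no constraint whatsoever on the image of $e$; the induced map $E^\infty_{1,0}(\text{source}) \to E^\infty_{1,0}(\text{target}) = 0$ is zero vacuously (the target quotient $F_1/F_0$ is trivial), not because $e$ itself must die. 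As a toy illustration, the quotient $\Z/4 \to \Z/2$ is a map of filtered groups for the $2$-adic filtrations, preserving filtration, yet sends the filtration-$1$ generator to a nonzero filtration-$0$ element.

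Resolving this is exactly what the paper's Lemma \ref{techlemmabockst} is for, and it requires genuine input beyond the spectral sequence formalism. The paper splits $\Phi^{\Z/2}H\Z \simeq \bigvee_{n\geq 0}\Sigma^{2n}H\F_2$ and $\Phi^{\Z/2}H\F_2 \simeq \bigvee_{n\geq 0}\Sigma^{n}H\F_2$ as $H\Z$- and $H\F_2$-modules respectively, with the module structures induced by the Frobenius maps $\Phi^{\Z/2}f_R \colon HR \to \Phi^{\Z/2}HR$ used to pin down the splittings; the claim then reduces to showing that the component of $\Phi^{\Z/2}(\red)$ from the bottom summand $H\F_2 \subset \Phi^{\Z/2}H\Z$ to the degree-$1$ summand $\Sigma H\F_2 \subset \Phi^{\Z/2}H\F_2$ vanishes (a priori it could be $Sq^1$). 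This is proved by a diagram chase through fiber sequences that ultimately boils down to a computation with $\pi_0^{\Z/2}$ and the map of $2$-truncated Witt rings $W_2(\Z) \to W_2(\F_2)$. Your proposal would need to be supplemented with an argument of this kind; the naturality of the spectral sequence across the base change $\Z \to \F_2$ holds but simply does not give the needed vanishing.
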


\begin{proof}
The formula from Theorem \ref{geofixcalc} gives an equivalence of orthogonal ring spectra
\[\Phi^{\Z/2} \THR(\Z) \simeq \Phi^{\Z/2}H\Z \wedge_{H\Z}^{\mathbf{L}} \Phi^{\Z/2} H \Z.\]
We compute the homotopy ring $\pi_\ast(\Phi^{\Z/2}H\Z \wedge_{H\Z}^{\mathbf{L}} \Phi^{\Z/2} H \Z)$ by means of the multiplicative K\" unneth spectral sequence (see for example \cite{Tilson})
\[E^2_{p,q}=\Torr_{p,q}^{\mathbb{Z}}(\pi_\ast(\Phi^{\Z/2}H\Z), \pi_\ast(\Phi^{\Z/2}H\Z)) \Rightarrow \pi_{p+q}(\Phi^{\Z/2}H\Z \wedge_{H\Z}^{\mathbf{L}} \Phi^{\Z/2} H \Z). \]
This spectral sequence collapses at the $E^2$-term since the ring $\Z$ has global dimension $1$, and the $E^2$-page has only two potentially non-trivial columns $p=0$ and $p=1$. By the computations of \cite{HuKriz} the graded ring $\pi_\ast(\Phi^{\Z/2}H\Z)$ is a polynomial algebra over $\F_2$ on one generator $b$ of degree $2$. We compute the bigraded ring
\[E^2_{p,q}\cong\Torr_{p,q}^{\mathbb{Z}}(\F_2[b], \F_2[b])\]
by choosing the bigraded multiplicative resolution
\[(\Z[\overline{b},\varepsilon]/(\varepsilon^2),d(\varepsilon)=2) \longrightarrow \F_2[b],\]
where the bidegrees of the generators are $|\overline{b}|=(0,2)$ and $|\varepsilon|=(1,0)$, and where $\overline{b}$ maps to $b$ and $\varepsilon$ to zero. We then see that the $E^2$-term is isomorphic to
\[E^2_{p,q}\cong\F_2[\overline{b_1}, \overline{b_2}, \varepsilon]/(\varepsilon^2)\]
as a bigraded algebra,
where $\vert \overline{b_1} \vert = \vert \overline{b_2} \vert=(0,2)$ and $\vert \varepsilon \vert =(1,0)$. The only non-trivial terms are thus concentrated at the entries $(0, 2k)$ and $(1,2k)$, for $k \geq 0$, and there are no additive extensions. Let $b_i$ denote the element in $\pi_2(\Phi^{\Z/2}H\Z \wedge_{H\Z}^{\mathbf{L}} \Phi^{\Z/2} H \Z)$ which is detected by $\overline{b_i}$ and $e$ denote the element in $\pi_1(\Phi^{\Z/2}H\Z \wedge_{H\Z}^{\mathbf{L}} \Phi^{\Z/2} H \Z)$ which is detected by $\varepsilon$. The multiplicativity of the spectral sequence implies that the polynomial algebra $\F_2[b_1, b_2]$ is a subalgebra of $\pi_\ast(\Phi^{\Z/2}H\Z \wedge_{H\Z}^{\mathbf{L}} \Phi^{\Z/2} H \Z)$ (this also follows from the K\"unneth Theorem). The multiplicativity of the filtration tells us that the composition 
\[ \F_2[b_1, b_2] \subset\pi_\ast(\Phi^{\Z/2}H\Z \wedge_{H\Z}^{\mathbf{L}} \Phi^{\Z/2} H \Z) \xrightarrow{e \cdot (-)} \pi_\ast(\Phi^{\Z/2}H\Z \wedge_{H\Z}^{\mathbf{L}} \Phi^{\Z/2} H \Z)\]
is injective and that
\[\pi_\ast(\Phi^{\Z/2}H\Z \wedge_{H\Z}^{\mathbf{L}} \Phi^{\Z/2} H \Z) = \F_2[b_1, b_2] \oplus e\F_2[b_1, b_2].\]
It remains to prove that $e^2=0$. This is the only multiplicative extension problem of the K\"unneth spectral sequence, and it needs an extra input. 

We solve the multiplicative extension problem by exploiting our knowledge of $\Phi^{\Z/2} \THR(\F_2)$. The mod $2$ reduction map $\red \colon H\Z \to H\F_2$ induces a multiplicative map of orthogonal ring spectra
\[(\Phi^{\Z/2} \red)\wedge_{\red} (\Phi^{\Z/2} \red)\colon \Phi^{\Z/2}H\Z \wedge_{H\Z}^{\mathbf{L}} \Phi^{\Z/2} H \Z \longrightarrow \Phi^{\Z/2}H\F_2 \wedge_{H\F_2}^{\mathbf{L}} \Phi^{\Z/2} H \F_2. \]
On homotopy groups the morphism $\Phi^{\Z/2} \red \colon \Phi^{\Z/2}H\Z \to \Phi^{\Z/2}H\F_2$ induces the homomorphism of graded rings $\F_2[b] \to \F_2[w]$
which sends $b$ to $w^2$. It follows that the map
\[\pi_\ast(\Phi^{\Z/2}H\Z \wedge_{H\Z}^{\mathbf{L}} \Phi^{\Z/2} H \Z) \longrightarrow \pi_\ast(\Phi^{\Z/2}H\F_2 \wedge_{H\F_2}^{\mathbf{L}} \Phi^{\Z/2} H \F_2)\cong \F_2[w_1,w_2]\]
sends $b_1$ to $w_1^2$ and $b_2$ to $w_2^2$ (see Proposition \ref{geomTHRFp} for the computation of the geometric fixed points of $\THR(\F_2)$). If we can show that the map
\[\pi_1(\Phi^{\Z/2}H\Z \wedge_{H\Z}^{\mathbf{L}} \Phi^{\Z/2} H \Z) \longrightarrow \pi_1(\Phi^{\Z/2}H\F_2 \wedge_{H\F_2}^{\mathbf{L}} \Phi^{\Z/2} H \F_2)\]
is zero, we would have that $e^2=0$. Indeed, if $e^2$ is nonzero it must be either $b_1$, $b_2$ or $b_1+b_2$. But all these elements map to nontrivial elements in $\pi_\ast(\Phi^{\Z/2}H\F_2 \wedge_{H\F_2}^{\mathbf{L}} \Phi^{\Z/2} H \F_2)$ which will contradict to the fact that $e$ maps to zero.

In order to show the vanishing statement for the induced homomorphism on $\pi_1$, we need to analyze the map 
\[\Phi^{\Z/2} \red \colon \Phi^{\Z/2}H\Z \longrightarrow \Phi^{\Z/2}H\F_2.\]
The multiplication maps define maps of $\Z/2$-equivariant commutative orthogonal ring spectra $f_{\Z} \colon N^{\Z/2}_e H \Z \to H\Z$ and $f_{\F_2} \colon N^{\Z/2}_e H \F_2 \to H\F_2$ which we call the ``Frobenius maps'', following \cite{NikSchol}.
These Frobenius maps induce maps of algebras on geometric fixed points
\[\Phi^{\Z/2}f_{\Z} \colon H \Z \to \Phi^{\Z/2} H\Z \ \ \ \ \ \ \mbox{and}\ \ \ \ \ \ \ \Phi^{\Z/2}f_{\F_2} \colon H \F_2 \to \Phi^{\Z/2} H\F_2,\]
which induce $H\Z$ and $H\F_2$-module structures on $\Phi^{\Z/2} H\Z$ and $\Phi^{\Z/2} H\F_2$, respectively. These are the module structures of Theorem \ref{geofixcalc}, and we refer to them as the Frobenius module structures. Since any $H \F_2$-module splits uniquely up to homotopy, the Frobenius module structure of $\Phi^{\Z/2} H\F_2$ determines a unique  isomorphism $\theta \colon \Phi^{\Z/2} H\F_2 \cong\bigvee_{n \geq 0} \Sigma^{n} H\F_2$ in the homotopy category of $H\F_2$-modules. It follows from the universal coefficient theorem that any $H \Z$-module with homotopy groups concentrated in even degrees splits uniquely, up to automorphisms of the summands. Thus the Frobenius module structure on $\Phi^{\Z/2} H\Z$ determines a unique isomorphism $\zeta \colon \Phi^{\Z/2} H\Z \cong \bigvee_{n \geq 0} \Sigma^{2n} H\F_2$ in the homotopy category of $H\Z$-modules.
Under these decompositions, the map
\[\pi_1(\Phi^{\Z/2}H\Z \wedge_{H\Z}^{\mathbf{L}} \Phi^{\Z/2} H \Z) \longrightarrow \pi_1(\Phi^{\Z/2}H\F_2 \wedge_{H\F_2}^{\mathbf{L}} \Phi^{\Z/2} H \F_2) \]
corresponds to the map
\[\pi_1(\bigvee_{m,n \geq 0} \Sigma^{2m+2n} H\F_2 \wedge_{H\Z}^{\mathbf{L}} H\F_2) \to \pi_1(\bigvee_{k,l \geq 0} \Sigma^{k+l} H\F_2 \wedge_{H\Z}^{\mathbf{L}} H\F_2) \to \pi_1(\bigvee_{k,l \geq 0} \Sigma^{k+l} H\F_2 \wedge_{H\F_2}^{\mathbf{L}} H\F_2),\]
 where the first map is induced by $\theta (\Phi^{\Z/2}\red) \zeta^{-1} \wedge  \theta (\Phi^{\Z/2}\red) \zeta^{-1}$ and the second map is the change of base. The only summands which can affect $\pi_1$ are $m=n=0$ and $k,l \in \{0,1\}$. In other words, we need to show that the map
\[\pi_1(H \F_2 \wedge_{H \Z}^{\mathbf{L}} H \F_2) \to \pi_1(H \F_2 \wedge_{H \F_2}^{\mathbf{L}} H \F_2) \oplus \pi_1(\Sigma H \F_2 \wedge_{H \F_2}^{\mathbf{L}} H \F_2) \oplus \pi_1 (H \F_2 \wedge_{H \F_2}^{\mathbf{L}} \Sigma H \F_2) \]
is zero. The first component of this map is zero because $\pi_1(H \F_2 \wedge_{H \F_2}^{\mathbf{L}} H \F_2)=\pi_1(H \F_2)=0$. The other two components are zero by Lemma \ref{techlemmabockst} below.
\end{proof}

\begin{lemma} \label{techlemmabockst} The composite
\[H \F_2 \xrightarrow{\incl_{n=0}} \bigvee_{n \geq 0} \Sigma^{2n} H \F_2 \xrightarrow{\theta \Phi^{\Z/2}(\red) \zeta^{-1}}  \bigvee_{n \geq 0} \Sigma^{n} H \F_2\xrightarrow{\proj_{n=1}}  \Sigma H \F_2 \]
is equal to zero.
 \end{lemma}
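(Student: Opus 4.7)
My plan is to show that the composite factors through the inclusion of the $n=0$ summand of $\bigvee_n \Sigma^n H\F_2$, so that its projection onto the $n=1$ summand vanishes. The key ingredient will be a uniqueness result for $H\Z$-linear maps $H\F_2 \to \Phi^{\Z/2}H\Z$: since $\pi_\ast \Phi^{\Z/2}H\Z \cong \F_2[b]$ is $2$-torsion, the Frobenius unit $\Phi^{\Z/2}f_\Z \colon H\Z \to \Phi^{\Z/2}H\Z$ satisfies $2 \cdot \Phi^{\Z/2}f_\Z = 0$ and therefore factors through $\red \colon H\Z \to H\F_2$ as an $H\Z$-linear map $\iota \colon H\F_2 \to \Phi^{\Z/2}H\Z$. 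Applying $[-,\Phi^{\Z/2}H\Z]_{H\Z}$ to the cofiber sequence $H\Z \xrightarrow{2} H\Z \xrightarrow{\red} H\F_2$ and using that $\pi_1 \Phi^{\Z/2}H\Z = 0$ gives $[H\F_2, \Phi^{\Z/2}H\Z]_{H\Z} \cong \F_2$, so $\iota$ is the unique nontrivial $H\Z$-linear map up to homotopy; since $\zeta^{-1} \circ \incl_{n=0}$ is $H\Z$-linear and induces the identity on $\pi_0$, it coincides with $\iota$.

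Next I will verify that $\Phi^{\Z/2}(\red) \circ \iota = \Phi^{\Z/2}f_{\F_2}$. Naturality of the Frobenius applied to the ring map $\red$ yields $\Phi^{\Z/2}(\red) \circ \Phi^{\Z/2}f_\Z = \Phi^{\Z/2}f_{\F_2} \circ \red$, so precomposing with $\red$ we have $\Phi^{\Z/2}(\red) \circ \iota \circ \red = \Phi^{\Z/2}f_{\F_2} \circ \red$. The analogous cofiber-sequence argument applied to $\Phi^{\Z/2}H\F_2$ (using $\pi_{-1}\Phi^{\Z/2}H\F_2 = 0$) shows that $\red^\ast$ is injective on $[H\F_2, \Phi^{\Z/2}H\F_2]$, forcing the desired equality.

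Finally, $\theta$ is an $H\F_2$-linear splitting and therefore identifies the unit $\Phi^{\Z/2}f_{\F_2}$ with the inclusion of the $n=0$ summand into $\bigvee_n \Sigma^n H\F_2$. Assembling everything, the composite of the lemma equals
\[
\proj_{n=1} \circ \theta \circ \Phi^{\Z/2}(\red) \circ \zeta^{-1} \circ \incl_{n=0} = \proj_{n=1} \circ \incl_{n=0} = 0.
\]
The main technical points will be the two mapping-space computations controlled by the cofiber sequence for $\red$; the rest is formal manipulation of units and naturality of the Frobenius.
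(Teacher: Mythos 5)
Your Steps 1--4, identifying $\zeta^{-1}\circ\incl_{n=0}$ with the mod-$2$ reduction $\iota$ of the Frobenius unit $\Phi^{\Z/2}f_\Z$, are correct and match the paper's setup. The gap is in Step 5. You assert that $\red^\ast$ is injective on $[H\F_2, \Phi^{\Z/2}H\F_2]_{H\Z}$ by ``the analogous cofiber-sequence argument'' using $\pi_{-1}\Phi^{\Z/2}H\F_2 = 0$, but this invokes the wrong group. Applying $[-,\,\Phi^{\Z/2}H\F_2]_{H\Z}$ to the triangle $H\Z \xrightarrow{2} H\Z \xrightarrow{\red} H\F_2 \xrightarrow{\beta} \Sigma H\Z$ gives an exact sequence
\[
\pi_1\Phi^{\Z/2}H\F_2 \xrightarrow{\ \beta^\ast\ } [H\F_2, \Phi^{\Z/2}H\F_2]_{H\Z} \xrightarrow{\ \red^\ast\ } \pi_0\Phi^{\Z/2}H\F_2,
\]
and the left-hand term is $\pi_1\Phi^{\Z/2}H\F_2 \cong \F_2$, not $\pi_{-1}$. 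Since $2$ acts by zero on $\pi_1\Phi^{\Z/2}H\F_2$, the map $\beta^\ast$ is injective, so $\ker(\red^\ast)\cong\F_2$ and $\red^\ast$ is not injective. Thus the equality $\Phi^{\Z/2}(\red)\circ\iota=\Phi^{\Z/2}f_{\F_2}$ does not follow.

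This is not a repairable slip, because that equality is in fact \emph{equivalent} to the lemma. The difference $d := \Phi^{\Z/2}(\red)\circ\iota - \Phi^{\Z/2}f_{\F_2}$ lies in $\ker(\red^\ast)=\im(\beta^\ast)$, so $d = \beta^\ast(\gamma)$ for some $\gamma\in\pi_1\Phi^{\Z/2}H\F_2\cong\F_2$. If $\gamma$ were nonzero then, using that $\theta\circ\Phi^{\Z/2}f_{\F_2}=\incl_{n=0}$ and that the unique nonzero $H\Z$-linear map $\Sigma H\Z\to\bigvee_n\Sigma^n H\F_2$ is $\incl_{n=1}\circ\Sigma(\red)$, one computes
\[
\proj_{n=1}\circ\theta\Phi^{\Z/2}(\red)\zeta^{-1}\circ\incl_{n=0} = \Sigma(\red)\circ\beta,
\]
the mod-$2$ Bockstein, which is nonzero. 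So ruling out this $\F_2$'s worth of ambiguity \emph{is} the content of the lemma, and Step 5 assumes what is to be proved.

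The paper's proof does the work at exactly this point by a concrete computation rather than a formal one. It passes to the fibers $F_\Z$, $F_{\F_2}$ of the Frobenius maps, produces the morphism of exact triangles under the splittings $\zeta$, $\theta$, $\zeta_F$, $\theta_F$, and deduces that the lemma is equivalent to $\pi_0(\Phi^{\Z/2}r)=0$ for the induced map $r\colon F_\Z\to F_{\F_2}$; by isotropy separation this is $\pi_0^{\Z/2}(r)=0$. That vanishing is then read off from the identification $\pi_0^{\Z/2}N^{\Z/2}_eH\Z\cong W_2(\Z)$: the kernel of the ghost map $w_1$ on $W_2(\Z)$ is $\{(2m,-2m^2):m\in\Z\}$, which dies under mod-$2$ reduction to $W_2(\F_2)$. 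This arithmetic input is precisely what the cofiber-sequence argument cannot supply, and it is the piece missing from your proposal.
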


\begin{proof}
We begin by choosing exact triangles respectivelry in the homotopy category of $N^{\Z/2}_e H \Z$ and $N^{\Z/2}_e H \F_2$-modules
\[F_{\Z} \to N^{\Z/2}_e H \Z \stackrel{f_{\Z}}{\longrightarrow} H \Z\to \Sigma F_{\Z}\ \ \ \ \ \ \mbox{and}\ \ \ \ \ \ \ \ F_{\F_2} \to N^{\Z/2}_e H \F_2 \stackrel{f_{\F_2}}{\longrightarrow} H \F_2\to \Sigma F_{\F_2}.\] By pulling back the second triangle with respect to the mod $2$ reduction map we can regard it as a triangle in the homotopy category of $N^{\Z/2}_e H \Z$-modules. We get a morphism of triangles in the homotopy category of $N^{\Z/2}_e H \Z$-modules
\[\xymatrix@C=50pt{F_{\Z} \ar[r] \ar[d]^-r &  N^{\Z/2}_e H \Z \ar[r]^-{f_{\Z}} \ar[d]^-{N^{\Z/2}_e(\red)} & H \Z \ar[r] \ar[d]^-{\red} & \Sigma F_{\Z} \ar[d]^-{\Sigma r} \\  F_{\F_2} \ar[r] &  N^{\Z/2}_e H \F_2 \ar[r]^-{f_{\F_2}} & H \F_2 \ar[r] & \Sigma F_{\F_2}\rlap{\ ,} }\]
where $r$ is the map induced on homotopy fibers. From here we obtain a commutative diagram of $H \Z$-modules on geometric fixed-points
\[\xymatrix@C=50pt{\Phi^{\Z/2}F_{\Z} \ar[r] \ar[d]^{\Phi^{\Z/2} r} &  H \Z \ar[r]^-{\Phi^{\Z/2} f_{\Z}} \ar[d]^{\red} &  \Phi^{\Z/2} H \Z \ar[r] \ar[d]^{\Phi^{\Z/2} (\red)} & \Sigma \Phi^{\Z/2} F_{\Z} \ar[d]^{\Sigma \Phi^{\Z/2}  r} \\ \Phi^{\Z/2} F_{\F_2} \ar[r] &  H \F_2 \ar[r]^-{\Phi^{\Z/2} f_{\F_2}} & \Phi^{\Z/2} H \F_2 \ar[r] & \Sigma \Phi^{\Z/2} F_{\F_2},} \]
where the rows are exact triangles. We want to identify this diagram under the splittings provided by $\theta$ and $\zeta$.
Since the bottom row is pulled back from a triangle in the homotopy category of $H \F_2$-modules, it splits as
\[
\xymatrix@C=40pt{\Phi^{\Z/2} F_{\F_2} \ar[r] \ar[d]^{\cong}_{\theta_{F}} &  H \F_2 \ar[d]_{\id} \ar[r]^-{\Phi^{\Z/2} f_{\F_2}} & \Phi^{\Z/2} H \F_2 \ar[r] \ar[d]^{\cong}_{\theta} & \Sigma \Phi^{\Z/2} F_{\F_2} \ar[d]^{\cong}_{\Sigma \theta_{F}}
\\
\bigvee_{n \geq 1} \Sigma^{n-1} H \F_2  \ar[r]^-0 & H \F_2 \ar[r]^-{\incl_{n=0}} & \bigvee_{n \geq 0} \Sigma^{n} H \F_2 \ar[r]^-{\proj_{n\geq 1}} & \bigvee_{n \geq 1}  \Sigma^{n} H \F_2 }
\]
where the homotopy type of the fiber $\Phi^{\Z/2} F_{\F_2}$ is determined by the fact that $\Phi^{\Z/2} f_{\F_2}$ corresponds to a summand inclusion. 
Similarly, the top row splits in the homotopy category of $H \Z$-modules
\[\xymatrix@C=30pt{\Phi^{\Z/2}F_{\Z} \ar[r] \ar[d]^-{\cong}_{\zeta_{F}} &  H \Z \ar[r]^-{\Phi^{\Z/2} f_{\Z}} \ar[d]_{\id} &  \Phi^{\Z/2} H \Z \ar[r] \ar[d]^{\cong}_{\zeta} & \Sigma \Phi^{\Z/2} F_{\Z} \ar[d]^{\cong}_{\Sigma \zeta_{F}} 
\\
H \Z \vee \bigvee_{n \geq 1} \Sigma^{2n-1} H \F_2  \ar[r]^-{2 \vee 0} & H \Z \ar[r]^-{\red \vee 0} & H \F_2 \vee \bigvee_{n \geq 1} \Sigma^{2n} H \F_2 \ar[r]^-{\beta \vee \id} & \Sigma H \Z  \vee \bigvee_{n \geq 1} \Sigma^{2n} H \F_2\rlap{ ,}}\]
where $\beta$ is the Bockstein and the isomorphisms here are easily seen to be unique by the universal coefficient theorem. Now combining the latter three diagrams we get a morphism of exact triangles of $H \Z$-modules
\[\xymatrix{H \Z \vee \bigvee_{n \geq 1} \Sigma^{2n-1} H \F_2  \ar[r]^-{2 \vee 0} \ar[d]_{\theta_{F} (\Phi^{\Z/2}r) \zeta_{F}^{-1}} & H \Z \ar[d]^{\red} \ar[r]^-{\red \vee 0} & H \F_2 \vee \bigvee_{n \geq 1} \Sigma^{2n} H \F_2 \ar[r]^-{\beta \vee \id} \ar[d]^{\theta \Phi^{\Z/2}(\red) \zeta^{-1}} & \Sigma H \Z  \vee \bigvee_{n \geq 1} \Sigma^{2n} H \F_2 \ar[d]^{\Sigma \theta_{F} (\Phi^{\Z/2}r) \zeta_{F}^{-1}} \\ \bigvee_{n \geq 1} \Sigma^{n-1} H \F_2  \ar[r]^-0 & H \F_2 \ar[r]^-{\incl} & H \F_2 \vee \bigvee_{n \geq 1} \Sigma^{n} H \F_2 \ar[r]^-{\proj} & \bigvee_{n \geq 1}  \Sigma^{n} H \F_2\rlap{\ .}}\]
From the commutativity of the right most square it follows that the composite
\[\xymatrix@C=35pt{H \F_2 \ar[r]^-{\incl} & H \F_2 \vee \bigvee_{n \geq 1} \Sigma^{2n} H \F_2 \ar[rr]^-{\theta \Phi^{\Z/2}(\red) \zeta^{-1}} & & H \F_2 \vee \bigvee_{n \geq 1} \Sigma^{n} H \F_2 \ar[r]^-{\proj_{n=1}} & \Sigma H \F_2 }\]
is equal to zero if and only if $\pi_1(\Sigma \theta_{F} \Phi^{\Z/2}(r) \zeta_{F}^{-1})= 0$ and hence $\pi_0(\theta_{F} \Phi^{\Z/2}(r) \zeta_{F}^{-1})=0$, or equivalently if $\pi_0(\Phi^{\Z/2}r)=0$. Since the Frobenius maps $f_{\Z}$ and $f_{\F_2}$ are isomorphisms on $\pi_0$ of underlying spectra, we must have that $\pi_0(F_{\Z})=0$ and $\pi_0(F_{\F_2})=0$. This implies that the horizontal maps in the commutative diagram
\[\xymatrix@C=50pt{\pi_0^{\Z/2}(F_{\Z}) \ar[r]^{\cong} \ar[d]_{\pi_0^{\Z/2}(r)} & \pi_0(\Phi^{\Z/2} F_{\Z}) \ar[d]^{\pi_0(\Phi^{\Z/2}r)} \\ \pi_0^{\Z/2}(F_{\F_2}) \ar[r]^{\cong} & \pi_0(\Phi^{\Z/2} F_{\F_2})}\]
are isomorphisms, and therefore we can equivalently show that $\pi^{\Z/2}_0(r)=0$.
From Proposition \ref{pi_0norm} we see that $\pi_0^{\Z/2}(r)$ coincides up to isomorphism with the map induced on the kernels in the diagram
\[\xymatrix{0 \ar[r] & \Z \ar@{-->}[d] \ar[r]^-i & W_2(\Z) \ar[d]^{\red} \ar[r]^-{w_1} & \Z \ar[d]^-{\red} \ar[r] & 0 \\ 0 \ar[r] & \F_2 \ar[r]^-j & W_2(\F_2) \ar[r]^-{w_1} & \F_2 \ar[r] & 0\rlap{\ ,} }\]
where $w_1$ denotes the first ghost map, given by $w_1(a,b)=a^2+2b$. It is easy to calculate that $i(m)=(2m, -2m^2)$ and $j(l)=(0,l)$. It follows that the dashed arrow is equal to zero, and hence $\pi_0^{\Z/2}(r)=0$.
\end{proof}

\begin{prop}\label{prop:piTHR(Z)}
There are isomorphisms of abelian groups
\begin{align*}\pi_0 (\THR(\Z)^{\Z/2}) \cong & \Z\\
\pi_1 (\THR(\Z)^{\Z/2}) \cong & \Z/2\\
\pi_2 (\THR(\Z)^{\Z/2}) \cong & \Z/2. 
\end{align*}
The generator of $\pi_1 (\THR(\Z)^{\Z/2})$ maps to $e \in \pi_1 \Phi^{\Z/2}\THR(\Z)$ and squares to $0$. The generator of $\pi_2 (\THR(\Z)^{\Z/2})$ maps to $b_1 + b_2 \in \pi_2 \Phi^{\Z/2}\THR(\Z)$ and has infinite multiplicative order.
\end{prop}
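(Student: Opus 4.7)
The plan is to combine the calculation $\pi_{\ast}\Phi^{\Z/2}\THR(\Z)\cong \F_2[b_1,b_2,e]/e^2$ from Theorem \ref{thm:phiTHR(Z)} with the isotropy separation cofiber sequence
\[
\THR(\Z)^{h\Z/2}\longrightarrow \THR(\Z)^{\Z/2}\longrightarrow \Phi^{\Z/2}\THR(\Z),
\]
feeding in B\"okstedt's calculation of $\pi_{\ast}\THH(\Z)$ as input to the homotopy fixed point spectral sequence on the left. The computation of $\pi_0\THR(\Z)^{\Z/2}$ would come directly from Corollary \ref{compi0}: for $H\Z$ with trivial involution, $\pi_0 H\Z=\pi_0^{\Z/2}H\Z=\Z$, the multiplicative norm is $N(a)=a^2$, and the transfer is $\tran(1)=2$, so the defining relations $x\otimes a^2 y\sim xa^2\otimes y$ and $x\otimes 2y\sim 2x\otimes y$ already hold in $\Z\otimes\Z\cong\Z$, giving $\pi_0\THR(\Z)^{\Z/2}\cong\Z$. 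Being a unital ring map, the restriction $\pi_0\THR(\Z)^{\Z/2}\to \pi_0\Phi^{\Z/2}\THR(\Z)=\F_2$ is mod-$2$ reduction.

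For $\pi_1$ and $\pi_2$ I would work through the long exact sequence of the cofiber sequence, computing $\pi_{\ast}\THR(\Z)^{h\Z/2}$ in low degrees via the HFPSS $E_2^{p,q}=H^p(\Z/2;\pi_q\THH(\Z))\Rightarrow \pi_{q-p}\THR(\Z)^{h\Z/2}$. B\"okstedt's theorem gives $\pi_0\THH(\Z)=\Z$, $\pi_1\THH(\Z)=\pi_2\THH(\Z)=0$ and $\pi_3\THH(\Z)=\Z/2$, with the $\Z/2$-action trivial on $\pi_0$ and forced on $\pi_3$. Combined with the known surjection $\pi_0\THR(\Z)^{\Z/2}=\Z\twoheadrightarrow \F_2$, this would identify the image of $\pi_0\THR(\Z)^{h\Z/2}\to \Z$ as $2\Z$ and then pin down the connecting maps out of $\pi_1\Phi^{\Z/2}=\F_2\{e\}$ and $\pi_2\Phi^{\Z/2}=\F_2\{b_1,b_2\}$. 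The crux is to show that $e$ lifts to a class in $\pi_1\THR(\Z)^{\Z/2}$, and that among the two generators $b_1,b_2$ only their sum $b_1+b_2$ is hit by the restriction $\pi_2\THR(\Z)^{\Z/2}\to \pi_2\Phi^{\Z/2}\THR(\Z)$. The identification of $b_1+b_2$ as the lifting class should use the symmetry of the factorization $\Phi^{\Z/2}H\Z\wedge^{\mathbf{L}}_{H\Z}\Phi^{\Z/2}H\Z$ from the proof of Theorem \ref{thm:phiTHR(Z)}, in which $b_1$ and $b_2$ come from the two tensor factors and the image of the restriction is the ``diagonal''.

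The final statement on infinite multiplicative order would then be immediate: by Corollary \ref{cor:fixed-pt-alg-str} the restriction $\pi_{\ast}\THR(\Z)^{\Z/2}\to \pi_{\ast}\Phi^{\Z/2}\THR(\Z)$ is a map of graded rings, and for every $n\geq 1$ the image
\[
(b_1+b_2)^n=\sum_{k=0}^{n}\binom{n}{k}b_1^k b_2^{n-k}
\]
is a nonzero homogeneous element of $\F_2[b_1,b_2,e]/e^2$, so any lift has infinite order. The main obstacle will be the spectral sequence bookkeeping for $\THR(\Z)^{h\Z/2}$ and the careful analysis of the connecting homomorphisms of the isotropy separation sequence: specifically, proving that $e$ genuinely lifts and that $b_1$ and $b_2$ do \emph{not} lift individually, so that the image in $\pi_2\Phi^{\Z/2}\THR(\Z)$ is exactly the one-dimensional subspace spanned by $b_1+b_2$. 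The symmetry coming from the K\"unneth/Frobenius arguments in the proof of Theorem \ref{thm:phiTHR(Z)} should be the key ingredient for this last step.
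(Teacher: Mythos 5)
Your overall strategy matches the paper's (combine Theorem~\ref{thm:phiTHR(Z)} with the isotropy separation sequence, feed in B\"okstedt's $\pi_\ast\THH(\Z)$, and use ring structure for the multiplicative statements), and the $\pi_0$ derivation from Corollary~\ref{compi0} and the infinite-order argument are both correct. However, there is a genuine error in the key input: you write the isotropy separation sequence as
\[
\THR(\Z)^{h\Z/2}\longrightarrow \THR(\Z)^{\Z/2}\longrightarrow \Phi^{\Z/2}\THR(\Z),
\]
and propose to compute the left-hand term via the homotopy fixed point spectral sequence. This sequence is wrong. The isotropy separation cofiber sequence has \emph{homotopy orbits} on the left, not homotopy fixed points:
\[
\THR(\Z)_{h\Z/2}\longrightarrow \THR(\Z)^{\Z/2}\longrightarrow \Phi^{\Z/2}\THR(\Z).
\]
Homotopy fixed points $\THR(\Z)^{h\Z/2}$ appear only in the Tate square, and they sit in a different cofiber sequence (with the Tate construction as cofiber). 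Moreover $\THR(\Z)^{h\Z/2}$ is not connective, so the HFPSS would give contributions in arbitrarily negative total degree, with $E_2$-terms that look nothing like what you would need. You must instead use the homotopy orbit spectral sequence $E^2_{p,q}=H_p(\Z/2;\pi_q\THH(\Z))\Rightarrow\pi_{p+q}\THR(\Z)_{h\Z/2}$. With the trivial $\Z/2$-action on $\pi_0\THH(\Z)=\Z$ and $\pi_1=\pi_2=0$, this gives $\pi_0\THR(\Z)_{h\Z/2}\cong\Z$, $\pi_1\THR(\Z)_{h\Z/2}\cong\Z/2$, and $\pi_2\THR(\Z)_{h\Z/2}=0$, which are precisely the inputs the long exact sequence needs. (The paper also uses the injectivity of the $\pi_0$-transfer, from Theorem~\ref{pi_0THR}, to kill one boundary map, and the unit $H\Z\to\THR(\Z)$ to show the boundary $\pi_2\Phi^{\Z/2}\to\pi_1(\THR(\Z)_{h\Z/2})$ is surjective.)

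Two smaller points. First, your sketch for identifying the image as $\F_2\{b_1+b_2\}$ via ``symmetry of the K\"unneth factorization'' is plausible but vague; the paper instead uses the map $S^{1,1}_+\wedge H\Z\to\THR(\Z)$ and checks that it induces an isomorphism on $\pi_2\Phi^{\Z/2}(-)$, which pins down the kernel of the boundary map concretely. Second, you assert that the generator of $\pi_1$ ``squares to $0$'' but give no argument for it; knowing $e^2=0$ in $\Phi^{\Z/2}$ only says the square maps to zero, not that it vanishes in $\pi_2(\THR(\Z)^{\Z/2})\cong\Z/2$. The paper's argument is indirect: $e$ does not divide $b_1+b_2$ in $\F_2[b_1,b_2,e]/e^2$, so the generator of $\pi_1$ cannot divide the generator of $\pi_2$, and hence its square must vanish.
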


\begin{proof}
The $\pi_0$-statement follows immediately from Theorem \ref{pi_0THR}. For the remaining isomorphisms we consider the long exact sequence of homotopy groups for the isotropy separation sequence of $\THR(\Z)$. In the subsequence
\[ \pi_2 \Phi^{\Z/2}\THR(\Z)\twoheadrightarrow\! \pi_1 (\THR(\Z)_{h\Z/2}) \stackrel{0}{\to}\! \pi_1(\THR(\Z)^{\Z/2}) \stackrel{\cong}{\to}\! \pi_1 \Phi^{\Z/2}\THR(\Z) \stackrel{0}{\to}\! \pi_0 (\THR(\Z)_{h\Z/2}),\]
the rightmost map is $0$, because the transfer map on $\pi_0$ is  injective, by Theorem \ref{pi_0THR}. It follows that the middle right hand map is surjective. Using the unit map $H\Z \to \THR(\Z)$ we see that the leftmost map is surjective and so, by exactness, the middle right hand map is an isomorphism. By Theorem \ref{thm:phiTHR(Z)} the target group $\pi_1 \Phi^{\Z/2}\THR(\Z)$ is isomorphic to $\Z/2$.

The long exact sequence continues to the left with
\[  0=\pi_2 (\THR(\Z)_{h\Z/2}) \to \pi_2(\THR(\Z)^{\Z/2}) \hookrightarrow \pi_2 \Phi^{\Z/2}\THR(\Z) \twoheadrightarrow \pi_1 (\THR(\Z)_{h\Z/2})\cong\Z/2,\]
and we have already seen that the right hand map is surjective. From the homotopy orbit spectral sequence for $\THR(\Z)$ we see that $\pi_2 (\THR(\Z)_{h\Z/2}) = 0$ and $\pi_1 (\THR(\Z)_{h\Z/2}) \cong \Z/2$. Since $\pi_2\Phi^{\Z/2}\THR(\Z)$ is isomorphic to $\Z/2 \oplus \Z/2$ it follows by exactness that $\pi_2(\THR(\Z)^{\Z/2})$ is isomorphic to $\Z/2$.

Using that the canonical map $S^{1,1}_+ \wedge H\Z \to \THR(\Z)$ induces an isomorphism on $\pi_2 \Phi^{\Z/2}(-)$ one can check that the kernel of the right hand map is a copy of $\Z/2$ generated by $b_1 + b_2$. The generator $b$ of $ \pi_2(\THR(\Z)^{\Z/2})$ maps to the element $b_1 + b_2$ of $\pi_2 \Phi^{\Z/2}\THR(\Z)$, which has infinite multiplicative order. Since $\THR(\Z)^{\Z/2} \to  \Phi^{\Z/2}\THR(\Z) $ is a map of ring spectra, the element $b$ must also have infinite order. The generator $e'$of $\pi_1 (\THR(\Z)^{\Z/2})$ maps to $e \in \pi_1 \Phi^{\Z/2}\THR(\Z)$. Since $e$ does not divide $b_1 + b_2$ it follows that $e'$ cannot divide $b$. Hence there is no room for $(e')^2$ in $\pi_2 (\THR(\Z)^{\Z/2}) \cong \Z/2\{b\}$ to be non-zero.
\end{proof}

\begin{rem}
B{\"o}kstedt showed in \cite{Bok2} that $\THH(\Z)$ is equivalent to the wedge of Eilenberg-MacLane spectra
\[\THH(\Z)\simeq H\Z \vee \bigvee_{k \geq 1} \Sigma^{2k-1} H\Z/k.\]
Hence, for parity reasons the multiplication in $\pi_\ast \THH(\Z)$ must be zero in positive degrees. However, the multiplicative structure on the spectrum $\THH(\Z)$ is far from trivial, as can be seen by taking homology with $\F_p$ coefficients. Theorem \ref{thm:phiTHR(Z)} and Proposition \ref{prop:piTHR(Z)} show that in the $\Z/2$-equivariant case the complexity of the multiplicative structure is apparent already at the level of homotopy groups. In future work we hope to calculate the full bigraded homotopy ring $\pi_{\ast,\ast} \THR(\Z)$ and determine the $\Z/2$-equivariant homotopy type of $\THR(\Z)$. Based on the calculations of this section we conjecture that there is a stable equivalence of $\Z/2$-equivariant spectra
\[ \THR(\Z)\simeq H\Z \vee \bigvee_{k \geq 1} \Sigma^{2k-1,k} H\Z/k.\]
\end{rem}

We show now that this formula holds after localizing at an odd prime $p$.

\begin{theorem} \label{THRZp} Let $p$ be an odd prime. There is a stable equivalence of $\Z/2$-spectra
\[\THR(\Z)_{(p)} \simeq\THR(\Z_{(p)})\simeq  H\Z_{(p)} \vee \bigvee_{k \geq 1} \Sigma^{2k-1, k} H(\Z/p^{\nu_p(k)})\]
where $\nu_p(k)$ is the $p$-adic valuation of $k$.
\end{theorem}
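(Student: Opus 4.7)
The plan is to follow the strategy of Theorem \ref{theorem:thrfp}: build a $\Z/2$-equivariant map from the proposed wedge decomposition into $\THR(\Z_{(p)})$ that realizes Bökstedt's non-equivariant splitting, and verify it is an equivalence by checking underlying spectra and $\Z/2$-geometric fixed points separately. The key simplification at odd primes is that the geometric fixed points of both sides vanish, so the fine structure lives entirely in the Borel-equivariant part.

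For the first equivalence $\THR(\Z)_{(p)} \simeq \THR(\Z_{(p)})$, I would examine the canonical map induced by $\Z \to \Z_{(p)}$. On underlying spectra this is Bökstedt's $p$-localization of $\THH(\Z)$, hence an equivalence after $p$-localizing the source. On $\Z/2$-geometric fixed points both sides are contractible: by Theorem \ref{thm:phiTHR(Z)} the ring $\Phi^{\Z/2}\THR(\Z)$ is an $\F_2$-algebra and is killed after inverting any odd prime, while $\Phi^{\Z/2}\THR(\Z_{(p)}) \simeq \ast$ by Corollary \ref{geomzero} since $\tfrac{1}{2} \in \Z_{(p)}$. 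A map of $\Z/2$-spectra between objects with contractible geometric fixed points is a $\Z/2$-equivalence iff it is an underlying equivalence; this follows from the cofiber sequence $E\Z/2_+ \wedge X \to X \to \widetilde{E\Z/2}\wedge X$, which identifies $X$ with $E\Z/2_+\wedge X$ under the vanishing assumption, reducing everything to underlying data with $\Z/2$-action.

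Next, to build the equivariant splitting map
\[
H\Z_{(p)}\vee \bigvee_{k\geq 1}\Sigma^{2k-1,k}H(\Z/p^{\nu_p(k)})\longrightarrow \THR(\Z_{(p)}),
\]
I would take the unit of the $H\Z_{(p)}$-algebra structure (Corollary \ref{multTHR}) for the first summand, and for each $k$ construct an $H\Z_{(p)}$-module map $\Sigma^{2k-1,k}H(\Z/p^{\nu_p(k)}) \to \THR(\Z_{(p)})$ by choosing a class $\tilde{x}_k \in \pi_{2k-1,k}\THR(\Z_{(p)})$ that restricts to Bökstedt's generator $x_k\in \pi_{2k-1}\THH(\Z_{(p)})\cong \Z/p^{\nu_p(k)}$ and is annihilated by $p^{\nu_p(k)}$. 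Analogous to Lemma \ref{keylemma}, these lifts are obtained by exploiting the vanishing of $\Phi^{\Z/2}\THR(\Z_{(p)})$ together with the collapse of the Borel spectral sequence at odd primes (since $\Z/2$-group cohomology with $p$-local coefficients vanishes in positive degrees), which computes $\pi_{2k-1,k}\THR(\Z_{(p)})$ in terms of the $\Z/2$-action on $\pi_{2k-1}\THH(\Z_{(p)})$ induced by the dihedral involution. A comparison between the sign of the action on $S^{k\sigma}$ and the Loday involution on $HH_{2k-1}$ (both equal to $(-1)^k$, using $(-1)^{k(2k-1)}=(-1)^k$) ensures that the restriction map is surjective onto the whole of $\Z/p^{\nu_p(k)}$, so lifts of the required order exist.

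Once the map is constructed, the equivalence check is the easy part. Non-equivariantly it is Bökstedt's splitting of $\THH(\Z)_{(p)}$. On $\Z/2$-geometric fixed points, both sides vanish: the target by Corollary \ref{geomzero}, the source because each Eilenberg--MacLane summand has $\tfrac{1}{2}$ invertible in its coefficients. Hence the criterion from the first step promotes the underlying equivalence to a $\Z/2$-equivariant stable equivalence, completing the argument. The main obstacle is the precise construction of the lifts $\tilde{x}_k$ and the verification of their order, which depends on a concrete identification of the $\Z/2$-action on Bökstedt's generators via the dihedral structure. This sign analysis is tractable at odd primes thanks to the vanishing of the relevant group cohomology, but genuinely fails at $p=2$ where $\Phi^{\Z/2}\THR(\Z)$ is nontrivial (Theorem \ref{thm:phiTHR(Z)}) and an entirely different approach would be required.
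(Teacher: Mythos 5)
Your overall strategy is correct and mirrors the paper's: build a map from the wedge of Eilenberg--MacLane pieces into $\THR(\Z_{(p)})$ realizing B{\"o}kstedt's splitting, and then check underlying and $\Z/2$-geometric fixed points separately (both geometric fixed points vanish since $p$ is odd, using Corollary~\ref{geomzero} on the target and the $2$-torsion of the source summands). But the construction of the splitting map is genuinely different. You propose to compute $\pi_{2k-1,k}\THR(\Z_{(p)})$ directly via the Borel/homotopy fixed point spectral sequence (valid since $\Phi^{\Z/2}\THR(\Z_{(p)})\simeq\ast$ makes the spectrum cofree and the Tate spectrum vanishes $p$-locally for $p$ odd), find a class $\tilde{x}_k$ of order $p^{\nu_p(k)}$ lifting B{\"o}kstedt's generator, and extend it to an $H\Z_{(p)}$-module map from $\Sigma^{2k-1,k}H\Z/p^{\nu_p(k)}$. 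This is a correct route, but it requires an infinite family of RO($\Z/2$)-graded homotopy group computations. The paper instead uses the restriction--induction adjunction: the non-equivariant $\incl_k$ gives a genuine $\Z/2$-map $\widetilde{\incl_k}\colon \Z/2_+\wedge\Sigma^{2k-1}H\Z/p^{\nu_p(k)}\to\THR(\Z)_{(p)}$ for free, which one precomposes with the transfer-type map $S^{2k-1,k}\wedge H\Z/p^{\nu_p(k)}\to\Z/2_+\wedge\Sigma^{2k-1}H\Z/p^{\nu_p(k)}$, and then one checks the resulting underlying map is $2\cdot\incl_k$, a unit multiple. This sidesteps any computation of equivariant homotopy groups.

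Both approaches hinge on the same sign input, namely that the dihedral involution acts by $(-1)^k$ on the summand $\Sigma^{2k-1}H\Z/p^{\nu_p(k)}$ of $\THH(\Z_{(p)})$. You invoke Loday's chain-level sign formula $(-1)^{n(n+1)/2}$; while the arithmetic $(-1)^{k(2k-1)}=(-1)^k$ is right, transporting the classical Hochschild sign formula to the genuine dihedral structure on $\THR$ and to B{\"o}kstedt's particular splitting is not automatic and would need an argument. The paper does this in Lemma~\ref{actionTHRZp} without invoking Loday's formula at all: it uses that $w$ is $H\Z_{(p)}$-linear, that the universal coefficient sequence forces $w$ to be diagonal with integer entries $w_k=\pm 1$, and then pins down the sign by the Bockstein comparison to $\pi_\ast\THH(\F_p)$ (where the sign $(-1)^k$ is already established by Corollary~\ref{poddTHRFp}). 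You flag this sign identification as the main obstacle, correctly; but it's precisely the content one cannot wave away, and the paper's indirect algebraic argument is the reliable way to get it. With that lemma in hand, your approach would work, but the paper's adjunction trick makes the remainder of the argument shorter and avoids any per-degree analysis.

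Your preliminary observation that $\THR(\Z)_{(p)}\to\THR(\Z_{(p)})$ is an equivalence (by checking underlying and geometric fixed points, the latter trivial because $\pi_\ast\Phi^{\Z/2}\THR(\Z)$ is $2$-torsion and $p$-localizing inverts $2$) is correct and a reasonable addition; the paper states this equivalence but does not elaborate on it.
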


Combining the results for the individual primes gives the following.

\begin{cor} \label{THRZ[1/2]} There is a stable equivalence of $\Z/2$-spectra
\[\THR(\Z)[\tfrac{1}{2}] \simeq\THR(\Z[\tfrac{1}{2}]) \simeq H\Z[\tfrac{1}{2}] \vee \bigvee_{k \geq 1} \Sigma^{2k-1, k} H(\Z/k)[\tfrac{1}{2}].\]
\end{cor}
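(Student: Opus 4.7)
The proof splits naturally into two equivalences: $\THR(\Z)[\tfrac{1}{2}] \simeq \THR(\Z[\tfrac{1}{2}])$, and then the identification of this common spectrum with the claimed wedge decomposition.

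For the first equivalence, the localization map $H\Z \to H\Z[\tfrac{1}{2}]$ induces a natural map $\THR(\Z) \to \THR(\Z[\tfrac{1}{2}])$ of $\Z/2$-spectra. Since $\THR(\Z[\tfrac{1}{2}])$ is naturally a module over $H\Z[\tfrac{1}{2}]$, this factors through $\THR(\Z)[\tfrac{1}{2}] \to \THR(\Z[\tfrac{1}{2}])$. I would verify this is a stable $\Z/2$-equivalence by checking on underlying spectra and on geometric fixed points. On underlying spectra it reduces to $\THH(\Z)[\tfrac{1}{2}] \to \THH(\Z[\tfrac{1}{2}])$, which is a classical equivalence. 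On geometric fixed points, the target is contractible by Corollary \ref{geomzero} (since $\Z[\tfrac{1}{2}]$-algebras have trivial $\Z/2$-geometric fixed points), and the source is contractible after inverting $2$ by Theorem \ref{thm:phiTHR(Z)}, since $\pi_\ast \Phi^{\Z/2}\THR(\Z) \cong \F_2[b_1,b_2,e]/e^2$ is $2$-torsion.

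For the second equivalence, I would assemble from the $(p)$-local decomposition of Theorem \ref{THRZp}. The key observation is that for every odd prime $p$ and every $k \geq 1$, the natural map $\Z/k \otimes \Z_{(p)} \cong \Z/p^{\nu_p(k)}$ yields
\[
\big( H\Z[\tfrac{1}{2}] \vee \bigvee_{k \geq 1} \Sigma^{2k-1, k} H(\Z/k)[\tfrac{1}{2}] \big)_{(p)}
\;\simeq\; H\Z_{(p)} \vee \bigvee_{k \geq 1} \Sigma^{2k-1, k} H(\Z/p^{\nu_p(k)})_{(p)},
\]
which by Theorem \ref{THRZp} is stably equivalent to $\THR(\Z_{(p)}) \simeq \THR(\Z)[\tfrac{1}{2}]_{(p)}$. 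The strategy is to build a global map
\[
\phi\colon H\Z[\tfrac{1}{2}] \vee \bigvee_{k \geq 1} \Sigma^{2k-1, k} H(\Z/k)[\tfrac{1}{2}] \longrightarrow \THR(\Z)[\tfrac{1}{2}],
\]
using the unit on the first summand and, on the $k$-th summand, a lift of the generator of $\pi_{2k-1}\THH(\Z) \otimes \Z[\tfrac{1}{2}] \cong \Z/k_{\mathrm{odd}}$ to the bigraded homotopy group $\pi_{2k-1,k}\THR(\Z)[\tfrac{1}{2}]$. Such lifts exist because they exist $(p)$-locally for each odd $p$ (as witnessed by the splitting of Theorem \ref{THRZp}) and the arithmetic square gluing these $(p)$-local maps encounters no obstructions on the $\Z[\tfrac{1}{2}]$-local, bounded-below pieces involved. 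Once $\phi$ is defined, it is a $\Z/2$-stable equivalence because $\phi_{(p)}$ is, by Theorem \ref{THRZp}, for every odd prime $p$, and a map of $\Z[\tfrac{1}{2}]$-local, connective, finite-type $\Z/2$-spectra is an equivalence iff its $(p)$-localizations are equivalences for all odd $p$.

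The main obstacle is constructing the global map $\phi$ compatibly across all odd primes. All the hard work is done by Theorem \ref{THRZp}, which provides the pieces, but assembling them requires care with the lifting of generators in the bigraded homotopy: one must show that each summand inclusion $\Sigma^{2k-1,k}H(\Z/k)[\tfrac{1}{2}] \to \THR(\Z)[\tfrac{1}{2}]$ can be realized globally rather than just $(p)$-locally. Because the relevant obstruction groups for lifting non-equivariant maps to $\Z/2$-equivariant maps of $H\Z[\tfrac{1}{2}]$-modules with vanishing geometric fixed points reduce to ordinary cohomology of $E\Z/2_+$-Borel completions, and these groups vanish rationally and at each odd prime at the necessary bidegrees, the assembly proceeds without difficulty.
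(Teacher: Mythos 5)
Your overall strategy --- reduce to the odd-prime $p$-local statement of Theorem~\ref{THRZp} and assemble --- is the right one, and your first step identifying $\THR(\Z)[\tfrac{1}{2}]\simeq\THR(\Z[\tfrac{1}{2}])$ via underlying spectra (classical) plus vanishing geometric fixed points (Corollary~\ref{geomzero} for the target, $2$-torsion of $\pi_\ast\Phi^{\Z/2}\THR(\Z)$ for the source) is correct.

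However, your construction of the global map $\phi$ in the second step is considerably more involved than necessary, and the final obstruction-theoretic paragraph is not quite sound as stated. First, lifting a generator of $\pi_{2k-1}\THH(\Z)\otimes\Z[\tfrac{1}{2}]$ to a class $\alpha\in\pi_{2k-1,k}\THR(\Z)[\tfrac{1}{2}]$ does not by itself produce a map out of $\Sigma^{2k-1,k}H(\Z/k)[\tfrac{1}{2}]$: one must additionally arrange $k\alpha=0$ and then extend over the cofiber of multiplication by $k$, which is a further choice. Second, the appeal to vanishing obstruction groups for ``lifting non-equivariant maps to $\Z/2$-equivariant maps of $H\Z[\tfrac{1}{2}]$-modules'' conflates two different problems (equivariant lifting vs.\ arithmetic fracture), and neither is spelled out precisely enough to close the argument. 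The cleaner route, and the one implicit in ``combining the results for the individual primes,'' is to notice that the comparison maps in the proof of Theorem~\ref{THRZp} are defined \emph{integrally}: B\"okstedt's splitting $\THH(\Z)\simeq H\Z\vee\bigvee_{k\geq 1}\Sigma^{2k-1}H\Z/k$ is an integral statement, so the maps $\incl_k$, their adjoints $\widetilde{\incl_k}\colon\Z/2_+\wedge\Sigma^{2k-1}H\Z/k\to\THR(\Z)$ under restriction--induction, and the composites $\overline{\incl_k}\colon\Sigma^{2k-1,k}H\Z/k\to\THR(\Z)$ are all defined over $\Z$. Localizing at $\tfrac{1}{2}$ gives a single global map; it is an equivalence on geometric fixed points since both sides vanish, and an underlying equivalence because the computation $\overline{\incl_k}=2\incl_k$ on underlying spectra goes through once one knows the involution acts by $(-1)^k$ on the $k$-th summand of $\THH(\Z)[\tfrac{1}{2}]$ --- a fact which, unlike at a single odd prime, does not follow from $w_k^2=1$ alone, but does follow by checking it at each odd prime via Lemma~\ref{actionTHRZp} and using that $H(\Z/k)[\tfrac{1}{2}]$ splits as a finite wedge over the odd primes dividing $k$. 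This removes the need for any fracture-square gluing of maps.
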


The proof of this theorem is similar in sprit to the computation of $\THR(\mathbb{F}_p)$. The main difference is that since the wedge of Eilenberg MacLane spectra in question is not a free algebra, we need to define a map individually on each summand. We start by identifying the action of $\Z/2$ on the underlying homotopy type of $\THR(\Z)$.

\begin{lemma} \label{actionTHRZp} Let $p$ be an odd prime. The map $w\colon \THH(\Z_{(p)}) \to \THH(\Z_{(p)})$ is equivalent in the stable homotopy category to the wedge product
\[1 \vee \bigvee_{k \geq 1} (-1)^k \colon H\Z_{(p)} \vee \bigvee_{k \geq 1} \Sigma^{2k-1} H\Z/p^{\nu_p(k)} \to H\Z_{(p)} \vee \bigvee_{k \geq 1} \Sigma^{2k-1} H\Z/p^{\nu_p(k)}.\]
In other words, $w$ acts by the identity in degree $0$ and in degrees $3$ mod $4$, and by $-1$ in degrees $1$ mod $4$ (in the degrees where the $p$-adic valuation is zero, the groups are trivial and $-1=1$).
\end{lemma}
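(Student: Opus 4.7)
The plan is to exploit B\"okstedt's splitting of $\THH(\Z_{(p)})$ and analyze the self-map $w$ summand by summand in the stable homotopy category.

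First I would verify that $w$ is compatible with the splitting, i.e.\ that it decomposes in the stable homotopy category as a wedge $w_0 \vee \bigvee_{k \geq 1} w_k$ of self-maps on the individual summands. Since $w$ fixes the unit sub-ring $H\Z_{(p)} \subset \THH(\Z_{(p)})$ pointwise, $w$ is a map of $H\Z_{(p)}$-modules, so the potentially off-diagonal components for $k\neq k'$ live in $\pi_0$ of the $H\Z_{(p)}$-module mapping spectrum from $\Sigma^{2k-1}H\Z/p^{\nu_p(k)}$ to $\Sigma^{2k'-1}H\Z/p^{\nu_p(k')}$. Using the cofiber sequence $H\Z_{(p)} \xrightarrow{p^n} H\Z_{(p)} \to H\Z/p^n$ one sees that $\map_{H\Z_{(p)}}(H\Z/p^n, H\Z/p^{n'})$ has homotopy concentrated in degrees $0$ and $1$, so the relevant off-diagonal groups vanish for $k\neq k'$.

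Each $w_k$ is then determined by an involutive element of a cyclic $p$-group (respectively of $\Z_{(p)}$ for $k=0$), and since $p$ is odd the only such elements are $\pm 1$. The component $w_0$ equals $+1$, because $1 \in \pi_0\THH(\Z_{(p)})$ is $\Z/2$-fixed, being hit by the $\Z/2$-equivariant unit map $\mathbb{S} \to \THR(\Z_{(p)})$.

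The main obstacle is determining the sign $\epsilon_k \in \{\pm 1\}$ of $w_k$ for $k\geq 1$. My plan is to work with the dihedral Bar construction model $B^{di}_\wedge H\Z_{(p)}$ from Theorem \ref{comparison}, where the $\Z/2$-action admits an explicit simplicial description as reversal of cyclic order combined with the sphere-coordinate reversal. Passing to the B\"okstedt spectral sequence converging to $\pi_\ast\THH(\Z_{(p)})$, this descends on the Hochschild-type $E^2$-page to the classical Loday involution, which introduces a sign of the form $(-1)^{n(n+1)/2}$ in simplicial degree $n$. Matching this sign with the identification of the generator of $\pi_{2k-1}\THH(\Z_{(p)})$ in the appropriate filtration should yield $\epsilon_k = (-1)^k$. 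Alternatively, one may identify $w$ with the Hopf-algebra antipode on $\THH(\Z_{(p)})$ in the sense of Angeltveit--Rognes and invoke the standard formula for antipodes on connected graded Hopf algebras. The technical heart of the argument lies in this sign bookkeeping, which is the hard part of the proof.
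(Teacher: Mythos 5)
Your reduction to diagonal components and the deduction $w_k\in\{\pm1\}$ for $p$ odd agree with the paper (which works via the universal coefficient sequence in the derived category of $\Z_{(p)}$, whereas you use the $H\Z_{(p)}$-linear mapping spectrum; both are correct, though the homotopy of $\map_{H\Z_{(p)}}(H\Z/p^n,H\Z/p^{n'})$ is concentrated in degrees $0$ and $-1$, not $0$ and $1$). The identification $w_0=1$ from the equivariant unit is also fine.

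The gap is the determination of the sign $\epsilon_k$, which you explicitly leave as a sketch. Of your two proposed routes, the Loday-sign route through the B\"okstedt spectral sequence is plausible but delicate: the spectral sequence converges to mod-$p$ homology rather than to homotopy, so one must locate the Hochschild filtration of the classes generating $\pi_{2k-1}$ and reconcile a mod-$p$ sign with an integral $\pm1$. The antipode route is more troublesome: since $\pi_\ast\THH(\Z_{(p)})$ has trivial multiplication in positive degrees, the naive antipode inversion formula would give $S=-1$ on every positive-degree class, which contradicts the $(-1)^k$ pattern you are trying to prove; the Hopf-algebra structure on $\THH(\Z)$ does not descend to a Hopf algebra on $\pi_\ast$ in a way that makes a ``standard formula'' available, so this route needs substantial extra input, not a citation. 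The paper instead settles the sign with a short diagram chase reusing material already established: by Corollary \ref{poddTHRFp} the involution acts by $(-1)^k$ on $\pi_{2k}\THH(\F_p)\cong\F_p$, and by B\"okstedt the map $\THH(\Z_{(p)})\to\THH(\F_p)$ restricts to the Bockstein $\beta\colon\Sigma^{2k-1}H\Z/p^{\nu_p(k)}\to\Sigma^{2k}H\F_p$ on each summand; the resulting relation $(w_k-(-1)^k)\beta=0$ together with $w_k=\pm1$, nontriviality of $\beta$, and $p$ odd forces $w_k=(-1)^k$. You should either adopt that argument or actually carry one of your sketches to completion.
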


\begin{proof} Since $H\Z_{(p)}$ is a $\Z/2$-equivariant commutative ring spectrum, it follows from Corollary \ref{multTHR} that $\THR(\Z_{(p)})$ is  an $H\Z_{(p)}$-module. The involution on $\Z_{(p)}$ is trivial, so we see that $w\colon \THH(\Z_{(p)}) \to \THH(\Z_{(p)})$ is $H\Z_{(p)}$-linear. According to \cite{Bok2} the splitting
\[\THH(\Z_{(p)}) \simeq H\Z_{(p)} \vee \bigvee_{k \geq 1} \Sigma^{2k-1} H\Z/p^{\nu_p(k)}\]
is $H\Z_{(p)}$-linear, i.e. it is an equivalence in the homotopy category of $H\Z_{(p)}$-modules. But the latter is triangulated equivalent to the classical derived category of $\Z_{(p)}$ (see \cite{SchShip1}) and hence admits universal coefficient exact sequences. Given two $H\Z_{(p)}$-modules $X$ and $Y$, the sequence looks as follows
\[ \xymatrix{0 \ar[r] & \Ext^1_{\mathbb{Z}}(\pi_\ast X[1], \pi_\ast Y) \ar[r] & [X,Y] \ar[r] & \Hom_{\Z}(\pi_\ast X, \pi_\ast Y)  \ar[r] & 0,}\]
where $[X,Y]$ stands for morphisms in the homotopy category of $H\Z_{(p)}$-modules. This implies that $w$ splits in the homotopy category of $H\Z_{(p)}$-modules as a wedge
\[w_0 \vee \bigvee_{k \geq 1} w_k\colon H\Z_{(p)} \vee \bigvee_{k \geq 1} \Sigma^{2k-1} H\Z/p^{\nu_p(k)} \to H\Z_{(p)} \vee \bigvee_{k \geq 1} \Sigma^{2k-1} H\Z/p^{\nu_p(k)},\]
and that each $w_i$ is in fact an integer. Indeed, the potential non-diagonal terms correspond to elements in $\Ext$-groups, and since there is a shift involved these $\Ext$-terms vanish for $k \geq 2$ for degree reasons. Additionally, when $X=H\Z_{(p)}$ the $\Ext$-term obviously vanishes.

Now since $\Z_{(p)}$ has the trivial involution, we see that $w_0=1$. Moreover the identity $w^2=1$  implies that $w_k^2=1$ mod $p^{\nu_p(k)}$ for any $k \geq 1$. From now on we consider only those degrees $k$ for which $\nu_p(k) \neq 0$.
We observe that since $p$ is odd, if an integer $a$ satisfies the equation $a^2=1$ mod $p^l$ for some $l \geq 1$ we must have $a = \pm 1$ mod $p^l$. We conclude that $w_k=\pm 1$ for any $k \geq 1$.

To determine the sign of $w_k$ we use B\"okstedt's result \cite{Bok2}, which implies that the map $\THH(\Z_{(p)}) \to \THH(\mathbb{F}_p)$ is a product of Bocksteins. Moreover by Corollary \ref{poddTHRFp} the involution on $\pi_{2k} \THH(\mathbb{F}_p)$ is $(-1)^k$, and we obtain a commutative diagram
\[\xymatrix{\Sigma^{2k-1} H\Z/p^{\nu_p(k)} \ar[r]^-\beta \ar[d]^-{w_k} & \Sigma^{2k} H\mathbb{F}_p \ar[d]^-{(-1)^k} \\ \Sigma^{2k-1} H\Z/p^{\nu_p(k)} \ar[r]^-\beta & \Sigma^{2k} H\mathbb{F}_p, }\]
where $\beta$ is the Bockstein. It follows that $(w_k-(-1)^k)\beta=0$. Since $w_k=\pm 1$ the difference $w_k-(-1)^k$ can be $0$, $-2$ or $2$. But the latter two cases are impossible since $p$ is odd and the Bockstein is nontrivial. Hence $w_k-(-1)^k=0$ which proves our claim. \end{proof}

\begin{proof}[Proof of Theorem \ref{THRZp}] The equivalence
\[\THH(\Z)_{(p)} \simeq H\Z_{(p)} \vee \bigvee_{k \geq 1} \Sigma^{2k-1} H\Z/p^{\nu_p(k)}\]
gives maps $\incl_k\colon \Sigma^{2k-1} H\Z/p^{\nu_p(k)} \to \THH(\Z)_{(p)}$
for $k \geq 1$. By the restriction-induction adjunction we get maps in the $\Z/2$-equivariant stable homotopy category:
\[\xymatrix{\Z/2_+ \wedge \Sigma^{2k-1} H\Z/p^{\nu_p(k)} \ar[r]^-{\widetilde{\incl_k}} & \THR(\Z)_{(p)}}\]
for $k\geq 1$.
Composing with the morphisms 
\[S^{2k-1,k} \wedge H\Z/p^{\nu_p(k)} \to \Z/2_+ \wedge \Sigma^{2k-1} H\Z/p^{\nu_p(k)}\]
we obtain maps in the $\Z/2$-equivariant stable homotopy category
\[\overline{\incl_k}\colon S^{2k-1,k} \wedge H\Z/p^{\nu_p(k)} \to  \THR(\Z)_{(p)}. \]
Together with the unit map $H \Z_{(p)} \to \THR(\Z_{(p)})$ they assemble into a map
\[ H\Z_{(p)} \vee \bigvee_{k \geq 1} \Sigma^{2k-1, k} H\Z/p^{\nu_p(k)} \to \THR(\Z)_{(p)}. \]
We will now argue that this map is a genuine $\Z/2$-equivariant equivalence. The $\Z/2$-geometric fixed points of both sides vanish since $p$ is odd. Hence it suffices to show that this map is an underlying equivalence. We observe that on underlying spectra the map $\tran \colon S^0 \to \Z/2_+$ is the stable pinch map, and the map $S^{2k-1,k} \to  \Z/2_+ \wedge S^{2k-1}$ is equivalent as a map on non-equivariant spectra to the map
\[(1,(-1)^k) \colon S^{2k-1} \to S^{2k-1} \vee S^{2k-1}.\]
Using this and Lemma \ref{actionTHRZp}, we see that on underlying spectra the map $\overline{\incl_k}$ is the composite
\[
\Sigma^{2k-1} H\Z/p^{\nu_p(k)} \xrightarrow{(1,(-1)^k) } \Sigma^{2k-1} H\Z/p^{\nu_p(k)} \vee \Sigma^{2k-1} H\Z/p^{\nu_p(k)} \xrightarrow{(\incl_k,(-1)^k\incl_k)}\THH(\Z_{(p)})
\]
for all $k \geq 1$, which is equal to $2\incl_k$. By construction the maps $\incl_k$ assemble into an equivalence of spectra. Since $p$ is odd, $2$ is invertible and hence so do the maps $\overline{\incl_k}$. This completes the proof.
\end{proof}



\appendix
\section{Flatness  and ring spectra with anti-involution}\label{appendixflat}

In this appendix we prove some technical results on ring spectra with anti-involution and on flatness of orthogonal $G$-spectra. We recall some definitions, from  \cite{ManMay}, \cite{MMSS} and \cite{Sto, Schwedeglobal}.

Let $G$ be a compact Lie group. An orthogonal $G$-spectrum is a $\Top_\ast$-enriched functor
\[X \colon O \to \Top_\ast^G,\]
where $O$ is the category whose objects are the finite dimensional real inner product spaces and where the space of morphisms $O(V,W) $ is the Thom space of the orthogonal complement bundle over the Stiefel manifold $L(V,W)$ of linear isometric embeddings of $V$ into $W$. This space consists of the unique point $\infty$ at infinity and pairs $(\alpha,x)$ of a linear isometric embedding $\alpha\colon V\to W$ and $x \in W - \alpha(V)$. The category of orthogonal $G$-spectra is the $\Top_\ast$-enriched functor category $\Sp^G:=\Fun(O, \Top_\ast^G)$.
Let $O^{\leq m-1}$ denote the full subcategory of $O$ spanned by the vector spaces of dimension $\leq m-1$. The inclusion $\iota_{m-1} \colon O^{\leq m-1} \hookrightarrow O$
induces an adjunction
\[
\xymatrix@C=60pt{
\Sp^G=\Fun(O, \Top_\ast^G)\ar@<1ex>[r]^-{\iota_{m-1}^\ast} 
&
\Fun(O^{\leq m-1},\Top_\ast^G)
\ar@<1ex>[l]^-{(\iota_{m-1})_!}
},\]
where $(\iota_{m-1})_!$ is the $\Top_\ast$-enriched left Kan extension. We recall that the $(m-1)$-skeleton of a $G$-spectrum $X$ is defined by
\[\sk_{m-1}X := (\iota_{m-1})_! \iota_{m-1}^\ast X.\]
The counit of the adjunction is a map of orthogonal $G$-spectra $\varepsilon_X \colon \sk_{m-1}X \to X$, which evaluated at $\R^m$ this gives as $G \times O(m)$-equivariant map $(\sk_{m-1}X)(\R^m) \to X(\R^m)$.
\begin{defn} The $m$-th latching space of an orthogonal $G$-spectrum $Y$ is defined by
\[L_m X := (\sk_{m-1}X)(\R^m) .\]
The orthogonal  $G$-spectrum $X$ is called flat (or $\mathbb{S}$-cofibrant) if the latching map $L_mX \to X(\R^m)$ is a $G \times O(m)$-cofibration,
for every $m\geq 0$.
\end{defn}

The notion of $\mathbb{S}$-cofibrant $G$-spectra was first introduced in \cite[Section 2.3.3]{Sto}. It coincides with the notion of flat $G$-spectrum of \cite[Proposition 5.9]{Schwedeglobal}. We note that this definition does not depend on a choice of universe. The term ``flat'' is justified by the fact that the functor $X \wedge -$ preserves genuine $G$-equivariant stable equivalences whenever $X$ is a flat $G$-spectrum (see \cite[Theorem 5.10]{Schwedeglobal} and \cite[Proposition 2.10.1]{BrDuSt}).

\subsection{A model structure on ring spectra with anti-involution}\label{flatMS}

Let $f\colon (A,w)\to (B,\sigma)$ be a morphism of ring spectra with anti-involution. We say that $f$ is a \emph{stable $\Z/2$-equivalence} if the underlying map of orthogonal $\Z/2$-spectra is a stable equivalence with respect to a complete universe of $\Z/2$-representations.

\begin{prop}\label{antimodelstructure}
There is a cofibrantly generated model structure on the category of orthogonal ring spectra with anti-involution, where the weak equivalences are the stable $\Z/2$-equivalences and the cofibrations are the cofibrations of \cite[B.63]{HHR}.
\end{prop}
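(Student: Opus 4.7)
The plan is to realize the category of ring spectra with anti-involution as the category of algebras over the operad $Ass^\sigma$ in $\Sp^{\Z/2}$ (Remark \ref{operad}), and then apply a transfer of model structures along the free-forgetful adjunction
\[
\xymatrix@C=60pt{
\Sp^{\Z/2}\ar@<1ex>[r]^-{\mathbb{P}} & Ass^\sigma\text{-alg}(\Sp^{\Z/2})\ar@<1ex>[l]^-{U}\rlap{\ .}
}
\]
For the base I would take the positive complete stable model structure on $\Sp^{\Z/2}$ from \cite[B.63]{HHR}, whose cofibrations are the ones singled out in the statement. The generating (trivial) cofibrations of the lifted structure are then $\mathbb{P}I$ and $\mathbb{P}J$, where $I$ and $J$ are the generating sets from the base.

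First I would set up Kan's transfer criterion (as in Schwede--Shipley or in \cite[III.7]{ManMay}). The forgetful functor $U$ preserves filtered colimits (algebra structures in our context are encoded by sums of smash powers and filtered colimits of spectra commute with the relevant wedge and smash constructions through cellular arguments), and $\Sp^{\Z/2}$ has functorial path objects for algebras, so the first hypothesis is straightforward. The substantive hypothesis — the acyclicity condition — requires showing that for every trivial cofibration $j\colon X\to Y$ in $\Sp^{\Z/2}$, every map $\mathbb{P}X\to A$ in $Ass^\sigma$-algebras, and every pushout
\[
\xymatrix{
\mathbb{P}X\ar[r]\ar[d]_{\mathbb{P}j} & A\ar[d]\\
\mathbb{P}Y\ar[r] & A\cup_{\mathbb{P}X}\mathbb{P}Y,
}
\]
the resulting map $A\to A\cup_{\mathbb{P}X}\mathbb{P}Y$ is a stable $\Z/2$-equivalence; transfinite compositions of such maps must also be stable $\Z/2$-equivalences.

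The hard part will be exactly this acyclicity step. I would handle it via the standard filtration of pushouts of free operadic algebras by cells indexed on the number of operation inputs, as in Harper or Pavlov--Scholbach: the layers of the filtration are built from the $\Sigma_n$-coinvariants
\[
(Ass^\sigma_n)_+\wedge_{\Sigma_n}\bigl(Q_n^{n-1}(X\to Y)\bigr),
\]
where $Q_n^{n-1}$ denotes the $(n-1)$-th pushout-product filtration of $j$. Because $Ass^\sigma_n=\Sigma_n$ is, as a $\Z/2\times\Sigma_n$-space with the twisted action of Remark \ref{operad}, cofibrant with respect to the family of graph subgroups (a classical observation of Blumberg--Hill used there), smashing with $(Ass^\sigma_n)_+$ over $\Sigma_n$ converts $\Sigma_n$-equivariant trivial cofibrations of $\Z/2$-spectra into genuine $\Z/2$-equivariant stable equivalences. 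Combining this with the fact that the positive complete stable model structure is monoidal and that its cofibrations satisfy the pushout-product axiom, each filtration layer becomes a trivial cofibration, and the transfinite composition remains a stable $\Z/2$-equivalence.

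Finally, having verified the transfer conditions, the lifted model structure exists and is cofibrantly generated with generators $\mathbb{P}I$ and $\mathbb{P}J$; its weak equivalences are detected on underlying $\Z/2$-spectra and are the stable $\Z/2$-equivalences, as required. The characterization of the cofibrations as those of \cite[B.63]{HHR} follows because every cell in $\mathbb{P}I$ is, on underlying spectra, a free extension by a cell from $I$, and the filtration analysis above shows that such extensions are built from pushouts of cells of $\Sp^{\Z/2}$ in the positive complete model structure.
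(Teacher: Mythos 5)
Your proposal and the paper's proof are the same strategy at heart: transfer the positive complete model structure of \cite[B.63]{HHR} along the free--forgetful adjunction, and verify the acyclicity condition by filtering pushouts of free extensions and invoking the equivariant pushout-product axiom from \cite[B.102]{HHR}. Where you diverge is in packaging. The paper deliberately avoids citing general operadic transfer machinery: it reintroduces the free construction as the twisted tensor-algebra monad $T^\sigma$, reconstructs the Schwede--Shipley filtration $P_0 \to P_1 \to \cdots$ of a pushout by hand, and then spends most of the proof verifying that the explicit cube diagrams $W_n$, the pushout squares, and the multiplication maps $\mu_{n,m}$ all carry compatible $\Z/2$-actions so that $P$ inherits an anti-involution and satisfies the universal property in $\Mon^{\sigma}(\Sp^{\Z/2})$. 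You instead appeal to the general operadic filtration (Harper, Pavlov--Scholbach) together with the graph-subgroup cofibrancy of $Ass^\sigma_n = \Sigma_n$, which the paper records in Remark~\ref{operad} but does not actually invoke in this proof. What your route buys is brevity and conceptual clarity; what the paper's route buys is self-containment, since the twisted action on $T^\sigma$ is not a black-box input to off-the-shelf operadic transfer theorems and the paper prefers to make the equivariance of the filtration visible.

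One imprecision worth flagging: the filtration layers for the pushout $A \cup_{\mathbb{P}X} \mathbb{P}Y$ are not simply
\[
(Ass^\sigma_n)_+ \wedge_{\Sigma_n} Q_n^{n-1}(X\to Y);
\]
that formula only describes the free case $A = \mathbb{P}X$. In general you must use the enveloping operad of $A$ (in the paper's explicit version this is exactly where the $X^{\wedge n+1}$ factors interleaving the $K$'s and $L$'s appear in the cubes $W_n$), and you then need the induced $\Z/2\times\Sigma_n$-objects to remain cofibrant, which is again supplied by the pushout-product axiom and the flatness of $A$. This is a fixable omission and not a structural gap, but as written your filtration would not literally compute the pushout unless $A$ is free.
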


\begin{rem}\label{flatrepl}
It follows immediately from Proposition \ref{antimodelstructure} that any ring spectrum with anti-involution can be replaced up to equivalence by one whose underlying $\Z/2$-spectrum is cofibrant in the sense of \cite[B.63]{HHR}, and therefore flat.

A similar model structure where the cofibrations are the flat cofibrations would exist, if one can prove that the flat model structure on orthogonal $\Z/2$-spectra satisfies the indexed version of the pushout product axiom of \cite[B.97-B.102]{HHR}.
\end{rem}

The argument is essentially the one of \cite[4.1(3)]{SchwShip}. 
Let $T^\sigma \colon \Sp^{\mathbb{Z}/2} \to \Sp^{\mathbb{Z}/2}$ be the \emph{twisted tensor algebra functor}, which sends $X$ in $\Sp^{\mathbb{Z}/2}$ to
\[
T^\sigma(X)=\bigvee_{n\geq 0}X^{\wedge n}.
\]
The $\Z/2$-action on $T^\sigma(X)$ preserves the summands, and is given on the $n$-th summand by the map
\[
X^{\wedge n}\xrightarrow{w^{\wedge n}}X^{\wedge n}\xrightarrow{\tau_{n}}X^{\wedge n},
\]
where $w$ is the involution of $X$ and $\tau_n$ is the permutation of $\underline{n}=\{1,\dots, n\}$ that reverses the order. The usual concatenation product $\mu_X \colon  T^\sigma(T^\sigma(X)) \to T^\sigma(X)$ is $\Z/2$-equivariant, as is the rest of the monad structure on $T^\sigma$. It is not hard to see that ring spectra with anti-involution are precisely the algebras for $T^\sigma$ (cf. Remark \ref{operad}). We write $\Mon^{\sigma}(\Sp^{\mathbb{Z}/2})$ for the category of $T^\sigma$-algebras. The corresponding free-forgetful adjunction is
\[\xymatrix@C=50pt{\Mon^{\sigma}(\Sp^{\mathbb{Z}/2})\ar@<-.5ex>[r]_-U&\Sp^{\Z/2}\ar@<-.5ex>[l]_-{T^\sigma}}.\]


\begin{proof}[Proof of \ref{antimodelstructure}]
To show that there is such a model structure on $\Mon^{\sigma}(\Sp^{\mathbb{Z}/2})$ we follow the argument of  \cite[4.1(3)]{SchwShip} and test the conditions of \cite[6.2]{SchwShip}. The non-trivial condition to verify is the analog of \cite[6.2]{SchwShip}, which is based on the construction of a filtration of the pushouts of the form $X\leftarrow T^\sigma(K)\rightarrow T^\sigma(L)$ in the category of ring spectra with anti-involution, where $K\to L$ is a map of orthogonal $\Z/2$-spectra. The filtration
\[
X=:P_0\to P_1\to\dots\to P_n\to \dots\to P
\]
of the pushout $P$ is constructed in \cite[Section 6]{SchwShip} inductively as follows. Let $\mathcal{P}(\underline{n})$ denote the poset of subsets of $\underline{n}=\{1,\dots,n\}$ ordered by inclusion. The authors define an $n$-cube $W_n\colon \mathcal{P}(\underline{n})\to \Sp$ of spectra by sending a subset $S$ of $\underline{n}$ to the spectrum
\[
W_n(S):=X\wedge C_1\wedge X\wedge C_2\wedge X\wedge \dots\wedge X\wedge C_n\wedge X
\]
where $C_i=L$ if $i\in S$ and $C_i=K$ otherwise. The spectrum $P_n$ is then defined as the pushout
\[
\xymatrix{
Q_n\ar[r]\ar[d]&X^{\wedge n+1}\wedge L^{\wedge n}\ar[d]
\\
P_{n-1}\ar[r]&P_n
}
\]
where $Q_n:=\colim_{\mathcal{P}_1(\underline{n})}W_n$ and $\mathcal{P}_1(\underline{n})$ is the subposet of $\mathcal{P}(\underline{n})$ obtained by removing the terminal object. The left hand vertical map is defined in the proof of \cite[6.2]{SchwShip}. The authors show that the colimit spectrum $P=\colim_nP_n$ is a monoid, and that it satisfies the universal property for the pushout of $X\leftarrow T^\sigma(K)\rightarrow T^\sigma(L)$ in the category or ring spectra.

We show that the spectra $P_n$ carry $\Z/2$-actions that induce an anti-involution on $P$, and that this defines the pushout of $X\leftarrow T^\sigma(K)\rightarrow T^\sigma(L)$ in the category of ring spectra with anti-involution. The cubes $W_n$ are naturally $\Z/2$-equivariant cubes, where the poset $\mathcal{P}(\underline{n})$ has the involution induced by the involution $\tau_n$ of $\underline{n}$ that reverses the order. The $\Z/2$-structure is defined by the natural transformation
\[
W_n(S)=X^{\wedge n+1}\wedge \bigwedge_{i\in S}L\wedge\bigwedge_{i\notin S}K\longrightarrow X^{\wedge n+1}\wedge \bigwedge_{i\in \tau_n S}L\wedge\bigwedge_{i\notin \tau_nS}K= W_n(\tau_nS)
\]
which is the smash of the following maps. On the $X^{\wedge n+1}$-factor it is the map $\tau_{n+1}\circ w^{\wedge n+1}$, where $w$ is the involution of $X$. On the $S$-indexed smash it sends the $i$-factor to the $\tau_ni$-factor by the involution of $L$. On the smash factor indexed by the complement of $S$ it sends the $i$-factor to the $\tau_ni$-factor by the involution of $K$. The colimit $\colim_{\mathcal{P}_1(\underline{n})}W_n$ therefore inherits a $\mathbb{Z}/2$-action and the map to the terminal vertex
\[
\colim_{\mathcal{P}_1(\underline{n})}W_n\longrightarrow X^{\wedge n+1}\wedge L^{\wedge n}
\]
is equivariant. It is straightforward to show that the left vertical map of the square above is also equivariant, and therefore the pushout defining $P_n$ from $P_{n-1}$ is a pushout of $\Z/2$-spectra. This defines a filtration of $\Z/2$-spectra
\[
X=:P_0\to P_1\to\dots\to P_n\to \dots\to P:=\colim_nP_n.
\]
Let us now verify that the resulting involution on the colimit $P$ reverses the order of the multiplication. The multiplication on $P$ is defined in the proof of \cite[6.2]{SchwShip} by maps $\mu_{n,m} \colon P_n\wedge P_m\to P_{n+m}$. For $n + m = 0$ we have $P_n \wedge P_m = X \wedge X$ and we take $\mu_{0,0}$ to be the multiplication map of $X$. For $n + m > 0$ we define $\mu_{n,m}$ by induction on $n+m$ by expressing  $P_n\wedge P_m$ as an iterated pushout where the building blocks are themselves pushouts of terms that depend only on $P_k\wedge P_l$ where $k+l<n+m$. In order to verify the compatibility between these maps and the involution it is convenient to write $P_n\wedge P_m$ as the colimit of the diagram of $\Z/2$-spectra
\[
W_{n,m}=\left(
\vcenter{\xymatrix{
P_{n-1}\wedge P_{m-1}
&
Q_n\wedge P_{m-1}\ar[l]\ar[r]
&
X^{\wedge n+1}\wedge L^{\wedge n}\wedge P_{m-1}
\\
P_{n-1}\wedge Q_{m}\ar[u]\ar[d]
&
Q_n\wedge Q_m\ar[l]\ar[r]\ar[u]\ar[d]
&
X^{\wedge n+1}\wedge L^{\wedge n}\wedge Q_m\ar[u]\ar[d]
\\
P_{n-1}\wedge X^{\wedge m+1}\wedge L^{\wedge m}
&
Q_n\wedge X^{\wedge m+1}\wedge L^{\wedge m}\ar[l]\ar[r]
&
X^{\wedge n+1}\wedge L^{\wedge n}\wedge X^{\wedge m+1}\wedge L^{\wedge m}
}}
\right).
\]
The map $P_n\wedge P_m\to P_{n+m}$ is then determined by the following maps out of the four outer corners of $W_{n,m}$, where $\mu_{l,k}\colon P_l\wedge P_k\to P_{l+k}$ has already been inductively defined for $l+k<n+m$:
\[
\xymatrix@R=15pt{
P_{n-1}\wedge P_{m-1}\ar[rr]^-{\mu_{n-1,m-1}}&& P_{n+m-2}\ar[r]& P_{n+m}
\\
X^{\wedge n+1}\wedge L^{\wedge n}\wedge P_{m-1}\ar[r]& P_n\wedge P_{m-1}\ar[r]^-{\mu_{n,m-1}} &P_{n+m-1}\ar[r]& P_{n+m}
\\
P_{n-1} \wedge X^{\wedge m+1} \wedge L^{\wedge m} \ar[r] & P_{n-1}\wedge P_m \ar[r]^-{\mu_{n-1,m}} & P_{n-1 + m} \ar[r] &  P_{n+m}
\\
X^{\wedge n+1}\wedge L^{\wedge n}\wedge X^{\wedge m+1}\wedge L^{\wedge m}\ar[r]& X^{\wedge n+m+1}\wedge L^{\wedge n+m}\ar[rr]&& P_{n+m}.
}
\]
The first map in the last composite swaps the $L^{\wedge n}$ and $X^{\wedge m+1}$ factors and multiplies the last factor of $X^{\wedge n+1}$ and the first factor of $X^{\wedge m+1}$.  These maps define a monoid structure on the colimit $P$. The compatibility of this multiplication with the involution reduces to the commutativity of the following diagram
\[
\xymatrix{
\displaystyle P_{n}\wedge P_m\ar@<-7ex>[d]_-{\tau}\cong \colim_{\mathcal{P}_{1}(2)^{\times 2}}W_{n,m}
\ar[rr]^{w\wedge w}\ar@<5ex>[d]_-{}
&&
\displaystyle\colim_{\mathcal{P}_{1}(2)^{\times 2}}W_{n,m}
\ar[d]^{\mu_{n,m}}
\\
\displaystyle P_{m}\wedge P_n\cong \colim_{\mathcal{P}_{1}(2)^{\times 2}}W_{m,n}
\ar[r]_-{\mu_{m,n}}
&
P_{m+n}\ar[r]_-{w}&P_{n+m}
}
\]
where the middle vertical map is the composite of the maps
\[
\colim_{\mathcal{P}_{1}(2)^{\times 2}}W_{n,m}\longrightarrow \colim_{\mathcal{P}_{1}(2)^{\times 2}}\tau^{\ast}W_{m,n}\stackrel{\tau_\ast}{\longrightarrow} \colim_{\mathcal{P}_{1}(2)^{\times 2}}W_{m,n}
\]
of the push-forward by the functor $\tau\colon \mathcal{P}_{1}(2)^{\times 2}\to \mathcal{P}_{1}(2)^{\times 2}$ that switches the product factors, and of the natural transformation $W_{n,m}\to \tau^{\ast}W_{m,n}$ that permutes the two smash factors in each entry of $W_{n,m}$. It is immediate to see that the left hand square commutes. The right hand square commutes essentially because the order-reversing permutations $\tau_n\in \Sigma_n$ satisfy 
\[
\tau_n\times\tau_m=\tau_{n+m}\circ \chi_{n,m}
\]
where $\chi_{n,m}\in \Sigma_{n+m}$ switches the blocks of the first $n$-elements and the last $m$-elements.

This shows that the colimit $P$ is a ring spectrum with anti-involution. In order to show that $P$ is the pushout of $X\leftarrow T^\sigma(K)\rightarrow T^\sigma(L)$ in the category $\Mon^{\sigma}(\Sp^{\mathbb{Z}/2})$, we notice that the morphism sets in $\Mon^{\sigma}(\Sp^{\mathbb{Z}/2})$ from $(A,w)$ to $(B,\sigma)$ is the fixed-point set of the involution on the morphism set of underlying ring spectra that sends $f\colon A\to B$ to $\sigma f w$. Therefore it suffices to show that the bijection
\[
\Hom_{\Mon(\Sp)}(X\leftarrow T(K)\rightarrow T(L), A=A=A)\cong \Hom_{\Mon(\Sp)}(P,A)
\]
established in \cite[6.2]{SchwShip} is equivariant. This is readily verified.

We are now left with verifying the analog of \cite[6.2]{SchwShip}. That is, we need to prove that if $K\to L$ is an acyclic cofibration of orthogonal $\Z/2$-spectra with cofibrant source, the map $X\to P$ is an acyclic cofibration of underlying orthogonal $\Z/2$-spectra. The argument is identical to the one of \cite[6.2]{SchwShip}, by using the equivariant version of the pushout product axiom of \cite[B.102]{HHR}.
\end{proof}

\subsection{Shifts preserve flatness of orthogonal \texorpdfstring{$\Z/2$}{Z/2}-spectra}\label{shiftflat}

Let $G$ be a compact Lie group.
Recall that an orthogonal $G$-spectrum $Y$ can be evaluated on a real $n$-dimensional $G$-representation $V$ by the formula
\[Y(V) = L(\R^n,V)_+ \wedge_{O(n)} Y_n,\]
where $L(\R^n,V)_+$ is the space of linear isometries from $\R^n$ to $V$ with a disjoint basepoint, and the $G$-action is diagonal. The shift functor $\Sh^V \colon \Sp^G \to \Sp^G$ is defined by precomposing with the functor $(V\oplus -)$ on $O$, that is
\[(\Sh^VY)_n = Y(V \oplus \R^n).\]
The goal of this section is to prove the following.

\begin{prop}\label{prop:flatness}
Let $f$ be a flat cofibration of orthogonal $\Z/2$-spectra, and $V$ a finite dimensional orthogonal $\Z/2$-representation. Then the map $\Sh^Vf$ is a flat cofibration.
\end{prop}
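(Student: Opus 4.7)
The plan is to reduce the claim to a set of generating flat cofibrations and verify it there by an explicit cellular analysis. First, observe that $\Sh^V$ is defined by precomposition with the endofunctor $V \oplus - : O \to O$ on indexing categories. Hence it admits both a left and a right adjoint by Kan extension, and in particular preserves all small colimits: pushouts, transfinite composites, coproducts, and retracts. Since the class of flat cofibrations is closed under these operations and generated under them by a set (via the small object argument), it suffices to show that $\Sh^V$ sends each generating flat cofibration to a flat cofibration.

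According to the description of the generators in \cite[Section 2.3.3]{Sto}, these are the maps of the form
\[G_m\bigl((S^{n-1}_+ \hookrightarrow D^n_+) \wedge ((\Z/2 \times O(m))/H)_+\bigr),\]
where $G_m$ is the free orthogonal spectrum functor at level $m$ and $H$ ranges over the graph subgroups of $\Z/2 \times O(m)$. For such a generator with source or target $G_mA$, the defining adjunction gives
\[(\Sh^V G_m A)_k \;=\; O(\R^m, V \oplus \R^k) \wedge_{O(m)} A,\]
where $O(\R^m, V \oplus \R^k)$ is the Thom space of the orthogonal complement bundle over the Stiefel manifold $L(\R^m, V \oplus \R^k)$, carrying a natural smooth action of $\Z/2 \times O(m) \times O(k)$. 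Using an equivariant cellular decomposition of this Thom space with cells stabilised by graph subgroups (via Illman's triangulation applied to a Schubert-type stratification) and assembling these cells compatibly across $k$ using the orthogonal structure maps, one would exhibit $\Sh^V(G_m A)$ as a sequential colimit of pushouts of generating flat cofibrations, and likewise exhibit the map $\Sh^V(G_m i)$ as a relative cell complex of generating flat cofibrations, hence a flat cofibration.

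The main technical obstacle is producing the cellular decompositions of the Thom spaces $O(\R^m, V \oplus \R^k)$ in a manner \emph{compatible} across varying $k$, so that the individual cells organise into a cell filtration of the orthogonal $\Z/2$-spectrum $\Sh^V G_m A$ rather than merely a cell structure at each spectrum level. This compatibility relies crucially on the orthogonal functoriality of the spectrum $\Sh^V G_m A$, and the analysis parallels the non-equivariant cell constructions of the flat model structure in \cite{MMSS} and their equivariant refinements in \cite{Sto, Schwedeglobal}.
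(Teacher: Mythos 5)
Your reduction step is correct and matches the paper's: $\Sh^V$ preserves colimits, so it suffices to check on generating flat cofibrations, and these are built from semi-free spectra $G_n(Z)$. But the heart of the proof — showing $\Sh^V$ of a generator is again a flat cofibration — is not actually carried out. You propose to build compatible equivariant cell decompositions of the Thom spaces $O(\R^n, V\oplus\R^k)$ across all $k$ so that $\Sh^V G_n(Z)$ becomes a cell spectrum, and you explicitly flag that producing these decompositions compatibly is ``the main technical obstacle,'' leaving it unresolved. That obstacle is exactly where the difficulty lies, and it is far from clear that a Schubert-type stratification of the Stiefel manifolds assembles into a genuine cell filtration of the orthogonal spectrum (as opposed to a cell structure at each fixed level). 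The proposal therefore gestures at the hard step rather than proving it.

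The paper sidesteps this entirely by \emph{not} attempting a cell structure. It proves directly that $\Sh^U G_n(Z)$ is flat by verifying the defining latching condition. After reducing to one-dimensional $U$, it computes the $m$-th latching object as a coequalizer $W_m \wedge_{O(n)} Z$ and reduces to showing an explicit map $f_m\colon W_m \to O(\R^n, U\oplus\R^m)$ of $\Z/2\times O(n)\times O(m)$-spaces is a cofibration. A point-set lemma (Lemma \ref{lemma:closed}) shows the latching map is always a closed inclusion, and then a case analysis on $m$ versus $n$ shows: for $m<n-1$ the map is trivial; for $m=n-1$ it is the basepoint inclusion into a Stiefel manifold, a cofibration by Illman's theorem; for $m>n$ the map $f_m$ is in fact a homeomorphism; and for $m=n$ it is the Thom space of a smooth $\Gamma$-equivariant embedding of manifolds, which is a $\Gamma$-cofibration by Lemma \ref{lemma:emb}. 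This latching-map analysis, especially the surjectivity in the $m>n$ case and the embedding argument at $m=n$, is the content your proposal would need to supply and does not. Finally, the paper derives the proposition from the flatness of $\Sh^U G_n(Z)$ together with the fact that the $\mathbb{S}$-model structure is topological, so smashing a flat spectrum with a cofibration of spaces yields a flat cofibration — a cleaner closing step than re-exhibiting the generator as a relative cell complex.
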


\begin{rem}
The proof we propose works for any compact Lie group $G$, up until the description of the coequalizer in the proof of \ref{shiftsemifree}, where we use that the irreducible representations of $\Z/2$ are one-dimensional. We believe that with more work the argument can be modified to apply to any compact Lie group. 
\end{rem}
We begin with a lemma of point-set topology This is a stable analog of \cite[Proposition 3.11]{SagSchlichtdiag} and \cite[Proposition 1.3.6]{Schwedeglobal}.

\begin{lemma}\label{lemma:closed}
Let $Y$ be an orthogonal spectrum satisfying the following properties:
\begin{enumerate}
\item \label{lemma:closed-i} $Y(\R^n)$ is compact Hausdorff for all $n \geq 0$,
\item \label{lemma:closed-ii}For any morphism $(\alpha,x) \colon V \to W \textrm{ in } O$ the induced map $(\alpha,x)_\ast \colon Y(V) \to Y(W)$
is injective,
\item \label{lemma:closed-iii}Suppose we are given morphisms in $O$
\[ V \xrightarrow{(\alpha,x)}W \xleftarrow{(\alpha',x')} V' ,\]
where $\alpha$ and $\alpha'$ are subspace inclusions, and let $\iota\colon V\cap V'\to V$ and $\iota'\colon V\cap V'\to V'$ the inclusions. Then there exist $v\in V-V\cap V'$ and $v'\in V'-V\cap V'$ such that $x+\alpha(v)=x'+\alpha(v')$ and the canonical map into the pullback
\[\xymatrix@R=20pt{
Y(V\cap V')\ar@/^15pt/[rr]^{(\iota,v)_\ast}\ar@/_15pt/[dr]_-{(\iota',v')_\ast}\ar@{-->}[r]
&
P\ar[r]\ar[d]
&
Y(V)\ar[d]^-{(\alpha,x)_\ast}
\\
&
Y(V')\ar[r]_-{(\alpha',x')_\ast}
&Y(W)
}
\]
is surjective.
 \end{enumerate}
Then the map $L_m Y \to Y(\R^m)$ is a closed inclusion for all $m\geq 0$.
\end{lemma}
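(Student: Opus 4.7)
The plan is to identify $L_m Y$ as a coend over a small category with finitely many objects, use hypothesis (i) to show this coend is compact, and use conditions (ii) and (iii) to prove injectivity of the canonical map $\varepsilon\colon L_m Y \to Y(\R^m)$. A continuous injection from a compact space to a Hausdorff space is automatically a closed embedding, so this suffices.

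First I would express
\[L_m Y \;\cong\; \int^{V \in O^{\leq m-1}} O(V,\R^m) \wedge Y(V),\]
computing the coend over a skeleton of $O^{\leq m-1}$ with finite object set $\{\R^0, \R^1, \dots, \R^{m-1}\}$. Each morphism space $O(\R^k,\R^l)$ is the Thom space of a rank-$(l-k)$ vector bundle over the compact Stiefel manifold $L(\R^k,\R^l)$, and is therefore compact. Combined with (i), the two sides of the defining coequalizer
\[\bigvee_{0 \leq k,l \leq m-1} O(\R^k,\R^l) \wedge O(\R^l,\R^m) \wedge Y(\R^k) \;\rightrightarrows\; \bigvee_{0 \leq k \leq m-1} O(\R^k,\R^m) \wedge Y(\R^k)\]
are finite wedges of compact Hausdorff spaces, so that $L_m Y$ is itself compact.

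Second, I would prove injectivity of $\varepsilon$. Every morphism $(\alpha, x)\colon V \to \R^m$ in $O$ factors as $V \xrightarrow{(\alpha,0)} \alpha(V) \xrightarrow{(\mathrm{incl},x)} \R^m$, so every class in $L_m Y$ admits a representative $[(\mathrm{incl}_U, x); z]$ with $U \subsetneq \R^m$ of dimension $<m$, $x \in U^\perp$, and $z \in Y(U)$. Suppose two such representatives have equal image in $Y(\R^m)$. Applying condition (iii) to the span
\[U \xrightarrow{(\mathrm{incl}_U, x)} \R^m \xleftarrow{(\mathrm{incl}_{U'}, x')} U',\]
one obtains vectors $v, v'$ with $x+v = x'+v'$ together with an element $w \in Y(U \cap U')$ restricting to $z$ and $z'$ respectively. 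The coend relations then identify
\[[(\mathrm{incl}_U, x); z] \;\sim\; [(\mathrm{incl}_{U\cap U'}, x+v); w] \;=\; [(\mathrm{incl}_{U\cap U'}, x'+v'); w] \;\sim\; [(\mathrm{incl}_{U'}, x'); z'].\]
Condition (ii) handles the degenerate cases in which the two representatives share the same subspace, reducing to equality of the underlying elements of $Y(U)$.

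Finally, the continuous injection $\varepsilon$ from the compact space $L_m Y$ into the Hausdorff space $Y(\R^m)$ is automatically a closed embedding, as claimed. The main technical step is the diagram chase in the second paragraph: one must carefully combine the linear and shift parts of morphisms $(\alpha, x)$ in $O$, and verify that the compatibility $x+v = x'+v'$ provided by condition (iii) exactly realizes the coend identification of the two representatives. This is precisely why (iii) is formulated with the specific shift data $v, v'$ in addition to the pullback surjectivity.
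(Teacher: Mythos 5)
Your proposal is correct and follows essentially the same strategy as the paper's proof: establish compactness of $L_m Y$ from the finite coequalizer, then prove injectivity of the latching map by using condition (iii) to produce a common lift in $Y(U\cap U')$ whose coend relations bridge the two representatives. The one cosmetic difference is that you take canonical representatives $[(\mathrm{incl}_U,x);z]$ indexed over subspaces $U\subset\R^m$ (the full coend over $O^{\leq m-1}$), which lets the chain of coend identifications be written directly, whereas the paper stays inside the skeletal coequalizer and therefore explicitly constructs the isometries $h,h',\beta$ to lift the merging element back down to $Y(\R^k)$, invoking condition (ii) to conclude $(h,w)_*(u)=y$; your formulation sidesteps that step and uses (ii) only for the basepoint and equal-subspace cases, which is an acceptable alternative.
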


\begin{proof}
The latching space $L_mY$ can be calculated as a coequalizer
\[ \bigvee_{0 \leq s \leq l \leq m-1} O(\R^l,\R^m) \wedge O(\R^s,\R^l) \wedge Y(\R^s) \rightrightarrows \bigvee_{0 \leq l \leq m-1} O(\R^l,\R^m) \wedge Y(\R^l) \to L_mY, \]
hence $L_mY$ is compact. Since $Y(\R^m)$ is Hausdorff it suffices to show that the map $L_m Y \to Y(\R^m)$ is injective.
 Let $(\phi,t) \in O(\R^i,\R^m), \, y \in Y(\R^i)$ and $(\phi',t') \in O(\R^j,\R^m), \, y' \in Y(\R^j)$ be such that
\begin{equation}\label{eq:yy'}(\phi,t)_\ast(y) = (\phi',t')_\ast(y') \in Y(\R^m),\end{equation}
and let us show that the elements $((\phi,t),y)$ and $((\phi',t'),y')$ are identified in the coequalizer. We set $V = \phi(\R^i)$ and $V' = \phi'(\R^j)$ and factor $\phi$ as an isomorphism $\bar{\phi}$ onto $V$ followed by an inclusion $j$, and similarly $\phi' = j' \circ \bar{\phi}'$.

Consider the morphisms in $O$
\[ V \stackrel{(j,t)}{\longrightarrow} \R^m \stackrel{(j',t')}{\longleftarrow} V'.\]
By (\ref{eq:yy'}) it follows that $(j,t)_\ast(\bar{\phi},0)_\ast(y) =  (j',t')_\ast(\bar{\phi'},0)_\ast(y')$ as elements of $Y(\R^m)$.
Condition $\ref{lemma:closed-iii}$ now gives a commutative diagram
\[\xymatrix@C=50pt{V \cap V' \ar[r]^-{(\iota',v')} \ar[d]_{(\iota,v)} & V' \ar[d]^{(j',t')}\\
V \ar[r]_{(j,t)} & \R^m ,} \]
and a point $z \in Y(V \cap V')$ mapping to $(\bar{\phi},0)_\ast(y) \in Y(V)$ and $(\bar{\phi'},0)_\ast(y') \in Y(V')$ respectively by $(\iota,v)_\ast$ and $(\iota',v')_\ast$. Set $k = \dim ( V \cap V')$ and let $h \colon \R^k \to \R^i$ be a linear isometry for which there is a factorization
\[\xymatrix@C=50pt{\R^k \ar[d]_-{\exists \beta} ^\cong \ar[r]^h & \R^i \ar[d]^\phi \\
V \cap V' \ar@{^{(}->}[r] & \R^m.}\]
Let $h'$ be the composite $h'\colon \R^k \stackrel{\beta}{\to} V \cap V' \hookrightarrow V' \stackrel{\bar{\phi'}^{-1}}{\longrightarrow} \R^j$, and

take $w = \bar{\phi}^{-1}(v)$ and $w' = \bar{\phi'}^{-1}(v')$. Then there are commutative diagrams in $O$
\[\xymatrix{\R^k \ar[r]^-{(\beta,0)} \ar[rd]_{(h',w')} & V \cap V'  \ar[r]^-{(\iota',v')}& V' \ar[d]^{(j',t')}\\
& \R^j \ar[r]_-{(\phi',t')} & \R^m} \ \ \ \ \ \ \ \
\mbox{and}
\ \ \ \ \ \ \ \ \xymatrix{\R^k \ar[r]^-{(\beta,0)} \ar[rd]_{(h,w)} & V \cap V'  \ar[r]^-{(\iota,v)}& V \ar[d]^{(j,t)}\\
& \R^i \ar[r]_-{(\phi,t)} & \R^m.}\]
Setting $u = (\beta,0)_\ast^{-1}(z) \in Y(\R^k)$, we get
\[(\phi,t)_\ast(h,w)_\ast(u)= (j,t)_\ast(\iota,v)_\ast(z) = (j,t)_\ast(\bar{\phi},0)_\ast(y)=(\phi,t)_\ast(y).\]
By condition $\ref{lemma:closed-ii}$ we have $(h,w)_\ast (u) =y$, and similarly $(h',w')_\ast (u) =y'$. Finally we have
\[ ((\phi,t),y) =((\phi,t),(h,w)_\ast(u)) \sim ((\phi,t)\circ(h,w),u) = ((j,t)\circ(\iota,v) \circ \beta,u) = ((j',t')\circ(\iota',v') \circ \beta,u)\]
and by a similar argument $((j',t')\circ(i',v') \circ \beta,u) \sim ((\phi',t'),y')$.
\end{proof}

The evaluation functor $\ev_n \colon \Sp^G \to \Top_\ast^{G \times O(n)}$ that sends a $G$-spectrum $Y$ to $Y(\R^n)$ has a $\Top_\ast$-enriched left adjoint, which we denote by $G_n$. Spectra of the form $G_n(Z)$, for $Z$ a pointed $G\times O(n)$-space, are called semi-free. We recall that this left adjoint is described by the formula
\[G_n(Z)(V)= O(\R^n,V) \wedge_{O(n)} Z.\]

\begin{lemma}Let $Z$ be a finite $G\times O(n)$-CW complex and $U$ a $G$-representation. Then the latching maps for $\Sh^U G_n(Z)$ are closed inclusions.
\end{lemma}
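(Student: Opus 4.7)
The plan is to apply Lemma \ref{lemma:closed} to the orthogonal $G$-spectrum $Y := \Sh^U G_n(Z)$, whose value on a finite dimensional inner product space $V$ is
\[Y(V) = O(\R^n, U \oplus V)_+ \wedge_{O(n)} Z.\]
This reduces the problem to verifying the three pointwise hypotheses of that lemma.

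For condition (\ref{lemma:closed-i}), I would observe that $O(\R^n, U \oplus \R^m)$ is the Thom space of the orthogonal complement bundle over the compact Stiefel manifold $L(\R^n, U \oplus \R^m)$, so it is compact Hausdorff. Since $Z$ is a finite $G\times O(n)$-CW complex it is also compact Hausdorff, and the orbit construction $(-)\wedge_{O(n)} Z$ along the action of the compact Lie group $O(n)$ preserves compactness and the Hausdorff property (the $O(n)$-action on $O(\R^n, U\oplus\R^m)\setminus\{\infty\}$ is free). For condition (\ref{lemma:closed-ii}), a morphism $(\alpha, x)\colon V\to V'$ in $O$ induces the map $(\id_U\oplus\alpha, (0,x))\colon U\oplus V\to U\oplus V'$, and hence a map of Thom spaces sending $(\beta, y)\mapsto((\id_U\oplus\alpha)\beta,\, (0,x)+(\id_U\oplus\alpha)(y))$. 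This is injective because $\id_U\oplus\alpha$ is an isometric embedding. Since the $O(n)$-action is by precomposition on $\R^n$, the map commutes with the $O(n)$-action, and injectivity is preserved upon smashing with $Z$ and passing to $O(n)$-orbits.

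The main content is condition (\ref{lemma:closed-iii}), which amounts to showing that $V\mapsto O(\R^n, U\oplus V)_+\wedge_{O(n)} Z$ turns intersections into pullbacks, after suitable translation. Given subspace inclusions $(j,x)\colon V\hookrightarrow W$ and $(j',x')\colon V'\hookrightarrow W$, I would choose the witnesses $v\in V-V\cap V'$ and $v'\in V'-V\cap V'$ by the orthogonal decomposition $W = (V\cap V')\oplus V^{\perp}\oplus V'^{\perp}\oplus (V+V')^{\perp}$ and the requirement $x+j(v)=x'+j'(v')$. Suppose now $[(\beta_1, y_1), z_1]$ and $[(\beta_2, y_2), z_2]$ in $Y(V)$ and $Y(V')$ represent an element of the pullback $P$, i.e.\ they have equal image in $Y(W)$. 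Choosing representatives in the smash product (using the free $O(n)$-action) forces the two Thom-space elements to coincide on the nose in $O(\R^n,U\oplus W)$. Injectivity of $\id_U\oplus j$ and $\id_U\oplus j'$ together with the equation on the $y$-coordinates implies that the common image of $\beta_1$ and $\beta_2$ lies in $U\oplus j(V)\cap U\oplus j'(V') = U\oplus (V\cap V')$, and similarly for the translations $(0,x)+(\id_U\oplus j)y_1$ and $(0,x')+(\id_U\oplus j')y_2$ after subtracting $(0,x+j(v))=(0,x'+j'(v'))$. This produces an element of $O(\R^n, U\oplus(V\cap V'))$ which, paired with the (automatically equal) $Z$-coordinate, provides a lift to $Y(V\cap V')$ mapping to the given pullback element.

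The main obstacle will be condition (\ref{lemma:closed-iii}): the bookkeeping to identify when two elements of the Thom space, after passing to $O(n)$-orbits and smashing with $Z$, factor through the intersection $V\cap V'$. This requires carefully exploiting the free $O(n)$-action on $O(\R^n, -)\setminus\{\infty\}$ to normalize representatives before geometrically analyzing the images of the linear isometric embeddings $\beta_1,\beta_2$ in $U\oplus W$ and the displacement vectors $y_1,y_2$.
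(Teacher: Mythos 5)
Your proposal is correct and follows essentially the same route as the paper's proof: reduce to the pointwise conditions of Lemma \ref{lemma:closed}, verify (\ref{lemma:closed-i}) and (\ref{lemma:closed-ii}) by routine inspection of Thom spaces and $O(n)$-orbits, and establish (\ref{lemma:closed-iii}) by using the free $O(n)$-action to arrange that the two Thom-space representatives agree on the nose, then analyzing the common image of the isometric embeddings and the translation vectors. One small caveat: the asserted decomposition $W = (V\cap V')\oplus V^{\perp}\oplus V'^{\perp}\oplus (V+V')^{\perp}$ double-counts $(V+V')^{\perp}=V^\perp\cap V'^\perp$ and is not orthogonal in general; the correct bookkeeping (as in the paper) is to split $y_1$ into its $U\oplus(V\cap V')$-component $e$ and its complementary component $v\in V\ominus(V\cap V')$, likewise $y_2 = e'+v'$, and show $e=e'$ by orthogonal projection — the witnesses $v,v'$ then come out of this decomposition (and are uniquely determined by the displacement equation when a solution exists), so your plan to fix them in advance is reconciled with the paper's approach.
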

\begin{proof}
We check that the underlying (non-equivariant) spectrum of $\Sh^U G_n(Z)$ satisfies the conditions of Lemma \ref{lemma:closed}.
The spaces
\[(\Sh^UG_n(Z))(\R^m)= O(\R^n,U \oplus \R^m) \wedge_{O(n)} Z\]
 are all compact Hausdorff, so condition $\ref{lemma:closed-i}$ is satisfied. Condition $\ref{lemma:closed-ii}$ is clear. It remains to check condition $\ref{lemma:closed-iii}$.

Suppose we are given maps
\[ V \xrightarrow{(\alpha,x)} W \xleftarrow{(\alpha',x')} V' ,\]
where $\alpha$ and $\alpha'$ are subspace inclusions, and points $[\phi,y,z] \in O(\R^n,U\oplus V ) \wedge_{O(n)} Z$ and $[\phi',y',z'] \in O(\R^n,U\oplus V' ) \wedge_{O(n)} Z$ such that
\[ (\alpha,x)_\ast[\phi,y,z] =  (\alpha',x')_\ast[\phi',y',z'] \in O(\R^n,U\oplus W) \wedge_{O(n)} Z. \]
This means that there is a $g \in O(n)$ such that
\[ ((1_U \oplus \alpha)\circ \phi, x + (1_U\oplus \alpha)(y), z)  = ((1_U \oplus \alpha')\circ \phi'\circ g^{-1}, x' + (1_U\oplus \alpha')(y'), gz')\] in $O(\R^n, U \oplus W) \wedge Z$.

Since $\alpha$ and $\alpha'$ are subspace inclusions there is a unique map $\bar{\phi} \colon \R^m \to V \cap V'$ such that the diagram
\[ \xymatrix@C=70pt{ & U \oplus V \ar[rd]^-{1\oplus \alpha} & \\
\R^n \ar[ur]^\phi \ar[r]^-{\bar{\phi}} \ar[dr]_-{\phi'\circ g^{-1}} & U \oplus (V\cap V') \ar@{^{(}->}[u]  \ar[r]  \ar@{^{(}->}[d] & U \oplus W \\
& U \oplus V' \ar[ur]_-{1\oplus \alpha'} & } \]
commutes. We have decompositions $y = e+v$ and $y' = e' + v'$ where $e,e' \in U \oplus V \cap V'$, $v \in U \oplus V - U \oplus (V\cap V')$ and $v' \in U \oplus V' - U \oplus (V\cap V')$. Note that $U \oplus V - U \oplus (V\cap V') = V - (V\cap V')$ and similarly for $V'$. From this we get the equation
\[x + (1\oplus \alpha)(e) + (1\oplus \alpha)(v) = x' + (1\oplus \alpha')(e') + (1\oplus \alpha')(v').\]
Applying the orthogonal projection onto $U \oplus (V\cap V')$ we get $(1\oplus \alpha)(e)=(1\oplus \alpha')(e') $ which implies $e =e'$.
This shows that the diagram in $O$
\[\xymatrix@C=50pt{V \cap V' \ar[r]^-{(\iota',v')} \ar[d]_{(\iota,v)} & V' \ar[d]^{(\alpha',x')}\\
V \ar[r]_{(\alpha,x)} & W} \]
commutes, and that the point $[\bar{\phi},e,z] \in O(\R^n,U \oplus V \cap V')\wedge_{O(n)}Z$ maps to $[\phi,y,z]$ and $[\phi',y',z']$ respectively by the maps $(\iota,v)_\ast$ and $(\iota',v')_\ast$.
\end{proof}

\begin{lemma}\label{lemma:emb} Let $\Gamma$ be a compact Lie group and let $i \colon M \to N$ be a smooth, $\Gamma$-equivariant embedding of smooth compact closed $\Gamma$-manifolds. Then $i$ is a $\Gamma$-cofibration.
\end{lemma}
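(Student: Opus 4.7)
The plan is to use the equivariant tubular neighborhood theorem to reduce the claim to the fact that the inclusion of the zero section into a $\Gamma$-equivariant vector bundle over a compact $\Gamma$-space is a $\Gamma$-cofibration. Concretely, since $M$ and $N$ are compact, one can choose a $\Gamma$-invariant Riemannian metric on $N$ (by averaging an arbitrary metric over $\Gamma$ using the Haar measure), and then the normal bundle $\nu$ of $i(M)$ in $N$ inherits a $\Gamma$-action. The equivariant tubular neighborhood theorem (see e.g.\ Bredon, \emph{Introduction to Compact Transformation Groups}, Chapter VI) produces a $\Gamma$-equivariant open embedding $\exp \colon D(\nu) \to N$ of the open unit disk bundle onto an open $\Gamma$-invariant neighborhood $U$ of $i(M)$, which restricts to $i$ on the zero section.

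Next I would exhibit $(N, i(M))$ as a $\Gamma$-NDR pair. Using the radial coordinate on $\nu$ and a smooth $\Gamma$-invariant bump function $\rho \colon N \to [0,1]$ with $\rho = 0$ on a smaller $\Gamma$-neighborhood of $i(M)$ and $\rho = 1$ outside $U$, one defines a $\Gamma$-equivariant map $u \colon N \to [0,1]$ with $u^{-1}(0) = i(M)$, and a $\Gamma$-equivariant homotopy $h \colon N \times I \to N$ that fixes $i(M)$ pointwise and deformation retracts the open set $\{u < 1\}$ onto $i(M)$ by radial scaling in the tubular chart. The pair $(u,h)$ is then a $\Gamma$-equivariant NDR representation for $(N, i(M))$, so by the equivariant version of the Strom/Steenrod criterion (which goes through verbatim because $\Gamma$ is compact and all constructions are $\Gamma$-equivariant) the inclusion $i$ is a $\Gamma$-cofibration.

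The two main technical inputs are the equivariant tubular neighborhood theorem and the equivariant NDR-pair criterion for cofibrations; both are standard for compact Lie group actions on smooth compact manifolds, so there is no serious obstacle. The only point to watch is equivariance of the bump function $\rho$ and of the deformation $h$, which is ensured by starting from a $\Gamma$-invariant metric and performing all constructions (averaging, radial scaling) inside the $\Gamma$-equivariant tubular chart.
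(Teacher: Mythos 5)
Your proof is correct but takes a genuinely different route from the paper. The paper's argument goes through Illman's equivariant triangulation theorem: it first triangulates $M$ and the boundary inclusion $S(\nu) \hookrightarrow N \setminus \stackrel{\circ}{D}(\nu)$ as relative $\Gamma$-CW complexes, exhibits $N$ as a pushout along these, and concludes that $i$ factors as $M \hookrightarrow D(\nu) \hookrightarrow N$, a composite of relative $\Gamma$-CW inclusions (hence a $\Gamma$-cofibration). Your approach instead stays "soft": you use only the equivariant tubular neighborhood theorem plus the $\Gamma$-NDR characterization of $\Gamma$-cofibrations, building the halo function $u$ and the retracting homotopy $h$ by radial scaling in the tubular chart and averaging to force equivariance. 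The trade-offs: Illman's theorem is a heavy input but gives the stronger conclusion that $i$ is a relative $\Gamma$-CW inclusion, which can be convenient elsewhere; your NDR argument is more elementary in spirit but yields only the cofibration property (which is all the lemma asks for). One small thing worth tightening in a full write-up is the definition of $u$ near $i(M)$ — as described, $\rho$ vanishes on an open neighborhood of $i(M)$, so $u$ must be built by combining $\rho$ with the radial norm $|v|$ in the tubular chart rather than being $\rho$ itself, in order to guarantee $u^{-1}(0) = i(M)$; this is standard but should be made explicit.
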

\begin{proof}
We will factor $i$ as a composite of two relative $\Gamma$-CW complexes. The $\Gamma$-manifold $M$ is a $\Gamma$-CW complex by Illman's equivariant triangulation Theorem \cite[7.2]{Illman}. Choose an equivariant Riemannian metric on $N$ and an equivariant tubular neighborhood of $M$, identified with the normal bundle $\nu$ of the embedding (see e.g. \cite{Kank}). The closed unit disc bundle $D(\nu)$ inherits an equivariant cell structure from $M$ such that the zero-section $M \hookrightarrow D(\nu)$ is a relative $\Gamma$-CW complex (see e.g. \cite[Lemma 1.1]{LO}). The space $N$ can be written as pushout
\[\xymatrix{S(\nu) \ar[r] \ar[d] & D(\nu) \ar[d]\\
N \setminus \stackrel{\circ}{D}(\nu) \ar[r] & N ,} \]
where $S(\nu)$ is the unit sphere bundle of $\nu$ and $\stackrel{\circ}{D}(\nu)$ is the open disk bundle. The left vertical map is the inclusion of the boundary in a smooth, compact $\Gamma$-manifold and is therefore a relative $\Gamma$-CW complex by Illman's Theorem \cite[7.2]{Illman}. By cobase change it follows that the inclusion $D(\nu) \to N$ is also a relative $\Gamma$-CW complex. The map $i$ is the composite $M \to D(\nu) \to N$ and is therefore a $\Gamma$-cofibration.
\end{proof}

\begin{lemma} \label{shiftsemifree} Let $Z$ be a finite $\Z/2\times O(n)$-CW complex and $U$ a $\Z/2$-representation. Then the $\Z/2$-spectrum $\Sh^U G_n(Z)$ is flat.
\end{lemma}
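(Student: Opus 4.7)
The plan is to reduce to the case $Z = S^0$ and then to identify the latching map as the inclusion of an equivariant subcomplex of a CW structure on $O(\R^n, U\oplus \R^m)$ coming from an equivariant triangulation of the underlying smooth disc bundle.

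First I would reduce to $Z=S^0$. The $O(n)$-balanced smash product $-\wedge_{O(n)}Z$ is a colimit-preserving functor from pointed $(\Z/2\times O(n)\times O(m))$-spaces to pointed $(\Z/2\times O(m))$-spaces, and because $Z$ is a finite $\Z/2 \times O(n)$-CW complex it takes $(\Z/2\times O(n)\times O(m))$-cofibrations with the appropriate freeness on the $O(n)$-factor to $(\Z/2\times O(m))$-cofibrations. Since the latching construction commutes with $-\wedge_{O(n)}Z$, it suffices to show that the latching map
\[
L_m \Sh^U G_n(S^0) \longrightarrow O(\R^n, U \oplus \R^m)
\]
is a $(\Z/2\times O(n)\times O(m))$-cofibration for every $m \geq 0$.

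Next I would describe the latching subspace explicitly. By the preceding results the latching map is a closed inclusion, and tracing through the definitions one sees that its image is the closed $(\Z/2\times O(n)\times O(m))$-invariant subspace $A_m \subset O(\R^n, U\oplus \R^m)$ consisting of the basepoint together with all pairs $(\phi,w)$ for which the projection of $\phi(\R^n)+\R w$ onto the $\R^m$-summand of $U\oplus \R^m$ fails to be surjective.

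Finally I would realize this inclusion as that of an equivariant subcomplex. The disc bundle $D=D(\xi_{n,U\oplus \R^m})$ of the complement bundle over the Stiefel manifold $L(\R^n, U \oplus \R^m)$ is a compact smooth $(\Z/2\times O(n)\times O(m))$-manifold with boundary the sphere bundle $S=S(\xi_{n,U\oplus \R^m})$, and its collapse $D/S$ is $O(\R^n, U\oplus \R^m)$. The preimage $\widetilde A_m$ of $A_m$ in $D$ is a closed invariant subspace naturally stratified by the rank of the projection of $\phi(\R^n)+\R w$ to $\R^m$; each stratum, together with $S$, is a closed invariant smooth submanifold (or union of such). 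By Illman's equivariant triangulation theorem applied to $D$ together with the closed invariant subspaces $\widetilde A_m$ and $S$, we can choose a $(\Z/2\times O(n)\times O(m))$-equivariant triangulation of $D$ in which both $S$ and $\widetilde A_m$ are subcomplexes. Passing to the quotient by $S$ gives a $(\Z/2\times O(n)\times O(m))$-equivariant CW structure on $O(\R^n, U\oplus \R^m)$ in which $A_m$ is a subcomplex, and the inclusion of a subcomplex is a $(\Z/2\times O(n)\times O(m))$-cofibration.

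The main obstacle is the stratified, rather than smooth submanifold, structure of $\widetilde A_m$: since the rank of the projection to $\R^m$ can jump, Lemma \ref{lemma:emb} does not apply directly to $\widetilde A_m \hookrightarrow D$. Overcoming this requires invoking Illman's theorem in the form that produces an equivariant triangulation simultaneously compatible with a finite collection of closed invariant subspaces arising as unions of closed orbit-type strata.
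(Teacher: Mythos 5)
Your reduction to $Z=S^0$ matches the paper's (they factor the latching map through $-\wedge_{O(n)}Z$), and the identification of the latching space as a closed subspace $A_m$ is in the right spirit. The gap is exactly where you flag it, and the tool you propose to close it does not apply.

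The strata of $\widetilde A_m$ are cut out by rank conditions on the projection $U\oplus\R^m\to\R^m$, and these have nothing to do with the $(\Z/2\times O(n)\times O(m))$-action on $D(\xi)$. They are not unions of orbit-type strata. Illman's triangulation theorem (and the standard strengthenings you allude to) lets you demand that a finite family of closed \emph{smooth} invariant submanifolds, or unions of \emph{orbit-type} strata, become subcomplexes. It does not, without a genuinely different semi-algebraic refinement, let you triangulate compatibly with an arbitrary invariant algebraic stratification of the sort you have. So for general $U$ and for those $m$ in the range $n-\dim U\le m\le n$ where $A_m$ really does have jumping rank strata, your argument does not go through as written.

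The paper's proof removes the obstacle rather than confronting it, by two observations you are missing. First, it reduces to the case $\dim U=1$ via the isomorphism $\Sh^U\cong\Sh^{U_1}\circ\cdots\circ\Sh^{U_d}$ (using that $\Z/2$-representations are sums of lines). Second, it checks that for $m>n$ the latching map $f_m\colon W_m\to O(\R^n,U\oplus\R^m)$ is a closed embedding (Lemma \ref{lemma:closed}) which is also \emph{surjective}, because the image of any linear isometry $\R^n\to U\oplus\R^m$ projects into a subspace of $\R^m$ of dimension $\le n\le m-1$; since both sides are compact Hausdorff, $f_m$ is a homeomorphism, hence trivially a cofibration. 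For $m<n-1$ and $m=n-1$ the map is $\ast\to\ast$ or a basepoint inclusion into a manifold, which are cofibrations. With $U$ one-dimensional the only remaining case is $m=n$, and there the coequalizer $W_n$ collapses to a single Thom space $O(\R^{n-1},\R^n)\wedge_{O(n-1)}O(\R^n,U\oplus\R^{n-1})$ --- precisely because only the $l=n-1$ summand can contribute --- so $f_n$ is the map on Thom spaces induced by a pullback square of vector bundles over a smooth closed equivariant embedding of compact homogeneous $G$-manifolds, and Lemma \ref{lemma:emb} applies directly. In other words, the $1$-dimensional reduction together with the $m>n$ homeomorphism make the one nontrivial latching subspace a smooth submanifold, so no stratified triangulation is ever needed.
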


\begin{proof}
Any $\Z/2$-representation $U$ decomposes as a direct sum of one-dimensional representations. An isomorphism $U \cong U_1 \oplus \cdots \oplus U_d$ induces an isomorphism of functors $\Sh^U \cong \Sh^{U_1}\circ \cdots \circ \Sh^{U_d}$, so we may assume that $U$ is one-dimensional.

Let $Y= \Sh^U G_n(Z)$. We must show that for each $m \geq 0$ the latching map $L_m Y \to Y(\R^m)$ is a $\Z/2 \times O(m)$-cofibration. The latching space $L_mY$ is the coequalizer of the diagram
\[  \bigvee_{0 \leq k \leq l \leq m-1}\!\!\! O(\R^l,\R^m) \wedge O(\R^k,\R^l) \wedge O(\R^n,U \oplus \R^k) \wedge_{O(n)} Z \rightrightarrows \!\!\!  \bigvee_{0 \leq l \leq m-1} \!\!\! O(\R^l,\R^m) \wedge O(\R^n,U \oplus \R^l) \wedge_{O(n)} Z,\]
which is isomorphic to the value of the functor $-\wedge_{O(n)} Z$ on the coequalizer $W_m$ of
\[  \bigvee_{0 \leq k \leq l \leq m-1} O(\R^l,\R^m) \wedge O(\R^k,\R^l) \wedge O(\R^n,U \oplus \R^k) \rightrightarrows \bigvee_{0 \leq l \leq m-1} O(\R^l,\R^m) \wedge O(\R^n,U \oplus \R^l) .\]
The latching map can be written as the value of the functor $-\wedge_{O(n)} Z$ on a $G \times O(n) \times O(m)$-equivariant map $f_m \colon W_m \to O(\R^n,U \oplus \R^m)$. Since $-\wedge_{O(n)} Z$ preserves cofibrations, it suffices to show that $f_m$ is a $G \times O(n) \times O(m)$-cofibration.

If $m < n-1$, then there are no linear embeddings of $\R^n$ into $U \oplus \R^m$ and $f_m$ is just $\ast \to \ast$ which is a cofibration. If $m = n-1$ then $W_m = \ast$ and the map $f_m$ is the inclusion of the basepoint into $O(\R^n,U \oplus \R^{n-1}) = L(\R^n,U \oplus \R^{n-1})_+$ which is a cofibration, since the space $L(\R^n,U \oplus \R^{n-1})$ of linear isometries is a $G \times O(n) \times O(n-1)$-manifold, and hence cofibrant, by Illman's Theorem \cite{Illman}.

On the other hand if $m > n$, then we claim that $f_m$ is a homeomorphism. To see this, first note that the map $f_m$ agrees at the point set level with the latching map for the $G$-spectrum $\Sh^UG_n(O(n)_+)$, and is therefore a closed embedding by Lemma \ref{lemma:closed}. Since all the spaces involved are compact Hausdorff it now suffices to show that $f_m$ is surjective. Take any point
\[(\alpha,x) \in O(\R^n, U \oplus \R^m)\]
and let $V$ denote the image of the map $\R^n \stackrel{\alpha}{\longrightarrow} U \oplus \R^m \stackrel{\proj}{\longrightarrow} \R^m$.
Let $v = \dim V$, then $v \leq n \leq m-1$. The map $\alpha$ factors as
\[\xymatrix{\R^n \ar[dr]_-{\bar{\alpha}} \ar[rr]^-\alpha && U \oplus \R^m \\
& U \oplus \R^v \ar[ur]_-{id_U \oplus \beta} &,}
\]
where $\beta$ is an isometry with image $V$. The vector $x$ can be decomposed as a sum $x = x' +x''$, where $x' \in U \oplus V$ and $x'' \in U \oplus \R^m - U \oplus V = \R^m - V$.  Now if $y'$ is a vector in $U\oplus \R^v$ which maps to $x'$ then the class in $W_m$ of the point
\[ (\beta,x'') \wedge (\bar{\alpha},y') \in O(\R^v, \R^m) \wedge O(\R^n, U \oplus \R^v)\]
is mapped to $(\alpha,x)$ by $f_m$.


We are left with the case $m= n$. The space $W_m$ is homeomorphic to
\[O(\R^{n-1},\R^n) \wedge_{O(n-1)} O(\R^n, U \oplus \R^{n-1})\]
and $f_m$ is the map on Thom spaces induced by map of vector bundles which is also a pullback diagram
\[\xymatrix{E \ar[d] \ar[rr] && \gamma(U\oplus \R^n)^\perp \ar[d] \\
L(\R^{n-1},\R^n) \times_{O(n-1)} L(\R^n, U \oplus \R^{n-1})  \ar[rr] && L(\R^n, U \oplus \R^{n})\rlap{\ ,}}\]
where $\gamma(U\oplus\mathbb{R}^n)^\perp$ is the orthogonal complement bundle.
The lower horizontal map is a smooth map between smooth manifolds which is $G\times O(n) \times O(n)$-equivariant and also a closed embedding. By Lemma \ref{lemma:emb} it is therefore an equivariant cofibration and it follows that the induced map on Thom spaces is a cofibration as well (see e.g. \cite[Lemma 1.1]{LO}).
\end{proof}

\begin{proof}[Proof of Proposition \ref{prop:flatness}]
We will now show that shifts preserve flat cofibrations ($\mathbb{S}$-cofibrations) of orthogonal $\Z/2$-spectra. The functor $\Sh^U$-commutes with colimits, hence it suffices to show that $\Sh^U$ sends generating flat cofibrations to flat cofibrations. The generating flat cofibrations have the form
\[ i \wedge 1\colon  A \wedge G_n(Z) \longrightarrow B \wedge G_n(Z), \]
where $i\colon  A \rightarrow B$ is a (non-equivariant) cofibration of spaces and $Z$ is a finite $G\times O(n)$-CW complex. The map $\Sh^U(i \wedge 1)$ is isomorphic to the map
\[i \wedge 1\colon  A \wedge \Sh^U(G_n(Z)) \longrightarrow B \wedge \Sh^U(G_n(Z)). \]
Lemma \ref{shiftsemifree} implies that $\Sh^U(G_n(Z))$ is flat, and since the $\mathbb{S}$-model structure is topological \cite{Sto}, the latter map is a flat cofibration.
\end{proof}

\phantomsection\addcontentsline{toc}{section}{References} 
\bibliographystyle{amsalpha}
\bibliography{bib}

\end{document}